\tikzset{every picture/.style={line width=0.75pt}} %set default line width to 0.75pt  
\newtheorem{theorem}{Theorem}[section]
\newtheorem{lemma}[theorem]{Lemma}
\newtheorem{corollary}[theorem]{Corollary}
\newtheorem{proposition}[theorem]{Proposition}
\theoremstyle{definition}
\newtheorem{definition}[theorem]{Definition}
\newtheorem{remark}[theorem]{Remark}
\newcommand{\divides}{\mid}
\newcommand{\R}{{\mathbb{R}}}
\newcommand{\Z}{{\mathbb{Z}}}
\newcommand{\N}{{\mathbb{N}}}
\newcommand{\PP}{{\mathbb{P}}}
\newcommand{\mb}[1]{\mathbf{#1}}
\newcommand{\mc}[1]{\mathcal{#1}}
\def\moverlay{\mathpalette\mov@rlay}
\def\mov@rlay#1#2{\leavevmode\vtop{%
   \baselineskip\z@skip \lineskiplimit-\maxdimen
   \ialign{\hfil$\m@th#1##$\hfil\cr#2\crcr}}}
\newcommand{\charfusion}[3][\mathord]{
    #1{\ifx#1\mathop\vphantom{#2}\fi
        \mathpalette\mov@rlay{#2\cr#3}
      }
    \ifx#1\mathop\expandafter\displaylimits\fi}
\providecommand{\noopsort}[1]{}
\renewcommand{\P}{\mathbb{P}}
\newcommand{\F}{\mathcal{F}}
\newcommand{\G}{\mathcal{G}}
\NewDocumentCommand{\Prob}{O{what} O{\left(} O{\right)}}{\mathbb{P}#2 #1 #3}
\newcommand{\Pro}[1][P]{\mathbb{#1}}
\newcommand{\E}{{\mathbb E}}
\newcommand{\p}[1][p]{\mathbf{#1}}
\NewDocumentCommand{\tos}{O{=} O{a.s.}}{\stackrel{\smash{\scriptscriptstyle\mathrm{#2}}}{#1}}
\NewDocumentCommand{\prp}{O{\p} O{n} O{T_n(\p)}}{#1^{\otimes #2} \left(#3\right)}
\NewDocumentCommand{\pof}{O{} O{} O{P} O{}}{\mathbb{#3}_{#2}^{#4}\left(#1\right)}
\NewDocumentCommand{\lof}{O{X} O{P}}{\mathbb{#2}_{#1}} %%% law of X wrto P
\NewDocumentCommand{\cmv}{O{X} O{Y} O{}}{\E_{\mathbb{#3}}\left(#1|#2\right)}
\newcommand{\iof}[1][]{\mathbbm{1}_{#1}}
\NewDocumentCommand{\lo}{O{x} O{2}}{\log_{#2}\left(#1\right)}
\NewDocumentCommand{\h}{O{X} O{}}{\mathbf{H}_{#2}\left(#1\right)}
\NewDocumentCommand{\ph}{O{X} O{}}{\mathbf{H}_{#2}\left(\mb{#1}\right)}
\NewDocumentCommand{\pch}{O{X} O{Y}}{\mathbf{H}\left(\mathbf{#1} | \mathbf{#2}\right)}
\NewDocumentCommand{\ch}{O{X} O{Y} O{}}{\mathbf{H}_{#3}\left(#1\;|\; #2\right)}
\NewDocumentCommand{\cph}{O{X} O{Y}}{\mathbf{H}\left(\mb{#1}\;|\; \mb{#2}\right)}
\NewDocumentCommand{\nof}{O{1} O{in}}{\#_{#1}\left(#2\right)}
\NewDocumentCommand{\inffun}{O{X} O{\mc{G}}}{\mb{I}^{#2}\left(#1\right)}
\NewDocumentCommand{\prrh}{O{X} O{Y}}{D_p\left(#1||#2\right)}
\NewDocumentCommand{\prmi}{O{X} O{Y}}{I_p\left(#1;#2\right)}
\NewDocumentCommand{\prch}{O{X} O{Y}}{\mathbf{H}_p\left(#1|#2\right)}
\NewDocumentCommand{\preof}{O{}}{\pmb{\mathscr{P}}_{#1}}
\NewDocumentCommand{\proc}{O{X} O{\Z}}{\left(#1_{i}\right)_{i\in #2}}
\NewDocumentCommand{\crh}{O{X} O{Y} O{Z} O{W}}{\mathbf{H}\left(#1 | #2\Big|\Big| #3 | #4\right)}
\DeclarePairedDelimiter{\oci}{(}{]}
\title{Basic thermodynamical formalism for sandwich subshifts}
\author{Joanna Ku\l{}aga-Przymus \and Micha\l{} D.\ Lema\'{n}czyk \and Micha\l{} Rams}
\begin{document}
\maketitle

\begin{abstract}
Consider a partial order on $\{0,1\}^{\mathbb Z}: x\leq y$ when $x_i\leq y_i$ for all $i\in\mathbb{Z}$. A subshift $X\subset\{0,1\}^\Z$ is hereditary if together with any $x\in \{0,1\}^{\mathbb Z}$ it contains all $y\leq x$. Heuristically speaking, a hereditary subshift contains all the elements between maximal elements (with respect to this partial order) and the element $0^{\mathbb Z}$. In a particular situation when it suffices to take (the orbit closure of) all the elements between a single maximal element $x$ and the element $0^{\mathbb Z}$, we speak of subordinate subshifts. In this paper we investigate measure-theoretic properties of such subshifts, with a special emphasis on thermodynamical formalism. The key notion is a measure-theoretic counterpart of subordinate subshifts, where the role of a single maximal element is replaced with a single (maximal with respect to a certain order) invariant measure on $\{0,1\}^\Z$. 

We also introduce and investigate two-sided analogues of the above classes, we call them {\it sandwich hereditary}, {\it sandwich subordinate} and {\it sandwich measure-theoretically subordinate} subshifts. Sandwich hereditary subshifts can be thought of as sets of elements between some pairs of maximal and minimal elements satisfying certain assumptions. Sandwich subordinate subshifts occur when it suffices to take (the orbit closure of) all the elements between a single pair of sequences $(w,x)$, where $w\leq x$. In sandwich measure-theoretically subordinate subshifts, the role of a pair of sequences is replaced by a pair (precisely speaking: a joining) of two invariant measures on $\{0,1\}^\Z$.

The notions and results are motivated by those from the theory of so-called $\mathscr{B}$-free systems.
\end{abstract}

\tableofcontents

\section{Introduction}\label{se:intro}
This paper is dedicated to investigating a family of dynamical systems, called sandwich subshifts, through the lens of invariant measures and thermodynamic formalism. We will introduce this family shortly, however, for now, let us direct our attention toward a specific class of examples that serves as the primary motivation for our study. Given $\mathscr{B}\subseteq\mathbb{N}$, let $\mathcal{F}_\mathscr{B}:=\Z\setminus\bigcup_{b\in\mathscr{B}}b\Z$ be the corresponding set of \emph{$\mathscr{B}$-free integers} and consider the orbit closure $X_\eta$ of $\eta=\mathbf{1}_{\mathcal{F}_\mathscr{B}}$ under the left shift $\sigma$ (this is the so-called \emph{$\mathscr{B}$-free subshift}). Always when $\mathscr{B}$ is infinite, pairwise coprime with $\sum_{b\in\mathscr{B}}1/b<\infty$, then the $\mathscr{B}$-free subshift $X_\eta$ is \emph{hereditary}, that is, it is closed under taking sequences in $\{0,1\}^\Z$ smaller or equal coordinatewise that the ones that are already in $X_\eta$ (see~\cite{MR3428961}). In other words, we have $M(X_\eta\times \{0,1\}^\Z)=X_\eta$, where $M\colon \{0,1\}^\Z\times\{0,1\}^\Z\to \{0,1\}^\Z$ stands for the coordinatewise multiplication of sequences. For a general $\mathscr{B}\subseteq\N$, this might no longer be true (see~\cite{MR3803141}) and one defines so-called hereditary closure of $X_\eta$ by setting $\widetilde{X}_\eta:=M(X_\eta\times \{0,1\}^\Z)$. Clearly, $X_\eta\subseteq \widetilde{X}_\eta$. These two families of subshifts exhibit quite a variety of behaviours, setting them apart from other families classically examined in the literature. Let us give here a sample of results justifying this claim. Recall first the necessary notions from topological dynamics. A \emph{topological dynamical system} is a compact metric space $(X,d)$ equipped with a homeomorphism $T$ acting on it. We say that $X$ is \emph{proximal} if for any $x,y\in X$ we have $\liminf d(T^nx,T^ny)=0$.  A notion ``contrary'' to proximality is that of minimality: we say that $X$ is \emph{minimal} if is does not contain a non-trivial closed $T$-invariant subset. The only topological dynamical system that is at the same time minimal and proximal is $(\{\ast\}, id)$. The dynamical properties of $X_\eta$ are reflected in the properties of $\mathscr{B}$, e.g., we have the following.
\begin{enumerate}
\item $X_\eta$ is proximal $\iff$ $\ldots 000 \ldots \in X_\eta$ $\iff$ $\mathscr{B}$ contains an infinite pairwise coprime subset (Theorem B in~\cite{MR3803141}),
\item $X_\eta$ is minimal $\iff$ $\eta$ is a Toeplitz sequence (i.e.\ for any $n\in\mathcal{F}_\mathscr{B}$ there exists $s\geq 1$ such that $n+s\Z\subseteq \mathcal{F}_\mathscr{B}$) $\iff$ $\mathscr{B}$ does not contain any set of the form $c\mathcal{A}$, where $c\geq 1$ and $\mathcal{A}$ is infinite and pairwise coprime (Theorem B in~\cite{DKKu}, see also earlier partial results: Corollary 1.4 in~\cite{MR3803141} an Theorem B in~\cite{MR3947636}).
\item $X_\eta$ is \emph{essentially minimal}, i.e.\ has a unique closed invariant minimal subset. Moreover, there exists $\mathscr{B}^*\subseteq \mathbb{N}$ such that $X_{\eta^*}$ (where $\eta^*=\mathbf{1}_{\mathcal{F}_{\mathscr{B}^*}}$) is the unique minimal invariant subset of $X_\eta$ (see Theorem~A in~\cite{MR3803141} for the essential minimality and Corollary 5 and Lemma 3 in~\cite{MR4280951} for the form of the unique minimal subset).
\end{enumerate}
For a topological dynamical system $(X,T)$, we will denote by $\mathcal{M}(X,T)$ (or $\mathcal{M}(X)$ if $T$ is clear from the context) the set of all Borel probability $\sigma$-invariant measures on $X$. Crucial for understanding (in fact, even for formulating) results related to invariant measures in the $\mathscr{B}$-free setup, is the so-called Mirsky measure $\nu_\eta\in\mathcal{M}(X_\eta)$. Let $(N_i)$ be any sequence realizing the upper density of $\mathcal{F}_{\mathscr{B}}$, i.e.\ satisfying $\overline{d}(\mathcal{F}_\mathscr{B}):=\limsup_{N\to\infty}\frac{1}{N}|\mathcal{F}_\mathscr{B}\cap [1,N]|=\lim_{i\to\infty}\frac{1}{N_i}|\mathcal{F}_\mathscr{B} \cap [1,N_i]|$. Davenport and Erd\"os~\cite{MR43835} proved that 
\[
\overline{d}(\mathcal{F}_\mathscr{B})=\delta(\mathcal{F}_\mathscr{B}):=\lim_{N\to\infty}\frac{1}{\log N}\sum_{k\leq N, k\in\mathcal{F}_\mathscr{B}}1/k
\] 
(that is, the limit defining $\delta(\mathcal{F}_\mathscr{B})$, called the \emph{logarithmic density} of $\mathcal{F}_\mathscr{B}$, exists and equals the upper density of $\mathcal{F}_\mathscr{B}$). It turned out that then it is not hard to show (see Proposition~E in~\cite{MR3803141}) that, in fact, $\eta$ is quasi-generic for a certain invariant measure along $(N_i)$: the limit $\frac{1}{N_i}\sum_{n\leq N_i}\delta_{\sigma^n \eta}$ exists in the weak topology. The resulting measure is called the \emph{Mirsky measure} and is usually denoted by $\nu_\eta$ (for more details about $\nu_\eta$ see the relevant paragraph of Section~\ref{Bintro}). There are several natural questions related to the set of shift-invariant measures on $X_\eta$ and $\widetilde{X}_\eta$. We have the following result on the form of $\mathcal{M}(\widetilde{X}_\eta,\sigma)$:
\begin{enumerate}
\setcounter{enumi}{3}
\item For any $\mathscr{B}\subseteq \mathbb{N}$, 
\[
\mathcal{M}(\widetilde{X}_\eta,\sigma)=\{M_\ast(\mu) :\mu\in\mathcal{M}(\{0,1\}^\Z\times \{0,1\}^\Z,\sigma\times\sigma),\ (\pi_1)_\ast(\mu)=\nu_\eta\},
\] 
where $\pi_1\colon \{0,1\}^\Z\times\{0,1\}^\Z\to\{0,1\}^\Z$ stands for the projection onto the first coordinate, $M_\ast(\mu)$ is the image of $\mu$ via $M$, while $(\pi_1)_\ast)(\mu)$ is the image of $\mu$ via $\pi$. (See~\cite{MR3356811} for the Erd\"os case and~\cite{MR3803141} for the general case. Note that, by the definition of the hereditary closure, the inclusion ``$\supseteq$'' is here automatic.) 
\end{enumerate}
Recall that there are two classical notions of entropy given a topological dynamical system $(X,T)$: topological entropy $h_{top}(X,T)$ and measure-theoretic entropy $h(X,\mu)$. In case of a subshift $(X,\sigma)$, $h_{top}(X,\sigma)$ measures the speed of the exponential growth of the number of distinct $n$-blocks appearing in $X$, while $h(X,\mu)$ takes into account certain weights related to $\mu\in\mathcal{M}(X)$. These quantities are linked via the so-called variational principle: 
\[ h_{top}(X,T)=\sup\{h(X,\mu) : \mu\in\mathcal{M}(X,T)\}.\] 
If there is only one measure realizing this supremum, we say that the system is \emph{intrinsically ergodic}. We have the following results for $\widetilde{X}_\eta$:
\begin{enumerate}
\setcounter{enumi}{4}
\item $h_{top}(\widetilde{X}_\eta,\sigma)=\overline{d}(\mathcal{F}_\mathscr{B})$ (see~\cite[Proposition K]{MR3803141} and earlier results from~\cite{MR3430278,MR3428961}).
\item The subshift $\widetilde{X}_\eta$ is intrinsically ergodic and the unique measure of maximal entropy for $\widetilde{X}_\eta$ equals $M_\ast(\nu_\eta\otimes B_{1/2})$, where $B_{1/2}$ is the Bernoulli measure $(1/2,1/2)$ on $\{0,1\}^\Z$ (see~\cite{MR3803141} and the earlier papers~\cite{MR3430278,MR3356811}).
\end{enumerate}

Recently, in~\cite{ADJ}, the analogues of the above result on invariant measure were proved for $X_\eta$ under the extra assumption that $\eta^*$ is a regular Toeplitz sequence. Namely:
\begin{enumerate}
\item[4'.] For any $\mathscr{B}\subseteq\mathbb{N}$ such that $\eta^*$ is a regular Toeplitz sequence, we have 
\[
\mathcal{M}(X_\eta,\sigma)=\{N_\ast(\mu) : \mu\in\mathcal{M}((\{0,1\}^\Z)^3, (\pi_{1,2})_\ast(\mu)=\nu_{\eta^*}\triangle\nu_\eta\},
\] 
where:
	\begin{itemize}
	\item $\pi_{1,2}\colon (\{0,1\}^\Z)^3$ stands for the projection onto the first two coordinates,
	\item $N\colon (\{0,1\}^\Z)^3 \to \{0,1\}^\Z$ is given by $N(w,x,y)=(1-y)w+yx$,
	\item $\nu_{\eta^*}\triangle\nu_\eta$ stands for the weak limit of $\frac{1}{N_i}\sum_{n\leq N_i}\delta_{(\sigma^n\eta^*,\sigma^n\eta)}$ (where $(N_i)$ is any sequence realizing $\overline{d}(\mathcal{F}_\mathscr{B})$).
	\end{itemize}
\item[5']
For any $\mathscr{B}\subseteq\mathbb{N}$ such that $\eta^*$ is a regular Toeplitz sequence, $h_{top}(X_\eta,\sigma)=\overline{d}(\mathcal{F}_\mathscr{B})-d(\mathcal{F}_{\mathscr{B}^*})$.
\item[6']
For any $\mathscr{B}\subseteq\mathbb{N}$ such that $\eta^*$ is a regular Toeplitz sequence, the subshift $X_\eta$ is intrinsically ergodic and the unique invariant measure of maximal entropy for $X_\eta$ equals $N_\ast((\nu_{\eta^*}\triangle\nu_\eta)\otimes B(1/2))$.
\end{enumerate}
Since the focus of this paper will be not only on invariant measures but also on thermodynamic formalism, let us recall here the basic relevant notions and related known results in the $\mathscr{B}$-free setup. Given a subshift $X$, a measure $\mu\in\mathcal{M}(X)$ is said to have the \emph{Gibbs property} if there exists a constant $c>0$ such that for any block $C$ of length $n$ appearing in $X$ we have $\mu(C)\geq c\cdot e^{-n h_{top}(X)}$ (see~\cite{We0}). The following was shown in~\cite{MR4289651}.
\begin{enumerate}
\setcounter{enumi}{6}
\item For any $\mathscr{B}\subseteq \N$ such that $\nu_\eta$ is not atomic, the measure of maximal entropy for $\widetilde{X}_\eta$ does not have the Gibbs property.
\end{enumerate}
(Roughly speaking, a non-atomic Mirsky measure corresponds to a set $\mathscr{B}\subseteq \mathbb{N}$ that cannot be ``reduced'' to a finite set, see~\cite{MR4289651} for the details.) What makes the above result interesting is that the unique maximal entropy measures for sofic systems do have the Gibbs property~\cite{We0}. Thus, $\widetilde{X}_\eta$ is rarely a sofic system.\footnote{We skip here the lengthy technical definition(s) of sofic systems and refer the reader to~\cite{Li-Ma, We0}} However, it can be approximated by a descending family of sofic systems $\widetilde{X}_{\eta_K}$, where $\eta_K=\mathbf{1}_{\mathcal{F}_{\mathscr{B}_K}}$ and $\mathscr{B}_K=\{b\in\mathscr{B} : b<K\}$. We have $\widetilde{X}_{\eta_K}\supseteq \widetilde{X}_{\eta_{K+1}}$ for each $K\geq 1$, $\widetilde{X}_\eta \subseteq \bigcap_{K\geq 1}\widetilde{X}_{\eta_K}$ and $\mathcal{M}(\widetilde{X}_\eta)=\mathcal{M}(\bigcap_{K\geq 1}\widetilde{X}_{{\eta}_K})$ (see, e.g., Theorem~30 in~\cite{MR4544150}). (As we will see in this paper -- and is implicit in~\cite{ADJ} -- if $\mathscr{B}\subseteq \N$ is such that $\eta^*$ is a regular Toeplitz sequence then $X_\eta$ can be approximated by sofic shifts in a similar manner to $\widetilde{X}_\eta$.)

Results 4 and 4' were our motivation to introduce the following definitions. Let $X\subseteq \{0,1\}^\Z$ be a subshift.

\begin{definition}
We say that $X$ is a \emph{measure-theoretically subordinate subshift} if there exists a measure $\nu\in\mathcal{M}(\{0,1\}^\Z,\sigma)$ such that 
\[
\mathcal{M}(X,\sigma)=\{M_\ast(\mu) : \mu\in\mathcal{M}(\{0,1\}^\Z\times \{0,1\}^\Z, \sigma\times \sigma),\ (\pi_1)_\ast(\mu)=\nu\}.
\] 
Each measure $\nu$ as above is called a \emph{base measure}.
\end{definition}
Thus, the set of invariant measures of a measure-theoretically subordinate subshift consists of all measures located ``under'' any base measure.
\begin{remark}
In~\cite{doktorat}, the above notion appeared under a different name -- such subshifts were called convolution systems. We believe that the new name is more appropriate and carries the intuition of the existence of a measure that ``dominates'' all other measures. In fact, this is a measure-theoretic counterpart of subordinate subshifts introduced in~\cite{KuKwiLi}, cf.\ Remark~\ref{rk:podshifty} and the preceding definitions.
\end{remark}
\begin{definition}
We say that $X$ is a \emph{sandwich measure-theoretically subordinate subshift} if there exists a measure $\rho\in\mathcal{M}(\{0,1\}^\Z\times \{0,1\}^\Z,\sigma\times \sigma)$ such that 
\[
\mathcal{M}(X,\sigma)=\{N_\ast(\mu) : \mu\in\mathcal{M}((\{0,1\}^\Z)^3,\sigma^{\times 3}),\ (\pi_{1,2})_\ast(\mu)=\rho\}.
\] 
Each measure $\rho$ as above is called a \emph{pre-base measure}. If additionally $\rho(\{(w,x): w\leq x \text{ coordinatewise}\})=1$ then $\rho$ is called a \emph{base measure}.
\end{definition}
Thus, the set of invariant measures of a sandwich measure-theoretically subordinate subshift consists of measures ``located between the two marginals of any base measure''.

Since the above classes seem to be new and not really studied before (apart from the extensive study of $\mathscr{B}$-free systems and their hereditary closures), the aims of this paper are as follows:
\begin{enumerate}[(A)]
\item Describe basic properties of (sandwich) measure-theoretically subordinate subshifts and the relations between these two classes.
\item Provide examples of (sandwich) measure-theoretically subordinate subshifts.
\item Make first steps towards understanding (sandwich) measure-theoretically subordinate subshifts from the perspective of thermodynamic formalism.
\item Gain a deeper understanding of the mechanisms behind the earlier results on the invariant measures on $\widetilde{X}_\eta$ and $X_\eta$.
\end{enumerate}
Last, but not least:
\begin{enumerate}[(A)]
\setcounter{enumi}{4}
\item Provide a probabilistic perspective on the subject.
\end{enumerate}
Let us now summarize the main results corresponding to goals (A)-(E) and present the structure of the paper.
\paragraph{(A)} We show that measure-theoretically subordinate subshifts are, in fact, a subclass of sandwich measure-theoretically subordinate subshifts: any measure-theoretically subordinate subshift with base measure $\nu$ is a sandwich measure-theoretically subordinate subshift with base measure $\delta_{\mathbf{0}}\otimes \nu$ (Proposition~\ref{ppp}). On the other hand, the hereditary closure of any sandwich measure-theoretically subordinate subshift is a measure-theoretically subordinate subshift (Proposition~\ref{zwiazek}). The base measure for measure-theoretically subordinate subshifts turns out to be unique and ergodic. Moreover, it is the unique measure of maximal density, i.e.\ maximizing the measure of the cylinder $\{x\in X : x_0=1\}$ (Proposition~\ref{pr:1}). For sandwich measure-theoretically subordinate subshifts the marginals of any base measure are uniquely determined and ergodic (and one can describe them in terms of maximizing the measure of some cylinder sets), see Corollary~\ref{cor:1}. However, we do not know whether the base measure itself is unique (it is easy to see by taking the ergodic decomposition that if it is unique then it is automatically also ergodic).
\paragraph{(B)} The most basic ``building blocks'' to construct more complicated (sandwich) measure-theoretically subordinate subshifts (including our motivating examples of $\mathscr{B}$-free origin) are defined as follows:
\begin{itemize}
\item The hereditary closure $\widetilde{X}=M(X\times \{0,1\}^\Z)$ of any uniquely ergodic subshift $X\subseteq \{0,1\}^\Z$ is a measure-theoretically subordinate subshift whose base measure is the unique invariant measure of $X$ (Example (B) in Section~\ref{przyk}).
\item Let $N\colon (\{0,1\}^\Z)^3\to \{0,1\}^\Z$ be given by $N(w,x,y)=(1-y)w+yx$. If $Z\subseteq \{0,1\}^\Z\times \{0,1\}^\Z$
is uniquely ergodic then $N(Z\times \{0,1\}^\Z)$ is a sandwich measure-theoretically subordinate subshift whose pre-base measure is the unique invariant measure of $Z$ (Example (B) in Section~\ref{ppp}, proved in Section~\ref{bicobisu}).
\end{itemize}
Moreover, there is a certain approximation procedure allowing us to obtain new (sandwich) measure-theoretically subordinate subshifts: under some technical assumptions, the intersection of a descending family of (sandwich) measure-theoretically subordinate subshifts remains a (sandwich) measure-theoretically subordinate subshift, see Section~\ref{ap1} and Section~\ref{ap2}.

\begin{remark}
Most of the results related to (A) and (B) are covered separately for measure-theoretically subordinate subshifts and for sandwich measure-theoretically subordinate subshifts (in Section~\ref{CS} and Section~\ref{BCS}, respectively). An important tool -- interesting also on its own -- is a charaterization of (sandwich) measure-theoretically subordinate subshifts in terms of generic / quasi-generic points (see Section~\ref{gepo} and Section~\ref{ggg}). This is used, e.g., in the proof of the fact that the hereditary closure of a sandwich measure-theoretically subordinate subshift is a measure-theoretically subordinate subshift and to show the uniqueness and the ergodicity of the marginals of base measure for sandwich measure-theoretically subordinate subshifts.
\end{remark}

\paragraph{(C)} Recall that for a subshift $X\subseteq \{0,1\}^\Z$ and a H\"older continuous potential $\varphi\colon X\to\mathbb{R}$, one defines the corresponding \emph{topological pressure} as 
\[
\mathcal{P}_{X,\varphi}:=\lim_{n\to\infty}\frac{1}{n}\log\sum_{A\in\mathcal{L}_n(X)}2^{\sup_{x\in A}\varphi^{(n)}(x)},
\]
where $\mathcal{L}_n(X)$ stands for the family of all $n$-blocks appearing in $X$ and $\varphi^{(n)}(x)=\varphi(x)+\varphi(\sigma x)+\ldots + \varphi(\sigma^{n-1}x)$, $n\geq 1$. If $\varphi(x)$ depends only on $x|_{[0,k]}$ for some $k\geq 0$, we say that it is \emph{local} and we write $\varphi(x_0\dots x_k)$  instead of $\varphi(x)$.  Recall also the classical variational principle for the topological pressure:
\[
\mathcal{P}_{X,\varphi}=\sup\{h(T,X,\mu)+\int \varphi\, d\mu : \mu\in\mathcal{M}(X,T)\}=\sup\{h(T,X,\mu)+\int \varphi\, d\mu : \mu\in\mathcal{M}^e(X,T)\}.
\] 
If $X$ is a subshift then this supremum is always attained by some measure $\mu$ which is then called an \emph{equilibrium state}. If this measure is unique, we say that there is a \emph{unique equilibrium state}. Note that one can see the topological pressure as a ``weighted version'' of the topological entropy (for $\varphi\equiv 0$  we clearly have $\mathcal{P}_{X,\varphi}=h_{top}(X)$ and in this case the equilibrium measures are just the measures of maximal entropy).

Given a subshift $X$ and a potential $\varphi$, the two basic tasks are to compute $\mathcal{P}_{X,\varphi}$ and to determine whether there is a unique equilibrium state. Although it is well understood how to compute the topological pressure for local potentials for sofic shifts~\cite{MR390180} and (as noted above) $\mathscr{B}$-free subshifts are approximated by sofic shifts, this is not very helpful in computing $\mathcal{P}_{\widetilde{X}_\eta,\varphi}$. The underlying reason is that the complexity of the sofic approximations, and thus the size certain corresponding matrices, grows very fast. Therefore, finding meaningful relationships, even at the level of the values of the topological pressure for sofic approximations, proves to be challenging. 

In Section~\ref{se:jedna}, we develop a framework that allows us to deal with the simplest case when $\varphi(x)=\varphi(x_0)$ and to provide formulas and prove the uniqueness of the corresponding equilibrium states for (sandwich) measure-theoretically subordinate subshifts with zero entropy base measures. 

Already for $\varphi(x)=\varphi(x_0,x_1)$ (i.e.\ when $\varphi$ depends on two consecutive coordinates) the whole picture gets much more complicated (unless $\varphi(0,0)+\varphi(1,1)=\varphi(1,0)+\varphi(0,1)$ when we can easily get back to the setting of potentials depending on $x_0$ only, see the very beginning of Section~\ref{NNNN}). In Section~\ref{NNNN}, after imposing extra assumptions on the subshift, we get meaningful results for any  $\varphi(x)=\varphi(x_0,x_1)$. Namely, based on ideas coming form the case of the full $0$-$1$-shift, we provide a formula for the topological pressure for sandwich measure-theoretically subordinate subshifts with periodic bounds, see Theorem~\ref{OKRESOWE} in Section~\ref{DDD} (measure-theoretically subordinate subshifts are covered in the succeeding Corollary~\ref{OKRESOWE1}). Then, in Section~\ref{APEB}, we apply these results to (sandwich) measure-theoretically subordinate subshifts that can be approximated by the ``periodic'' ones. These results are tailored to fit the $\mathscr{B}$-free setup which we cover in Section~\ref{apb}. Our main results in this section are as follows.
\begin{enumerate}[(i)]
\item We present formulas for the topological pressure $\mathcal{P}_{\widetilde{X}_\eta,\varphi}$ and (under some extra assumptions on $\mathscr{B}$) for $\mathcal{P}_{X_\eta,\varphi}$.
\item In the special case when $2\in\mathscr{B}$, we show how to compute $\mathcal{P}_{\widetilde{X}_\eta,\varphi}$ for $\varphi(x)=\varphi(x_0,x_1,x_2,x_3)$. The main idea is to study a certain induced transformation and reduce the situtation to that of a potential depending on two coordinates (for a new $\mathscr{B}$-free system related to the original one). Note that Lin and Chen in~\cite{LC} were only able to cover the case when $2\in\mathscr{B}$ and $\varphi(x)=\varphi(x_0,x_1)$.
\item Last but not least, we study the asymptotics of terms in our formula(s) in the Erd\"os case (i.e.\ when $\mathscr{B}$ is pairwise coprime, with summable series of reciprocals).
\end{enumerate}
\paragraph{(D)} This goal is clearly achieved via the results on (sandwich) measure-theoretically subordinate subshifts. 
Let us note here that the intrinsic ergodicity of $\widetilde{X}_\eta$ was proved before the description of invariant measures was obtained (the same goes for $X_\eta$). We show that, in fact, the first of these results is a consequence of the latter one: any (sandwich) measure-theoretically subordinate subshift with a zero entropy base measure is necessarily intrinsically ergodic (see Theorem~\ref{toto} and Corollary~\ref{GGG}). For a detailed discussion see Remark~\ref{commentsB} and Remark~\ref{commentsB1}.

\paragraph{(E)} We finish the paper with an appendix devoted to a probabilistic approach to dynamical systems under consideration as we believe that (some of) our results might be of an interest to people working in this area. Since the results are not stronger than the ones obtained with purely ergodic tools, we will give only a sample of results and -- for simplicity -- stick to measure-theoretically subordinate subshifts only.

\section{Definitions and notation}
\subsection{Densities}

Given a subset $A\subseteq \N$, we define its natural \emph{lower density} and \emph{upper density} in the following way:
\[
\underline{d}(A):=\liminf_{n\to\infty}\frac{1}{n}|[1,n]\cap A| \leq \limsup_{n\to\infty}\frac{1}{n}|[1,n]\cap A|=:\overline{d}(A).
\]
If the two above quantities are equal, we speak of the \emph{(natural) density} of $A$ and denote their commmon value by $d(A)$. The \emph{lower logarithmic density} and \emph{upper logarithmic density} of $A\subseteq \N$ are defined as follows:
\[
\underline{\delta}(A):=\liminf_{n\to\infty}\frac{1}{\log n}\sum_{k\in A,k\leq n}\frac{1}{k} \leq\limsup_{n\to\infty}\frac{1}{\log n}\sum_{k\in A,k\leq n}\frac{1}{k}=: \overline{\delta}(A).
\]
If the two above quantities are equal, we speak of the \emph{logarithmic density} of $A$ and denote their commmon value by $\delta(A)$. For $A\subseteq \Z$ by the  (lower / upper) density / logarithmic density we mean the corresponding quantity for $A\cap \N$. For any $A\subseteq \Z$, we have
\(
\underline{d}(A) \leq \underline{\delta}(A) \leq \overline{\delta}(A) \leq \overline{d}(A).
\)

\subsection{Dynamical systems}
\paragraph{Topological and measure-theoretic dynamics}
A \emph{topological dynamical system} is a pair $(X,T)$, where $T$ is a homeomorphism of a compact metric space $X$. We equip $X$ with the sigma-algebra of Borel subsets and denote the set of probability Borel $T$-invariant measures on $X$ by $\mathcal{M}(X,T)$ or just $\mathcal{M}(X)$ if $T$ is clear from the context. We say that $\mu$ is \emph{ergodic} if $\mu(A\triangle T^{-1}A)=0$ implies $\mu(A)\in \{0,1\}$. The subset of $\mathcal{M}(X,T)$ consisting of ergodic measures is denoted by $\mathcal{M}^e(X,T)$ or $\mathcal{M}^e(X)$. For each $\mu\in\mathcal{M}(X,T)$, the triple $(X,T,\mu)$ is a \emph{measure-theoretic dynamical system}. The \emph{topological support} of $\mu\in\mathcal{M}(X)$, denoted by $\text{supp }\mu$, is the smallest closed subset of $X$ with $\mu(X)=1$. 

Given two topological dynamical systems $(X,T)$ and $(Y,S)$, we say that $(Y,S)$ is a \emph{factor} of $(X,T)$ whenever there exists a continuous map $\pi\colon X\to Y$ such that $\pi\circ T = S\circ \pi$. A measure-theoretic dynamical system $(Y,S,\nu)$ is factor of $(X,T,\mu)$ whenever there exists a measurable map $\pi\colon X\to Y$ such that $\pi\circ T = S\circ \pi$ $\mu$-a.e.\ and the image $\pi_\ast (\mu)$ of $\mu$ via $\pi$ equals $\nu$.

\paragraph{Subshifts}
We will often deal with subshifts, i.e.\ $X\subseteq \mathcal{A}^\Z$, where $\mathcal{A}$ is a finite alphabet (usually, we will have $\mathcal{A}=\{0,1\}^\Z$), such that $X$ is closed and invariant under the \emph{left shift} $\sigma\colon \mathcal{A}^\Z\to\mathcal{A}^\Z$. Given $a\in\mathcal{A}^\Z$, we denote by $X_a$ the orbit closure of $a$ under $\sigma$, i.e.\ $X_a=\overline{\{\sigma^n a : n\in\mathbb{Z}\}}$. (If $a\in \mathcal{A}^\N$ then $X_a$ is defined as the subset of all bi-infinite sequences $x$ over the alphabet $\mathcal{A}$ such that any finite block that appears in $x$, appears also in $a$.)
We will use the following notation for \emph{cylinder sets} in a subshift $X\subseteq \{0,1\}^\Z$: given $A\subseteq \Z$, we set
\(C_A:=\{x\in X : x_i=1 \text{ for }i\in A\}\). Moreover, for any $a,\dots, a_{n-1}\in\mathcal{A}$, $a_0 \dots a_{n-1}:=\{x\in X : x_i=a_i \text{ for }i=0,\dots, n-1\}$. In particular, we have $C_{\{0\}}=1$. Moreover, we will use the following convention for Cartesian products of cylinders: if $C_0,C_1\in \mathcal{A}^\ell$ for some $\ell\geq 1$ then instead of $C_0\times C_1\subseteq \mathcal{A}^\ell\times \mathcal{A}^\ell$ we will write $\begin{matrix}C_0 \\ C_1\end{matrix}$. For $a\in\mathcal{A}$, we denote by $\mathbf{a}$ the two-sided sequence whose all entries are equal to $a$. 

\paragraph{Heredity}A subshift $X\subseteq \{0,1\}^\Z$ is called \emph{hereditary} if it is closed under taking sequences in $\{0,1\}^\Z$ that are smaller or equal coordinatewise that some member of $X$:
\[
(x\in X \text{ and }z\in \{0,1\}^\Z, z\leq x) \implies z\in X.
\]
The \emph{hereditary closure} $\widetilde{X}$ of $X$ is the smallest hereditary subshift containing $X$, i.e.\ $\widetilde{X}=M(X\times \{0,1\}^\Z)$. Clearly, 
\begin{equation}\label{vv}
X\text{ is hereditary if and only if }X=M(X\times \{0,1\}^\Z).
\end{equation}

Another closely related notion of so-called subordinate subshift was introduced in~\cite{KuKwiLi}. We say that a subshift $X\subseteq \{0,1\}^\Z$ is \emph{subordinate} if 
\[
X=\overline{\{\sigma^ny :y\leq x, n\in\Z\}}
\]
for some $x\in X$. More precisely, we then say that $X$ is a \emph{subordinate system under $x$}. Note that
\[
X\text{ is subordinate under }x\text{ if and only if }X=\widetilde{X}_x=M(X_x\times \{0,1\}^\Z)
\]
(indeed, taking the closures in the product topology commutes with the coordinatewise multiplication $M$, the same applies to any finite code). Thus, any subordinate subshift is hereditary.

Let us now introduce ``two-sided counterparts'' of the above notions. 
\begin{definition}
We say that $X\subseteq \{0,1\}^\Z$ is a \emph{sandwich hereditary subshift} if $X=N(Z\times \{0,1\}^\Z)$ for some subshift $Z\subseteq\{0,1\}^\Z\times \{0,1\}^\Z$, cf.~\eqref{vv}. Moreover,
we say that $ X $ is a \emph{sandwich subordinate subshift} if there exists a pair $(w,x)$ with $w\leq x$ such that
\begin{equation*}
 X =\overline{\{\sigma^ny: w\leq y\leq x\}}.
\end{equation*}
More precisely, we say then that $ X $ is a \emph{sandwich subordinate subshift between $w$ and $x$}.
\end{definition} 
Note that, by the comment from the preceding paragraph on the finite codes, the following are equivalent:
\begin{itemize}
\item $X$ is a sandwich subordinate subshift between $w$ and $x$,
\item $X=N(X_{(w,x)}\times \{0,1\}^\Z)$,
\end{itemize}
where by $X_{(w,x)}$ we mean the orbit closure of the pair $(w,x)$ under $\sigma\times\sigma$. Thus, any sandwich subordinate subshift is a sandwich hereditary subshift. Sometimes for $w,x\in \{0,1\}^\Z$, with $w\leq x$, we will write
\(
[w,x] \text{ instead of }\{\sigma^ny :w\leq y\leq x,n\in\Z\}.
\)
In particular, $\overline{[\mathbf{0},x]}$ stands for $\widetilde{X}_x$.

\begin{remark}\label{rk:podshifty}
Let us summarize here our nomenclature. Alltogether, we have defined 6 types of subshifts:
\begin{itemize}
\item (sandwich) measure-theoretically subordinate subshifts,
\item (sandwich) subordinate subshifts,
\item (sandwich) hereditary subshifts.
\end{itemize}
It is clear that (sandwich) measure-theoretically subordinate subshifts are a measure-theoretic counterpart of (sandwich) subordinate subshifts: instead of ``bounds'' given by sequences, we have ``bounds'' given by invariant measures. 
\end{remark}

\paragraph{Generic points}
Given a topological dynamical system $(X,T)$, we say that $x\in X$ is \emph{generic} for $\mu\in\mathcal{M}(X.T)$ if $\frac{1}{N}\sum_{n\leq N}\delta_{T^nx}\to\mu$ in the weak topology. If the convergence takes place along a subsequence $(N_i)$, we say that $x$ is \emph{quasi-generic} for $\mu$ along $(N_i)$. For subshifts, $x$ is (quasi-)generic for $\mu$ if the frequencies of blocks (over the given alphabet) on $x$ exist and are equal to the values of $\mu$ of the corresponding cylinder sets.

\paragraph{Minimality and proximality}
A topological dynamical system $(X,T)$ is \emph{minimal} if there are no closed non-empty proper subsets of $X$ invariant under $T$. If $(X,T)$ has a unique minimal subsystem, we say that it is \emph{essentially minimal}. We say that $(X,T)$ is \emph{proximal} if for any points $x,y\in X$ we have $\liminf_{n\to\infty}d(T^nx,T^ny)=0$.

\paragraph{Toeplitz subshifts}
A sequence $x\in\mathcal{A}^\Z$ is called \emph{Toeplitz} if for any $k\in\Z$ there exists $s\in \N$ such that $x|_{k+s\Z}$ is constant. The orbit closure $X_x$ of a Toeplitz sequence is called a \emph{Toeplitz subshift}. Any Toeplitz subshift is minimal~\cite{MR756807}.
A Toeplitz sequence $x$ is called \emph{regular} if $\lim_{r\to\infty}d\left(\{k\in \Z : x|_{k+s\Z} \text{ is constant for some } 1\leq s\leq r\} \right)=1$. (This is equivalent to the usual definition via so-called period structure which can be found, e.g.,~in~\cite{MR2180227}.) The remaining Toeplitz sequences are called~\emph{irregular}. For more information on Toeplitz sequences, we refer the reader, e.g., to~\cite{MR2180227}.

\paragraph{Topological entropy}
The \emph{topological entropy} of $(X,T)$ will be denoted by $h(X,T)$ (or $h(X)$ if the map is clear from the context). In case of subshifts, $h(X,T)=\lim_{n\to\infty}\frac{1}{n}\log p_n$, where $p_n$ stands for the number of distinct blocks of length $n$ ($n$-blocks) appearing in $X$. For the general definition, we refer the reader, e.g., to~\cite{MR2809170}. The \emph{measure-theoretic entropy} of $(X,T,\mu)$ will be denoted by $h(X,T,\mu)$ (or just $h(T,\mu)$ or even $h(\mu)$); for the definition also see~\cite{MR2809170}. There is the following \emph{variational principle}: 
\begin{equation}\label{VP1}
h(X,T)=\sup\{h(X,T,\mu) : \mu\in\mathcal{M}(X,T)\}=\sup\{h(X,T,\mu) : \mu\in\mathcal{M}^e(X,T)\}.
\end{equation}
If $X$ is a subshift, there is always at least one measure $\mu$ realizing this supremum. If this measure is unique, we say that $(X,T)$ is \emph{intrinsically ergodic}.

\paragraph{Topological pressure}
Given a \emph{potential} (i.e.\ a measurable function) $\varphi\colon X\to \R$, the corresponding \emph{topological pressure} of $(X,T)$ will be denoted by $\mathcal{P}_{X,\varphi}$. Again, we skip the lengthy general definition (see, e.g.,~\cite{MR2656475}) and recall that for the subshift $X$ and the potential $\varphi$, the topological pressure is defined as follows: 
\[
\mathcal{P}_{X,\varphi}=\lim_{n\to\infty}\frac{1}{n}\log\sum_{A\in\mathcal{L}_n(X)}2^{\sup_{x\in A}\varphi^{(n)}(x)},
\] 
where $\mathcal{L}_n(X)$ stands for the family of all $n$-blocks appearing in $X$ and $\varphi^{(n)}(x)=\varphi(x)+\varphi(\sigma x)+\ldots + \varphi(\sigma^{n-1}x)$, $n\geq 1$. If $\varphi(x)$ depends only on $x|_{[0,k]}$ for some $k\geq 0$, we will write $\varphi(x_0\dots x_k)$  instead of $\varphi(x)$.

Recall also the variational principle for the topological pressure:
\[
\mathcal{P}_{X,\varphi}=\sup\{h(T,X,\mu)+\int \varphi\, d\mu : \mu\in\mathcal{M}(X,T)\}=\sup\{h(T,X,\mu)+\int \varphi\, d\mu : \mu\in\mathcal{M}^e(X,T)\}.
\] 
If $X$ is a subshift then this supremum is always attained by some measure $\mu$ which is then called an \emph{equilibrium state}. If this measure is unique, we say that there is a \emph{unique equilibrium state}.

\paragraph{Induced maps}
Given a measure-theoretic dynamical system $(X,T,\mu)$ and a measurable subset $A\subseteq X$ with $\mu(A)>0$, we define the corresponding \emph{induced map} $T_A$, acting on $A$ (with the $\sigma$-algebra of measurable subsets inherited from $X$) and defined by $T_Ax=T^{n_A(x)}x$, where $n_A(x)=\min\{n\geq 1 : T^nx\in A\}$. Because of the Poincar\'e recurrence theorem, this map is well-defined and it is measure-preserving with respect to $\mu_A$ given by $\mu_A(C)=\frac{\mu (C)}{\mu(A)}$ for any measurable $C\subseteq A$. If additionally $T$ is ergodic then $h(X,T,\mu)=\mu(A)\cdot h(A,T_A,\mu_A)$ (this is the so-called Abramov's formula, see, e.g.,~\cite{petersen_1983}, Section 6.1.C). Moreover, if $(X,T)$ is a topological dynamical system and $A\subseteq X$ is measurable and such that $\mu(A)>0$ for any $\mu\in\mathcal{M}(X,T)$ then there is 1-1-correspondence between the elements of $\mathcal{M}(X,T)$ and $\mathcal{M}(A,T_A)$.

\paragraph{Joinings}
Given two dynamical systems $(X,T,\mu)$, $(Y,S,\nu)$, we say that a measure $\rho$ on the product space $X\times Y$ (equipped with the product $\sigma$-algebra) is a \emph{joining} of $T$ and $S$ if $\rho$ is invariant under $T\times S$ and its projection onto the first and second coordinate equals $\mu$ and $\nu$ respectively. We then sometimes write $\rho=\mu\vee\nu$. The product measure $\mu\otimes \nu$ is always a joining. Moreover, if $(Y,S,\nu)=(X,T,\mu)$, we have the so-called \emph{diagonal (self-)joining} $\mu\triangle\mu$ given by $\mu\triangle \mu(A\times B)=\mu(A\cap B)$ for measurable $A,B\subseteq X$. Last but not least, for $R\colon (X,T,\mu)\to(Y,S,\nu)$, we will denote by $\triangle_R$ the graph joining of $(S,Y,\nu)$ and $(T,X,\mu)$ given by $\triangle_R(A\times B)=\mu(R^{-1}A\cap B)$ for any measurable $A\subseteq Y$, $B\subseteq X$. (Note that usually $\triangle_R$ stands for the joining of $T$ and $S$ where the coordinates are written in the opposite order.)

\subsection{Dynamical diagrams}
A certain category theory language related to dynamical systems and factoring maps between them was introduced in~\cite{ADJ}. Let us give a short summary here, as it will be convenient also in the present paper:
\begin{itemize}
\item
an \emph{object} is a triple of the form $(X,T,\mathcal{M}_X)$, where $(X,T)$ is a topological dynamical system and $\emptyset\neq\mathcal{M}_X\subseteq \mathcal{M}(X)$; if $\mathcal{M}_X=\mathcal{M}(X)$, we skip it and write $(X,T)$ instead of $(X,T,\mathcal{M}(X))$;
\item
a \emph{morphism} from $(X,T,\mathcal{M}_X)$ to $(Y,S,\mathcal{M}_Y)$ is a map $f\colon (X,T,\mathcal{M}_X)\to (Y,S,\mathcal{M}_Y)$, i.e.\ $f\colon X_0\to Y$, where $X_0\subseteq X$ is $T$-invariant with $\mu(X_0)=1$ for any $\mu\in\mathcal{M}_X$, $f_\ast(\mathcal{M}_X)\subseteq \mathcal{M}_Y$ and $S\circ f=f\circ T$ on $X_0$.
\end{itemize}
Any graph whose vertices are the above-defined objects and arrows denote morphisms is called a \emph{dynamical diagram}. We identify two morphisms $f,g\colon (X,T,\mathcal{M}_X)\to (Y,S,\mathcal{M}_Y)$, whenever $f$ and $g$ agree on a subset $X_0\subseteq X$ that is of full measure for any measure $\mu\in\mathcal{M}_X$. We define the \emph{composition} of morphisms $f\colon (X,T,\mathcal{M}_X)\to (Y,S,\mathcal{M}_Y)$ and $g\colon (Y,S,\mathcal{M}_Y)\to (Z,R,\mathcal{M}_Z)$ as the composition $g\circ f\colon (X,T,\mathcal{M}_X)\to (Z,R,\mathcal{M}_Z)$.

\begin{definition}
We will say that a dynamical diagram \emph{commutes} if for any choice of objects $(X,T,\mathcal{M}_X)$ and $(Y,S,\mathcal{M}_Y)$ the composition of morphisms along any path connecting $(X,T,\mathcal{M}_X)$ with $(Y,S,\mathcal{M}_Y)$ does not depend on the choice of the path, {including the trivial (zero) path}. 
\end{definition}
\begin{definition}
We will say that a morphism $f\colon (X,T,\mathcal{M}_X) \to (Y,S,\mathcal{M}_Y)$ is \emph{surjective} if $f_\ast(\mathcal{M}_X)=\mathcal{M}_Y$. We will say that a dynamical diagram is \emph{surjective} if every morphism in this diagram is surjective. If $f\colon(X,T,\mathcal{M}_X)\to(Y,S,\mathcal{M}_Y)$ is surjective, we will sometimes just say that the morphism $f$ is surjective. Notice that this notion is not the same as the surjectivity of the map $f\colon X\to Y$.
\end{definition}

\subsection{$\mathscr{B}$-free systems}\label{Bintro}
Given $\mathscr{B}\subseteq\mathbb{N}$, consider the corresponding \emph{set of multiples} $\mathcal{M}_\mathscr{B}=\bigcup_{b\in\mathscr{B}}b\Z$ and its complement $\mathcal{F}_\mathscr{B}=\mathbb{Z}\setminus \mathcal{M}_\mathscr{B}$, i.e.\ the set of \emph{$\mathscr{B}$-free integers}. We tacitly assume that $\mathscr{B}$ is primitive: if $b\divides b'$ for $b,b'\in\mathscr{B}$ then $b=b'$. For any $\mathscr{B}\subseteq \N$, there exists a primitive subset $\mathscr{B}^{prim}\subseteq \mathscr{B}$ with $\mathcal{M}_\mathscr{B}=\mathcal{M}_{\mathscr{B}^{prim}}$. 
\paragraph{Number-theoretic origins}
Sets of multiples were studied from the number-theoretic point of view already in the 1930's (for references see, e.g.,~\cite{MR1414678}). Natural examples of such sets include, e.g., the set $\mathbf{A}$ of \emph{abundant numbers} (recall that $n\in\N$ is abundant if the sum of its divisors exceeds $2n$; the set of all such numbers is closed under taking multiples and, thus, $\mathbf{A}$ is a set of multiples). In~\cite{BH} Bessel-Hagen asked whether the natural density $d(\mathbf{A})$ of $\mathbf{A}$ exists. The answer to this question is positive~\cite{abundantes,chowla,MR1574879}. A natural question arose, whether the same result holds for any set of multiples and Besicovitch~\cite{MR1512943} constructed a whole class of counterexamples. However - and this is one of the most striking results in this area from that time - we have the following.
\begin{theorem}[Davenport and Erd\"os~\cite{DE}]\label{daverd}
For any $\mathscr{B}\subseteq \N$, the logarithmic density of $\mathcal{M}_\mathscr{B}$ exists. In fact,
\[
\delta(\mathcal{M}_\mathscr{B})=\underline{d}(\mathcal{M}_\mathscr{B})=\lim_{K\to\infty}d(\mathcal{M}_{\mathscr{B}_K}),
\]
where $\mathscr{B}_K=\{b\in\mathscr{B} : b<K\}$, $K\geq 1$.
\end{theorem}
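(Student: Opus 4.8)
The plan is to show that the four quantities relevant here, $\underline d(\mathcal M_{\mathscr B})\le\underline\delta(\mathcal M_{\mathscr B})\le\overline\delta(\mathcal M_{\mathscr B})\le\overline d(\mathcal M_{\mathscr B})$, all collapse to the single value $L:=\lim_{K\to\infty}d(\mathcal M_{\mathscr B_K})$. This limit exists by monotonicity: $\mathscr B_K\subseteq\mathscr B_{K+1}$ forces $\mathcal M_{\mathscr B_K}\subseteq\mathcal M_{\mathscr B_{K+1}}$, so the densities $d(\mathcal M_{\mathscr B_K})$ (which exist, each $\mathcal M_{\mathscr B_K}$ being a finite union of arithmetic progressions, hence periodic) form a bounded nondecreasing sequence. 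For periodic sets natural and logarithmic density coincide, so also $\delta(\mathcal M_{\mathscr B_K})=d(\mathcal M_{\mathscr B_K})$.

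The easy half is the lower bound. Since $\mathcal M_{\mathscr B}\supseteq\mathcal M_{\mathscr B_K}$ for every $K$, we get $\underline d(\mathcal M_{\mathscr B})\ge d(\mathcal M_{\mathscr B_K})$, and letting $K\to\infty$ gives $\underline d(\mathcal M_{\mathscr B})\ge L$; with the general inequality $\underline d\le\underline\delta$ recalled in the excerpt, also $\underline\delta(\mathcal M_{\mathscr B})\ge L$. Everything therefore reduces to the single nontrivial inequality $\overline\delta(\mathcal M_{\mathscr B})\le L$: once this holds, the chain $L\le\underline d\le\underline\delta\le\overline\delta\le L$ forces all four to equal $L$, so the logarithmic density exists and equals $\underline d(\mathcal M_{\mathscr B})=L$ (note that $\overline d$ need not equal $L$, and the theorem does not claim it does).

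For the upper bound I would exploit the multiplicative self-similarity of a set of multiples by partitioning according to the least $\mathscr B$-divisor: for $n\in\mathcal M_{\mathscr B}$ set $\beta(n):=\min\{b\in\mathscr B:b\mid n\}$. Each level set $\{n:\beta(n)=b\}=b\Z\setminus\bigcup_{b'\in\mathscr B_b}b'\Z$ (with $\mathscr B_b=\{b'\in\mathscr B:b'<b\}$ finite) is periodic, hence has a genuine natural density $\alpha_b$; these sets are disjoint and $\bigsqcup_{b<K}\{\beta=b\}=\mathcal M_{\mathscr B_K}$, so $\sum_{b<K}\alpha_b=d(\mathcal M_{\mathscr B_K})$ and thus $\sum_{b\in\mathscr B}\alpha_b=L$. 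Writing $n=bk$ on each level set, the logarithmic sum splits as $\sum_{n\le N,\,n\in\mathcal M_{\mathscr B}}1/n=\sum_{b\le N}b^{-1}\sum_{k\le N/b,\ \beta(bk)=b}k^{-1}$, the inner sum running over the periodic set $\{k:bk\in\mathcal F_{\mathscr B_b}\}$. Approximating each inner sum by its logarithmic density produces the main term $\sum_{b\le N}\alpha_b\log(N/b)$, whose normalisation $\tfrac1{\log N}\sum_{b\le N}\alpha_b\log(N/b)$ tends to $\sum_b\alpha_b=L$; the factor $\log(N/b)=\log N-\log b$ costs only $\tfrac1{\log N}\sum_{b\le N}\alpha_b\log b\to0$, as one checks by splitting at $b\le N^{\varepsilon}$ and using $\sum_b\alpha_b<\infty$.

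The main obstacle is the remainder. Equivalently, with $R_K:=\mathcal M_{\mathscr B}\setminus\mathcal M_{\mathscr B_K}$, subadditivity of the upper logarithmic density yields the exact identity $\overline\delta(\mathcal M_{\mathscr B})=d(\mathcal M_{\mathscr B_K})+\overline\delta(R_K)$, so the whole theorem is equivalent to $\overline\delta(R_K)\to0$ as $K\to\infty$. The delicate point is that the corresponding natural-density statement $\overline d(R_K)\to0$ is \emph{false} in general — this is precisely Besicovitch's phenomenon that $\overline d(\mathcal M_{\mathscr B})$ may strictly exceed $\underline d(\mathcal M_{\mathscr B})$, and note that $R_K\downarrow\emptyset$ gives nothing since upper density is not continuous from above. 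Thus the argument cannot avoid the logarithmic averaging and must use the structure of $R_K$ as the multiples of $\{b\in\mathscr B:b\ge K\}$ sieved to be free of $\mathscr B_K$. When $\sum_{b\in\mathscr B}1/b<\infty$ (the Erd\H{o}s case) even the crude union bound $\overline d(R_K)\le\sum_{b\ge K}1/b\to0$ suffices; the content of Davenport and Erd\H{o}s is to replace this by a logarithmically averaged estimate valid in the general case, where the per-level periodic approximation errors, individually bounded but possibly large, must be shown to sum to $o(\log N)$. This tail estimate, rather than the bookkeeping above, is where the real work lies.
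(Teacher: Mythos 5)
Your reductions are all correct as far as they go: $L=\lim_{K\to\infty}d(\mathcal{M}_{\mathscr{B}_K})$ exists by monotonicity, $\underline{d}(\mathcal{M}_{\mathscr{B}})\geq L$ is immediate, the identity $\overline{\delta}(\mathcal{M}_{\mathscr{B}})=d(\mathcal{M}_{\mathscr{B}_K})+\overline{\delta}(R_K)$ is exact because the periodic set $\mathcal{M}_{\mathscr{B}_K}$ has a genuine logarithmic density, and the theorem is indeed equivalent to $\overline{\delta}(R_K)\to 0$; your warning that the natural-density analogue $\overline{d}(R_K)\to 0$ fails in general, by Besicovitch's phenomenon, is also right. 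But the submission stops precisely at the step that \emph{is} the theorem. Nothing in it proves $\overline{\delta}(R_K)\to 0$, nor the equivalent assertion that the errors made when each inner sum $\sum_{k\leq N/b,\ bk\in\mathcal{F}_{\mathscr{B}_b}}1/k$ is replaced by $d_b\log(N/b)$ add up to $o(\log N)$ over $b\in\mathscr{B}$, $b\leq N$. This is not routine bookkeeping: the approximation error for a periodic set carries a constant that can be of the order of the logarithm of its period, the period of $\{k:\ bk\in\mathcal{F}_{\mathscr{B}_b}\}$ is of order $\lcm(\mathscr{B}_b)$ and grows superexponentially in the number of elements of $\mathscr{B}$ below $b$, and $\mathscr{B}$ may satisfy $\sum_{b\in\mathscr{B}}1/b=\infty$, so neither a uniform per-level constant nor a union bound salvages the tail. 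Conceding that this is ``where the real work lies'' is an accurate self-diagnosis, but it means you have produced a correct reduction of the problem together with the announcement that the theorem is true --- a missing idea, not a proof.

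As for the comparison you were asked to survive: there is no internal proof to compare with, since Theorem~\ref{daverd} is quoted from \cite{DE} and used in the paper as a black box (it is exactly what makes the approximating sequence $(\eta_K)$ good in Remark~\ref{commentsB}, cf.~\eqref{isgood}). The known proofs supply precisely the estimate you left open, and not by sharpening the per-level periodic approximations: Davenport and Erd\"os's later argument is of ergodic-theoretic flavour, and its modern dynamical incarnation is visible in this very paper in the quasi-genericity of $\eta$ for the Mirsky measure along sequences realizing the lower density of $\mathcal{M}_{\mathscr{B}}$ (point (B) of the Mirsky-measure paragraph in Section~\ref{Bintro}, citing \cite[Theorem 4.1]{MR3803141}). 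To finish, you must either import such an argument or find a genuinely new bound on $\overline{\delta}(R_K)$; the least-$\mathscr{B}$-divisor decomposition, which you set up correctly, cannot deliver it on its own.
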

Another important example of number-theoretic origin is related to the M\"obius function $\mu\colon\N\to\{-1,0,1\}$ (recall that $\mu(1)=1$, $\mu(p_1\cdot\ldots p_n)=(-1)^n$ if $p_1,\dots,p_n$ are distinct primes and $\mu(n)=0$ for all other values of $n$). Namely, if $\mathscr{B}$ be the set of squares of all primes then $\mathcal{F}_\mathscr{B}$ is the so-called set of \emph{square-free integers} and the characteristic function of its restriction to $\N$ is nothing but the square of the M\"obius function. The density of $\mathcal{F}_\mathscr{B}$, which we will often denote by $d$, in this case equals $6/\pi^2$. In fact, the frequencies of all 0-1 blocks on $\mu^2$ exist~\cite{MR28334,MR21566}.

\paragraph{Fundamental classes of $\mathscr{B}$-free systems}
Let us list here some important classes of sets~$\mathscr{B}$. Following~\cite{MR1414678}, we say that $\mathscr{B}\subseteq \N$ is:
\begin{itemize}
\item \emph{Besicovitch} if $d(\mathcal{M}_\mathscr{B})$ exists,
\item \emph{Erd\"os} if $\mathscr{B}$ is infinite, pairwise coprime and $\sum_{b\in\mathscr{B}}1/b<\infty$,
\item \emph{Behrend} if $\delta(\mathcal{M}_\mathscr{B})=1$,
\item \emph{taut} if $\delta(\mathcal{M}_{\mathscr{B}\setminus \{b\}})<\delta(\mathcal{M}_\mathscr{B})$ for any $b\in\mathscr{B}$.
\end{itemize}
The above classes appear already in the number-theoretic literature (see, e.g.,~\cite{MR1414678}), as we will see later, they are also important from the point of view of dynamics. We have the following basic relations: 
\begin{itemize}
\item if~$\mathscr{B}$ is Erd\"os or Behrend then $\mathscr{B}$ is Besicovitch,
\item $\mathscr{B}$ is taut iff there is no $c\geq 1$ and $\mathcal{A}$ being a Behrend set such that $c\mathcal{A}\subseteq \mathscr{B}$. %As we will see later on, the above notions are also important from the dynamical point of view.
\end{itemize}

\paragraph{Dynamics comes into play} 

Sarnak in 2010~\cite{PS} suggested to study the orbit closure $X_{\mu^2}$ of $\mu^2$ under the left shift $\sigma$. This subshift is a topological factor of $X_\mu$ (via $\pi(x)=|x|$) and, thus, the knowledge about this system gives us some knowledge about $\mu$ itself. Sarnak formulated a certain ``program'' for $\mu^2$. Let us recall here samples of the results for $\mu^2$ related to our present work:
\begin{itemize}
\item $\mu^2$ is a generic point for some measure on $\{0,1\}^\Z$;
we call this measure the \emph{Mirsky measure}~\cite{MR28334,MR21566} and denote it by $\nu_{\mu^2}$; the corresponding measure theoretic dynamical system is ergodic and of zero entropy,
\item $X_{\mu^2}=\{x\in \{0,1\}^\Z : |\text{supp }x\bmod p^2|<p^2 \text{ for each prime }p\}$; the latter subshift is the so-called \emph{admissible subshift},
\item $X_{\mu^2}$ is proximal and the singleton $\{\boldsymbol{0}\}$ is the unique minimal subset of $X_{\mu^2}$,
\item $h_{top}(X_{\mu^2},\sigma)=\frac{6}{\pi^2}$.
\end{itemize}
In~\cite{MR3428961}, these ideas were extended to the Erd\"os case and (among other results), the analogues of the above statements were proved for any $\mathscr{B}$ that is Erd\"os. A systematic study of $\mathscr{B}$-free systems was triggered by a general question of Boshernitzan during a conference in Toru\'n (2014), whether the same is true without the extra assumptions on $\mathscr{B}$. It turned out not to be the case and there is a whole variety of behaviours that can be observed within $\mathscr{B}$-free systems. The first results on this subject can be found in~\cite{MR3803141}, see also other works mentioned below.

\paragraph{Three subshifts}
Let $\eta:=\mathbf{1}_{\mathcal{F}_\mathscr{B}}\in\{0,1\}^\Z$ and consider:
\begin{itemize}
\item the corresponding \emph{$\mathscr{B}$-free subshift}, i.e.\ $X_\eta=\overline{\{\sigma^n \eta : n\in\Z\}}$, 
\item the \emph{hereditary closure} $\widetilde{X}_\eta=M(X_\eta\times \{0,1\}^\Z)=\{x\in \{0,1\}^\Z : x\leq y \text{ for some }y\in X_\eta\}$ of $X_\eta$,
\item the \emph{$\mathscr{B}$-admissible subshift} $X_\mathscr{B}=\{x\in \{0,1\}^\Z : |\text{supp }x \bmod b|<b \text{ for each }b\in\mathscr{B}\}$.
\end{itemize}
We have
\(
X_\eta\subseteq \widetilde{X}_\eta\subseteq X_\mathscr{B}.
\)
Both inclusions can be strict, see the relevant examples in~\cite{MR3803141}; if $\mathscr{B}$~is Erd\"os then all three subshifts are equal~\cite{MR3428961}. The number-theoretic properties of $\mathscr{B}$ and the dynamical properties of $X_\eta$ are often intertwined. Apart from the characterization of the minimality and the proximality of $X_\eta$ (see Section~\ref{se:intro}), we have, e.g., the following (see~\cite{AKL}):
\begin{itemize}
%\item $X_\eta$ is minimal iff $\mathscr{B}$ does not contain a subset of the form $d\mathcal{A}$, where $d\in\N$ and $\mathcal{A}$ is infinite pairwise coprime~\cite{MR3803141},
%\item $X_\eta$ is proximal iff $\mathscr{B}$ contains an infinite pairwise coprime subset~\cite{MR3803141},
\item $\mathscr{B}$ is Erd\"os iff $X_\eta=X_\mathscr{B}$ and $h(X_\eta)>0$,
\item $\mathscr{B}$ is Behrend iff $\mathscr{B}$ contains an infinite pairwise coprime subset and $h(X_\eta)=0$.
\end{itemize}

\paragraph{Mirsky measure}
A central role in the theory of $\mathscr{B}$-free systems is played by the \emph{Mirsky measure}. There are several equivalent ways to define it.
\begin{enumerate}
\item[(A)] We set $\nu_\eta:=\varphi_\ast({m_H})$, where $H:=\overline{\{(n,n,n,\dots) : n\in\Z\}}\subseteq \prod_{b\in\mathscr{B}}\Z/b\Z$, $m_H$ is the Haar measure on $H$ and the map $\varphi\colon H\to \{0,1\}^\Z$ is given by 
\[
\varphi(h)(n)=1 \iff h_b+n\not\equiv 0\bmod b \text{ for all }b\in\mathscr{B}.
\]
\item[(B)]
In the Erd\"os case, $\eta$ is a generic point for $\nu_{\eta}$ (see~\cite{MR3428961}).
         In general, $\eta$ may fail to be a generic point; in this case, even the natural density of $\mathcal{M}_\mathscr{B}$ fails to exist~\cite{MR1512943}. However, if $(N_i)$ is a sequence realizing the lower density of $\mathcal{M}_\mathscr{B}$ then
           then $\eta$ is quasi-generic for $\nu_\eta$ along $(N_i)$,
           see~\cite[Theorem 4.1]{MR3803141}.
\item[(C)] The Mirsky measure $\nu_\eta$ is the unique measure of maximal density on $X_\eta$, see Theorem 4 and Corollary 4 in~\cite{MR3784254}, cf. also Chapter 7 in~\cite{MR3136260}.    
\end{enumerate}
        Note that if $\mathscr{B}$ is finite then $\eta$ is periodic, with period equal to $s:={\rm lcm}\mathscr{B}$. It follows immediately that $X_{\eta}$ is also finite and the unique shift-invariant probability measure on $X_{\eta}$ is given by $\nu_{\eta} = \frac{1}{s}\sum_{i = 0}^{s - 1} \sigma^i \delta_{\eta}$.
        \begin{remark}
        Notice that if we set $\eta_K:=\mathbf{1}_{\mathcal{F}_{\mathscr{B}_K}}$ then $(\eta_K)$ is non-increasing and $\eta$ is its coordinatewise limit. Moreover, by Theorem~\ref{daverd}, we have we have $\nu_{\eta_K}(1)\to\nu_\eta(1)$.
        \end{remark}
\paragraph{Invariant measures on $\widetilde{X}_\eta$}%TUTUTU
It was shown (in~\cite{MR3356811} in the Erd\"os case and later, in~\cite{MR3803141}, in the general case) that
\[
\mathcal{M}(\widetilde{X}_\eta,\sigma) = \{M_\ast(\nu_{\eta}\vee\kappa) : \kappa \in \mathcal{M}(\{0,1\}^\Z,\sigma)\},
\]        
i.e., that we deal with a measure-theoretic subordinate subshift, with the Mirsky measure being its base measure.  The topological entropy of $\widetilde{X}_\eta$ equals $\delta(\mathcal{F}_{\mathscr{B}})$ (see~\cite[Proposition K]{MR3803141} and earlier results from~\cite{MR3430278,MR3428961}). Moreover, $\widetilde{X}_\eta$ is intrinsically ergodic and its unique measure of maximal entropy is given by $M_\ast(\nu_{\eta}\otimes B_{1/2})$ (see~\cite{MR3803141} and the earlier papers~\cite{MR3430278,MR3356811}). Cf.\ Remark~\ref{commentsB} in Section~\ref{NNN}.

\paragraph{Tautification and toeplitzisation of $\mathscr{B}$}        
        Given $\mathscr{B}\subseteq\mathbb{N}$ one defines the ''tautification'' $\mathscr{B}'$ and ''toeplitzisation'' $\mathscr{B}^*$ of $\mathscr{B}$ in the following way:
        \begin{itemize}
        \item $\mathscr{B}':=(\mathscr{B}\cup C)^{prim}$, $C=\{c\in \N : c\mathcal{A}\subseteq \mathscr{B} \text{ for some Behrend set }\mathcal{A}\}$,
        \item $\mathscr{B}^*:=(\mathscr{B}\cup D)^{prim}$, $D=\{d\in \N : d\mathcal{A}\subseteq \mathscr{B} \text{ for some infinite pairwise coprime set }\mathcal{A}\}$.
        \end{itemize}
        Since any Behrend set contains an infinite pairwise coprime subset (Theorem 3.7 in~\cite{MR3803141}), it follows by the above definitions that $\eta^*\leq \eta'\leq \eta$, where $\eta'=\mathbf{1}_{\mathcal{F}_{\mathscr{B}'}}$ and $\eta^*=\mathbf{1}_{\mathcal{F}_{\mathscr{B}^*}}$ and, thus, $\widetilde{X}_{\eta^*}\subseteq \widetilde{X}_{\eta'}\subseteq \widetilde{X}_\eta$. In fact, we have more than that: $X_{\eta^*}\subseteq X_{\eta'}\subseteq X_\eta$ (see Remark 3.22 in~\cite{MR3803141} for the first inclusion, and (27) in~\cite{MR4289651} for the second one). Moreover, $\mathscr{B}'$ is the unique taut set such that $\nu_{\eta}=\nu_{\eta'}$ (in fact $\mathcal{M}(X_\eta)=\mathcal{M}(X_{\eta'})$)~\cite[Theorem C]{MR3803141}, while $\eta^*$ is a Toeplitz sequence (see~\cite{DKKu} and~\cite{MR3947636}) and $X_{\eta^*}$ is the unique minimal subset of $X_\eta$~\cite{MR4280951}. The subshift $X_\eta$ is minimal if and only if $\eta$ itself is a Toeplitz sequence~\cite{MR3803141} (Corollary 1.4) (which may or may not be regular~\cite{IrK}).

\paragraph{Invariant measures on $X_\eta$}
Recently, it was shown in~\cite{ADJ} (Theorem B therein) that $X_\eta$ is a sandwich measure-theoretic subordinate subshift whenever $\eta^*$ is a regular Toeplitz sequence. More precisely, the pair $(\eta^*,\eta)$ is a quasi-generic point along any sequence $(N_i)$ realizing the lower density of $\mathcal{M}_\mathscr{B}$ (Lemma 2.3 in~\cite{ADJ}). The resulting limit measure is denoted by $\nu_{\eta^*}\triangle \nu_\eta$ and it is a certain off-diagnoal joining of $\nu_{\eta^*}$ and $\nu_\eta$. As both $\nu_{\eta^*}$ and $\nu_\eta$ are zero entropy measures, so is their joining $\nu_{\eta^*}\triangle\nu_\eta$. Thus, in view of Corollary~\ref{GGG}, $X_\eta$ is intrinsically ergodic, its topological entropy equals $\nu_\eta(1)-\nu_{\eta^*}(1)$ and the maximal entropy measure is given by $N_\ast((\nu_{\eta^*}\triangle\nu_{\eta})\otimes B_{1/2}$ (as proved ``by hand'' in~\cite{ADJ}).

Let us comment a little bit on the crucial tool used in~\cite{ADJ}. Namely, recall that by the Davenport-Erd\"os theorem, $\eta$ is always approximated from above by periodic sequences $\eta_K=\mathbf{1}_{\mathcal{F}_{\mathscr{B}_K}}$, while the assumption that $\eta^*$ is a regular Toeplitz sequence allows one to find useful periodic approximations $\underline{\eta}_K$ of $\eta^*$ from below. More precisely, we have the following:
\begin{itemize}
\item $\underline{\eta}_K\leq \eta^*\leq \eta\leq \eta_K$ for $K\geq 1$, $(\underline{\eta}_K)$ is non-decreasing, $(\eta_K)$ is non-increasing,\hfill\refstepcounter{equation}\textup{(\theequation)}\label{last2}%
\item $(\eta^*,\eta)$ is the coordinatewise limit of $(\underline{\eta}_K,\eta_K)$,\hfill\refstepcounter{equation}\textup{(\theequation)}\label{last1}%
\item 
$\lim_{i\to\infty}\frac{1}{N_i}\sum_{n\leq N_i}(\eta_K(n)-\eta(n))=\lim_{N\to\infty}\frac{1}{N}\sum_{n\leq N}(\eta^*(n)-\underline{\eta}_K(n))=0$
\hfill\refstepcounter{equation}\textup{(\theequation)}\label{last}%
\end{itemize}
(see Lemma 2.8 in~\cite{ADJ}).

\section{Measure-theoretic subordinate subshifts}\label{CS}
Let $X\subseteq \{0,1\}^\Z$ be a subshift and let $M\colon \{0,1\}^\Z\times \{0,1\}^\Z \to \{0,1\}^\Z$ stand for the coordinatewise multiplication of sequences. It will be convenient to translate the definition of a measure-theoretic subordinate subshift to the language of joinings: $X$ is a {measure-theoretic subordinate subshift} if there exists a measure $\nu\in\mathcal{M}(\{0,1\}^\Z)$ such that 
\begin{equation}\label{isconv}
\mathcal{M}(X)=\{M_\ast(\rho) : \rho=\nu\vee\kappa \text{ for some }\kappa\in \mathcal{M}(\{0,1\}^\Z)\}.
\end{equation}
In other words, $X\subseteq \{0,1\}^\Z$ is a measure-theoretic subordinate subshift whenever the diagram 
\[
(X\times \{0,1\}^\Z,\sigma^{\times 2},\{\nu\vee\kappa : \kappa\in\mathcal{M}(\{0,1\}^\Z)\})\xrightarrow{M}(X,\sigma)
\]
is surjective.

\subsection{Basic properties}\label{bapro}
Notice that $\nu$ satisfying~\eqref{isconv} is necessarily a member of $\mathcal{M}(X)$ since $\nu=M_\ast(\nu\otimes \delta_{\mathbf{1}})$. Moreover, we have $\delta_{\mathbf{0}}=M_\ast(\nu\otimes \delta_{\mathbf{0}})\in\mathcal{M}(X)$.
\begin{proposition}\label{pr:1}
If $X$ is a measure-theoretic subordinate subshift then $\nu\in\mathcal{M}(\{0,1\}^\Z)$ satisfying~\eqref{isconv} is unique and ergodic. Moreover, it is the unique {\it measure of maximal density}, that is the unique measure satisfying
\begin{equation}\label{eq:1}
\nu(1)=\sup_{\mu\in\mathcal{M}(X)}\mu(1).
\end{equation}
\end{proposition}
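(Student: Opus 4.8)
The plan is to prove three things: that any $\nu$ satisfying~\eqref{isconv} must attain the supremum in~\eqref{eq:1}, that there is at most one measure attaining this supremum, and that such a measure is ergodic. The central observation is that the map $M$ can only decrease the frequency of $1$'s: for any joining $\rho = \nu \vee \kappa$ we have $M_\ast(\rho)(1) = \rho(\{(x,y) : x_0 = 1, y_0 = 1\}) \leq \rho(\{(x,y) : x_0 = 1\}) = \nu(1)$. Hence every $\mu \in \mathcal{M}(X)$ of the form $M_\ast(\nu \vee \kappa)$ satisfies $\mu(1) \leq \nu(1)$, so $\nu(1) = \sup_{\mu \in \mathcal{M}(X)}\mu(1)$, with the supremum realized by $\nu$ itself (take $\kappa = \delta_{\mathbf{1}}$, giving $M_\ast(\nu \vee \delta_{\mathbf{1}}) = \nu$, as already noted in the paragraph preceding the statement).

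For uniqueness, I would argue that any measure realizing the maximal density must equal $\nu$. First I would show that if $\mu \in \mathcal{M}(X)$ satisfies $\mu(1) = \nu(1)$, then in the representation $\mu = M_\ast(\nu \vee \kappa)$ the multiplication $M$ acts trivially (in measure) on the support: the inequality above is an equality precisely when $\rho(\{x_0 = 1, y_0 = 0\}) = 0$, i.e. the second coordinate is $1$ wherever the first is $1$, $\rho$-almost everywhere. Applying this at every coordinate via shift-invariance, one gets that $M_\ast(\rho) = \mu$ agrees with the first marginal $\nu$ on all one-sided cylinders $C_A$ with $A$ finite, hence $\mu = \nu$. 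This simultaneously shows that any maximizing $\mu$ coincides with $\nu$, forcing uniqueness of the base measure: if $\nu_1, \nu_2$ both satisfy~\eqref{isconv}, then each is a maximizer of density on $X$, so each equals the unique maximizer, whence $\nu_1 = \nu_2$.

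For ergodicity, I would use the ergodic decomposition of $\nu$, writing $\nu = \int \nu_\omega \, dP(\omega)$ with each $\nu_\omega$ ergodic. Since $\nu(1) = \int \nu_\omega(1)\, dP(\omega)$ and each $\nu_\omega$ lies in $\mathcal{M}(X)$ (the ergodic components of an invariant measure on a subshift are supported on the same subshift), each satisfies $\nu_\omega(1) \leq \nu(1)$ by the maximality already established. An average of quantities each bounded above by $\nu(1)$ can equal $\nu(1)$ only if $\nu_\omega(1) = \nu(1)$ for $P$-almost every $\omega$. But then each such $\nu_\omega$ is itself a measure of maximal density, hence by the uniqueness just proved $\nu_\omega = \nu$ almost surely, so the ergodic decomposition is trivial and $\nu$ is ergodic.

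The step I expect to be the main obstacle is the rigorous passage, in the uniqueness argument, from equality of the $1$-cylinder measures $\mu(1) = \nu(1)$ to equality of the full measures $\mu = \nu$. The subtlety is that $\mu = M_\ast(\nu \vee \kappa)$ records the coordinatewise product, and one must carefully use shift-invariance together with the equality case of the inequality at each coordinate to conclude that $\kappa$ assigns full mass to the event $\{y \geq x\}$ (on the $\nu$-support), so that multiplication by $y$ leaves $x$ unchanged $\rho$-a.e. Once this is in place the equality of $\mu$ and $\nu$ on the generating semi-algebra of cylinders is routine, but formulating the almost-everywhere domination cleanly — rather than just matching a single cylinder value — is the delicate point.
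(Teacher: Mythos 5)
Your proposal is correct and takes essentially the same route as the paper's proof: the same joining inequality $\mu(1)=(\nu\vee\kappa)\left(\begin{smallmatrix}1\\1\end{smallmatrix}\right)\leq\nu(1)$ yields maximality, the equality case together with shift-invariance kills the mass of $\{x_i=1,\,y_i=0\}$ at every coordinate (the paper records this as equality $\nu'(C_A)=\nu(C_A)$ on all cylinders, you record it as $\rho$-a.e.\ domination $y\geq x$, a cosmetic difference), and ergodicity is obtained from the ergodic decomposition exactly as in the paper.
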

\begin{proof}
It follows by~\eqref{isconv} that 
\begin{equation}\label{teta}
\nu'(1)=(\nu\vee\kappa)\left(\begin{matrix}1 \\ 1\end{matrix}\right)\leq \nu(1)
\end{equation}
for any $\nu'\in\mathcal{M}(X)$, which yields~\eqref{eq:1}.

Suppose now that $\nu'\in\mathcal{M}(X,\sigma)$ is such that $\nu'(1)=\nu(1)$. In view of~\eqref{teta}, this yields immediately that
\begin{equation}\label{gwi}
(\nu\vee\kappa)\left(\begin{matrix}1 \\ 0\end{matrix}\right)=0.
\end{equation}
Therefore, for any finite $A\subseteq \Z$,
\begin{equation}\label{bazow1}
\nu'(C_{A})=(\nu\vee\kappa)\left(\begin{matrix}C_A \\ C_A\end{matrix}\right)=\nu(C_{A}).
\end{equation}
(Indeed, take, e.g., $A=11$. In a simplified form,~\eqref{bazow1} becomes
\[
\nu'(11)=(\nu\vee\kappa)\left(\begin{matrix}11\\ 11\end{matrix}\right)=(\nu\vee\kappa)\left(\begin{matrix}11 \\ 11 \end{matrix}\cup \begin{matrix}01 \\ 11\end{matrix}\cup \begin{matrix}10 \\ 11 \end{matrix} \right)=\nu(11),
\]
where the middle equality follows from the fact that using~\eqref{gwi}, we have $(\nu\vee\kappa)\left(\begin{matrix}ij\\ 11\end{matrix}\right)=0$ whenever $i$ or $j$ equals $0$.)
However,~\eqref{bazow1} means that $\nu'=\nu$ and we obtain that the measure of maximal density is unique. The ergodicity of $\nu$ follows immediately by the ergodic decomposition, as we already know that the base measure is the unique measure of maximal density.
\end{proof}

\subsection{Examples}\label{przyk}
\begin{enumerate}[(A)]
\item The full shift $\{0,1\}^\Z$ is a measure-theoretic subordinate subshift, with the base measure $\delta_{\mathbf{1}}$ (recall that $\nu=M_\ast(\delta_{\mathbf{1}}\otimes \nu)$).
\item If $X\subseteq \{0,1\}^\Z$ is an arbitrary uniquely ergodic subshift, with $\mathcal{M}=\{\nu\}$ then $\widetilde{X}=M(X\times \{0,1\}^\Z)$ is a measure-theoretic subordinate subshift with base measure $\nu$ (see property (1) in~\cite{MR3589826} and the proof of Corollary 1 therein). In particular, we can take as $X$ any of the following (zero entropy) systems:
\begin{enumerate}[(i)]
\item
the orbit of a periodic $0$-$1$ sequence,
\item
a Sturmian dynamical system (recall that they arise by taking an irrational rotation $Tx=x+\alpha$ on $\mathbb{T}$, fixing an interval $J=[a,b)\subset \mathbb{T}$ such that $|J|$ and $\alpha$ are independent over $\mathbb{Q}$; then $X=\overline{\{  (\mathbbm{1}_J(T^n\omega)) : \omega\in\mathbb{T}\}}$),
\item
a uniquely ergodic Toeplitz dynamical system (notice that we have an overlap with hereditary closures of $\mathscr{B}$-free systems as minimal $\mathscr{B}$-free systems are Toeplitz~\cite[Corollary 1.5]{MR3803141}).
\end{enumerate}
In fact, this list is far from complete -- recall that any ergodic measure-theoretic dynamical system with entropy lower than $\log 2$ has a certain $0$-$1$ subshift as its uniquely ergodic model~\cite{MR0252604, MR0393402}.
\item
The hereditary closure $\widetilde{X}_\eta$ of any $\mathscr{B}$-free system is a measure-theoretic subordinate subshift with the Mirsky measure being its base measure, see Remark~\ref{commentsB} in Section~\ref{NNN} for the details and references.
\end{enumerate}
\begin{remark}
Notice that there are several overlaps in the above list:
\begin{itemize}
\item for a finite set $\mathscr{B}\subseteq \N$, $\eta$ is a periodic sequence -- we have an overlap of (B)(i) and (C);
\item since minimal $\mathscr{B}$-free systems are Toeplitz~\cite[Corollary 1.5]{MR3803141} and it is possible to have a minimal uniquely ergodic $\mathscr{B}$-free system (also other than the singleton $\{\mathbf{0}\}$, see e.g. Theorem~2 in~\cite{IrK}), there is an overlap of (C) and (B)(iii).
\end{itemize}
\end{remark}

\subsection{Measure-theoretic subordinate subshifts vs.\ hereditary and subordinate subshifts}\label{cohe}
In this subsection we will describe relations between measure-theoretic subordinate subshifts, hereditary subshifts and subordinate subshifts.
\begin{proposition}\label{sub}
Let $X\subseteq\{0,1\}^\Z$ be a measure-theoretic subordinate subshift with the base measure $\nu$. Then 
\[
X':=\widetilde{\text{supp}(\nu)}=M(\text{supp}(\nu)\times \{0,1\}^\Z)
\] 
satisfies $\mathcal{M}(X)=\mathcal{M}(X')$.
\end{proposition}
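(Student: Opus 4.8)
The plan is to prove the two inclusions $\mathcal{M}(X)\subseteq\mathcal{M}(X')$ and $\mathcal{M}(X')\subseteq\mathcal{M}(X)$ separately, after recording the basic setup. By Proposition~\ref{pr:1} the base measure $\nu$ lies in $\mathcal{M}(X)$, so $Y:=\supp(\nu)$ is a closed $\sigma$-invariant subset of $X$, i.e.\ a subshift with $Y\subseteq X$, and $X'=\widetilde{Y}=M(Y\times\{0,1\}^\Z)$. In particular every $\nu'\in\mathcal{M}(Y)$ is concentrated on $Y\subseteq X$ and hence belongs to $\mathcal{M}(X)$; I will use this observation repeatedly.

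For $\mathcal{M}(X)\subseteq\mathcal{M}(X')$, take $M_\ast(\nu\vee\kappa)\in\mathcal{M}(X)$. Since the first marginal of the joining $\nu\vee\kappa$ is $\nu$, which is concentrated on $Y$, the joining itself is concentrated on $Y\times\{0,1\}^\Z$; its $M$-image is therefore $\sigma$-invariant and concentrated on $M(Y\times\{0,1\}^\Z)=X'$, so it lies in $\mathcal{M}(X')$.

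For the reverse inclusion I first reduce an arbitrary $\rho\in\mathcal{M}(X')$ to a joining lying ``under'' some element of $\mathcal{M}(Y)$. Consider the roof $\widehat{Y}=\{(z,w)\in X'\times Y: z\le w\}$, a closed $\sigma\times\sigma$-invariant set whose projection onto the first coordinate is onto $X'$ (indeed any $z=M(w_0,u_0)\in X'$ satisfies $z\le w_0\in Y$). Lifting $\rho$ through this surjective factor map to some invariant $\widehat\rho\in\mathcal{M}(\widehat{Y})$ (a standard fact for equivariant continuous surjections of compact systems) and using that $z\le w$ forces $M(w,z)=z$, I obtain $\rho=M_\ast(\nu'\vee\kappa)$, where $\nu'\in\mathcal{M}(Y)$ is the law of $w$ and $\kappa$ the law of $z$ under $\widehat\rho$.

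It remains to replace $\nu'$ by the base measure $\nu$, and this is the main obstacle. Since $\nu'\in\mathcal{M}(Y)\subseteq\mathcal{M}(X)$, the defining property of a measure-theoretic subordinate subshift yields $\nu'=M_\ast(\theta)$ for some invariant joining $\theta$ of $\nu$ (first coordinate, call it $w$) with some $\lambda$ (second coordinate, call it $v$). I then glue $\theta$ and $\nu'\vee\kappa$ over their common factor $\nu'$, which is realized as the law of $M(w,v)=wv$ on the one side and as the first marginal of $\nu'\vee\kappa$ on the other: the relatively independent joining over this factor produces a $\sigma^{\times 3}$-invariant measure $\Omega$ on the $(w,v,u)$-space under which $(w,v)$ has law $\theta$ and $(wv,u)$ has law $\nu'\vee\kappa$. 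By associativity of coordinatewise multiplication, $M(wv,u)=w\cdot(vu)=M\bigl(w,M(v,u)\bigr)$, so $\rho=M_\ast(\nu'\vee\kappa)$ is the $M$-image of the law of $\bigl(w,M(v,u)\bigr)$, an invariant joining whose first marginal is $\nu$. Setting $\kappa'':=$ law of $M(v,u)$, this reads $\rho=M_\ast(\nu\vee\kappa'')\in\mathcal{M}(X)$, as desired. The delicate points are the legitimacy of the relatively independent gluing and the invariance of $\Omega$; everything else is bookkeeping with the identity $w_iv_iu_i=(w_iv_i)u_i=w_i(v_iu_i)$.
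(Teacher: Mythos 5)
Your proof is correct, but the second inclusion takes a genuinely different route from the paper. For $\mathcal{M}(X)\subseteq\mathcal{M}(X')$ you argue exactly as the paper does: any $\mu=M_\ast(\nu\vee\kappa)$ is concentrated on $M(\supp(\nu)\times\{0,1\}^\Z)=X'$ because the joining lives on $\supp(\nu)\times\{0,1\}^\Z$. For the converse, however, the paper does something much shorter and topological: it considers the single measure $M_\ast(\nu\otimes B_{1/2})\in\mathcal{M}(X)$ and checks that every block $A'\leq A$ with $\nu(A)>0$ has positive measure for it, which yields the set inclusion $X'=\widetilde{\supp(\nu)}\subseteq\supp\bigl(M_\ast(\nu\otimes B_{1/2})\bigr)\subseteq X$, and then $\mathcal{M}(X')\subseteq\mathcal{M}(X)$ is automatic. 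You instead bypass the topological inclusion entirely and work at the level of measures: you lift $\rho\in\mathcal{M}(X')$ through the roof $\{(z,w)\in X'\times\supp(\nu): z\leq w\}$ (the invariant lift through a continuous equivariant surjection of compact systems is indeed standard) to write $\rho=M_\ast(\nu'\vee\kappa)$ with $\nu'\in\mathcal{M}(\supp(\nu))\subseteq\mathcal{M}(X)$, then use the defining property to write $\nu'=M_\ast(\nu\vee\lambda)$, and glue via the relatively independent joining over the common factor $\nu'$, exploiting associativity $M(wv,u)=M(w,vu)$ to land back in $\{M_\ast(\nu\vee\kappa'')\}=\mathcal{M}(X)$. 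All the delicate steps you flag check out: the relative product $\int\theta_y\otimes(\nu'\vee\kappa)_y\,d\nu'(y)$ is supported where the two factor maps agree and is $\sigma^{\times 3}$-invariant after collapsing, and the first marginal of the law of $(w,vu)$ is $\nu$. What each approach buys: the paper's argument is a few lines and delivers the stronger topological fact $X'\subseteq X$ (used, e.g., implicitly around Remarks~\ref{portut2} and~\ref{kiedyzero}); yours never establishes $X'\subseteq X$ but proves a transitivity principle of independent interest — a measure sitting under some $\nu'\in\mathcal{M}(X)$ sits under $\nu$ itself — and would survive in settings where identifying the support of $M_\ast(\nu\otimes B_{1/2})$ is awkward. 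One cosmetic point: $\nu\in\mathcal{M}(X)$ is the observation $\nu=M_\ast(\nu\otimes\delta_{\mathbf{1}})$ made just before Proposition~\ref{pr:1}, rather than Proposition~\ref{pr:1} itself.
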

\begin{proof}
Consider $\mu:=M_\ast(\nu\otimes B_{1/2})\in\mathcal{M}(X)$. Then, clearly, $\text{supp}(\mu)\subseteq X$. If $A$ is a 0-1-block such that $\nu(A)>0$ and $A'\leq A$ (coordinatewise) for some block $A'$ of the same length as $A$, then 
\[
\mu(A')=(\nu\otimes B_{1/2})(M^{-1}A')\geq (\nu\otimes B_{1/2})(A\times C)=\nu(A)\cdot B_{1/2}(C)>0,
\] 
where $C$ is any 0-1-block (of the same length) such that $M(A\times C)=A'$. It follows immediately that $X'=\widetilde{\text{supp}(\nu)}\subseteq \text{supp}(\mu)\subseteq X$.

Now, take $\mu\in\mathcal{M}(X)$. Then $\mu=M_\ast(\nu\vee\kappa )$ for some $\kappa$ and thus
\begin{multline*}
\mu(X')=(\nu\vee\kappa)(M^{-1}(M(\text{supp}(\nu))\times \{0,1\}^\Z))\\
\geq (\nu\vee\kappa)(\text{supp}(\nu)\times \{0,1\}^\Z)=\nu(\text{supp}(\nu))=1.
\end{multline*}
It follows that $\mathcal{M}(X')=\mathcal{M}(X)$.
\end{proof}

\begin{remark}\label{portut2}
The hereditary subshift $X'$ from Proposition~\ref{sub} is, in fact, subordinate. Indeed, if $x\in\text{supp }\nu$ is generic for $\nu$ then $X_x=\text{supp }\nu$. It follows that $\overline{[\mathbf{0},x]}=\widetilde{\text{supp }\nu}$.
\end{remark}
\begin{remark}
Notice that $X:=\widetilde{X}_{(110)^\infty}\cup \widetilde{X}_{(10)^\infty}$ is clearly hereditary and $\frac{1}{3}(\delta_{(110)^\infty}+\delta_{(101)^\infty}+\delta_{(011)^\infty})$ is its unique measure of maximal density. However, $X$ is not a measure-theoretic subordinate subshift. Indeed, if this was the case then we would have
\[
\frac{1}{2}(\delta_{(10)^\infty}+\delta_{(01)^\infty})=M_\ast(\frac{1}{3}(\delta_{(110)^\infty}+\delta_{(101)^\infty}+\delta_{(011)^\infty})\vee\kappa).
\]
However,
\[
\frac{1}{2}(\delta_{(10)^\infty}+\delta_{(01)^\infty})(101010)=\frac{1}{2},
\]
while for every $\kappa$ and every joining $\lambda=\frac{1}{3}(\delta_{(110)^\infty}+\delta_{(101)^\infty}+\delta_{(011)^\infty})\vee\kappa$, we have $M_\ast(\lambda)(101010)=0$.

Note that $X$ is contained in a subordinate system with the same set of invariant measures. Indeed, it suffices to consider $\widetilde{X}_x$, where 
\[
x = \ldots B_{-2} B_{-1} B_0 B_1 B_2 \ldots
\]
and $B_i = (01)^{n_i} 0^{n_i} (001)^{n_i} 0^{n_i}$,
with $n_i\to \infty$ for $|i|\to\infty$.
\end{remark}

\begin{remark}\label{kiedyzero}
Let $X$ be a measure-theoretic subordinate subshift with base measure $\nu$. Let $X':=\widetilde{\text{supp }\nu}$ (cf.\ the proof of Proposition~\ref{sub}). Then, by the fact that $\mathcal{M}(X)=\mathcal{M}(X')$ and by the variational principle, we have $h_{top}(X)=h_{top}(X')$. Take any point $x\in X'$ that is generic for $\nu$. Since $X'$ is hereditary, it follows immediately that $h_{top}(X)=h_{top}(X')\geq \nu(1)$. Therefore, $\nu\neq \delta_{\boldsymbol{0}} \iff \nu(1)>0 \Rightarrow h_{top}(X)>0$. On the other hand, $\nu=\delta_{\boldsymbol{0}}$ implies $\mathcal{M}(X)=\{\delta_{\boldsymbol{0}}\}$ and thus $h_{top}(X)=0$. Therefore,
\begin{equation}\label{degen}
h_{top}(X)>0 \iff \nu(1)>0 \iff \nu\neq \delta_{\boldsymbol{0}}.
\end{equation}
\end{remark}

\subsection{Generic points viewpoint}\label{gepo}
We provide now a characterization of measure-theoretic subordinate subshifts in terms of so-called visible measures. 
\begin{definition}
Given $y\in\{0,1\}^\Z$, let $V(y)$ be the set of invariant measures for which $y$ is quasi-generic. We call the members of $V(y)$ the \emph{measures visible from $y$}. More generally, given a subsequence $(N_i)$, we define as $V_{(N_i)}(y)$ the subset of $V(y)$ of invariant measures for which $y$ is quasi-generic along a subsequence of $(N_i)$.
\end{definition}
\begin{proposition}\label{charakt}
Let $X\subseteq\{0,1\}^\Z$ be a subshift. Then the following are equivalent:
\begin{enumerate}[(a)]
\item $\mathcal{M}(X)=\{M_\ast(\nu\vee\kappa) : \kappa\in\mathcal{M}(\{0,1\}^\Z)\}$ (i.e.\ $X$ is a measure-theoretic subordinate subshifts with the base measure $\nu$),
\item $\mathcal{M}^e(X)\subseteq \{M_\ast(\nu\vee\kappa): \kappa\in\mathcal{M}(\{0,1\}^\Z)\} \subseteq \mathcal{M}(X)$,
\item $\mathcal{M}^e(X)\subseteq \bigcup_{y\leq x}V(y) \subseteq \mathcal{M}(X)$ for some point $x$ generic for $\nu$,
\item $\mathcal{M}^e(X)\subseteq \bigcup_{y\leq x}V(y) \subseteq \mathcal{M}(X)$ whenever $x$ generic for $\nu$,
\item $\mathcal{M}^e(X)\subseteq \bigcup_{y\leq x}V_{(N_i)}(y) \subseteq \mathcal{M}(X)$ for some $x$ quasi-generic for $\nu$ along~$(N_i)$,
\item $\mathcal{M}^e(X)\subseteq \bigcup_{y\leq x}V_{(N_i)}(y) \subseteq \mathcal{M}(X)$ whenever $x$ is quasi-generic for $\nu$ along~$(N_i)$.
\end{enumerate}
\end{proposition}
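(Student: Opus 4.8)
The plan is to introduce the convex compact set $K:=\{M_\ast(\nu\vee\kappa):\kappa\in\mathcal{M}(\{0,1\}^\Z)\}$, reduce everything to condition (b), and then bridge (b) with the visibility conditions (c)--(f) by a statement identifying $K$ with the measures visible from points below a (quasi-)generic point for $\nu$. First I would record that $K$ is convex and weak-$\ast$ compact: the joinings with first marginal $\nu$ form a convex compact set, and $M_\ast$ is affine and continuous. The equivalence (a)$\iff$(b) is then immediate: (a)$\Rightarrow$(b) is trivial, and for (b)$\Rightarrow$(a) one needs only $\mathcal{M}(X)\subseteq K$, which follows because every $\mu\in\mathcal{M}(X)$ is the barycenter of its ergodic decomposition, supported on $\mathcal{M}^e(X)\subseteq K$ by (b); as $K$ is convex and compact, the barycenter lies in $K$, and with $K\subseteq\mathcal{M}(X)$ this gives $\mathcal{M}(X)=K$.

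The heart of the matter is the bridging claim: \emph{for every $x$ quasi-generic for $\nu$ along $(N_i)$ one has $\bigcup_{y\le x}V_{(N_i)}(y)=K$, and for every $x$ generic for $\nu$ one has $\bigcup_{y\le x}V(y)=K$.} The inclusion ``$\subseteq$'' is the easy direction: if $y\le x$ and $y$ is quasi-generic for $\mu$ along a subsequence, then, since $y=M(x,y)$, any weak-$\ast$ limit $J$ of the pair-empirical measures $\tfrac1{N}\sum_{n\le N}\delta_{(\sigma^n x,\sigma^n y)}$ along that subsequence is a joining whose first marginal is $\nu$ (because $x$ equidistributes to $\nu$ along every subsequence of $(N_i)$) and whose push-forward $M_\ast J$ equals $\mu$; hence $\mu=M_\ast(\nu\vee\mu)\in K$. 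Granting the claim, conditions (c)--(f) all collapse to (b): the displayed set $\bigcup_{y\le x}V(y)$ (resp.\ $\bigcup_{y\le x}V_{(N_i)}(y)$) is literally $K$, so each condition reads $\mathcal{M}^e(X)\subseteq K\subseteq\mathcal{M}(X)$, i.e.\ (b). Here one uses that a generic point is quasi-generic along the full sequence (so $V(y)=V_{(\mathbb N)}(y)$, and (c),(d) are the instances $(N_i)=\mathbb N$ of (e),(f)), together with the fact that, the ambient space being the full shift, \emph{every} invariant measure $\nu$ admits both generic and quasi-generic points, so none of (c)--(f) is vacuous; alternatively, once (a) holds, $\nu$ is ergodic by Proposition~\ref{pr:1} and generic points exist by Birkhoff. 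Since (a)$\iff$(b) is settled, it remains to link (b) with (c)--(f); a convenient cycle is (b)$\Rightarrow$(d)$\Rightarrow$(c)$\Rightarrow$(e)$\Rightarrow$(b), supplemented by (b)$\Rightarrow$(f)$\Rightarrow$(e).

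The main obstacle is the nontrivial inclusion ``$\supseteq$'' of the bridging claim, a lifting lemma: given $x$ quasi-generic for $\nu$ along $(N_i)$ and an arbitrary joining $J=\nu\vee\kappa$, one must produce $z\in\{0,1\}^\Z$ with $(x,z)$ quasi-generic for $J$ along a subsequence of $(N_i)$; then $y:=M(x,z)\le x$ witnesses $M_\ast J\in V_{(N_i)}(y)$. I would build $z$ by a block-coding/marginal-matching construction along a rapidly increasing subsequence $N_{i_k}$. On the $k$-th window, whose first-coordinate content has $L_k$-block statistics close to $\nu$ (as $x$ is quasi-generic for $\nu$ and the window dominates $[1,N_{i_k}]$), I would assign second-coordinate values so that the joint $L_k$-block statistics approximate those of $J$; this is a transportation problem feasible precisely because $J$ has first marginal $\nu$, so the prescribed conditional block-distributions can be distributed over the occurrences of each first-coordinate block. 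Short markers of vanishing density inserted between independently filled sub-blocks keep the overlapping-window statistics under control, and letting $L_k\to\infty$, $\varepsilon_k\to 0$, a diagonal argument yields $(x,z)$ quasi-generic for $J$ along $(N_{i_k})$. Carrying this out rigorously---especially the bookkeeping for overlapping windows and the passage from non-overlapping block matching to genuine quasi-genericity for all finite blocks---is the technically demanding step; everything else is soft.
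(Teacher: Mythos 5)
Your proposal is correct and follows the same skeleton as the paper's proof: the equivalence (a)$\iff$(b) via convexity (the paper phrases it as $M_\ast$ commuting with generalized convex combinations, so that $\{M_\ast(\nu\vee\kappa):\kappa\in\mathcal{M}(\{0,1\}^\Z)\}$ contains the convex hull of $\mathcal{M}^e(X)$, hence $\mathcal{M}(X)$; your compactness-plus-barycenter argument is the same point stated more carefully), and the reduction of (c)--(f) to (b) via the identification $\bigcup_{y\leq x}V_{(N_i)}(y)=\{M_\ast(\nu\vee\kappa):\kappa\in\mathcal{M}(\{0,1\}^\Z)\}$, with (c),(d) recovered from (e),(f) by taking $N_i=i$, exactly as in the paper's remark following the proposition. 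The one genuine divergence is how the hard inclusion of your bridging claim is handled: the paper does not prove it at all, but imports it wholesale as Theorem~\ref{downar} (Theorem 5.16 in~\cite{MR4525753}) and packages it, together with your easy joint-empirical-measure direction ($M_\ast(V_{(N_i)}((x,y)))\subseteq V_{(N_i)}(M(x,y))$ and the converse via $y=M(x,y)$), as Corollary~\ref{downar1}. Your block-coding/marginal-matching construction is thus an attempt to reprove the cited lifting theorem from scratch; the strategy (windowed transport of the conditional block distributions of $J$ over occurrences of first-coordinate blocks, diagonalization along a sparse subsequence of $(N_i)$) is the standard one for such results and is plausibly completable, but as written it is a sketch: the overlapping-window bookkeeping you flag, plus the constraint that the first coordinate is fixed equal to $x$ (so markers and phase-averaging may only be arranged in the second coordinate, relative to the given $x$), is precisely where the real work lies, and it is the content of the cited theorem. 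Net effect: your route is self-contained where the paper's is citation-based, at the cost of leaving the one technically demanding step unfinished; everything surrounding it --- including the observation that in the full shift every invariant measure admits generic points, or alternatively that (a) forces ergodicity of $\nu$ via Proposition~\ref{pr:1}, so that conditions (c)--(f) are non-vacuous --- is sound.
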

\begin{remark}
Notice that conditions (c) and (d) are, in fact, contained in (e) and (f); one just takes $N_i=i$.
\end{remark}

To prove Proposition~\ref{charakt}, we will need a certain ``lifting lemma'' from~\cite{MR4525753} for quasi-generic points to joinings. We formulate it here for $\Z$-actions, while the original version is more general (the result is true for actions of countable cancellative semigroups and arbitrary Følner sequences, see~\cite{MR4525753}).
\begin{theorem}[Theorem 5.16 in~\cite{MR4525753}]\label{downar}
Let $\mathcal{A}_1,\mathcal{A}_2$ be finite alphabets. If $x\in\mathcal{A}_1^\Z$ is quasi-generic for $\nu$ and $\nu\vee\kappa\in \mathcal{M}(\mathcal{A}_1^\Z\times \mathcal{A}_2^\Z)$ then there exists $y\in\mathcal{A}_2^\Z$ such that the pair $(x,y)$ is quasi-generic for $\nu\vee\kappa$:
\[
\nu \in V(x) \implies \{\nu\vee\kappa: \kappa\in\mathcal{M}(\mathcal{A}_2^\Z)\}\subseteq \bigcup_{y}V((x,y)).
\]
More precisely, if $x$ is quasi-generic for $\nu$ along $(N_i)$ then there exists $y$ such that $(x,y)$ is quasi-generic for $\nu\vee\kappa$ along some subsequence of $(N_i)$:\footnote{Whether the same is true without passing to a subsequence is unknown, see Question 5.17 in~\cite{MR4525753}.}
\begin{equation}\label{tut}
\frac{1}{N_i}\sum_{n\leq N_i}\delta_{\sigma^nx}\to \nu \implies \{\nu\vee\kappa: \kappa\in\mathcal{M}(\mathcal{A}_2^\Z)\}\subseteq \bigcup_{y}V_{(N_i)}((x,y)).
\end{equation}
In fact, we have
\(
\{\nu\vee\kappa: \kappa\in\mathcal{M}(\mathcal{A}_2^\Z)\}= \bigcup_{y}V_{(N_i)}((x,y)).
\)
\end{theorem}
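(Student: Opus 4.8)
The plan is to prove the two inclusions separately, with all the content lying in the reverse inclusion $\{\nu\vee\kappa : \kappa\in\mathcal{M}(\mathcal{A}_2^\Z)\}\subseteq \bigcup_y V_{(N_i)}((x,y))$. The forward inclusion, and the fact that the limiting first marginal is forced to equal $\nu$, are immediate: any $\mu\in V_{(N_i)}((x,y))$ is a weak${}^*$ limit of $\frac{1}{N_{i_j}}\sum_{n\le N_{i_j}}\delta_{\sigma^n(x,y)}$ along some subsequence, hence $\sigma\times\sigma$-invariant, and its projection onto the first coordinate is the corresponding limit of $\frac{1}{N_{i_j}}\sum_{n\le N_{i_j}}\delta_{\sigma^n x}$, which equals $\nu$ because $x$ is quasi-generic for $\nu$ along $(N_i)$. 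Thus $\mu=\nu\vee\kappa$ with $\kappa$ its second marginal, giving ``$\subseteq$'' and, together with the reverse inclusion, the asserted equality.

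For the reverse inclusion, fix a joining $\lambda=\nu\vee\kappa$. First I would reduce, by a telescoping argument, to the following finitary statement: for every block-length $\ell$ and every $\varepsilon>0$ there is an index $i$ and a choice of $y|_{[1,N_i]}$ such that the joint empirical frequencies of all $\ell$-rectangles $\begin{matrix}C\\D\end{matrix}$ along $[1,N_i]$ are within $\varepsilon$ of $\lambda(\begin{matrix}C\\D\end{matrix})$. Granting this, I build a single $y$ on longer and longer fresh windows: having defined $y$ up to $N_{i_k}$, I pick $N_{i_{k+1}}\gg N_{i_k}$ so large that the prefix $[1,N_{i_k}]$ is Ces\`aro-negligible at scale $N_{i_{k+1}}$, and I define $y$ on the fresh interval $(N_{i_k},N_{i_{k+1}}]$ by the finitary construction with parameters $\ell_k\to\infty$, $\varepsilon_k\to 0$. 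Since the prefix contributes a vanishing fraction of the $N_{i_{k+1}}$-average and the fresh block matches $\lambda$ to within $\varepsilon_k$ on $\ell_k$-blocks, the pair $(x,y)$ is quasi-generic for $\lambda$ along $(N_{i_k})$. (Negative coordinates of $y$ are irrelevant to these one-sided averages and may be set to $\mathbf{0}$.)

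The heart of the argument is the finitary construction, and this is where I expect the main obstacle, the overlapping of blocks, to appear. Fix $\ell,\varepsilon$ and choose an auxiliary scale $L\gg\ell$. Using quasi-genericity of $x$ for $\nu$ along $(N_i)$, pick $N=N_i$ so large that the empirical frequencies of $L$-blocks of $x$ over $[1,N]$ approximate $\nu$ on $L$-blocks. Partition $[1,N]$ into consecutive super-blocks of length $L$ and read off, on each, the $\mathcal{A}_1$-content $C$ of $x$. For each $C$ with $\nu(C)>0$ I distribute the $\mathcal{A}_2$-fills $D$ among the super-blocks carrying $C$ in proportions $\lambda(\begin{matrix}C\\D\end{matrix})/\nu(C)$ (the rounding error is negligible since each $C$ occurs in many super-blocks). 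This defines $y|_{[1,N]}$, and the empirical distribution of $L$-super-blocks of $(x,y)$ then approximates $\lambda$ on $L$-blocks. Because assignments are made at the level of whole super-blocks, there are no internal consistency issues, and the only overlaps affecting the count of $\ell$-rectangles are across super-block boundaries, involving an $O(\ell/L)$ fraction of windows.

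Finally I would verify that the internal $\ell$-rectangle statistics come out correctly: the average, over all super-blocks, of the empirical $\ell$-window distribution inside a super-block equals the $\ell$-window distribution of an $L$-block drawn from the super-block distribution, which is $\approx\lambda$ on $L$-blocks; and averaging an $\ell$-window over its position inside a $\lambda$-distributed $L$-block recovers $\lambda$ on $\ell$-blocks exactly, up to an $O(\ell/L)$ boundary error, by the shift-invariance of $\lambda$. Crucially, this means individual super-blocks need not be ``typical'' for $\lambda$; only their averaged internal statistics must be, which is automatic. Collecting the three error sources, namely the $\nu$-approximation of $x$'s $L$-block frequencies (tending to $0$ as $N\to\infty$), the super-block boundary effects ($O(L/N)$), and the $\ell$-window boundary term ($O(\ell/L)$), and choosing $L$ large relative to $\ell$ and then $N$ large relative to $L$, the joint $\ell$-block frequencies of $(x,y)$ over $[1,N]$ fall within $\varepsilon$ of $\lambda$, completing the finitary step. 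The bookkeeping that keeps all three errors simultaneously small, together with the telescoping, is the only genuinely delicate point.
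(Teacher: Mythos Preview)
The paper does not give a proof of this result; it is cited as Theorem~5.16 of~\cite{MR4525753} and used throughout as a black box, so there is no in-paper argument to compare yours with.

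Your sketch is the standard block-painting approach and is correct in outline. The forward inclusion and the telescoping reduction to a finitary statement are fine. In the finitary step, however, there is one point that is more than bookkeeping and that your sketch elides. You pick $N=N_i$ so that the \emph{overlapping} $L$-block frequencies of $x$ on $[1,N]$ approximate $\nu$, then partition $[1,N]$ into \emph{disjoint} super-blocks and fill them according to $\lambda(\,\cdot\mid C)$, asserting that ``the empirical distribution of $L$-super-blocks of $(x,y)$ approximates $\lambda$ on $L$-blocks'' and deriving the $\ell$-block conclusion from that via shift-invariance of $\lambda$. But the super-block law you actually obtain is $\widehat\lambda(C,D)=P_x(C)\,\lambda(D\mid C)$, where $P_x$ is the \emph{disjoint} $L$-block frequency of $x$, and quasi-genericity does not force $P_x\approx\nu$: already for $x=(01)^\infty$ and even $L$, every disjoint $L$-block equals $(01)^{L/2}$ while $\nu((01)^{L/2})=\nu((10)^{L/2})=\tfrac12$. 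So the quantity your last paragraph computes is the $\ell$-window average of $\widehat\lambda$, not of $\lambda$.

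What you must show is that the \emph{overlapping} $\ell$-block frequencies of $(x,y)$ are nonetheless close to $\lambda$. This can be done --- for instance by randomising the partition offset $j\in\{0,\dots,L-1\}$, which converts disjoint $L$-block counts of $x$ back into overlapping ones, computing that the expected $\ell$-block empirical law of $(x,y)$ is then $\lambda+O(\ell/L)+o_N(1)$, and extracting a single good realisation over the finitely many $\ell$-cylinders --- but it is a genuine additional argument, not an $O(\ell/L)$ boundary effect. This overlapping-versus-disjoint passage is the substantive missing step in your sketch.
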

\begin{corollary}\label{downar1}
For $x\in\{0,1\}^\Z$, we have
\[
\frac{1}{N_i}\sum_{n\leq N_i}\delta_{\sigma^nx}\to \nu\implies \{M_\ast(\nu\vee\kappa): \kappa\in\mathcal{M}(\{0,1\}^\Z)\}=\bigcup_y M_\ast(V_{(N_i)}((x,y)))=\bigcup_{y\leq x}V_{(N_i)}(y).
\]
\end{corollary}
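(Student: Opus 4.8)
The plan is to read the statement off Theorem~\ref{downar}, with essentially all the work concentrated in matching the middle and right-hand unions. Throughout I will rename the dummy variable in the right-hand union to $z$, writing $\bigcup_{z\leq x}V_{(N_i)}(z)$, to keep it distinct from the variable $y$ in the middle union.

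First I would dispatch the equality of the left and middle sets. Since $x$ is quasi-generic for $\nu$ along $(N_i)$, the concluding identity of Theorem~\ref{downar} gives $\{\nu\vee\kappa:\kappa\in\mathcal{M}(\{0,1\}^\Z)\}=\bigcup_{y}V_{(N_i)}((x,y))$; applying $M_\ast$ to both sides and using that the image of a union is the union of the images immediately yields $\{M_\ast(\nu\vee\kappa):\kappa\in\mathcal{M}(\{0,1\}^\Z)\}=\bigcup_{y}M_\ast(V_{(N_i)}((x,y)))$.

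It then remains to prove $\bigcup_{y}M_\ast(V_{(N_i)}((x,y)))=\bigcup_{z\leq x}V_{(N_i)}(z)$. For ``$\subseteq$'' I would use that $M$ is continuous and commutes with the shift, $M\circ(\sigma\times\sigma)=\sigma\circ M$, so that $M_\ast\delta_{\sigma^n(x,y)}=\delta_{\sigma^n M(x,y)}$ for every $n$; consequently, whenever $(x,y)$ is quasi-generic for $\rho$ along a subsequence of $(N_i)$, the point $M(x,y)$ is quasi-generic for $M_\ast\rho$ along the same subsequence, i.e.\ $M_\ast(V_{(N_i)}((x,y)))\subseteq V_{(N_i)}(M(x,y))$. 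As $M(x,y)=x\cdot y\leq x$ for every $y$, taking the union over $y$ gives the inclusion into $\bigcup_{z\leq x}V_{(N_i)}(z)$.

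The hard part will be the reverse inclusion. Given $z\leq x$ and $\lambda\in V_{(N_i)}(z)$, with $z$ quasi-generic for $\lambda$ along a subsequence $(N_{i_j})$, I would first record the elementary identity $z=M(x,z)$, valid precisely because $z\leq x$ in $\{0,1\}^\Z$ (if $z_n=1$ then $x_n=1$, and if $z_n=0$ then $x_nz_n=0$). The point is now that $x$ is quasi-generic for $\nu$ along the \emph{entire} sequence $(N_i)$, hence along $(N_{i_j})$ as well; so by weak-topology compactness of the space of invariant measures on $(\{0,1\}^\Z)^2$ I can pass to a further subsequence along which the empirical measures of $(x,z)$ converge to some joining $\rho$ of $\nu$ and $\lambda$, giving $\rho\in V_{(N_i)}((x,z))$. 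Feeding this $\rho$ into the ``$\subseteq$'' direction just established shows that $z=M(x,z)$ is quasi-generic for $M_\ast\rho$ along that subsequence; since $z$ is simultaneously quasi-generic for $\lambda$ there, uniqueness of limits forces $M_\ast\rho=\lambda$, so $\lambda\in M_\ast(V_{(N_i)}((x,z)))$. I expect the only delicate point to be exactly this extraction of the joining $\rho$: it works solely because the convergence $\frac{1}{N_i}\sum_{n\leq N_i}\delta_{\sigma^n x}\to\nu$ holds along the full sequence, so that restricting to the subsequence dictated by $z$ leaves the first marginal intact.
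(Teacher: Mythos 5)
Your proposal is correct and follows essentially the same route as the paper: the first equality is read off the final identity of Theorem~\ref{downar}, the inclusion $M_\ast(V_{(N_i)}((x,y)))\subseteq V_{(N_i)}(M(x,y))\subseteq\bigcup_{z\leq x}V_{(N_i)}(z)$ is obtained from equivariance of $M$ and $M(x,y)\leq x$, and the reverse inclusion uses $z=M(x,z)$ together with a joining $\nu\vee\lambda$ for which $(x,z)$ is quasi-generic along a subsequence of $(N_i)$. The only difference is cosmetic: the paper asserts the existence of that joining in one line (``for some joining $\nu\vee\mu$''), whereas you spell out the compactness extraction and the uniqueness-of-limits step forcing $M_\ast\rho=\lambda$, correctly identifying that this works because $x$ is quasi-generic for $\nu$ along the \emph{full} sequence $(N_i)$.
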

\begin{proof}
The first equality from the assertion follows directly by Theorem~\ref{downar}. To obtain the second equality, notice that
\begin{itemize}
\item
$M_\ast(V_{(N_i)}((x,y)))\subseteq V_{(N_i)}(M(x,y))\subseteq \bigcup_{y\leq x}V_{(N_i)}(y)$
\item 
for $\mu\in V_{(N_i)}(y)$ with $y\leq x$ we have $\nu\vee\mu \in V_{(N_i)}((x,y))$ (for some joining $\nu\vee\mu$) and since $y=M(x,y)$, we conclude that $\mu=M_\ast(\nu\vee\mu)\in M_\ast(V_{(N_i)}((x,y)))$.
\end{itemize}
\end{proof}
\begin{proof}[Proof of Proposition~\ref{charakt}]
The implication from (a) to (b) is immediate.
The implication from (b) to (a) follows easily from the fact that $M_\ast$ commutes with taking generalized convex combinations of measures. That is, $\{M_\ast(\nu\vee\kappa): \kappa\in\mathcal{M}(\{0,1\}^\Z)\}$ is closed under taking generalized linear combinations and, hence, it contains the convex hull of $\mathcal{M}^e(X)$, which in turn is is equal to $\mathcal{M}(X)$, so (a) follows.

The equivalences $(b) \iff (e) \iff (f)$ follow immediately by Theorem~\ref{downar} and Corollary~\ref{downar1}.
\end{proof}

\subsection{Approximation}\label{ap1}
We will now study the following problem:
\begin{quote}
When is the limit $\bigcap_{K\geq 1}X_K$ of a descending family $\{X_K : K\geq 1\}$ of measure-theoretic subordinate subshifts also a measure-theoretic subordinate subshift?
\end{quote}
Assume that each $X_K$, $K\geq 1$, is a measure-theoretic subordinate subshift with base measure $\nu_K$ and that we have $X_K\supseteq X_{K+1}$ for each $K\geq 1$. Notice that it follows from Theorem~\ref{downar} that one can find $x_K\in \{0,1\}^\Z$ such that each $x_K$ is generic for $\nu_K$ and $x_{K+1}\leq x_K$ for $K\geq 1$. Indeed, we have $\nu_{K+1}=M_\ast(\nu_K\vee\kappa_K)$ and we just pick $y_K$ such that $(x_K,y_K)$ is generic for the joining $\nu_K\vee\kappa_K$. The point $x_{K+1}:=M_\ast(x_K,y_K)\leq x_K$ is then generic for $\nu_{K+1}$. Let now $x$ be the coordinatewise limit of $(x_K)$. Take $y\leq x$ and $\mu\in V(y)$. Since $y\leq x_K$ for each $K\geq 1$, we have $\mu\in \mathcal{M}(\bigcap_{K\geq 1}X_K)$. Hence
\[
\bigcup_{y\leq x}V(y)\subseteq \mathcal{M}(\bigcap_{K\geq 1}X_K),
\]
cf.\ Proposition~\ref{charakt} (e). Let $\nu$ be an accumulation point of $(\nu_K)$. Now, take $\mu\in\mathcal{M}^e(\bigcap_{K\geq 1}X_K)$. Then $\mu\in \mathcal{M}^e(X_K)$ for each $K\geq 1$ and thus we have $\mu=M_\ast(\nu_K\vee\kappa_K)$. Clearly, passing to a subsequence, we can assume that $\nu_K\vee\kappa_K\to \nu\vee\kappa$ for a joining of $\nu$ with some measure $\kappa$ (dependent on $\mu$). Therefore, 
\[
\mathcal{M}^e(\bigcap_{K\geq 1}X_K)\subseteq \{M_\ast(\nu\vee\kappa) : \kappa\in\mathcal{M}(\{0,1\}^\Z)\}.
\]
If $x$ would be (quasi-)generic for $\nu$ then, by Corollary~\ref{downar1} and by Proposition~\ref{charakt}, the proof of $\bigcap_{K\geq 1}X_K$ being a measure-theoretic subordinate subshift would be complete. However, a priori, there is no relation between $x$ and $\nu$.
\begin{definition}
Let  $(x_K)\subseteq \{0,1\}^\Z$ be a non-increasing sequence and let $x=x_\infty$ be its coordinatewise limit. We say that $(x_K)$ is \emph{good} if there exists an increasing sequence $(N_i)\subseteq\mathbb{N}$ such that:
\begin{itemize}
\item $x_K$ is quasi-generic along $(N_i)$ for some measure $\nu_K$, $K\in \N\cup\{\infty\}$,
\item $\nu_K(1)\to \nu:=\nu_{\infty}(1)$. 
\end{itemize}
\end{definition}

There is the following well-known related result.
\begin{lemma}\label{lemat_gene}
Suppose that $(x_K)\subseteq \{0,1\}^\Z$ is good. Then $\nu_K\to \nu$.
\end{lemma}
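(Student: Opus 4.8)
The plan is to establish the weak convergence $\nu_K\to\nu:=\nu_\infty$ by checking it on a convergence-determining family and to reduce everything to Ces\`aro averages of the sequences $x_K$ themselves. Since $\{0,1\}^\Z$ is compact and metrizable, weak-$*$ convergence of probability measures is equivalent to convergence of the integrals of all locally constant functions, and these are finite linear combinations of indicators of the cylinders $C_A=\{y:y_i=1\text{ for }i\in A\}$ (using $C_A\cap C_B=C_{A\cup B}$ and inclusion--exclusion to recover arbitrary finite-coordinate cylinders). Hence it suffices to prove
\[
\nu_K(C_A)\longrightarrow \nu(C_A)\qquad\text{for every finite }A\subseteq\Z.
\]
Because each $\1_{C_A}$ is continuous and $x_K$ (resp.\ $x_\infty$) is quasi-generic for $\nu_K$ (resp.\ $\nu$) along $(N_i)$, each of these cylinder masses is realized as $\lim_i\frac1{N_i}\sum_{n\le N_i}\1_{C_A}(\sigma^n x_K)$, so I may work entirely with block frequencies.

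The heart of the argument is a quantitative comparison driven by the single-site defect $d_K:=\nu_K(1)-\nu(1)$, which tends to $0$ by the definition of a good sequence. First I would record that, since $y\mapsto\1_{C_A}(y)=\prod_{i\in A}y_i$ is non-decreasing coordinatewise and $x_\infty\le x_K$, averaging and passing to the limit along $(N_i)$ gives $\nu_K(C_A)\ge\nu(C_A)$; thus only an upper bound on the difference is needed. To obtain it, set $E_K:=\{n:x_K(n)=1,\ x_\infty(n)=0\}$, the set of ``extra ones'' of $x_K$ over $x_\infty$. Since $\{x_K=1\}=\{x_\infty=1\}\sqcup E_K$, comparing the two quasi-generic limits shows $E_K$ has density $d_K$ along $(N_i)$, i.e.\ $\frac1{N_i}|E_K\cap[1,N_i]|\to d_K$. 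Now observe that $\1_{C_A}(\sigma^n x_K)-\1_{C_A}(\sigma^n x_\infty)=1$ forces some $j\in A$ with $x_K(n+j)=1$ but $x_\infty(n+j)=0$, i.e.\ $n+j\in E_K$; hence the set of such $n$ is contained in $\bigcup_{j\in A}(E_K-j)$. A union bound, together with the fact that a fixed shift changes $\frac1{N_i}|(E_K-j)\cap[1,N_i]|$ only by $O(|j|/N_i)\to0$, yields
\[
0\le \nu_K(C_A)-\nu(C_A)\le\sum_{j\in A}\lim_i\frac1{N_i}\bigl|(E_K-j)\cap[1,N_i]\bigr|=|A|\,d_K.
\]

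Letting $K\to\infty$ and using $d_K\to0$ gives $\nu_K(C_A)\to\nu(C_A)$ for every finite $A$, which by the first paragraph completes the proof. The only genuinely delicate point is the middle step: transferring the control handed to us at a single coordinate ($d_K\to0$) into uniform control over all cylinders. The mechanism is the monotonicity $x_\infty\le x_K$, which confines the entire discrepancy between the block frequencies of $x_K$ and of $x_\infty$ to the translates of the thin set $E_K$; once this is seen, the factor $|A|$ and the convergence are immediate. I would also take care that all the Ces\`aro limits along $(N_i)$ invoked above genuinely exist, which is exactly what the definition of good guarantees by requiring every $x_K$, including $K=\infty$, to be quasi-generic along the \emph{same} sequence $(N_i)$.
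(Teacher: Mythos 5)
Your proof is correct and takes essentially the same route as the paper's: both reduce weak convergence to convergence of cylinder/block frequencies along the common sequence $(N_i)$ and bound the discrepancy by the block length times the density of positions where $x_K$ and $x_\infty$ disagree, which by the monotonicity $x_\infty\le x_K$ equals $\nu_K(1)-\nu_\infty(1)\to 0$. The only cosmetic difference is that you work with the monotone cylinders $C_A$ together with inclusion--exclusion (plus a one-sided bound the paper does not need), while the paper estimates $|\nu_K(B)-\nu_\infty(B)|$ for arbitrary blocks $B$ directly by the same union-bound mechanism.
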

\begin{remark}
{The convergence $\nu_K\to \nu$ in the assertion of the above lemma follows by so-called d-bar convergence of sequences. We decided to include the straightforward computation to avoid introducing too much formalism that is not used in other parts of this paper. For some infomation on d-bar convergence, see e.g.~\cite{MR3669782} or~\cite{doktorat}.}
\end{remark}
\begin{remark}
To see why the assumption that $\nu_K(1)\to \nu:=\nu_{\infty}(1)$ is necessary, it suffices to take $x_K=\underbrace{0\dots 0}_{K}11\dots$ and $x=\mb{0}$. Then $\nu_K=\delta_{\mb{1}}$, while $\nu=\delta_{\mb{0}}$.
\end{remark}
\begin{proof}[Proof of Lemma~\ref{lemat_gene}]
To obtain $\nu_K\to \nu$, it suffices to show that for each $B\in \{0,1\}^{\ell}$, $\ell\geq 1$, we have $\nu_k(B) \to \nu_\infty(B)$.
           Indeed,
           \begin{align*}
              \lim_{i\to\infty}&\left|\frac{1}{N_{i}}\sum_{n=0}^{N_{i}-1}\delta_{S^{n}x_K}(B)-\frac{1}{N_{i}}\sum_{n=0}^{N_{i}-1}\delta_{S^{n}x_\infty}(B)\right|\leq \lim_{i\to\infty}\frac{1}{N_{i}}\sum_{n=0}^{N_{i}-1}\left|\mathbf{1}_{x_K[n,n+\ell-1]=B}-\mathbf{1}_{x_\infty[n,n+\ell-1]=B} \right|\\
              &\leq \lim_{i\to\infty}\frac{1}{N_{i}}\sum_{n=0}^{N_{i}-1}\mathbf{1}_{x_K[n,n+\ell-1]\neq x_\infty[n,n+\ell-1]}\leq \ell (\nu_K(1) - \nu_\infty(1))\to 0 \text{ as }K\to\infty.
           \end{align*}
\end{proof}

\begin{proposition}\label{prop_gene}
Suppose that $(x_K)\subseteq \{0,1\}^\Z$ is good and let $x$ be its coordinatewise limit. 
If $\overline{[\mathbf{0},x_K]}$ is a measure-theoretic subordinate subshift with base measure $\nu_K$ (corresponding to $(N_i)$ from the definition of good sequences) then both $\bigcap_{K\geq 1}\overline{[\mathbf{0},x_K]}$ and $\overline{[\mathbf{0},x]}$ are measure-theoretic subordinate subshifts with base measure $\nu_\infty=\lim_{K\to\infty}\nu_K$.
\end{proposition}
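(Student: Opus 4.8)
The plan is to verify condition (e) of Proposition~\ref{charakt} for both subshifts, with $\nu_\infty$ as the candidate base measure and the coordinatewise limit $x$ as the distinguished quasi-generic point. Everything hinges on first squeezing the correct consequences out of goodness. By definition of a good sequence there is a single increasing sequence $(N_i)$ along which \emph{every} $x_K$, $K\in\N\cup\{\infty\}$, is quasi-generic for $\nu_K$, together with $\nu_K(1)\to\nu_\infty(1)$; Lemma~\ref{lemat_gene} upgrades the latter to weak convergence $\nu_K\to\nu_\infty$. In particular $x=x_\infty$ is quasi-generic for $\nu_\infty$ along $(N_i)$, so I would invoke Corollary~\ref{downar1} to get the identity $\{M_\ast(\nu_\infty\vee\kappa):\kappa\in\mathcal{M}(\{0,1\}^\Z)\}=\bigcup_{y\leq x}V_{(N_i)}(y)$. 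This identity is precisely the link that the opening discussion of Section~\ref{ap1} was missing (``a priori, there is no relation between $x$ and $\nu$''): goodness is exactly what forces the limit point $x$ to see the limit measure $\nu_\infty$.

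Next I would treat the intersection $Y:=\bigcap_{K\geq 1}\overline{[\mathbf{0},x_K]}$ by checking the two inclusions in (e). For the upper inclusion $\bigcup_{y\leq x}V_{(N_i)}(y)\subseteq\mathcal{M}(Y)$: any measure quasi-generic for some $y\leq x\leq x_K$ is supported on $X_y$, and since $y\in\overline{[\mathbf{0},x_K]}$ by heredity (as $x_K$ lies in the hereditary set $\overline{[\mathbf{0},x_K]}$ and $y\leq x_K$), one has $X_y\subseteq\overline{[\mathbf{0},x_K]}$ for every $K$, hence the measure belongs to $\mathcal{M}(Y)$. For the lower inclusion $\mathcal{M}^e(Y)\subseteq\bigcup_{y\leq x}V_{(N_i)}(y)$: an ergodic $\mu\in\mathcal{M}^e(Y)$ lies in each $\mathcal{M}^e(\overline{[\mathbf{0},x_K]})$, so by hypothesis $\mu=M_\ast(\nu_K\vee\kappa_K)$; passing to a weak limit $\nu_K\vee\kappa_K\to\lambda$ along a subsequence in $K$ and using continuity of $M_\ast$ (coordinatewise multiplication is a block code, hence continuous) gives $\mu=M_\ast(\lambda)$, while continuity of the first projection forces the first marginal of $\lambda$ to be $\lim_K\nu_K=\nu_\infty$, so $\lambda=\nu_\infty\vee\kappa$ and $\mu\in\{M_\ast(\nu_\infty\vee\kappa):\kappa\}=\bigcup_{y\leq x}V_{(N_i)}(y)$ by the first paragraph. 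Proposition~\ref{charakt} then yields that $Y$ is a measure-theoretic subordinate subshift with base measure $\nu_\infty$.

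Finally, for the single bound $\overline{[\mathbf{0},x]}$, since $x\leq x_K$ for all $K$ I get $\overline{[\mathbf{0},x]}\subseteq Y$, whence $\mathcal{M}(\overline{[\mathbf{0},x]})\subseteq\mathcal{M}(Y)=\bigcup_{y\leq x}V_{(N_i)}(y)$; conversely, exactly the heredity argument of the previous paragraph (now with $\overline{[\mathbf{0},x]}$ in place of $\overline{[\mathbf{0},x_K]}$) shows every member of $\bigcup_{y\leq x}V_{(N_i)}(y)$ is supported on $X_y\subseteq\overline{[\mathbf{0},x]}$, giving the reverse inclusion and hence equality $\mathcal{M}(\overline{[\mathbf{0},x]})=\mathcal{M}(Y)=\{M_\ast(\nu_\infty\vee\kappa):\kappa\}$. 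So $\overline{[\mathbf{0},x]}$ is a measure-theoretic subordinate subshift with the same base measure $\nu_\infty$. I expect the main obstacle to sit entirely in the lower inclusion of the second paragraph: one must identify the accumulation point of the base measures $\nu_K$ with the measure for which $x$ is quasi-generic. Without goodness these could genuinely differ, and the argument would collapse at exactly the point the text flags; Lemma~\ref{lemat_gene} together with Corollary~\ref{downar1} is what closes this gap.
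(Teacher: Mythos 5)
Your proof is correct and follows essentially the same route as the paper: the paper's proof of Proposition~\ref{prop_gene} consists precisely of the heredity argument for $\bigcup_{y\leq x}V(y)\subseteq\mathcal{M}(\bigcap_K\overline{[\mathbf{0},x_K]})$, the compactness argument passing $\nu_K\vee\kappa_K$ to a limit joining, and Lemma~\ref{lemat_gene} with Corollary~\ref{downar1} to identify the accumulation point of the $\nu_K$ with the measure for which $x$ is quasi-generic, exactly as you lay out. Your only (immaterial) deviation is in handling $\overline{[\mathbf{0},x]}$, where the paper argues directly that $\nu_\infty\in\mathcal{M}(X_x)$ forces $M_\ast(\nu_\infty\vee\kappa)\in\mathcal{M}(\overline{[\mathbf{0},x]})$, while you route the same support observation through the quasi-generic points $y\leq x$.
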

\begin{proof}
The assertion for $\bigcap_{K\geq 1}\overline{[\mb{0},x_K]}$ follows by the discussion preceding Lemma~\ref{lemat_gene} and the lemma itself. Moreover, since $\nu_\infty \in\mathcal{M}(X_x)$ as $x$ is a quasi-generic point for $\nu_\infty$, it follows that $M_\ast (\nu_\infty\vee \kappa)\in\mathcal{M}(\overline{[\mathbf{0},x]})$ for any $\kappa\in\mathcal{M}(\{0,1\}^\Z)$ and, thus,
\[
\{M_\ast (\nu_\infty\vee\kappa) : \kappa\in\mathcal{M}(\{0,1\}^\Z)\}\subseteq\mathcal{M}(\overline{[\mathbf{0},x]})\subseteq\mathcal{M}(\bigcap_{K\geq 1}[\mb{0},x_K]) = \{M_\ast (\nu_\infty\vee\kappa) : \kappa\in\mathcal{M}(\{0,1\}^\Z)\}.
\]
\end{proof}

\begin{remark}
The assumptions of Proposition~\ref{prop_gene} are satisfied, e.g., when $x=\eta$ for some $\mathscr{B}\subseteq\N$ and $x_K=\mathbf{1}_{\mathcal{F}_{\mathscr{B}_K}}$. For the details, see Remark~\ref{commentsB} in Section~\ref{NNN}.
\end{remark}

\section{Sandwich measure-theoretic subordinate subshifts}\label{BCS}
Let again $ X \subseteq \{0,1\}^\Z$ be a subshift and let $N\colon(\{0,1\}^\Z)^3\to\{0,1\}^\Z$ be given by
\[
N(w,x,y)=(1-y)\cdot w+y\cdot x.
\]
Again, it will be convenient to translate the definition of a sandwich measure-theoretic subordinate subshift to the language of joinings: $ X $ is called a \emph{sandwich measure-theoretic subordinate subshift} if there exists $\rho\in \mathcal{M}(\{0,1\}^\Z\times \{0,1\}^\Z)$ such that
\begin{equation}\label{miary}
\mathcal{M}( X )=\{N_\ast(\rho\vee \kappa) : \kappa\in\mathcal{M}(\{0,1\}^\Z)\}.
\end{equation}
We will denote by $\nu_0$ and $\nu_1$ the projection of $\rho$ onto the first and second coordinate, respectively (hence, $\rho$ is a certain joining of $\nu_0$ and $\nu_1$). 

\subsection{Base measure}

Notice first that if $ X $ is a sandwich measure-theoretic subordinate subshift with a pre-base measure $\rho=\nu_0\vee\nu_1$ then $\nu_0,\nu_1\in\mathcal{M}( X )$.
Indeed, for any finite $A\subseteq \Z$, we have
\begin{align*}
N_\ast&(\rho\otimes \delta_{\mathbf{1}})(\{x : x_i=1 \text{ for }i\in A\})\\
&=\rho\otimes \delta_{\mathbf{1}}(\{(w,x,y) : (w+y(x-w))_i=1 \text{ for }i\in A:\})\\
&=\rho(\{(w,x) : x_i=1 \text{ for }i\in A\})=\nu_1(\{x : x_i=1\text{ for }i\in A\}).
\end{align*}
Thus, by the inclusion-exclusion principle, $N_\ast(\rho\otimes \delta_{\mathbf{1}})=\nu_1$. By the same token, $N_\ast(\rho\otimes \delta_{\mathbf{0}})=\nu_0$.

\begin{remark}\label{re:11}
Given a sandwich measure-theoretic subordinate subshift $X$, there might be many pre-base measures. The simplest example here is $\{0,1\}^\Z$, where e.g.\ both $\delta_{\mathbf{0}}\otimes \delta_{\mathbf{1}}$ and $(\sigma\otimes id)_\ast (\delta_{(01)^\infty})$ satisfy~\eqref{miary}. However, if $ X $ is a sandwich measure-theoretic subordinate subshift then one can find $\rho$ satisfying~\eqref{miary} such that $\rho$ lives in the ``above the diagonal'' part of the space, that is $\rho$ satisfies $\rho(\mathbf{A})=1$, where $\mathbf{A}:=\{(w,x) : w\leq x\}$. Indeed let 
$J\colon X\times X\times \{0,1\}^Z\mapsto (\{0,1\}^\Z)^3$ be defined by
\[
J\colon (w,x,y)\mapsto (\min(w,x),\max(w,x),\widetilde{y}),
\]
where $\widetilde{y}(n)=y(n)$ when $w(n)<x(n)$ and $\widetilde{y}(n)=1-y(n)$ otherwise. Then
\(
N\circ J=N
\)
and it follows immediately that $J_\ast(\rho)$ satisfies~\eqref{miary} and is supported on $\mathbf{A}$. In other words, for any sandwich measure-theoretic subordinate subshift a base measure always exists.
\end{remark}

\subsection{Examples}\label{ppp}
Each measure-theoretic subordinate subshift turns out to be a sandwich measure-theoretic subordinate subshift. More precisely, we have the following.
\begin{proposition}[cf.\ Proposition~\ref{zwiazek}]
Let $X\subseteq \{0,1\}^\Z$ be a measure-theoretic subordinate subshift whose base measure is $\nu$. Then $X$ is a sandwich measure-theoretic subordinate subshift for which $\delta_{\boldsymbol{0}}\otimes \nu$ is a base measure.
\end{proposition}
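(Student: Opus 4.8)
The plan is to verify directly that $\rho:=\delta_{\boldsymbol{0}}\otimes\nu$ serves as a base measure, with the entire argument resting on the single observation that the code $N$ collapses to the multiplication code $M$ once its first argument is $\boldsymbol{0}$. First I would record the easy half: the first marginal of $\delta_{\boldsymbol{0}}\otimes\nu$ is $\delta_{\boldsymbol{0}}$, so $\rho$ is concentrated on pairs of the form $(\boldsymbol{0},x)$, and since $\boldsymbol{0}\leq x$ coordinatewise for every $x$ we get $\rho(\mathbf{A})=1$. Hence $\rho$ lives above the diagonal and will qualify as a base measure (not merely a pre-base measure) once the measure-set identity is established.

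The key computation is that for all $x,y\in\{0,1\}^\Z$ we have $N(\boldsymbol{0},x,y)=(1-y)\boldsymbol{0}+yx=yx=M(x,y)$. Using this I would prove the two inclusions of~\eqref{miary}. For the inclusion ``$\subseteq$'': any joining $\rho\vee\kappa\in\mathcal{M}((\{0,1\}^\Z)^3)$ has its first coordinate equal to $\boldsymbol{0}$ almost surely, because its $(w,x)$-marginal is $\rho=\delta_{\boldsymbol{0}}\otimes\nu$; therefore $N(w,x,y)=M(x,y)$ on a set of full measure, and $N_\ast(\rho\vee\kappa)=M_\ast(\lambda)$, where $\lambda$ is the image of $\rho\vee\kappa$ under the projection onto the $(x,y)$-coordinates. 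Since $\lambda$ has marginals $\nu$ and $\kappa$, it is a joining $\nu\vee\kappa$, so $N_\ast(\rho\vee\kappa)$ lies in $\{M_\ast(\nu\vee\kappa):\kappa\in\mathcal{M}(\{0,1\}^\Z)\}=\mathcal{M}(X)$. For the inclusion ``$\supseteq$'': given any joining $\lambda=\nu\vee\kappa$ on the $(x,y)$-coordinates, the measure $\delta_{\boldsymbol{0}}\otimes\lambda$ on $(\{0,1\}^\Z)^3$ is $\sigma^{\times 3}$-invariant, has $(w,x)$-marginal $\delta_{\boldsymbol{0}}\otimes\nu=\rho$ and third marginal $\kappa$, hence is a joining $\rho\vee\kappa$; and by the key identity $N_\ast(\delta_{\boldsymbol{0}}\otimes\lambda)=M_\ast(\lambda)$. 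Thus every $M_\ast(\nu\vee\kappa)$ is realized as some $N_\ast(\rho\vee\kappa)$.

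Combining the two inclusions yields $\mathcal{M}(X)=\{N_\ast(\rho\vee\kappa):\kappa\in\mathcal{M}(\{0,1\}^\Z)\}$, which is exactly the statement that $X$ is a sandwich measure-theoretic subordinate subshift with pre-base measure $\rho=\delta_{\boldsymbol{0}}\otimes\nu$; together with the first paragraph this makes $\rho$ a base measure. I do not expect a genuine obstacle here: the only point requiring care is the bookkeeping of marginals when passing between joinings on $(\{0,1\}^\Z)^3$ and joinings of $\nu$ with $\kappa$, together with the check that $\delta_{\boldsymbol{0}}\otimes\lambda$ is a legitimate joining of $\rho$ and $\kappa$. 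The latter is immediate, since any $\sigma^{\times 3}$-invariant measure with the prescribed two marginals is by definition such a joining.
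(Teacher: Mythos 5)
Your proof is correct and follows essentially the same route as the paper's: both rest on the identity $N(\mathbf{0},x,y)=M(x,y)$ together with the observation that joinings of $\delta_{\boldsymbol{0}}\otimes\nu$ with $\kappa$ correspond exactly to measures of the form $\delta_{\boldsymbol{0}}\otimes(\nu\vee\kappa)$. You are in fact slightly more thorough than the paper, which leaves the ``$\subseteq$'' inclusion and the verification that $\rho(\mathbf{A})=1$ (making $\rho$ a base, not merely pre-base, measure) implicit.
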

\begin{proof}
Take $\mu\in\mathcal{M}(X)$. Then $\mu=M_\ast(\nu\vee\kappa)$ for some joining $\nu\vee\kappa$. Take $\rho:=\delta_{\boldsymbol{0}}\otimes \nu$ and notice that 
\[
\delta_{\boldsymbol{0}}\otimes (\nu\vee\kappa)=(\delta_{\boldsymbol{0}}\otimes \nu)\vee\kappa=\rho\vee\kappa.
\] 
Moreover, for a.e.\ $(w,x,y)$ with respect to any measure of the above form, we have
\[
M\circ \pi_{2,3}(w,x,y)=M(x,y)=N(\mathbf{0},x,y)=N(w,x,y)
\]
and thus
\[
N_\ast(\rho\vee \kappa)=N_\ast(\delta_{\boldsymbol{0}}\otimes (\nu\vee\kappa))=M_\ast \circ (\pi_{2,3})_\ast (\nu\vee\kappa)=M_\ast (\nu\vee\kappa),
\]
which completes the proof.
\end{proof}
However, we have more than that. Let us make now a list of examples of sandwich measure-theoretic subordinate subshifts parallel to that in Section~\ref{przyk}.
\begin{enumerate}[(A)]
\item The full shift, with the base measure $\delta_\mathbf{0}\otimes \delta_\mathbf{1}$ (this is immediate by the preceding proposition).
\item At the end of Section~\ref{bicobisu}, we will show the following.
\begin{proposition}[Cf.\ Example (B) in Section~\ref{przyk}]\label{biokre}
Let $Z\subseteq \{0,1\}^\Z\times \{0,1\}^\Z$ be closed and invariant under the action of $\sigma\times \sigma$. If $Z$ is uniquely ergodic with $\mathcal{M}(Z,\sigma\times\sigma)=\{\rho\}$ then $N(Z\times \{0,1\}^\Z)$ is a sandwich measure-theoretic subordinate subshift with pre-base measure $\rho$. In particular, if $w\leq x$ are both periodic then $[w,x]$ is a sandwich measure-theoretic subordinate subshift.
\end{proposition}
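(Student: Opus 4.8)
The plan is to verify the two inclusions in the defining identity $\mathcal{M}(X)=\{N_\ast(\rho\vee\kappa):\kappa\in\mathcal{M}(\{0,1\}^\Z)\}$ for $X:=N(Z\times\{0,1\}^\Z)$. First I would record that $X$ really is a subshift: $Z\times\{0,1\}^\Z$ is compact and $N$ is continuous, so $X$ is closed, and the intertwining $N\circ\sigma^{\times 3}=\sigma\circ N$ makes $X$ shift-invariant. After that, both inclusions become statements about transporting invariant measures along the factor map $N\colon(Z\times\{0,1\}^\Z,\sigma^{\times 3})\to(X,\sigma)$.

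The inclusion $\supseteq$ is the routine one. Given any joining $\rho\vee\kappa\in\mathcal{M}((\{0,1\}^\Z)^3)$, its projection onto the first two coordinates is $\rho$, which is supported on $Z$; hence $\rho\vee\kappa$ is supported on $Z\times\{0,1\}^\Z$, and its image $N_\ast(\rho\vee\kappa)$ is a $\sigma$-invariant measure supported on $N(Z\times\{0,1\}^\Z)=X$. So every such push-forward lies in $\mathcal{M}(X)$.

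The inclusion $\subseteq$ is where unique ergodicity does the real work, and is the step I expect to be the crux. Take $\mu\in\mathcal{M}(X)$. The idea is to lift $\mu$ upstairs to an invariant measure: since $N$ is a continuous equivariant surjection between compact metric spaces, there is $\lambda\in\mathcal{M}(Z\times\{0,1\}^\Z,\sigma^{\times 3})$ with $N_\ast\lambda=\mu$. If one prefers to be self-contained rather than quote the standard factor-lifting fact, pick any Borel probability $m$ on $Z\times\{0,1\}^\Z$ with $N_\ast m=\mu$ (such $m$ exists because push-forward by a continuous surjection of compact metric spaces is onto the probability measures) and let $\lambda$ be a weak-$\ast$ limit point of the Ces\`aro averages $\frac{1}{K}\sum_{k<K}\bigl((\sigma^{\times 3})^k\bigr)_\ast m$; invariance of $\mu$ then forces $N_\ast\lambda=\mu$. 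Now comes the decisive observation: $(\pi_{1,2})_\ast\lambda$ is a $\sigma\times\sigma$-invariant measure on $Z$, and unique ergodicity collapses it to $(\pi_{1,2})_\ast\lambda=\rho$. Thus $\lambda$ is automatically a joining of $\rho$ with $\kappa:=(\pi_3)_\ast\lambda$, that is $\lambda=\rho\vee\kappa$, whence $\mu=N_\ast\lambda=N_\ast(\rho\vee\kappa)$. This finishes the proof that $\rho$ is a pre-base measure.

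For the final assertion, when $w\le x$ are periodic the pair $(w,x)$ has a finite $\sigma\times\sigma$-orbit, so its orbit closure $X_{(w,x)}$ is a finite set and therefore uniquely ergodic, its only invariant measure being the normalized counting measure on the orbit. Since taking closures commutes with the finite code $N$, one has $\overline{[w,x]}=N(X_{(w,x)}\times\{0,1\}^\Z)$, and for periodic bounds this set is a finite union of order-intervals, hence already closed; applying the main statement with $Z=X_{(w,x)}$ identifies $[w,x]$ as a sandwich measure-theoretic subordinate subshift. The only point requiring genuine care throughout is the lifting of $\mu$ through $N$ together with the verification that the lift is invariant; everything else is forced by the unique ergodicity of $Z$.
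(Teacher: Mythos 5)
Your proof is correct, and it takes a genuinely different route from the paper's. The paper argues through generic points: for an \emph{ergodic} $\mu\in\mathcal{M}^e(N(Z\times\{0,1\}^\Z))$ it picks a generic point $z=N(w,x,y)$, uses unique ergodicity to see that $(w,x)$ is generic for $\rho$, extracts by compactness a subsequence along which $(w,x,y)$ is quasi-generic for some $\rho\vee\kappa$, and concludes $\mu=N_\ast(\rho\vee\kappa)$; covering all of $\mathcal{M}(X)$ then rests on the fact (from the sandwich analogue of Proposition~\ref{charakt}, i.e.\ the implication (b)$\Rightarrow$(a)) that the family $\{N_\ast(\rho\vee\kappa)\}$ is closed under generalized convex combinations. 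You instead lift an \emph{arbitrary} invariant $\mu$ through the topological factor map $N$ (existence of a lift via surjectivity of push-forward on compact metric spaces plus Ces\`aro averaging) and then collapse the $(\pi_{1,2})$-marginal to $\rho$ by unique ergodicity. Your approach buys directness: it treats all invariant measures at once, needing neither ergodic decomposition, nor generic points, nor the convexity step, and it also makes explicit two things the paper leaves tacit — the invariant lift and, in the periodic case, the closedness of $[w,x]$ (a finite union of order-intervals over one period). The paper's approach buys consistency with its generic-point framework (Theorem~\ref{downar}, Proposition~\ref{charakt}), which is the machinery it reuses throughout Sections~\ref{gepo}--\ref{ggg}; it also mirrors the proof of Corollary~1 in~\cite{MR3589826}, which is the precedent the authors cite. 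One small remark: your fiber-lifting step is slightly more than needed — since $(\pi_{1,2})_\ast$ of \emph{any} invariant measure on $Z\times\{0,1\}^\Z$ is automatically $\rho$, the only content is the existence of an invariant lift, which is exactly where your Ces\`aro argument is the right level of detail.
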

\item For any $\mathscr{B}\subseteq \mathbb{N}$ such that $\eta^*$ is a regular Toeplitz sequence, $X_\eta$ is a sandwich measure-theoretic subordinate subshift.  See Remark~\ref{commentsB1} in Section~\ref{NNNNN} for the details.
\end{enumerate}

\subsection{Generic points viewpoint}\label{ggg}
Clearly, for any sandwich measure-theoretic subordinate subshift, one can always find an ergodic base measure. Moreover, the marginals of any base measure are always ergodic. This is a consequence of the following more involved result.
\begin{proposition}\label{zwiazek}
Suppose that $ X \subseteq \{0,1\}^\Z$ is a sandwich measure-theoretic subordinate subshift with base measure~$\rho=\nu_0\vee\nu_1$. Then $\widetilde{ X }$ is a measure-theoretic subordinate subshift with base measure $\nu_1$.
\end{proposition}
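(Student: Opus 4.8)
The plan is to show directly that $\widetilde{X}$ is a measure-theoretic subordinate subshift with base measure $\nu_1$, i.e.\ that
\[
\mathcal{M}(\widetilde{X})=\{M_\ast(\nu_1\vee\kappa) : \kappa\in\mathcal{M}(\{0,1\}^\Z)\};
\]
by Proposition~\ref{charakt} it would even suffice to check $\mathcal{M}^e(\widetilde{X})\subseteq\{M_\ast(\nu_1\vee\kappa):\kappa\}\subseteq\mathcal{M}(\widetilde{X})$, and via Corollary~\ref{downar1} (with $\nu=\nu_1$) the middle set is exactly $\bigcup_{y\le x}V_{(N_i)}(y)$ for $x$ quasi-generic for $\nu_1$, so this matches condition (e). The right-hand inclusion is the ``automatic'' one: from the \emph{Base measure} subsection $N_\ast(\rho\otimes\delta_{\mathbf{1}})=\nu_1$, so $\nu_1\in\mathcal{M}(X)$ and $\supp(\nu_1)\subseteq X\subseteq\widetilde{X}$; since $\widetilde{X}$ is hereditary, each $M_\ast(\nu_1\vee\kappa)$ is a $\sigma$-invariant measure supported on $M(X\times\{0,1\}^\Z)=\widetilde{X}$.

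For the substantial inclusion I would first note that $M\colon X\times\{0,1\}^\Z\to\widetilde{X}$ is a continuous equivariant surjection, hence a topological factor map; therefore any $\mu\in\mathcal{M}(\widetilde{X})$ lifts to $\mu=M_\ast(\mu_1\vee\lambda)$ with $\mu_1\in\mathcal{M}(X)$, $\lambda\in\mathcal{M}(\{0,1\}^\Z)$. By the sandwich hypothesis $\mu_1=N_\ast(\rho\vee\kappa)$ for some $\kappa$. The key move is to lift the joining $\mu_1\vee\lambda$ through the factor map $N\colon(\rho\vee\kappa)\to\mu_1$: by the standard disintegration construction for lifting joinings across a factor map, there is a $\sigma^{\times 4}$-invariant measure $\Theta$ on $(\{0,1\}^\Z)^4$, in coordinates $(w,x,y,z)$, whose $(w,x,y)$-marginal is $\rho\vee\kappa$, whose $z$-marginal is $\lambda$, and with $(N\times\mathrm{id})_\ast\Theta=\mu_1\vee\lambda$.

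The crux is a coordinatewise identity that collapses the ``sandwich'' data into ``subordinate'' data. I would introduce the $1$-block code $G(w,x,y,z)=(x,\;z\cdot N(w,x,y))$ (coordinatewise product in the second slot), which is continuous and commutes with the shift. Because $\rho$ is a \emph{base} measure, $\rho(\mathbf{A})=1$, so $\supp\Theta$ lies in $\{w\le x\}$; and for $w_n\le x_n$ in $\{0,1\}$ one computes $x_n\cdot N(w,x,y)_n=N(w,x,y)_n$ (using $x_n^2=x_n$ and $w_nx_n=w_n$), whence $M(x,\,z\cdot N(w,x,y))=M(N(w,x,y),z)$ coordinatewise. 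Thus $M\circ G=M\circ(N\times\mathrm{id})$ $\Theta$-a.e., giving $\mu=M_\ast(N\times\mathrm{id})_\ast\Theta=M_\ast G_\ast\Theta$. Finally the first marginal of $G_\ast\Theta$ is the law of $x$, namely $\nu_1$; hence $G_\ast\Theta=\nu_1\vee\lambda'$ is a joining of $\nu_1$ with $\lambda'$, the law of $z\cdot N(w,x,y)$, and $\mu=M_\ast(\nu_1\vee\lambda')$ lands in the required set.

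I expect the main obstacle to be the lifting step producing $\Theta$ together with its interaction with the code $G$: one must check that the disintegrations defining $\Theta$ are equivariant so that $\Theta$ is genuinely $\sigma^{\times 4}$-invariant, and the base-measure property $\rho(\mathbf{A})=1$ must be used in an essential way, since it is precisely $w\le x$ that forces $x\cdot N(w,x,y)=N(w,x,y)$ and thereby pushes the lower bound $\nu_0$ down to $\mathbf{0}$. An equivalent route, closer to the generic-points viewpoint of this section, is to run the same computation at the level of quasi-generic points via Theorem~\ref{downar}, choosing a quasi-generic point for $\rho\vee\kappa$ inside $\supp(\rho\vee\kappa)\subseteq\mathbf{A}\times\{0,1\}^\Z$ so that $w\le x$ holds at every coordinate, and then applying the block code $G$ and Corollary~\ref{downar1}.
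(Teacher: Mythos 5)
Your proposal is correct, but it takes a genuinely different route from the paper's. The paper argues entirely at the level of (quasi-)generic points: it verifies condition (d) of the visible-measures characterization by chaining applications of the lifting result (Theorem~\ref{downar}) --- first lifting a generic point $u$ of $\mu\in\mathcal{M}^e(\widetilde{X})$ to a pair $(v,u)$ with $v\in X$ and $u=M(v,u)$, then writing $\kappa=N_\ast(\rho\vee\kappa')$ for $\kappa\vee\mu\in V((v,u))$ and choosing a generic point $(w,x,y)\in\mathbf{A}\times\{0,1\}^\Z$ for $\rho\vee\kappa'$, then lifting once more to a quadruple $(w,x,y,z)$ --- and it concludes from the pointwise inequality $M\circ(N\times I)(w,x,y,z)\leq N(w,x,y)\leq x$ that $\mathcal{M}^e(\widetilde{X})\subseteq\bigcup_{y\leq x}V(y)=\{M_\ast(\nu_1\vee\kappa):\kappa\in\mathcal{M}(\{0,1\}^\Z)\}$ via Corollary~\ref{downar1}. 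You instead work purely at the level of measures: you lift $\mu\in\mathcal{M}(\widetilde{X})$ through the topological factor map $M\colon X\times\{0,1\}^\Z\to\widetilde{X}$, apply the sandwich hypothesis to the first marginal $\mu_1$, and construct $\Theta$ as the relatively independent joining of $\rho\vee\kappa$ and $\mu_1\vee\lambda$ over the common factor $\mu_1$; the equivariance you flag as the main obstacle is the standard almost-everywhere identity for disintegrations of invariant measures over factor maps on standard Borel spaces, so $\Theta$ is indeed $\sigma^{\times 4}$-invariant and no gap remains. The algebraic heart is the same in both proofs: on $\{w\leq x\}$ one has $x\cdot N(w,x,y)=N(w,x,y)$, i.e.\ $N(w,x,y)\leq x$, and this is precisely where the base-measure hypothesis $\rho(\mathbf{A})=1$ enters --- your identity $M\circ G=M\circ(N\times\mathrm{id})$ $\Theta$-a.e.\ is the measure-level avatar of the paper's coordinatewise inequality. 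What your route buys: it handles all of $\mathcal{M}(\widetilde{X})$ at once, with no reduction to ergodic measures, no generic points, and no appeal to Proposition~\ref{charakt} or Theorem~\ref{downar}, at the cost of invoking disintegrations and relatively independent joinings. What the paper's route buys: it stays inside the generic-points formalism of the section, whose byproducts (the description of $\mathcal{M}(\widetilde{X})$ as $\bigcup_{y\leq x}V(y)$) are reused elsewhere, e.g.\ in Corollary~\ref{cor:1}. Your closing remark that the same computation can be rerun at the level of quasi-generic points is accurate --- that variant is essentially the paper's proof.
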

For the proof of Proposition~\ref{zwiazek}, we will need an analogue of Proposition~\ref{charakt}.
\begin{proposition} Let $ X \subseteq \{0,1\}^\Z$ be a subshift and suppose that $\rho$ is supported on $\mathbf{A}=\{(w,x) : w\leq x\}$. Then the following are equivalent:
\begin{enumerate}[(a)]
\item $\mathcal{M}( X )=\{N_\ast(\rho\vee\kappa) : \kappa\in\mathcal{M}(\{0,1\}^\Z)\}$ (i.e.\ $ X $ is a sandwich measure-theoretic subordinate subshift with base measure $\rho$),
\item $\mathcal{M}^e( X )\subseteq \{N_\ast(\rho\vee\kappa) : \kappa\in\mathcal{M}(\{0,1\}^\Z\}\subseteq \mathcal{M}( X )$,
\item $\mathcal{M}^e( X )\subseteq \bigcup_{w\leq y\leq x}V_{(N_i)}(y)\subseteq \mathcal{M}( X )$ for some pair $(w,x)$ quasi-generic for $\rho$ along $(N_i)$, such that $w\leq x$,
\item $\mathcal{M}^e( X )\subseteq \bigcup_{w\leq y\leq x}V_{(N_i)}(y)\subseteq \mathcal{M}( X )$ whenever the pair $(w,x)$   is quasi-generic for $\nu$ along $(N_i)$ and satisfies $w\leq x$.
\end{enumerate}
\end{proposition}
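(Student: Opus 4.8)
The plan is to follow the proof of Proposition~\ref{charakt} line by line, the only new ingredient being the sandwich analogue of Corollary~\ref{downar1}. I fix a pair $(w,x)$ quasi-generic for $\rho$ along $(N_i)$; since $\rho$ is supported on $\mathbf{A}$, any such pair automatically satisfies $w\leq x$. The crux is to prove the identity
\[
\{N_\ast(\rho\vee\kappa) : \kappa\in\mathcal{M}(\{0,1\}^\Z)\}=\bigcup_{y}N_\ast\big(V_{(N_i)}((w,x,y))\big)=\bigcup_{w\leq z\leq x}V_{(N_i)}(z),
\]
which holds for every such pair and directly matches the middle set of (b) with that of (c) and (d).

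For the first equality I would apply Theorem~\ref{downar} with $\mathcal{A}_1=\{0,1\}^2$ and $\mathcal{A}_2=\{0,1\}$, viewing $(w,x)$ as a single point of $(\{0,1\}^2)^\Z$ quasi-generic for $\rho$; the theorem gives $\{\rho\vee\kappa : \kappa\in\mathcal{M}(\{0,1\}^\Z)\}=\bigcup_y V_{(N_i)}((w,x,y))$, and pushing forward by the continuous shift-commuting map $N$ yields the claim. The second equality is the geometric heart of the argument and rests on the elementary coordinatewise fact that, when $w\leq x$, the map $y\mapsto N(w,x,y)$ has image exactly $\{z : w\leq z\leq x\}$: indeed $N(w,x,y)(n)$ equals $w(n)$ or $x(n)$ according to $y(n)$, so $w\leq N(w,x,y)\leq x$ always, and conversely any $z$ with $w\leq z\leq x$ is hit by setting $y(n)=z(n)$ on the coordinates where $w(n)<x(n)$. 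The inclusion ``$\subseteq$'' then follows since $N$ carries a point quasi-generic for a triple measure to one quasi-generic for its $N_\ast$-image, which lies in some $V_{(N_i)}(z)$ with $w\leq z\leq x$. For ``$\supseteq$'', given $\mu\in V_{(N_i)}(z)$ with $w\leq z\leq x$, I would choose $y$ with $N(w,x,y)=z$, pass to a subsequence of $(N_i)$ along which the triple $(w,x,y)$ is quasi-generic for some $\lambda$, and note that $(\pi_{1,2})_\ast\lambda=\rho$ (because $(w,x)$ is quasi-generic for $\rho$ along all of $(N_i)$) while $N_\ast\lambda=\mu$; hence $\mu=N_\ast(\rho\vee\kappa)$ and $\mu\in N_\ast(V_{(N_i)}((w,x,y)))$.

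Granting the identity, the equivalences are short. The implication (a)$\Rightarrow$(b) is trivial. For (b)$\Rightarrow$(a) I would observe that $\{N_\ast(\rho\vee\kappa):\kappa\in\mathcal{M}(\{0,1\}^\Z)\}=\{N_\ast(\lambda):(\pi_{1,2})_\ast\lambda=\rho\}$ is convex and stable under generalized (barycentric) convex combinations, because both $\lambda\mapsto N_\ast\lambda$ and $\lambda\mapsto(\pi_{1,2})_\ast\lambda$ are affine; therefore this set contains every generalized convex combination of measures from $\mathcal{M}^e(X)$, and by the ergodic decomposition these exhaust $\mathcal{M}(X)$, so together with the inclusion ``$\subseteq\mathcal{M}(X)$'' from (b) this forces the equality in (a). Finally, (b)$\iff$(c)$\iff$(d) is immediate from the displayed identity: it equates the middle set of (b) with $\bigcup_{w\leq y\leq x}V_{(N_i)}(y)$ for every quasi-generic pair $(w,x)$, so the ``for some'' version (c) and the ``whenever'' version (d) both reduce to (b).

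The step I expect to be the main obstacle is the ``$\supseteq$'' direction of the displayed identity, where an arbitrary $\mu\in V_{(N_i)}(z)$ must be realized as $N_\ast\lambda$ with $(\pi_{1,2})_\ast\lambda=\rho$. The delicate points are choosing the coordinate sequence $y$ so that $N(w,x,y)=z$ and then controlling two weak-$\ast$ limits along one common subsequence (the marginal on the first two coordinates converging to $\rho$ and the $N$-image converging to $\mu$); the hypothesis $w\leq x$, guaranteed by $\rho(\mathbf{A})=1$, is exactly what makes the fibre $\{z : w\leq z\leq x\}$ coincide with the range of $y\mapsto N(w,x,y)$. The convexity step for (b)$\Rightarrow$(a) carries the usual measurable-selection subtlety in assembling the barycentric joining, which I would treat as routine, as in Proposition~\ref{charakt}.
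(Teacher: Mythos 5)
Your argument is correct and is essentially the paper's own: the paper disposes of this proposition by saying the proof is that of Proposition~\ref{charakt} with $\nu$ replaced by $\rho$, $y\leq x$ by $w\leq y\leq x$ and $M$ by $N$, and your sandwich analogue of Corollary~\ref{downar1} --- Theorem~\ref{downar} applied with $\mathcal{A}_1=\{0,1\}^2$, $\mathcal{A}_2=\{0,1\}$, the fibre description of $y\mapsto N(w,x,y)$ for the identity $\{N_\ast(\rho\vee\kappa)\}=\bigcup_{w\leq z\leq x}V_{(N_i)}(z)$, and the convexity/ergodic-decomposition step for (b)$\Rightarrow$(a) --- is exactly that adaptation carried out explicitly. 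One small correction: quasi-genericity of $(w,x)$ for $\rho$ with $\rho(\mathbf{A})=1$ does \emph{not} automatically force $w\leq x$ coordinatewise (empirical averages are blind to, say, finitely many offending coordinates), which is why the statement imposes $w\leq x$ explicitly in (c) and (d); your proof only uses the inequality where it is assumed (and a pair with $w\leq x$ always exists since quasi-generic points can be chosen in $\operatorname{supp}\rho\subseteq\mathbf{A}$), so this aside is harmless.
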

\begin{proof}
The arguments are the same as in the proof of Proposition~\ref{charakt}, one just needs to change $\nu$ for $\rho$, $y\leq x$ for $w\leq y\leq x$ and $M$ for $N$.
\end{proof}
\begin{proof}[Proof of Proposition~\ref{zwiazek}]
We will show that condition (d) from Proposition~\ref{charakt} holds. To obtain the first necessary inclusion, let $\mu\in\mathcal{M}^e(\widetilde{ X })$. Let $u\in \widetilde{X}$ be a generic point for $\mu$. Then there exists $v\in X$ such that $u\leq v$. Moreover, we clearly have $u=M(v,u)$. Let $\kappa\vee\mu$ be any member of $V((v,u))$. Since $X$ is a sandwich measure-theoretic subordinate subshift with base measure $\rho$, we have $\kappa=N_\ast(\rho\vee\kappa')$ for some ergodic joining of $\rho$ with some measure $\kappa'\in\mathcal{M}(\{0,1\}^\Z)$. Let $(w,x,y)\in\mathbf{A}\times \{0,1\}^\Z$ be generic for $\rho\vee\kappa'$. In particular, $x$ is generic for $\nu_1$. Then $N(w,x,y)$ is generic for $\kappa=N_\ast(\rho\vee\kappa')$. It follows by Theorem~\ref{downar} that there exists $z\in\{0,1\}^\Z$ such that $(N(w,x,y),z)$ is generic for $\kappa\vee\mu$. Finally, take any member of $V((w,x,y,z))$. Its image via $M\circ (N\times I)$ is quasi-generic for $\mu$ and $M\circ (N\times I)(w,x,y,z)$ is its generic point. It remains to notice that
\(
M\circ (N\times I)(w,x,y,z)\leq N(w,x,y)\leq x
\)
to deduce that $\mathcal{M}^e(\widetilde{ X })\subseteq \bigcup_{y\leq x}V(y)$.

To finish the proof we only need the following:
\[
\bigcup_{y\leq x}V(y) = \{M_\ast(\nu_1\vee\kappa) : \kappa \in\mathcal{M}(\{0,1\}^\Z)\}\subseteq \mathcal{M}(\widetilde{ X })
\]
The equality is given by Theorem \ref{downar} and the inclusion follows directly by the fact that $M( X \times \{0,1\}^\Z)=\widetilde{ X }$ and $\nu_1\in\mathcal{M}(X)$.
\end{proof}
\begin{corollary}\label{cor:1}
For any sandwich measure-theoretic subordinate subshift $ X $, the corresponding projections $\nu_0$ and $\nu_1$ of a base measure $\rho$ are ergodic and do not depend on the choice of $\rho$. Moreover, 
\[
\nu_1(1)=\sup_{\nu\in\mathcal{M}( X )}\nu(1) \text{ and }\nu_0(0)=\sup_{\nu\in\mathcal{M}( X )}\nu(0)
\]
and $\nu_0$ and $\nu_1$ are the unique members of $\mathcal{M}( X )$ realizing the above suprema.
\end{corollary}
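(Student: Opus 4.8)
The plan is to derive both halves of the statement from Proposition~\ref{zwiazek} together with Proposition~\ref{pr:1}, treating $\nu_1$ directly and then reducing the $\nu_0$ case to the $\nu_1$ case by a bitwise complementation symmetry.

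First I would handle $\nu_1$. By Proposition~\ref{zwiazek} the hereditary closure $\widetilde{X}$ is a measure-theoretic subordinate subshift with base measure $\nu_1$, so Proposition~\ref{pr:1} applies to $\widetilde{X}$ and shows that $\nu_1$ is ergodic and is the unique measure in $\mathcal{M}(\widetilde{X})$ attaining $\sup_{\mu\in\mathcal{M}(\widetilde{X})}\mu(1)$. Since $X\subseteq\widetilde{X}$ we have $\mathcal{M}(X)\subseteq\mathcal{M}(\widetilde{X})$, and since $\nu_1\in\mathcal{M}(X)$ (as recorded at the start of the Base measure subsection), the chain
\[
\nu_1(1)\leq\sup_{\nu\in\mathcal{M}(X)}\nu(1)\leq\sup_{\mu\in\mathcal{M}(\widetilde{X})}\mu(1)=\nu_1(1)
\]
collapses to equalities, giving $\nu_1(1)=\sup_{\nu\in\mathcal{M}(X)}\nu(1)$. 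Any other maximizer $\nu'\in\mathcal{M}(X)$ then also maximizes over $\mathcal{M}(\widetilde{X})$, so $\nu'=\nu_1$ by the uniqueness in Proposition~\ref{pr:1}. Moreover, each base measure $\rho$ produces, via Proposition~\ref{zwiazek}, the \emph{same} base measure of $\widetilde{X}$, namely the unique one; hence its second projection $\nu_1$ is independent of the choice of $\rho$.

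For $\nu_0$ I would exploit the bitwise complementation $c(z)=\mathbf{1}-z$, a homeomorphism commuting with $\sigma$, so that $X^c:=c(X)$ is a subshift and $c_\ast\colon\mathcal{M}(X)\to\mathcal{M}(X^c)$ is a bijection with $c_\ast\nu(1)=\nu(0)$; in particular $\sup_{\nu\in\mathcal{M}(X)}\nu(0)=\sup_{\mu\in\mathcal{M}(X^c)}\mu(1)$. The crucial identity is $\mathbf{1}-N(w,x,y)=N(\mathbf{1}-w,\mathbf{1}-x,y)$, i.e.\ $c\circ N=N\circ(c\times c\times\mathrm{id})$. Since $c\times c\times\mathrm{id}$ commutes with $\sigma^{\times 3}$ and is an involution, pushing forward joinings $\rho\vee\kappa$ gives a bijection onto the joinings $(c\times c)_\ast\rho\vee\kappa$, and therefore
\[
\mathcal{M}(X^c)=c_\ast\mathcal{M}(X)=\{N_\ast((c\times c)_\ast\rho\vee\kappa):\kappa\in\mathcal{M}(\{0,1\}^\Z)\}.
\]
Thus $X^c$ is again a sandwich measure-theoretic subordinate subshift, with pre-base measure $(c\times c)_\ast\rho$ whose two marginals are $c_\ast\nu_0$ and $c_\ast\nu_1$. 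Because $c\times c$ reverses the coordinatewise order, $(c\times c)_\ast\rho$ is supported below the diagonal, so straightening it by the map $J$ of Remark~\ref{re:11} swaps the marginals: the \emph{second} marginal of the resulting genuine base measure of $X^c$ is $c_\ast\nu_0$. Applying the already-proved $\nu_1$-part to $X^c$ then gives that $c_\ast\nu_0$ is ergodic, independent of choices, and the unique maximizer of $\mu\mapsto\mu(1)$ over $\mathcal{M}(X^c)$. Transporting back through the involution $c$ yields that $\nu_0$ is ergodic, does not depend on $\rho$, satisfies $\nu_0(0)=\sup_{\nu\in\mathcal{M}(X)}\nu(0)$, and is the unique maximizer.

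The main obstacle I anticipate is the $\nu_0$ bookkeeping: one must verify carefully that $c\times c\times\mathrm{id}$ induces a bijection of the relevant sets of joinings (so that $X^c$ genuinely is a sandwich measure-theoretic subordinate subshift) and, above all, that after complementation the marginal $c_\ast\nu_0$ occupies the upper (second) slot of a base measure of $X^c$, since it is exactly this slot to which the $\nu_1$-argument applies. The order-reversal of $c\times c$ combined with the $J$-straightening of Remark~\ref{re:11} is what settles this; everything else is a direct transcription of the $\nu_1$ computation.
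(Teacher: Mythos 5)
Your proposal is correct and is exactly the paper's argument: the paper proves Corollary~\ref{cor:1} in one line as ``an immediate consequence of Proposition~\ref{zwiazek}'' (combined with Proposition~\ref{pr:1} for $\widetilde{X}$), noting that for $\nu_0$ one exchanges the symbols $0$ and $1$ and modifies the maps accordingly. Your two steps --- deducing the $\nu_1$ assertions from $\mathcal{M}(X)\subseteq\mathcal{M}(\widetilde{X})$ plus the uniqueness/ergodicity of the base measure of $\widetilde{X}$, and handling $\nu_0$ via the complementation identity $c\circ N=N\circ(c\times c\times\mathrm{id})$ together with the $J$-straightening of Remark~\ref{re:11} --- are precisely the details the paper leaves to the reader, correctly filled in.
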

\begin{proof}
This is an immediate consequence of Proposition~\ref{zwiazek} (for the assertions for $\nu_0$ one needs to exchange the role of symbols $0$ and $1$ and appropriately modify the maps that come into play).
\end{proof}

\subsection{Sandwich measure-theoretic subordinate subshifts vs sandwich hereditary and sandwich subordinate subshifts}\label{bicobisu}
\begin{proposition}[Cf.\ Proposition~\ref{sub} and Remark~\ref{portut2}]\label{bihe}
If $ X \subseteq\{0,1\}^\Z$ is a sandwich measure-theoretic subordinate subshift then there exists a bi-subordinate subshift $ X '\subseteq  X $ such that $\mathcal{M}( X )=\mathcal{M}( X ')$.
\end{proposition}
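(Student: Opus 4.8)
The plan is to transport the proof of Proposition~\ref{sub} (and the subsequent Remark~\ref{portut2}) through the interpolation map $N$, replacing the base measure $\nu$ by a base measure $\rho$ of $X$. Using the discussion at the beginning of Section~\ref{ggg} together with Remark~\ref{re:11}, I first fix a base measure $\rho$ that is simultaneously \emph{ergodic} and supported on $\mathbf{A}=\{(w,x):w\leq x\}$: the map $J$ of Remark~\ref{re:11} is $\sigma^{\times 3}$-equivariant, so $J_\ast$ preserves ergodicity while moving the support onto $\mathbf{A}$. I then set
\[
X':=N(\operatorname{supp}(\rho)\times\{0,1\}^\Z),
\]
which is a genuine subshift because $N$ is continuous and equivariant and $\operatorname{supp}(\rho)$ is compact and $\sigma\times\sigma$-invariant. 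To see that $X'$ is sandwich subordinate, pick $(w,x)\in\operatorname{supp}(\rho)$ that is generic for $\rho$ and has dense orbit in $\operatorname{supp}(\rho)$ (this holds for $\rho$-a.e.\ point, as $\rho$ is ergodic); then $X_{(w,x)}=\operatorname{supp}(\rho)$, and since $\operatorname{supp}(\rho)\subseteq\mathbf{A}$ we have $w\leq x$. Hence $X'=N(X_{(w,x)}\times\{0,1\}^\Z)=\overline{[w,x]}$ is sandwich (bi-)subordinate between $w$ and $x$.

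The next step is the inclusion $X'\subseteq X$, proved exactly as in Proposition~\ref{sub} but through $N$. Consider $\mu:=N_\ast(\rho\otimes B_{1/2})\in\mathcal{M}(X)$, so $\operatorname{supp}(\mu)\subseteq X$. The key computation is that every element of $X'$ lies in $\operatorname{supp}(\mu)$: a length-$\ell$ block $D$ occurring in $N(w,x,y)$ for some $(w,x)\in\operatorname{supp}(\rho)$, $y\in\{0,1\}^\Z$, satisfies $C_0\leq D\leq C_1$ coordinatewise, where $C_0=w|_{[0,\ell-1]}$ and $C_1=x|_{[0,\ell-1]}$; choosing a block $E$ with $N(C_0,C_1,E)=D$ (possible precisely because $C_0\leq D\leq C_1$) gives
\[
\mu(D)\geq (\rho\otimes B_{1/2})\Bigl(\begin{matrix}C_0\\C_1\end{matrix}\times E\Bigr)=\rho\Bigl(\begin{matrix}C_0\\C_1\end{matrix}\Bigr)\cdot B_{1/2}(E)>0,
\]
the first factor being positive since $(w,x)\in\operatorname{supp}(\rho)$. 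Thus every block of every element of $X'$ has positive $\mu$-measure, so $X'\subseteq\operatorname{supp}(\mu)\subseteq X$, and in particular $\mathcal{M}(X')\subseteq\mathcal{M}(X)$.

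For the reverse inclusion $\mathcal{M}(X)\subseteq\mathcal{M}(X')$, I would show that every $\mu\in\mathcal{M}(X)$ is concentrated on $X'$. Writing $\mu=N_\ast(\rho\vee\kappa)$, using $\operatorname{supp}(\rho)\times\{0,1\}^\Z\subseteq N^{-1}(X')$ and the fact that $\rho\vee\kappa$ projects to $\rho$ on the first two coordinates, one gets
\[
\mu(X')=(\rho\vee\kappa)\bigl(N^{-1}(X')\bigr)\geq(\rho\vee\kappa)\bigl(\operatorname{supp}(\rho)\times\{0,1\}^\Z\bigr)=\rho(\operatorname{supp}(\rho))=1.
\]
Since $X'$ is closed and $\sigma$-invariant, this yields $\mu\in\mathcal{M}(X')$, completing $\mathcal{M}(X)=\mathcal{M}(X')$.

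The only genuinely new point compared with Proposition~\ref{sub}, and the step I expect to require the most care, is arranging a \emph{single} base measure that is at once ergodic and supported on $\mathbf{A}$, since both features are used simultaneously: support on $\mathbf{A}$ forces $w\leq x$ and is exactly what makes the interpolation $N(C_0,C_1,E)=D$ realizable for every sandwiched block $D$, while ergodicity is what produces a generic pair $(w,x)$ with $X_{(w,x)}=\operatorname{supp}(\rho)$ and thereby identifies $X'$ as a bona fide sandwich subordinate subshift. Everything else is the block-counting argument of Proposition~\ref{sub} carried through $N$ rather than $M$.
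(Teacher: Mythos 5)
Your proof is correct and follows essentially the same route as the paper's: the same choice $X'=N(\operatorname{supp}(\rho)\times\{0,1\}^\Z)$, the same use of $\mu=N_\ast(\rho\otimes B_{1/2})$ to show every sandwiched block lies in $\mathcal{L}(X)$ (hence $X'\subseteq X$), and the same pushforward computation giving $\mu(X')=1$ for every $\mu\in\mathcal{M}(X)$. The only difference is that you make explicit what the paper leaves implicit --- choosing $\rho$ ergodic and supported on $\mathbf{A}$ via Remark~\ref{re:11}, and picking a pair $(w,x)$ with dense orbit in $\operatorname{supp}(\rho)$ so that $X'=\overline{[w,x]}$ is genuinely sandwich subordinate --- which is a useful clarification rather than a new idea.
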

\begin{proof}
Notice that $\overline{[w,x]}=\overline{\{\sigma^ny : w\leq y\leq x\}}\subseteq  X $ is equivalent to the following:
\begin{equation}\label{miniteza}
\text{if }w[-n,n]\leq C\leq x[-n,n] \text{ then }C\in\mathcal{L}( X ) \text{ for }n\in\N,
\end{equation}
where $\mathcal{L}(X)=\bigcup_{n\geq 1}\mathcal{L}_n(X)$. Let $X$ be a sandwich measure-theoretic subordinate subshift. We will find a pair $(w,x)$ such that~\eqref{miniteza} holds for $X$. Let $\rho$ be a base measure for $ X $ and let $(w,x)\in \text{supp }\rho$ be generic for $\rho$.  There exists $y$ such that $(w,x,y)$ is generic for $\rho\otimes B_{1/2}$. Then $N(w,x,y)$ is generic for $\mu:=N_\ast(\rho\otimes  B_{1/2})$. Suppose that $A,B\in \mathcal{L}_{2n+1}( X )$ are such that $A\leq B$ and $\rho(A\times B)>0$. Then, for any $A\leq C\leq B$, we have $\mu(C)>0$. In particular, we have $C\in\mathcal{L}( X )$. We conclude that~\eqref{miniteza} holds since $\rho(w[-n,n]\times x[-n,n])>0$ (as $(w,x)
\in \text{supp }\rho$). We set $X':=N(\text{supp}(\rho)\times \{0,1\}^\Z)$.

Now, take $\mu\in\mathcal{M}(X)$. Then $\mu=N_\ast(\rho\vee\kappa )$ for some $\kappa$ and thus
\begin{multline*}
\mu(X')=(\rho\vee\kappa)(N^{-1}(N(\text{supp}(\rho))\times \{0,1\}^\Z))\\
\geq (\rho\vee\kappa)(\text{supp}(\rho)\times \{0,1\}^\Z)=\rho(\text{supp}(\rho))=1.
\end{multline*}
It follows that $\mathcal{M}(X')=\mathcal{M}(X)$.

\end{proof}

\begin{proposition}\label{uwazka}
Let $ X $ be a sandwich measure-theoretic subordinate subshift with base measure $\rho$ with marginals $\nu_0,\nu_1$. Then:
\begin{itemize}
\item $h_{top}( X )\geq \nu_1(1)-\nu_0(1)$,
\item if $\nu_0(1)=\nu_1(1)$ then $\nu_0=\nu_1$, $\mathcal{M}(X)=\{\nu_0\}=\{\nu_1\}$ and, thus, $h_{top}( X )=h(\nu_0)=h(\nu_1)$.
\end{itemize}
\end{proposition}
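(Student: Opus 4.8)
The plan is to handle the two bullets separately, in both cases exploiting a generic point $(w,x)$ of a base measure together with the combinatorial description of $X$ obtained in the proof of Proposition~\ref{bihe}. First I would record the elementary consequence of $\rho$ being supported on $\mathbf{A}=\{(w,x):w\le x\}$: up to a $\rho$-null set $\{w_0=1\}\subseteq\{x_0=1\}$, so $\nu_0(1)\le\nu_1(1)$, and more precisely
\[
\rho(\{w_0=0,\,x_0=1\})=\nu_1(1)-\nu_0(1).
\]
Thus the set of \emph{free positions} $\{n:w_n=0,\,x_n=1\}$ has density exactly $\nu_1(1)-\nu_0(1)$, and this is the quantity that will govern both the entropy lower bound and the degeneration in the equality case.

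For the first bullet I would fix an \emph{ergodic} base measure $\rho$ (one exists, and by Corollary~\ref{cor:1} its marginals are still $\nu_0,\nu_1$) and choose a point $(w,x)$ generic for $\rho$; in particular $w\le x$. The proof of Proposition~\ref{bihe} establishes, via~\eqref{miniteza}, that every block $C$ with $w[0,L-1]\le C\le x[0,L-1]$ lies in $\mathcal{L}_L(X)$. The number of such blocks is $2^{k_L}$, where $k_L=|\{n\in[0,L-1]:w_n=0,\,x_n=1\}|$, since at each free position the coordinate of $C$ may be chosen independently to be $0$ or $1$ while all other coordinates are forced. Hence $|\mathcal{L}_L(X)|\ge 2^{k_L}$, and applying genericity to the clopen cylinder $\{w_0=0,\,x_0=1\}$ gives $k_L/L\to\nu_1(1)-\nu_0(1)$. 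With the paper's normalisation ($\log=\log_2$), this yields $h_{top}(X)=\lim_L\frac1L\log_2|\mathcal{L}_L(X)|\ge\lim_L k_L/L=\nu_1(1)-\nu_0(1)$.

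For the second bullet I would start from $\nu_0(1)=\nu_1(1)$, so the free-position density vanishes and $\rho(\{w_0=0,\,x_0=1\})=0$. By $\sigma\times\sigma$-invariance of $\rho$ the same holds at every coordinate, and together with $w\le x$ holding $\rho$-a.e.\ this forces $w=x$ for $\rho$-a.e.\ $(w,x)$. Consequently the marginals coincide, $\nu_0=\nu_1=:\nu$, and $\rho$ is the diagonal joining supported on $\{(w,w)\}$. For a.e.\ $(w,x,y)$ with respect to any $\rho\vee\kappa$ one then has $N(w,x,y)=N(w,w,y)=w$, so $N_\ast(\rho\vee\kappa)=\nu$ for \emph{every} $\kappa$; by~\eqref{miary} this gives $\mathcal{M}(X)=\{\nu\}=\{\nu_0\}=\{\nu_1\}$. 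Finally, the variational principle~\eqref{VP1} applied to this uniquely ergodic system gives $h_{top}(X)=h(\nu)=h(\nu_0)=h(\nu_1)$.

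The only genuinely delicate point is in the first bullet: one must know both that all intermediate blocks really appear in $X$ (this is precisely~\eqref{miniteza}, already supplied by Proposition~\ref{bihe}) and that $k_L/L$ converges to the stated density. The latter is where I would be most careful, but once $\rho$ is taken ergodic it reduces to evaluating genericity on a single clopen cylinder, so no subsequence or upper/lower-density subtlety intervenes. Everything else is bookkeeping.
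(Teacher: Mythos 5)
Your proposal is correct and follows essentially the same route as the paper's proof: reduce via Proposition~\ref{bihe} to a generic pair $(w,x)$ for the base measure, use the frequency $\rho\left(\begin{matrix}0\\1\end{matrix}\right)=\nu_1(1)-\nu_0(1)$ of free positions to lower-bound the block count (hence the entropy), and in the equality case deduce that $\rho$ is the diagonal self-joining and compute $N_\ast(\rho\vee\kappa)=\nu_1$ for every $\kappa$. You merely make explicit two details the paper leaves implicit -- the passage to an ergodic base measure (with marginals unchanged by Corollary~\ref{cor:1}) and the $2^{k_L}$ block-counting step -- which is sound bookkeeping, not a different argument.
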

\begin{proof}
In view of Proposition~\ref{bihe}, we can assume without loss of generality that $ X $ is bi-subordinate. Let $(w,x)\in \text{supp}(\rho)$ with $w\leq x$ be a generic pair for $\rho$. Then the frequency of the pair $(0,1)$ appearing in $(w,x)$ equals
\[
\rho\left(\begin{matrix}0 \\ 1\end{matrix}\right)=(\nu_0\vee \nu_1)\left(\begin{matrix}0 \\ 1\end{matrix}\right)=\nu_1(1)-\nu_0(1).
\]
It follows that indeed
\(
h_{top}( X )\geq \nu_1(1)-\nu_0(1).
\)

Notice that if $\nu_0(1)=\nu_1(1)$  and $(w,x)$ with $w\leq x$ is generic for $\rho$, we get that $w=x$ along a full density subsequence. Thus, $\rho$ is the diagonal joining of two copies of $\nu_0=\nu_1$, i.e.\ $\rho(A\times B)=\nu_1(A\cap B)$. Moreover,
\begin{align*}
N_\ast(\rho\vee\kappa)(\{x : x_i=1 \text{ for }i\in A\})&=(\rho\vee\kappa) (\{(w,x,y) : (w(1-y)+xy)_i=1 \text{ for }i\in A\})\\
&=\nu_1(\{x : x_i=1 \text{ for }i \in A\})
\end{align*}
and we conclude that $\mathcal{M}(X)=\{N_\ast(\rho\vee\kappa)\}=\{\nu_1\}$.
\end{proof}

Last but not least, let us prove Proposition~\ref{biokre} (the proof goes along the same lines as the one of Corollary~1 in~\cite{MR3589826}. We provide the details for convenience).

\begin{proof}[Proof of Proposition~\ref{biokre}]
Let $\mu\in\mathcal{M}^e(N(Z\times\{0,1\}^\Z))$ and let $z\in N(Z\times \{0,1\}^\Z)$ be its generic point. We have $z=N(w,x,y)$, where $(w,x)\in Z$ and $y\in\{0,1\}^\Z$. By the unique ergodicity of $Z$, the pair $(w,x)$ is generic for $\rho$. Hence, $(w,x,y)$ is quasi-generic for a measure of the form $\rho\vee\kappa$ with $\kappa\in\mathcal{M}(\{0,1\}^\Z)$. It follows immediately that $z$ must be quasi-generic for $N_\ast(\rho\vee\kappa)$ and thus $\mu=N_\ast(\rho\vee\kappa)$. This yields
\[
\mathcal{M}^e(N(Z\times \{0,1\}^\Z))\subseteq \{N_\ast (\rho\vee\kappa) : \kappa \in\mathcal{M}(\{0,1\}^\Z)\}.
\]
The converse inclusion follows immediately by the form of the studied subshift $N(Z\times \{0,1\}^\Z)$.
\end{proof}

\subsection{Approximation}\label{ap2}
In this section we adress the following question.
\begin{quote}
When is the limit of a descending family of sandwich measure-theoretic subordinate subshifts also a sandwich measure-theoretic subordinate subshift?
\end{quote}
\begin{definition}
Let  $(w_K,x_K)\subseteq \{0,1\}^\Z\times \{0,1\}^\Z$ be such that $w_K\leq x_K$, $(w_K)$ is non-decreasing and $(x_K)$ is non-increasing (coordinatewise). Let $(w,x)=(w_\infty,x_\infty)$ be the coordinatewise limit of $(w_K,x_K)$. We say that $(w_K,x_K)$ is \emph{good} if there exists an increasing sequence $(N_i)\subseteq\mathbb{N}$ such that:
\begin{itemize}
\item $(w_K,x_K)$ is generic along $(N_i)$ for some measure $\rho_K$, $K\in\N\cup\{\infty\}$,
\item $\rho_K\left(\begin{matrix} 0\\ 1\end{matrix}\right)\to \rho\left(\begin{matrix} 0\\ 1\end{matrix}\right)$.
\end{itemize}
\end{definition}

\begin{proposition}[Cf.\ Proposition~\ref{prop_gene}]\label{III}
Suppose that $(w_K,x_K)\in \{0,1\}^\Z\times \{0,1\}^\Z$ is good and let $(w,x)$ be its coordinatewise limit. If $\overline{[w_K,x_K]}$ is a sandwich measure-theoretic subordinate subshift with base measure $\rho_K$ (corresponding to $(N_i)$ from the definition of good sequences) then both $\bigcap_{K\geq 1}\overline{[w_K,x_K]}$ and $\overline{[w,x]}$ are sandwich measure-theoretic subordinate subshifts with base measure $\rho=\lim_{K\to\infty}\rho_K$.
\end{proposition}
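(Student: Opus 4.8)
The plan is to follow the proof of Proposition~\ref{prop_gene} verbatim, replacing $M$ by $N$ and the sequence $x_K$ by the pair $(w_K,x_K)$. The first task is the sandwich analogue of Lemma~\ref{lemat_gene}: the weak convergence $\rho_K\to\rho$. Write $a_K$ and $b_K$ for the densities of $1$'s in $w_K$ and $x_K$ computed along $(N_i)$, i.e.\ the values on the cylinder $1$ of the two marginals of $\rho_K$; since $w_K\le x_K$ rules out the pattern $\left(\begin{smallmatrix}1\\0\end{smallmatrix}\right)$, one has $\rho_K\left(\begin{smallmatrix}0\\1\end{smallmatrix}\right)=b_K-a_K$. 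The \emph{main obstacle} is that goodness only supplies convergence of this single quantity, $b_K-a_K\to b_\infty-a_\infty$, and not of $a_K$ and $b_K$ separately (the same pathology as in the Remark following Lemma~\ref{lemat_gene}). I would resolve it using the monotonicity in the definition: as $(w_K)$ is non-decreasing and coordinatewise below $w$, the sets $\{w_K=1\}$ are nested inside $\{w=1\}$, so $a_K$ is non-decreasing in $K$ with $a_K\le a_\infty$; dually $b_K$ is non-increasing with $b_K\ge b_\infty$. Hence $a_K\to a_*\le a_\infty$ and $b_K\to b_*\ge b_\infty$, while $b_*-a_*=b_\infty-a_\infty$ forces $(a_\infty-a_*)+(b_*-b_\infty)=0$; both summands being nonnegative, I conclude $a_K\to a_\infty$ and $b_K\to b_\infty$.

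With both marginal densities converging, I would run the window-counting estimate of Lemma~\ref{lemat_gene} on both coordinates simultaneously: for a block pair $(A,B)$ of length $\ell$, the frequency along $(N_i)$ with which $(w_K,x_K)$ and $(w,x)$ differ on some length-$\ell$ window is at most $\ell$ times the density of coordinates where they disagree, namely at most $\ell\big[(a_\infty-a_K)+(b_K-b_\infty)\big]$. Letting $i\to\infty$ and then $K\to\infty$ gives $\rho_K\left(\begin{smallmatrix}A\\B\end{smallmatrix}\right)\to\rho\left(\begin{smallmatrix}A\\B\end{smallmatrix}\right)$ for all blocks, i.e.\ $\rho_K\to\rho=\rho_\infty$, which is the claimed base measure.

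Next I would show that $\bigcap_{K\ge1}\overline{[w_K,x_K]}$ is sandwich measure-theoretically subordinate with base measure $\rho$ by verifying condition (b) of the sandwich analogue of Proposition~\ref{charakt}. For the inclusion $\{N_\ast(\rho\vee\kappa):\kappa\}\subseteq\mathcal{M}(\bigcap_K\overline{[w_K,x_K]})$, I would invoke the sandwich form of Corollary~\ref{downar1} (obtained from Theorem~\ref{downar} exactly as in the one-sided case) to write $\{N_\ast(\rho\vee\kappa):\kappa\}=\bigcup_{w\le y\le x}V_{(N_i)}(y)$, using that $(w,x)$ is quasi-generic for $\rho$ along $(N_i)$; since any $y$ with $w\le y\le x$ satisfies $w_K\le y\le x_K$ for every $K$, each such visible measure is supported in every $\overline{[w_K,x_K]}$, hence in the intersection. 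For the reverse inclusion $\mathcal{M}^e(\bigcap_K\overline{[w_K,x_K]})\subseteq\{N_\ast(\rho\vee\kappa)\}$, take an ergodic $\mu$ in the intersection; then $\mu=N_\ast(\rho_K\vee\kappa_K)$ for each $K$, and after passing to a subsequence in $K$ along which $\rho_K\vee\kappa_K$ converges, the weak continuity of $N_\ast$ together with the already established $\rho_K\to\rho$ forces the first-two-coordinate marginal of the limit to be $\rho$, whence $\mu=N_\ast(\rho\vee\kappa)$. Condition (b), and therefore (a), follows.

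Finally, I would pinch $\overline{[w,x]}$ between the two extreme systems. Since $w_K\le w\le y\le x\le x_K$ for $w\le y\le x$, we have $\overline{[w,x]}\subseteq\overline{[w_K,x_K]}$ for every $K$, so $\overline{[w,x]}\subseteq\bigcap_K\overline{[w_K,x_K]}$ and hence $\mathcal{M}(\overline{[w,x]})\subseteq\{N_\ast(\rho\vee\kappa)\}$ by the previous paragraph. Conversely, $\rho\in\mathcal{M}(X_{(w,x)})$ because $(w,x)$ is quasi-generic for $\rho$, so every $N_\ast(\rho\vee\kappa)$ lies in $\mathcal{M}(N(X_{(w,x)}\times\{0,1\}^\Z))=\mathcal{M}(\overline{[w,x]})$. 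The two inclusions squeeze $\mathcal{M}(\overline{[w,x]})$ to $\{N_\ast(\rho\vee\kappa):\kappa\in\mathcal{M}(\{0,1\}^\Z)\}$, finishing the proof. I expect the only genuinely delicate point to be the upgrade in the first paragraph from convergence of $\rho_K\left(\begin{smallmatrix}0\\1\end{smallmatrix}\right)$ to convergence of both marginal densities; everything thereafter is a faithful transcription of the one-sided argument.
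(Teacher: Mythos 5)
Your proposal is correct and follows essentially the same route as the paper: the sandwich analogue of Lemma~\ref{lemat_gene} (weak convergence $\rho_K\to\rho$ via a disagreement-density estimate along $(N_i)$), the compactness/continuity argument identifying $\mathcal{M}(\bigcap_{K\geq 1}\overline{[w_K,x_K]})$ with $\{N_\ast(\rho\vee\kappa) : \kappa\in\mathcal{M}(\{0,1\}^\Z)\}$, the lifting theorem (Theorem~\ref{downar}) for the lower inclusion, and the squeeze $\overline{[w,x]}\subseteq\bigcap_{K\geq 1}\overline{[w_K,x_K]}$. One remark: your ``main obstacle'' is illusory --- since $(a_\infty-a_K)+(b_K-b_\infty)=\rho_K\left(\begin{smallmatrix}0\\1\end{smallmatrix}\right)-\rho\left(\begin{smallmatrix}0\\1\end{smallmatrix}\right)$, the goodness assumption controls the disagreement density directly (and in any case a sum of two nonnegative terms tending to $0$ forces each term to $0$, with no monotonicity needed), so your detour through separate convergence of the two marginal densities can be skipped.
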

For the proof of Proposition~\ref{III}, we will need the following modification of Lemma~\ref{lemat_gene}.
\begin{lemma}\label{lemat_gene1}
Suppose that $(w_K,x_K)\in \{0,1\}^\Z\times\{0,1\}^\Z$ is good. Then $\rho_K\to \rho$.
\end{lemma}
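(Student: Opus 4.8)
The plan is to mirror verbatim the proof of Lemma~\ref{lemat_gene}, now for the product system $(\{0,1\}^\Z\times\{0,1\}^\Z,\sigma\times\sigma)$, reducing weak convergence to convergence on cylinders and bounding the discrepancy by the frequency of coordinates at which $(w_K,x_K)$ and $(w_\infty,x_\infty)$ differ. Since the $\rho_K$ are shift-invariant, to prove $\rho_K\to\rho$ it suffices to show that $\rho_K\left(\begin{smallmatrix}B\\ C\end{smallmatrix}\right)\to\rho_\infty\left(\begin{smallmatrix}B\\ C\end{smallmatrix}\right)$ for every pair of blocks $B,C\in\{0,1\}^\ell$, $\ell\geq1$. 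I denote by $\nu_{0,K},\nu_{1,K}$ the two marginals of $\rho_K$ (for $K\in\N\cup\{\infty\}$). Because $w_K\leq x_K$ coordinatewise, every empirical measure along $(N_i)$ is supported on $\mathbf{A}=\{(w,x):w\leq x\}$, hence so is each $\rho_K$; in particular $\rho_K$ charges no pattern $\left(\begin{smallmatrix}1\\0\end{smallmatrix}\right)$, which gives the identity $\rho_K\left(\begin{smallmatrix}0\\ 1\end{smallmatrix}\right)=\nu_{1,K}(1)-\nu_{0,K}(1)$.

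First I would record the two monotonicity facts. As $(w_K)$ is non-decreasing with coordinatewise limit $w_\infty$, we have $w_K\leq w_\infty$, so the only first-coordinate disagreements are $w_K(n)=0$, $w_\infty(n)=1$; since $w_K$ and $w_\infty$ are quasi-generic along $(N_i)$ for $\nu_{0,K}$ and $\nu_{0,\infty}$, the frequency of such disagreements along $(N_i)$ equals $\nu_{0,\infty}(1)-\nu_{0,K}(1)\geq0$. Symmetrically, $x_\infty\leq x_K$ gives that the frequency of second-coordinate disagreements along $(N_i)$ equals $\nu_{1,K}(1)-\nu_{1,\infty}(1)\geq0$.

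Next I would run the block estimate exactly as in Lemma~\ref{lemat_gene}: bounding the indicator that a length-$\ell$ window of $(w_K,x_K)$ differs from that of $(w_\infty,x_\infty)$ by the sum over the $\ell$ coordinates of the two single-coordinate disagreement indicators, summing over $n< N_i$, and letting $i\to\infty$ (the $\ell-1$ boundary positions being negligible), yields
\[
\left|\rho_K\left(\begin{matrix}B\\ C\end{matrix}\right)-\rho_\infty\left(\begin{matrix}B\\ C\end{matrix}\right)\right|\leq \ell\left[(\nu_{0,\infty}(1)-\nu_{0,K}(1))+(\nu_{1,K}(1)-\nu_{1,\infty}(1))\right].
\]

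Finally, the step that closes the argument—and the one I would double-check most carefully—is the bookkeeping identity showing the bracket telescopes into the single quantity controlled by goodness. Applying the identity of the first paragraph to both $K$ and $\infty$,
\[
(\nu_{0,\infty}(1)-\nu_{0,K}(1))+(\nu_{1,K}(1)-\nu_{1,\infty}(1))=\left(\nu_{1,K}(1)-\nu_{0,K}(1)\right)-\left(\nu_{1,\infty}(1)-\nu_{0,\infty}(1)\right)=\rho_K\left(\begin{matrix}0\\ 1\end{matrix}\right)-\rho_\infty\left(\begin{matrix}0\\ 1\end{matrix}\right),
\]
which tends to $0$ as $K\to\infty$ by the second bullet in the definition of a good sequence. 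Hence $\rho_K\left(\begin{smallmatrix}B\\ C\end{smallmatrix}\right)\to\rho_\infty\left(\begin{smallmatrix}B\\ C\end{smallmatrix}\right)$ for all $B,C$, so $\rho_K\to\rho$. This identity is precisely where the two-sided ``sandwich'' hypothesis $w_K\leq x_K$ enters (to exclude $\left(\begin{smallmatrix}1\\0\end{smallmatrix}\right)$ entries), and it explains why the single scalar condition $\rho_K\left(\begin{smallmatrix}0\\1\end{smallmatrix}\right)\to\rho\left(\begin{smallmatrix}0\\1\end{smallmatrix}\right)$ in the definition of goodness suffices in place of the two conditions $\nu_{0,K}(1)\to\nu_{0,\infty}(1)$ and $\nu_{1,K}(1)\to\nu_{1,\infty}(1)$ that one might naively impose.
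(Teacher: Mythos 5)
Your proof is correct and is exactly the argument the paper intends: the paper's proof of Lemma~\ref{lemat_gene1} consists of the single sentence that it is analogous to Lemma~\ref{lemat_gene}, and your cylinder-by-cylinder estimate is that analogue. Your closing observation---that the two nonnegative marginal discrepancies $\nu_{0,\infty}(1)-\nu_{0,K}(1)$ and $\nu_{1,K}(1)-\nu_{1,\infty}(1)$ telescope into $\rho_K\left(\begin{smallmatrix}0\\1\end{smallmatrix}\right)-\rho\left(\begin{smallmatrix}0\\1\end{smallmatrix}\right)$ because each $\rho_K$ is supported on $\mathbf{A}$---is precisely the bookkeeping the paper leaves implicit, and it correctly explains why the single scalar condition in the definition of goodness suffices.
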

\begin{proof}
The proof of analogous to that of Lemma~\ref{lemat_gene}.
\end{proof}
\begin{proof}[Proof of Proposition~\ref{III}]
We will prove first that
\begin{equation*}
\{N_\ast(\rho\vee\kappa) : \kappa\in\mathcal{M}(\{0,1\}^\Z)\}\subseteq \mathcal{M}(\overline{[w,x]}).
\end{equation*}
Take $\mu=N_\ast(\rho\vee\kappa)$. By Theorem~\ref{downar}, there exists $y\in\{0,1\}^\Z$ such that $\rho\vee\kappa\in V(w,x,y)$. Thus,
\[
\mu=N_\ast(\rho\vee\kappa)\in N_\ast(V(w,x,y))\subseteq\bigcup_{w\leq y\leq x}V(y)\subseteq \mathcal{M}(\overline{[w,x]}). 
\]

Now, we will show that
\begin{equation*}
\mathcal{M}(\bigcap_{K\geq 1}\overline{[w_K,x_K]})\subseteq \{N_\ast(\rho\vee\kappa) : \kappa\in\mathcal{M}(\{0,1\}^\Z)\}.
\end{equation*}
Take $\mu\in \mathcal{M}(\bigcap_{K\geq 1}\overline{[w_K,x_K]})$. Then $\mu=N_\ast(\rho_K\vee\kappa_K)$. In view of Lemma~\ref{lemat_gene1}, passing to a subsequence, we may assume that $\rho_K\vee\kappa_K\to\rho\vee\kappa$ for some joining of $\rho$ with a measure $\kappa\in\mathcal{M}(\{0,1\}^\Z)$. Thus, $\mu=N_\ast(\rho\vee\kappa)$, which completes the proof since $\overline{[w,x]}\subseteq \bigcap_{K\geq 1}\overline{[w_K,x_K]}$ (so $\mathcal{M}(\overline{[w,x]})\subseteq \mathcal{M}(\bigcap_{K\geq 1}\overline{[w_K,x_K]})$).
\end{proof}

\section{Topological pressure: $\varphi(x)=\varphi(x_0)$}\label{se:jedna}

\subsection{Measure-theoretic subordinate subshifts}\label{NNN}
This section is devoted to finding a formula for $\mathcal{P}_{X,\varphi}$, where $X$ is a measure-theoretic subordinate subshift with a zero entropy base measure and $\varphi(x)=\varphi(x_0)$. We then apply it to the hereditary closure of $\mathscr{B}$-free systems and compare with the earlier known related results. We will denote by $B_p$ the $(p,1-p)$-Bernoulli measure on $\{0,1\}^\Z$. The main results in this section are as follows.
\begin{theorem}\label{int1}\label{toto}
Suppose that $X\subseteq \{0,1\}^\Z$ is a measure-theoretic subordinate subshift with base measure $\nu$ of zero entropy. Then $h_{top}(X)=\nu(1)$. Moreover, $X$ is intrinsically ergodic and $M_\ast(\nu\otimes B_{1/2})$ is the unique measure of maximal entropy.
\end{theorem}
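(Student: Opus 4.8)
The plan is to prove the entropy formula together with the existence of a maximal measure first, and then to get uniqueness from the equality case in the entropy estimate. Throughout I normalise entropy so that $h_{top}(\{0,1\}^\Z)=1$ (logarithms to base $2$), which is forced by the statement $h_{top}(X)=\nu(1)$ itself (take $\nu=\delta_{\mathbf{1}}$). Fix $\mu\in\mathcal{M}(X)$ and write $\mu=M_\ast(\rho)$ with $\rho=\nu\vee\kappa$. It is convenient to pass to the joining $\widetilde\rho$ of $\nu$ and $\mu$ given by the distribution of the pair $(w,u)$, where $(w,z)\sim\rho$ and $u:=M(w,z)=w\cdot z$; thus $\widetilde\rho$ is supported on $\{(w,u):u\leq w\}$, with first marginal $\nu$ and second marginal $\mu$.

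First the upper bound. Let $\mathcal{W}$ be the $\sigma$-algebra generated by the first coordinate and $\mathcal{U}^{-}$ the one generated by $u_i$, $i<0$. Since the $w$- and $u$-coordinates together generate, and since the first-coordinate factor is $(\{0,1\}^\Z,\nu)$ which has zero entropy by hypothesis, the relative (Abramov--Rokhlin) entropy formula gives
\[
h(X,\mu)\leq h(\sigma\times\sigma,\widetilde\rho)=h(\sigma,\nu)+H_{\widetilde\rho}\!\left(u_0\mid\mathcal{U}^{-}\vee\mathcal{W}\right)=H_{\widetilde\rho}\!\left(u_0\mid\mathcal{U}^{-}\vee\mathcal{W}\right),
\]
the first inequality because $\mu$ is a factor of $\widetilde\rho$ (here $\vee$ denotes the join of $\sigma$-algebras). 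As $\mathcal{U}^{-}\vee\mathcal{W}\supseteq\sigma(w_0)$, conditioning on the larger $\sigma$-algebra only decreases entropy, so $H_{\widetilde\rho}(u_0\mid\mathcal{U}^{-}\vee\mathcal{W})\leq H_{\widetilde\rho}(u_0\mid w_0)$. Finally $u_0=0$ whenever $w_0=0$, while $u_0\in\{0,1\}$ when $w_0=1$, so $H_{\widetilde\rho}(u_0\mid w_0)\leq\nu(1)$. Hence $h(X,\mu)\leq\nu(1)$ for every $\mu\in\mathcal{M}(X)$, and $h_{top}(X)\leq\nu(1)$ by the variational principle~\eqref{VP1}.

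Next, existence and the value. Take $\mu_0:=M_\ast(\nu\otimes B_{1/2})\in\mathcal{M}(X)$, whose associated $\widetilde\rho_0$ is the relatively independent extension in which, conditionally on $w$, the $u_i$ are i.i.d.\ fair on $\{i:w_i=1\}$ and $0$ elsewhere. For each $n$, conditionally on $w_0\dots w_{n-1}$ having $k$ ones, $u_0\dots u_{n-1}$ is uniform on $2^{k}$ blocks, so $H_{\mu_0}(u_0\dots u_{n-1})\geq H_{\widetilde\rho_0}(u_0\dots u_{n-1}\mid w_0\dots w_{n-1})=n\,\nu(1)$, using $\sum_{i<n}\nu(w_i{=}1)=n\,\nu(1)$ by shift-invariance. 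Dividing by $n$ and letting $n\to\infty$ gives $h(X,\mu_0)\geq\nu(1)$; with the upper bound this yields $h(X,\mu_0)=\nu(1)=h_{top}(X)$, so $\mu_0$ is a measure of maximal entropy. Moreover $\mu_0$ is ergodic, being a factor of $\nu\otimes B_{1/2}$, which is ergodic as the product of the ergodic measure $\nu$ (Proposition~\ref{pr:1}) with the mixing measure $B_{1/2}$.

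Finally, uniqueness. Let $\mu$ be ergodic with $h(X,\mu)=\nu(1)$; then every inequality in the chain above is an equality. The equality $H_{\widetilde\rho}(u_0\mid\mathcal{U}^{-}\vee\mathcal{W})=H_{\widetilde\rho}(u_0\mid w_0)$ forces $u_0$ to be conditionally independent of $\mathcal{U}^{-}\vee\mathcal{W}$ given $w_0$; conditioning down to $\mathcal{W}$ shows the law of $u_0$ given all of $w$ depends only on $w_0$, and also that $u_0$ is independent of $\mathcal{U}^{-}$ given $\mathcal{W}$. The equality $H_{\widetilde\rho}(u_0\mid w_0)=\nu(1)$ forces that law to be the fair coin on $\{w_0{=}1\}$ (and $\delta_0$ on $\{w_0{=}0\}$). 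By shift-invariance these hold at every coordinate, so conditionally on $w$ the $(u_i)$ are mutually independent and fair on the support of $w$; that is $\widetilde\rho=\widetilde\rho_0$, whence $\mu=\mu_0$. For a general maximal measure, almost every ergodic component has entropy $\nu(1)$, hence equals $\mu_0$, and since $\mu_0$ is ergodic the ergodic decomposition is the point mass at $\mu_0$. The delicate step, and the one I expect to be the main obstacle, is precisely this equality analysis: invoking the relative entropy formula correctly and extracting from the two equalities the full conditional-independence structure characterising $\nu\otimes B_{1/2}$; the zero-entropy upper bound itself is routine.
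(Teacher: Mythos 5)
Your proof is correct, but it follows a genuinely different route from the paper's. The paper's argument is built on the skew product $\widetilde{\sigma}(x,y)=(\sigma x,\sigma^{x_0}y)$ and the factor maps $\Psi_0,\Phi_0$ of Proposition~\ref{diagramiki}: one induces on $C_0\times\{0,1\}^\Z$ with $C_0=\{x:x_0=1\}$, observes that the induced map splits as $\sigma_{C_0}\times\sigma$, and applies Abramov's formula to get $h(\widetilde\sigma,\widetilde\mu)=\nu(1)\,h(\sigma_{C_0}\times\sigma,\nu_{C_0}\vee\kappa)\leq\nu(1)$, with equality iff $\kappa=B_{1/2}$; zero entropy of $\nu_{C_0}$ then forces $\nu_{C_0}\vee B_{1/2}=\nu_{C_0}\otimes B_{1/2}$, giving uniqueness, while the lower bound $\nu(1)\leq h_{top}(X)$ comes from heredity of $\widetilde{\text{supp}\,\nu}$ (Remark~\ref{kiedyzero}). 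You avoid induced maps entirely: you work directly on the joining $\widetilde\rho$ of $\nu$ and $\mu$, apply the Abramov--Rokhlin addition formula over the zero-entropy $w$-factor to identify $h(\widetilde\rho)$ with the fiber entropy $H_{\widetilde\rho}(u_0\mid\mathcal{U}^-\vee\mathcal{W})$, bound it by $H(u_0\mid w_0)\leq\nu(1)$, and extract uniqueness from the equality case of conditioning via strict concavity, which replaces the paper's disjointness step ($\nu_{C_0}\otimes B_{1/2}$ being the only joining) with an explicit conditional-independence argument; your lower bound also differs, being a direct block-entropy computation $h(\mu_0)\geq\nu(1)$ rather than a topological counting argument. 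Amusingly, your route is close in spirit to the paper's own probabilistic appendix (Proposition~\ref{theo q1} and Theorem~\ref{theo pot} with $\varphi\equiv 0$), where exactly the conditional entropy $\mathbf{H}(M_0\mid \mathbf{M}_{(-\infty,-1]},\mathbf{X})$ is the key quantity and uniqueness is derived from the same two equality conditions. What the paper's heavier machinery buys is reusability: the tower/skew-product formalism is what allows the pressure computations with nonzero potentials (Theorems~\ref{int0} and~\ref{HHH}), whereas your argument is shorter and self-contained for the entropy statement. Two minor polish points: state explicitly that the degenerate case $\nu=\delta_{\mathbf{0}}$ is subsumed (there $u\leq w$ forces $\widetilde\rho=\delta_{(\mathbf{0},\mathbf{0})}$, so nothing breaks, but the conditional-independence extraction from equality of \emph{zero} entropies deserves the one-line remark that $u_0$ is then deterministic anyway), and spell out the telescoping step showing that the coordinatewise conditional laws, fair on $\{w_i=1\}$ and independent of $\sigma(u_j:j<i)\vee\mathcal{W}$, determine all finite-dimensional conditional distributions and hence $\widetilde\rho=\widetilde\rho_0$.
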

\begin{theorem}\label{int0}
Suppose that $X\subseteq \{0,1\}^\Z$ is a measure-theoretic subordinate subshift with base measure $\nu$ of zero entropy. Let $\varphi\colon X\to\R$ be of the form $\varphi(x)=\varphi(x_0)$. Then 
\[
\mathcal{P}_{X,\varphi}=\nu(0)\varphi(0)+\nu(1)\log(2^{\varphi(0)}+2^{\varphi(1)})
\]
and there is the following corresponding unique equilibrium state: $M_\ast(\nu\otimes B_p)$, where $p=2^{\varphi(0)}/(2^{\varphi(0)}+2^{\varphi(1)})$.
\end{theorem}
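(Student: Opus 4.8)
The plan is to run the variational principle for the pressure,
\[
\mathcal{P}_{X,\varphi}=\sup_{\mu\in\mathcal{M}(X)}\Big(h(\mu)+\int\varphi\,d\mu\Big),
\]
and to exploit that, by the very definition of a measure-theoretic subordinate subshift, every competitor has the form $\mu=M_\ast(\nu\vee\kappa)$. I will write the joining on pairs $(y,z)$, with $y$-marginal equal to the base measure $\nu$ and $x=M(y,z)=y\cdot z$ coordinatewise. Since $\varphi(x)=\varphi(x_0)$, we have $\int\varphi\,d\mu=\varphi(0)\mu(0)+\varphi(1)\mu(1)$, so the objective depends on $\mu$ only through $h(\mu)$ and the single number $\mu(1)$. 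As $\{x_0=1\}\subseteq\{y_0=1\}$, the density obeys $\mu(1)\le\nu(1)$ (already noted in the proof of Proposition~\ref{pr:1}), so it is natural to reparametrise by $q:=\mu(1)/\nu(1)\in[0,1]$.

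The key estimate is the entropy bound $h(\mu)\le\nu(1)\,\mathcal{H}(q)$, where $\mathcal{H}(t)=-t\log_2 t-(1-t)\log_2(1-t)$ is the binary entropy (all logarithms base $2$, matching the $2^{(\cdot)}$ in the definition of pressure). I would obtain it by conditioning the $n$-block law of $x$ on that of the base coordinate $y$: from $H_\mu(x_0^{n-1})\le H_\nu(y_0^{n-1})+H_\lambda(x_0^{n-1}\mid y_0^{n-1})$, subadditivity of conditional entropy, and the fact that conditioning reduces entropy, one gets $H_\lambda(x_0^{n-1}\mid y_0^{n-1})\le\sum_{i<n}H_\lambda(x_i\mid y_i)=n\,\nu(1)\,\mathcal{H}(q)$, because $x_i=0$ whenever $y_i=0$ and $P(x_i=1\mid y_i=1)=\mu(1)/\nu(1)=q$. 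Dividing by $n$, letting $n\to\infty$ and using $h(\nu)=0$ gives the bound. Substituting it into the objective reduces the supremum to the one-variable, strictly concave problem of maximising $F(q):=\nu(1)\mathcal{H}(q)+\varphi(0)(1-\nu(1)q)+\varphi(1)\nu(1)q$ over $q\in[0,1]$; a routine differentiation gives the unique maximiser $q^\ast=2^{\varphi(1)}/(2^{\varphi(0)}+2^{\varphi(1)})$ and the value $F(q^\ast)=\nu(0)\varphi(0)+\nu(1)\log_2(2^{\varphi(0)}+2^{\varphi(1)})$, precisely the asserted formula, with $q^\ast=1-p$ for $p=2^{\varphi(0)}/(2^{\varphi(0)}+2^{\varphi(1)})$.

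For the matching lower bound I would verify that $\mu_0:=M_\ast(\nu\otimes B_p)$ attains $F(q^\ast)$. Its density is $\mu_0(1)=\nu(1)(1-p)=\nu(1)q^\ast$, and its entropy equals $\nu(1)\mathcal{H}(p)$: the bound above already gives $h(\mu_0)\le\nu(1)\mathcal{H}(q^\ast)=\nu(1)\mathcal{H}(p)$ (binary entropy is symmetric), while $h(\mu_0)\ge\lim_n\tfrac1n H_\lambda(x_0^{n-1}\mid y_0^{n-1})=\nu(1)\mathcal{H}(p)$ since, for the product joining, $x$ restricted to the support of $y$ is exactly i.i.d.\ Bernoulli. (For $p=\tfrac12$ this is the very entropy computation behind $h_{top}(X)=\nu(1)$ in Theorem~\ref{toto}.) Hence $h(\mu_0)+\int\varphi\,d\mu_0=F(q^\ast)$, proving $\mathcal{P}_{X,\varphi}=F(q^\ast)$ and that $\mu_0$ is an equilibrium state.

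The last and most delicate point is uniqueness. Any equilibrium state $\mu$ must saturate the whole chain, which forces $q=q^\ast$ (strict concavity of $F$ pins the density) and $h(\mu)=\nu(1)\mathcal{H}(q^\ast)$; the obstacle is to upgrade this equality of entropy \emph{rates} to an identification of $\mu$. I would resolve it by observing that $g(n):=H_\lambda(x_0^{n-1}\mid y_0^{n-1})$ is subadditive, so by Fekete $\lim_n g(n)/n=\inf_n g(n)/n=h(\mu)=\nu(1)\mathcal{H}(q^\ast)$; since moreover $g(n)\le n\,\nu(1)\mathcal{H}(q^\ast)$ at every finite scale, the infimum being attained in the limit forces $g(n)=n\,\nu(1)\mathcal{H}(q^\ast)$ for all $n$. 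Equality at each $n$ turns the two inequalities used above into equalities, i.e.\ $x_0,\dots,x_{n-1}$ are conditionally independent given $y_0^{n-1}$ and each $x_i$ depends on the block only through $y_i$. Together with the $y$-marginal being $\nu$, this determines the joint law of $(y,x)$ to be that of $(y,M(y,z))$ with $z\sim B_p$ independent of $y$; projecting gives $\mu=M_\ast(\nu\otimes B_p)$, the unique equilibrium state. The degenerate case $\nu=\delta_{\mathbf 0}$ is immediate, since then $\mathcal{M}(X)=\{\delta_{\mathbf 0}\}$ and both sides reduce to $\varphi(0)$.
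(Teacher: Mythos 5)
Your proof is correct, but it follows a genuinely different route from the paper's. The paper proves Theorem~\ref{int0} through the commutative diagrams of Proposition~\ref{diagramiki}: it lifts the variational problem to the skew product $\widetilde{\sigma}(x,y)=(\sigma x,\sigma^{x_0}y)$, applies Abramov's formula to the induced map on $C_0\times\{0,1\}^\Z$ (which splits as $\sigma_{C_0}\times\sigma$), and settles uniqueness by the fact that the zero-entropy measure $\nu_{C_0}$ admits only the product joining with $B_p$ --- i.e.\ disjointness of zero entropy from Bernoulli. You avoid all of this machinery: your upper bound is a finite-block Shannon estimate ($h(\mu)\le\nu(1)\mathcal{H}(q)$ via the chain rule, subadditivity of conditional entropy, and $x_i\le y_i$ under the joining), the optimization becomes a one-variable strictly concave problem, and --- most notably --- your uniqueness argument replaces the disjointness input with a subadditivity-rigidity argument: $g(n)=H_\lambda\bigl(x_0^{n-1}\mid y_0^{n-1}\bigr)$ is subadditive, bounded above by $n\,\nu(1)\mathcal{H}(q^\ast)$ at every finite $n$, and its limit per symbol equals that same value, so Fekete forces equality at every $n$, hence conditional independence and the identification of the pair law, giving $\mu=M_\ast(\nu\otimes B_p)$. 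One small presentational point: the bare assertion $\lim_n g(n)/n=h(\mu)$ deserves a line --- it follows from the sandwich $h(\mu)\le\lim_n g(n)/n\le\nu(1)\mathcal{H}(q^\ast)=h(\mu)$, where the first inequality uses $h(\nu)=0$ and the chain rule, and the last equality is what being an equilibrium state already forced; with that spelled out the argument is airtight. It is worth noting that the paper's appendix contains a second, probabilistic proof (Theorem~\ref{theo pot}) closer in spirit to yours --- it computes the conditional entropy rate of the product process exactly via the arrival-time formula of Proposition~\ref{theo q1} and extracts equality conditions --- but even that proof works with exact entropy-rate identities and regular conditional distributions, whereas your finite-$n$ rigidity argument needs nothing beyond Shannon entropy of finite blocks. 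What the paper's main route buys is that it transfers to the sandwich case (Theorem~\ref{HHH}) essentially verbatim; what yours buys is elementarity and self-containedness, at the cost of being tied to the specific combinatorial structure $x\le y$.
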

Although Theorem~\ref{int1} is a special case of Theorem~\ref{int0}, we state them separately and prove Theorem~\ref{int1} first. In fact, both proofs go along the same lines, however, it is easier to explain the main underlying ideas treating the simpler case first and then pass to a more complicated setup.

Denote by $\hat{Z}$ the set of $x\in \{0,1\}^\Z$ such that the support $\text{supp } x:=\{n\in\Z : x_n\neq 0\}$ of $x$ is unbounded both from below and from above. Given $x\in \hat{Z}$ and $y\in\{0,1\}^\Z$, let $\hat{y}_x\in \{0,1\}^\Z$ be the unique sequence consisting of the consecutive coordinates of $y$ that lie along the support of $x$ and such that $\hat{y}_x(0)=y(n)$ for $n=\min\{(\text{supp }x) \cap [0,\infty)\}$. Consider also
\[
\Psi_0\colon \hat{Z}\times \{0,1\}^\Z\to \hat{Z}\times \{0,1\}^\Z,\ \Psi_0(x,y)=(x,\hat{y}_x)
\]
and $\Phi_0\colon \hat{Z}\times \{0,1\}^\Z \to \{0,1\}^\Z$ given by the condition that
\[
\Phi_0(x,y) \text{ is the unique $0$-$1$ sequence such that }\Phi_0(x,y)\leq x \text{ and }\Phi_0(x,y)^{\widehat{}}_x=y.
\]
Last but not least, consider the following skew product: 
\[
\widetilde{\sigma}\colon (\{0,1\}^\Z)^2\to (\{0,1\}^\Z)^2,\ \widetilde{\sigma}(x,y)=(\sigma x,\sigma^{x_0}y).
\]
Take $C_0:=\{x \in \hat{Z} : x_0=1\}$ and consider the induced map $\widetilde{\sigma}_{C_0\times \{0,1\}^\Z}$ corresponding to $\widetilde{\sigma}$ and $C_0\times \{0,1\}^\Z$. Notice that
\begin{equation}\label{indukowane}
\widetilde{\sigma}_{C_0\times \{0,1\}^\Z}=\sigma_{C_0} \times \sigma.
\end{equation}

The main tool in the proofs of Theorem~\ref{toto} and Theorem~\ref{int0} will be the following result.
\begin{proposition}\label{diagramiki}
Suppose that $X\subseteq \{0,1\}^\Z$ is a measure-theoretic subordinate subshift with base measure $\nu$. Then the following two diagrams are commutative and surjective:
\begin{equation*}
\begin{tikzpicture}[baseline=(current  bounding  box.center)]
\node (LT) at (7,0) {$(x,y)$};
\node (LM) at (7,-1.5) {$\Psi_0(x,y)=(x,\hat{y}_x)$};
\node (LB) at (7,-3) {$\Phi_0(x,\hat{y}_x)=x\cdot y$};
\draw [|->] (LT) -- (LM);
\draw [|->] (LM) -- (LB);
\node (T) at (0,0) {$(X\times \{0,1\}^\Z,\sigma\times \sigma,\{\nu\vee\kappa : \kappa\in\mathcal{M}(\{0,1\}^\Z)\})$};
\node (M) at (0,-1.5) {$ (X\times \{0,1\}^\Z,\widetilde{\sigma},\{\widetilde{\mu} : (\pi_1)_\ast(\widetilde{\mu})=\nu\})$};
\node (B) at (0,-3) {$(X,\sigma)$};
\node (TR) at (-5,0) {};
\node (BR) at (-5,-3) {};
\draw (T) -- (-5,0) -- node[left] {$M$} (-5,-3);
\draw [->] (-5,-3) -- (B);
\draw[->]  (T) edge node[auto] {$\Psi_0$} (M)
		   (M) edge node[auto] {$\Phi_0$} (B);
\end{tikzpicture}
\end{equation*}
and
\begin{equation*}
\begin{tikzpicture}[baseline=(current  bounding  box.center)]
\node (LT) at (8,0) {$(x,y)$};
\node (LM) at (8,-1.5) {$(\pi_1\otimes M)(x,y)=(x,x\cdot y)$};
\node (LB) at (8,-3) {$\Psi_0(x,x\cdot y)=(x,\hat{y}_x)$.};
\node (LU) at (8,-4.5) {$\Phi_0(x,\hat{y}_x)=x\cdot y$.};
\draw [|->] (LT) -- (LM);
\draw [|->] (LM) -- (LB);
\draw [|->] (LB) -- (LU);
\node (T) at (0,0) {$(X\times \{0,1\}^\Z,\sigma\times \sigma,\{\nu\vee\kappa : \kappa\in\mathcal{M}(\{0,1\}^\Z)\})$};
\node (M) at (0,-1.5) {$(X\times \{0,1\}^\Z,\sigma\times \sigma,(\pi_1\otimes M)_\ast\{\nu\vee\kappa : \kappa\in\mathcal{M}(\{0,1\}^\Z)\})$};
\node (B) at (0,-3) {$(X\times \{0,1\}^\Z,\widetilde{\sigma},\{\widetilde{\mu} : (\pi_1)_\ast(\widetilde{\mu})=\nu\})$};
\node (U) at (0,-4.5) {$(X,\sigma)$};
\node (TR) at (-5.5,0) {};
\node (BR) at (-5.5,-4.5) {};
\draw (T) -- (-5.5,0) -- node[left] {$M$} (-5.5,-4.5);
\draw [->] (-5.5,-4.5) -- (U);
\draw[->]  (T) edge node[auto] {$\pi_1 \otimes M$} (M)
		   (M) edge node[auto] {$\Psi_0$} (B)
		   (B) edge node[auto] {$\Phi_0$} (U);
\end{tikzpicture}
\end{equation*}
\end{proposition}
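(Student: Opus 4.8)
The plan is to check, for each arrow, the three things that make it a morphism of the relevant objects — intertwining of the dynamics, the inclusion of measure sets, and surjectivity — and then to read off commutativity from a few pointwise identities. First I would dispose of the degenerate case: if $\nu=\delta_{\mathbf{0}}$ then $\mathcal{M}(X)=\{\delta_{\mathbf{0}}\}$ and everything is trivial, so I assume $\nu(1)>0$. Since $\nu$ is ergodic (Proposition~\ref{pr:1}) and $\nu(C_0)=\nu(1)>0$, the Poincar\'e recurrence theorem gives $\nu$-a.e.\ $x\in\hat Z$. As every measure appearing in either diagram has first marginal $\nu$, the objects $\hat y_x$, $\Psi_0$, $\Phi_0$ and the section $\Theta$ introduced below are all defined on an invariant set of full measure for each such measure; moreover $\Phi_0(x,y)\le x\in\supp\nu$ forces $\Phi_0(x,y)\in\widetilde{\supp\nu}\subseteq X$ by (the proof of) Proposition~\ref{sub}, so $\Phi_0$ indeed lands in $X$.

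Pointwise commutativity is immediate: along $\supp x$ one has $(x\cdot y)^{\widehat{}}_x=\hat y_x$, and by definition $\Phi_0(x,\hat y_x)$ is the unique $z\le x$ with $\hat z_x=\hat y_x$, namely $z=x\cdot y$. Hence $\Phi_0\circ\Psi_0=M$ for the first diagram and $\Phi_0\circ\Psi_0\circ(\pi_1\otimes M)=M$ for the second; these are exactly the required relations, the only nontrivial pair of paths being the long chain against the direct arrow $M$.

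The technical heart is the intertwining of $\Psi_0$ with the skew product $\widetilde\sigma$. I would establish the reindexing identity
\[
(\sigma y)^{\widehat{}}_{\sigma x}=\sigma^{x_0}\hat y_x\qquad (x\in\hat Z,\ y\in\{0,1\}^\Z)
\]
by splitting on $x_0$: the marker $\min(\supp x\cap[0,\infty))$ either sits at $0$ (when $x_0=1$, so that shifting deletes it and advances the reading by one position) or is strictly positive (when $x_0=0$, so that shifting preserves which support point is the first nonnegative one). This identity yields at once $\Psi_0\circ(\sigma\times\sigma)=\widetilde\sigma\circ\Psi_0$, and, applied to $w:=\Phi_0(x,y)$ together with the uniqueness in the definition of $\Phi_0$, it also gives $\sigma\Phi_0(x,y)=\Phi_0(\widetilde\sigma(x,y))$, i.e.\ $\Phi_0$ intertwines $\widetilde\sigma$ with $\sigma$. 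The arrows $M$ and $\pi_1\otimes M$ intertwine the shifts simply because $M$ commutes with $\sigma$. This case analysis is where all the real work lies; the main obstacle is precisely the careful bookkeeping of how the base point of the support indexing moves under $\sigma$, and once it is settled the rest is formal.

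For the measure sets, $\pi_1\circ\Psi_0=\pi_1$ shows $(\Psi_0)_\ast(\nu\vee\kappa)$ is $\widetilde\sigma$-invariant with first marginal $\nu$, so $\Psi_0$ maps the top set into the middle set; surjectivity I would obtain from the measurable section $\Theta(x,y):=(x,\Phi_0(x,y))$, which by the identities above intertwines $\widetilde\sigma$ with $\sigma\times\sigma$, satisfies $\pi_1\circ\Theta=\pi_1$ and $\Psi_0\circ\Theta=\mathrm{id}$. Then for any middle measure $\widetilde\mu$ the measure $\Theta_\ast\widetilde\mu$ has first marginal $\nu$, hence is some $\nu\vee\kappa$, and it pushes forward under $\Psi_0$ to $\widetilde\mu$. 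The arrow $M$ is surjective onto $\mathcal{M}(X)$ by the very definition of a measure-theoretic subordinate subshift, and $\Phi_0$ is then surjective since $(\Phi_0)_\ast(\text{middle})=(\Phi_0)_\ast(\Psi_0)_\ast(\text{top})=M_\ast(\text{top})=\mathcal{M}(X)$. For the second diagram, $\pi_1\otimes M$ is surjective onto its target by construction of that object's measure set, and because $\Psi_0\circ(\pi_1\otimes M)=\Psi_0$ as maps, the surjectivity of $\Psi_0$ transfers verbatim from the first diagram; $\Phi_0$ and $M$ are handled exactly as before. This completes both diagrams.
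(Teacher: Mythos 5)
Your proposal is correct and follows the same skeleton as the paper's proof: everything reduces to the pointwise identities $\Psi_0\circ(\sigma\times\sigma)=\widetilde{\sigma}\circ\Psi_0$, $\Phi_0\circ\widetilde{\sigma}=\sigma\circ\Phi_0$ and $\Phi_0\circ\Psi_0=M$ on $\hat{Z}\times\{0,1\}^\Z$ (plus, for the second diagram, the observation that $\pi_1\circ\Psi_0\circ(\pi_1\otimes M)=\pi_1$; your stronger identity $\Psi_0\circ(\pi_1\otimes M)=\Psi_0$, which holds since $(x\cdot y)^{\widehat{}}_x=\hat{y}_x$, subsumes it), together with $\nu(\hat{Z})=1$. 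The difference lies in what you prove versus what the paper outsources. The paper cites the detailed verification of the three identities to the proof of Theorem 2.1.1 in \cite{MR3356811} and disposes of surjectivity with the single remark that it ``follows immediately'' from $X$ being measure-theoretically subordinate; you instead verify the reindexing identity $(\sigma y)^{\widehat{}}_{\sigma x}=\sigma^{x_0}\hat{y}_x$ directly by the case split on $x_0$, and -- more substantively -- you build the measurable section $\Theta(x,y)=(x,\Phi_0(x,y))$ with $\Psi_0\circ\Theta=\mathrm{id}$, $\pi_1\circ\Theta=\pi_1$ and equivariance from $\widetilde{\sigma}$ to $\sigma\times\sigma$. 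This is a genuine gain in self-containedness: for the arrows $M$ and $\pi_1\otimes M$ surjectivity really is definitional, but for $\Psi_0$ (hitting \emph{every} $\widetilde{\mu}$ with $(\pi_1)_\ast\widetilde{\mu}=\nu$) and for the claim that $(\Phi_0)_\ast\widetilde{\mu}\in\mathcal{M}(X)$ for every such $\widetilde{\mu}$, something like your $\Theta$ (or your support argument $\Phi_0(x,y)\leq x\in\supp\nu$, hence $\Phi_0(x,y)\in\widetilde{\supp\nu}\subseteq X$ via Proposition~\ref{sub}) is actually needed -- the paper implicitly leans on \cite{MR3356811} for exactly this inverse identification. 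Your treatment of the degenerate case $\nu=\delta_{\mathbf{0}}$ and of the full-measure domain, via ergodicity of $\nu$ (Proposition~\ref{pr:1}) combined with Poincar\'e recurrence, matches the paper's unexplained ``clearly $\nu(\hat{X})=1$''. I see no gaps.
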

\begin{proof}
It suffices to prove the commutativity (the surjectivity will then follow immediately by the fact that $X$ is a measure-theoretic subordinate subshift). We can assume without loss of generality that $\nu\neq\delta_{\boldsymbol{0}}$ (otherwise there is nothing to prove). Let $\hat{X}:=X\cap \hat{Z}$. Then clearly $(\nu\vee\kappa)(\hat{X}\times \{0,1\}^\Z)=\nu(\hat{X})=1$ for any joining $\nu\vee\kappa$. Moreover, we have
\[
 \Psi_0\circ (\sigma\times \sigma)=\widetilde{\sigma}\circ \Psi_0,\ \Phi_0\circ \widetilde{\sigma} = \sigma\circ \Phi_0  \text{ and }\Phi_0\circ \Psi_0=M\text{ on }\hat{X}\times \{0,1\}^\Z
\]
(for detailed calculation see the proof of Theorem 2.1.1 in~\cite{MR3356811}). This yields the commutativity of the first diagram. The commutativity of the second diagram requires that
\[
\Psi_0 \colon (X\times \{0,1\}^\Z,\sigma\times \sigma,(\pi_1\otimes M)_\ast\{\nu\vee\kappa : \kappa\in\mathcal{M}(\{0,1\}^\Z)\}) \to (X\times \{0,1\}^\Z,\widetilde{\sigma},\{\widetilde{\mu} : (\pi_1)_\ast(\widetilde{\mu})=\nu\})
\]
is a morphism, i.e.\ that -- apart from the equivariance condition $\Psi_0\circ (\sigma\times \sigma)=\widetilde{\sigma}\circ \Psi_0$ -- we have
\[
(\Psi_0)_\ast \circ (\pi_1\otimes M)_\ast (\nu\vee\kappa) \in \{\widetilde{\mu}\in\mathcal{M}(X\times \{0,1\}^\Z,\widetilde{\sigma}) : (\pi_1)_\ast(\widetilde{\mu})=\nu\}.
\]
However, this follows immediately since $\pi_1\circ \Psi_0 \circ (\pi_1\otimes M)(x,y)=x$.
\end{proof}

\begin{proof}[Proof of Theorem~\ref{toto}]
We can assume that $\nu\neq \delta_{\mathbf{0}}$ (otherwise there is nothing to prove since $\delta_{\mathbf{0}}$ is the unique invariant measure on $X$, cf.\ Remark~\ref{kiedyzero}). Notice first that by Remark~\ref{kiedyzero}, 
\begin{equation}\label{eqn:nier1}
\nu(1)\leq h(X).
\end{equation}
Moreover,
\begin{equation}\label{eqn:nier2}
h(X)\leq \sup \{h(\widetilde{\sigma},\widetilde{\mu}) : \widetilde{\mu}\in \mathcal{M}((\{0,1\}^\Z)^2,\widetilde{\sigma}) \text{ with }\widetilde{\mu}|_X=\nu\}.
\end{equation}
Now, take $\widetilde{\mu}\in\mathcal{M}((\{0,1\}^\Z)^2,\widetilde{\sigma})$ with $\widetilde{\mu}|_X=\nu$. By~\eqref{indukowane}, $\widetilde{\mu}_{C\times \{0,1\}^\Z}$ is $(\sigma\times \sigma)$-invariant and equal to some joining of the form $\nu_C\vee\kappa$. Moreover, we have
\[
h(\widetilde{\sigma}_{C\times \{0,1\}^\Z},\nu_C\vee \kappa)=h(\sigma_C\times \sigma,\nu_C\vee\kappa)=h(\sigma,\kappa)\leq 1
\]
since $h(\nu_C)=0$. Thus, by Abramov's formula,
\begin{equation}\label{eqn:nier3}
h(\widetilde{\sigma},\widetilde{\mu})=\nu(1)h(\sigma_C\times \sigma, \nu_C\vee\kappa)\leq\nu(1),
\end{equation}
with equality if and only if $\kappa=B_{1/2}$. Since $h(\nu_C)=0$, we have $\nu_C\vee\kappa=\nu_C\vee B_{1/2}=\nu_C\otimes B_{1/2}$. Combining~\eqref{eqn:nier1},~\eqref{eqn:nier2} and~\eqref{eqn:nier3} we get $h(X)=\nu(1)$. The uniqueness in~\eqref{eqn:nier3} implies the unique ergodicity of $X$.
\end{proof}

\begin{proof}[Proof of Theorem~\ref{int0}]
We can assume that $\nu\neq \delta_{\mathbf{0}}$ (otherwise there is nothing to prove). We will use the second diagram from the assertion of Proposition~\ref{diagramiki} -- let us now add potentials to this picture. Given $\varphi\colon X\to \R$, we define $\widetilde{\varphi}\colon X\times \{0,1\}^\Z\to \R$ by $\widetilde{\varphi}:=\varphi\circ \Phi_0$ and $\overline{\varphi}\colon X\times \{0,1\}^\Z\to \R$ by $\overline{\varphi}:=\widetilde{\varphi}\circ \Psi_0$. We have
\begin{equation*}
h(\sigma,\mu)+\int \varphi\, d\mu \leq h(\widetilde{\sigma},\widetilde{\mu})+\int \widetilde{\varphi}\, d\widetilde{\mu} 
\leq h(\sigma^{\times 2},(\pi_1\otimes M)_\ast(\nu\vee\kappa))+\int\overline{\varphi}\, d (\pi_1\otimes M)_\ast(\nu\vee\kappa),
\end{equation*}
whenever $\mu=\Phi_\ast(\widetilde{\mu})$ and $\widetilde{\mu}=(\pi_1\otimes M)_\ast (\nu\vee\kappa)$ (the corresponding integrals are just equal and the inequality between the entropies follows by the fact that we deal with factors). Therefore,
\begin{align*}
\mathcal{P}_{X,\varphi}&=\sup\left\{h(\sigma,\mu)+\int \varphi\, d\mu : \mu\in\mathcal{M}(X)\right\}\nonumber\\ 
&\leq \sup\left\{h(\widetilde{\sigma},\widetilde{\mu})+\int \widetilde{\varphi}\, d\widetilde{\mu} : \widetilde{\mu}\in\mathcal{M}(X\times \{0,1\}^\Z,\widetilde{\sigma})\text{ and }(\pi_1)_\ast(\widetilde{\mu})=\nu\right\}\label{nn1}\\
&\leq\sup\left\{ h(\sigma^{\times 2},(\pi_1\otimes M)_\ast(\nu\vee\kappa))+\int \overline{\varphi}\, d (\pi_1\otimes M)_\ast(\nu\vee\kappa) : \kappa\in\mathcal{M}(\{0,1\}^\Z) \right\}\nonumber\\
&=\sup\left\{ h(\sigma, M_\ast(\nu\vee\kappa))+\int\varphi\, d M_\ast(\nu\vee\kappa) : \kappa\in\mathcal{M}(\{0,1\}^\Z)\right\}=\mathcal{P}_{X,\varphi}
\end{align*}
(the first equality in the last line follows from $h(\sigma,\nu)=0$ and by the definition of $\overline{\varphi}$, while the second one holds since $X$ is a measure-theoretic subordinate subshift). Thus, to complete the proof, it suffices to compute 
\[
\sup\left\{h(\widetilde{\sigma},\widetilde{\mu})+\int \widetilde{\varphi}\, d\widetilde{\mu} : \widetilde{\mu}\in\mathcal{M}(X\times \{0,1\}^\Z,\widetilde{\sigma})\text{ and }(\pi_1)_\ast(\widetilde{\mu})=\nu\right\}
\]
and show that there is a unique measure realizing the above supremum. By Abramov's formula, we have
\begin{equation}\label{l1}
h(\widetilde{\sigma},\widetilde{\mu})=\nu(1)\cdot h(\widetilde{\sigma}_{C\times \{0,1\}^\Z},\widetilde{\mu}_{C\times \{0,1\}^\Z})=\nu(1)\cdot h(\sigma,(\pi_2)_\ast(\widetilde{\mu}_{C\times \{0,1\}^\Z}))
\end{equation}
(recall that $\widetilde{\sigma}_{C\times \{0,1\}^\Z}=\sigma_C\times \sigma$ and that $h(\sigma_C,\nu_C)=0$). Moreover,
\[
\widetilde{\varphi}(x,y)=\varphi\circ \Phi_0 (x,y),\ \text{with }(\Phi_0(x,y))_0=x_0\cdot y_0
\]
and it follows that
\[
\widetilde{\varphi}(x,y)=\varphi(M(x,y))
\]
since $\varphi(x)=\varphi(x_0)$.
Thus,
\begin{align*}
\int \widetilde{\varphi}\, d\widetilde{\mu}&=(\widetilde{\mu}(0,0)+\widetilde{\mu}(0,1))\varphi(0)+\widetilde{\mu}(1,0)\varphi(0)+\widetilde{\mu}(1,1)\varphi(1)\\
&=\nu(0)\varphi(0)+\widetilde{\mu}(1,0)\varphi(0)+\widetilde{\mu}(1,1)\varphi(1)
\end{align*}
Notice also that for $i\in\{0,1\}$ we have
\begin{equation}\label{l2}
(\pi_2)_\ast(\widetilde{\mu}_{C\times \{0,1\}^\Z})(i)=\widetilde{\mu}_{C\times \{0,1\}^\Z}(\ast ,i)=\frac{\widetilde{\mu}(1,i)}{\nu(1)}.
\end{equation}
Combining~\eqref{l1} and~\eqref{l2}, we obtain
\begin{align*}
h(\widetilde{\sigma},\widetilde{\mu})+\int \widetilde{\varphi}\, d\widetilde{\mu} &= \nu(0)\varphi(0)+\nu(1)\left(h(\sigma, (\pi_2)_\ast(\widetilde{\mu}_{C\times \{0,1\}^\Z}))+\int \varphi\, d (\pi_2)_\ast(\widetilde{\mu}_{C\times \{0,1\}^\Z})\right)\\
&\leq \nu(0)\varphi(0)+\nu(1)\log (2^{\varphi(0)}+2^{\varphi(1)}),
\end{align*}
with the equality only when $(\pi_2)_\ast(\widetilde{\mu}_{C\times \{0,1\}^\Z})$ is equal to the Bernoulli measure $B_{p}$ for $p=\frac{2^{\varphi(0)}}{2^{\varphi(0)}+2^{\varphi(1)}}$. Now, notice that
\[
(\pi_2)_\ast(\widetilde{\mu}_{C\times \{0,1\}^\Z})) = B_{p}\iff \widetilde{\mu}_{C\times \{0,1\}^\Z}=\nu_C\otimes B_{p}
\]
($h(\sigma_C,\nu_C)=0$ and thus $\nu_C\otimes B_{p}$ is the only joining of $\nu_C$ and $B_{p}$). The latter equality is, in turn, equivalent to $\widetilde{\mu}=\nu\otimes B_{p}$. It remains to notice that
\[
(\Phi_0)_\ast(\nu\otimes B_{p})=(\Phi_0)_\ast \circ (\Psi_0)_\ast (\nu\otimes B_{p})=M_\ast (\nu\otimes B_{p}).
\]
\end{proof}

\begin{remark}[$\mathscr{B}$-free systems]\label{commentsB}
We will see now how to use Proposition~\ref{prop_gene} to conclude that $\widetilde{X}_\eta$ is always a measure-theoretic subordinate subshift with $\nu_\eta$ being its base measure and Theorem~\ref{toto} to obtain a short and elegant proof of the intrinsic ergodicity of $\widetilde{X}_\eta$. 

Fix $\mathscr{B}\subseteq \mathbb{N}$ and recall the notation $\mathscr{B}_K=\{b\in\mathscr{B} : b<K\}$. Let $\eta_K:=\mathbf{1}_{\mathcal{F}_{\mathscr{B}_K}}$. It follows by Theorem~\ref{daverd} (and by the very definition of $\eta$ and $\eta_K$) that 
\begin{equation}\label{isgood}
\text{$(\eta_K)$ is good (and its coordinatewise limit equals $\eta$)}.
\end{equation}
Moreover, each $\eta_K$ is generic for $\nu_{\eta_K}$, while 
\begin{equation}\label{eq:qg}
\eta \text{ is quasi-generic for }\nu_\eta.
\end{equation} 
It follows by Lemma~\ref{lemat_gene} that 
\begin{equation}\label{eq:gra}
\nu_{\eta_K}\to\nu_\eta.
\end{equation} 
It suffices to use Proposition~\ref{prop_gene} to conclude that $\bigcap_{K\geq 1}{[\mb{0},\eta_K]}$ and $\overline{[\mb{0},\eta]}=\widetilde{X}_\eta$ are measure-theoretic subordinate subshifts with base measure $\nu_\eta$. Thus, we can apply Theorem~\ref{toto} and obtain the intrinsic ergodicity of $\widetilde{X}_\eta$, with $\nu=\nu_\eta$. (Moreover, Theorem~\ref{int0} also holds in this setup.)

Note that the subshift $\widetilde{X}_\eta$ was known to be intrinsically ergodic earlier than it was proved that $\widetilde{X}_\eta$ is a measure-theoretic subordinate subshift. Let us go more into details. The proofs of Theorem~\ref{int1} (and Theorem~\ref{int0}) draw upon ideas from the old paper of Marcus and Newhouse~\cite{MR550415}. In case of the hereditary closure of $\mathscr{B}$-free subshifts the first result on invariant measures was that of Peckner~\cite{MR3430278}: he showed that the square-free subshift is intrinsically ergodic. In fact, his proof has the same flavour as what we do here (he gives a reference to~\cite{MR550415}) and some of the ideas behind it led later to the description of invariant measures of the hereditary closure of $\mathscr{B}$-free subshifts, first in the Erd\"os case in~\cite{MR3356811} and then for all sets $\mathscr{B}\subseteq \mathbb{N}$ in~\cite{MR3803141}. 
\end{remark}

\subsection{Sandwich measure-theoretic subordinate subshifts}\label{NNNNN}
This section is devoted to finding a formula for $\mathcal{P}_{X,\varphi}$, where $X$ is a sandwich measure-theoretic subordinate subshift with a zero entropy base measure and $\varphi(x)=\varphi(x_0)$. We then apply it to $\mathscr{B}$-free systems and compare with the earlier known related results. The main result in this section is as follows.
\begin{theorem}[Cf.\ Theorem~\ref{int0}]\label{HHH}
Suppose that $X\subseteq \{0,1\}^\Z$ is a sandwich measure-theoretic subordinate subshift with base measure $\rho$ of zero entropy. Let $\varphi\colon X\to\R$ be of the form $\varphi(x)=\varphi(x_0)$. Then 
\begin{equation}\label{wz}
\mathcal{P}_{X,\varphi}=\rho(0,0)\varphi(0)+\rho(1,1)\varphi(1)+\rho(0,1)\log(2^{\varphi(0)}+2^{\varphi(1)})
\end{equation}
and there is the following corresponding unique equilibrium state: $N_\ast(\rho\otimes B_p)$, where $p=2^{\varphi(0)}/(2^{\varphi(0)}+2^{\varphi(1)})$.
\end{theorem}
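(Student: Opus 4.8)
The plan is to run the proof of Theorem~\ref{int0} verbatim, with the dictionary: replace the base measure $\nu$ by $\rho$, the product $M$ by $N$, the distinguished symbol $1$ by the ``gap'' pattern $\begin{matrix}0\\1\end{matrix}$, and the support $\supp x$ by the \emph{gap set} $D(w,x):=\{n:w_n<x_n\}$. First I would dispose of the degenerate case $\rho(0,1)=0$: by Proposition~\ref{uwazka} this forces $\nu_0=\nu_1$ and $\mathcal M(X)=\{\nu_0\}$, so $X$ is uniquely ergodic, $\mathcal P_{X,\varphi}=h(\nu_0)+\int\varphi\,d\nu_0=\varphi(0)\nu_0(0)+\varphi(1)\nu_0(1)$, and since $\rho$ is then the diagonal joining one checks directly that this equals the right-hand side of \eqref{wz} (the term carrying $\rho(0,1)=0$ vanishes) and that $N_\ast(\rho\otimes B_p)=\nu_0$. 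So from now on I assume $\rho(0,1)>0$ and, using Remark~\ref{re:11} and Corollary~\ref{cor:1}, that $\rho$ is an ergodic base measure supported on $\mathbf A=\{(w,x):w\le x\}$; ergodicity together with $\rho(0,1)>0$ makes $D(w,x)$ $\rho$-a.e.\ unbounded on both sides. Note also that $\rho(1,1)=\nu_0(1)$, $\rho(0,1)=\nu_1(1)-\nu_0(1)$, $\rho(0,0)=\nu_1(0)$ are determined by the (unique) marginals, so \eqref{wz} is unambiguous.

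Next I would build the sandwich analogue of the skew product and of the (second) diagram in Proposition~\ref{diagramiki}. Set
\[
\widetilde\sigma\colon(\{0,1\}^\Z)^3\to(\{0,1\}^\Z)^3,\qquad \widetilde\sigma(w,x,y)=(\sigma w,\sigma x,\sigma^{x_0-w_0}y),
\]
so that the $y$-coordinate is advanced exactly at gap positions. With $C:=\{(w,x):w_0=0,\ x_0=1\}$ (so $\rho(C)=\rho(0,1)$), the same bookkeeping as in \eqref{indukowane} shows that the induced map of $\widetilde\sigma$ on $C\times\{0,1\}^\Z$ is $(\sigma\times\sigma)_C\times\sigma$. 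Defining $\Psi_0$ by compressing the $y$-coordinate along $D(w,x)$ (normalised by $\hat y(0)=y(\min(D(w,x)\cap[0,\infty)))$) and $\Phi_0$ by re-expanding it along $D(w,x)$, exactly as in Section~\ref{NNN} but with $D(w,x)$ in place of $\supp x$, one gets $\Phi_0\circ\Psi_0=N$ together with the equivariance relations, hence a commuting, surjective diagram
\[
\big((\{0,1\}^\Z)^3,\sigma^{\times3},\{\rho\vee\kappa\}\big)\xrightarrow{\ \Psi_0\ }\big((\{0,1\}^\Z)^3,\widetilde\sigma,\{\widetilde\mu:(\pi_{1,2})_\ast\widetilde\mu=\rho\}\big)\xrightarrow{\ \Phi_0\ }(X,\sigma),
\]
with left edge $N$; surjectivity is immediate from $X$ being a sandwich measure-theoretic subordinate subshift. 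Pushing $\varphi$ up to $\widetilde\varphi:=\varphi\circ\Phi_0$ and arguing exactly as in the proof of Theorem~\ref{int0} (factor inequalities on entropies, equal integrals, and the fact that the maximiser found below already comes from an actual measure on $X$) sandwiches the pressure, reducing everything to evaluating $\sup\{h(\widetilde\sigma,\widetilde\mu)+\int\widetilde\varphi\,d\widetilde\mu:(\pi_{1,2})_\ast\widetilde\mu=\rho\}$, which it suffices to do over ergodic $\widetilde\mu$.

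The computation is then routine. By Abramov's formula and $h((\sigma\times\sigma)_C,\rho_C)=0$ (a consequence of $h(\rho)=0$), for $\lambda:=(\pi_3)_\ast(\widetilde\mu_{C\times\{0,1\}^\Z})$ one has $h(\widetilde\sigma,\widetilde\mu)=\rho(0,1)\,h(\sigma,\lambda)$. Since $\varphi(z)=\varphi(z_0)$ and the $0$th output symbol of $N$ equals $0$, $1$, or the active $y$-symbol according as $(w_0,x_0)$ is $(0,0)$, $(1,1)$, or $(0,1)$, I get $\int\widetilde\varphi\,d\widetilde\mu=\varphi(0)\rho(0,0)+\varphi(1)\rho(1,1)+\rho(0,1)\big(\varphi(0)\lambda(0)+\varphi(1)\lambda(1)\big)$, whence
\[
h(\widetilde\sigma,\widetilde\mu)+\int\widetilde\varphi\,d\widetilde\mu=\varphi(0)\rho(0,0)+\varphi(1)\rho(1,1)+\rho(0,1)\Big(h(\sigma,\lambda)+\varphi(0)\lambda(0)+\varphi(1)\lambda(1)\Big).
\]
The bracketed quantity is the pressure on the full shift of the potential $y\mapsto\varphi(y_0)$, whose supremum over $\lambda\in\mathcal M(\{0,1\}^\Z)$ is $\log(2^{\varphi(0)}+2^{\varphi(1)})$, attained only at $\lambda=B_p$ with $p=2^{\varphi(0)}/(2^{\varphi(0)}+2^{\varphi(1)})$. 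This gives \eqref{wz}.

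For uniqueness, the optimum forces $\lambda=B_p$; as $\rho_C$ has zero entropy and $B_p$ is Bernoulli, the two are disjoint, so the only $(\sigma\times\sigma)_C\times\sigma$-joining of $\rho_C$ and $B_p$ is the product $\rho_C\otimes B_p$, i.e.\ $\widetilde\mu_{C\times\{0,1\}^\Z}=\rho_C\otimes B_p$. By the $1$--$1$ correspondence between invariant measures and their inductions on $C\times\{0,1\}^\Z$ (valid since this set has positive measure for all relevant measures), $\widetilde\mu=\rho\otimes B_p$ is uniquely determined, and therefore any equilibrium state is $\Phi_{0\ast}(\rho\otimes B_p)=N_\ast(\rho\otimes B_p)$. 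The step I expect to require the most care is the construction and verification of the diagram along the gap set $D(w,x)$ — in particular checking that the induced map is exactly $(\sigma\times\sigma)_C\times\sigma$ and that $\Phi_0\circ\Psi_0=N$ — whereas the disjointness of zero-entropy and Bernoulli systems makes the uniqueness argument clean, and the pressure computation itself is a direct transcription of the one in Theorem~\ref{int0}.
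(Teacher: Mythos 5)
Your proposal is correct and follows essentially the same route as the paper: the paper's proof of Theorem~\ref{HHH} likewise first disposes of the case $\nu_0(1)=\nu_1(1)$ (equivalently $\rho(0,1)=0$, since $\rho$ lives on $\mathbf{A}$) via Proposition~\ref{uwazka}, then runs the Theorem~\ref{int0} scheme through the second diagram of Proposition~\ref{diagramiki2} with exactly your skew product $\widetilde{\sigma^{\times 2}}(w,x,y)=(\sigma w,\sigma x,\sigma^{x_0-w_0}y)$, the cylinder $C=\{(w,x)\in\hat{\mathbf{A}}: w_0=0<1=x_0\}$, the maps $\Psi,\Phi$ compressing/expanding $y$ along $\operatorname{supp}(x-w)$ (your gap set $D(w,x)$), Abramov's formula, and the one-coordinate full-shift pressure bound. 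Your uniqueness argument (zero-entropy $\rho_C$ admits only the product joining with $B_p$, hence $\widetilde{\mu}=\rho\otimes B_p$ and the equilibrium state is $N_\ast(\rho\otimes B_p)$) is precisely the paper's closing step.
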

Note that as an immediate consequence, we obtain the following corollary.
\begin{corollary}[Cf.\ Theorem~\ref{toto}]\label{GGG}
Suppse that $X\subseteq \{0,1\}^\Z$ is a sandwich measure-theoretic subordinate subshift with base measure $\rho$ of zero entropy. Then $h_{top}(X)=\rho(0,1)=\nu_1(1)-\nu_0(1)$. Moreover, $X$ is intrinsically ergodic and $N_\ast(\rho\otimes B_{1/2})$ is the unique measure of maximal entropy.
\end{corollary}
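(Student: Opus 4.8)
The plan is to read off the statement as the special case $\varphi\equiv 0$ of Theorem~\ref{HHH}, which has already been established. First I would note that, straight from the definition of the topological pressure, taking $\varphi(0)=\varphi(1)=0$ gives $2^{\sup_{x\in A}\varphi^{(n)}(x)}=1$ for every block $A$, so the inner sum over $\mathcal{L}_n(X)$ just counts $n$-blocks and $\mathcal{P}_{X,\varphi}=h_{top}(X)$. In the same breath, the equilibrium states for the zero potential are exactly the measures of maximal entropy, so the uniqueness asserted in Theorem~\ref{HHH} at $\varphi\equiv 0$ is precisely intrinsic ergodicity together with the identification of the maximizing measure.

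Next I would substitute $\varphi(0)=\varphi(1)=0$ into the pressure formula~\eqref{wz}. The terms $\rho(0,0)\varphi(0)$ and $\rho(1,1)\varphi(1)$ vanish, and since in the paper's convention $\log$ denotes $\log_2$ (as forced by comparing Theorem~\ref{int1} with Theorem~\ref{int0}, and consistent with the factor $2^{\bullet}$ in the definition of the pressure), the surviving term is $\rho(0,1)\log(2^{0}+2^{0})=\rho(0,1)\log 2=\rho(0,1)$. This already yields $h_{top}(X)=\rho(0,1)$.

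To rewrite $\rho(0,1)$ as $\nu_1(1)-\nu_0(1)$, I would invoke that $\rho$ is a \emph{base} measure (not merely a pre-base measure), so $\rho(\mathbf{A})=1$ with $\mathbf{A}=\{(w,x):w\leq x\}$; in particular $\rho(1,0)=0$. Reading off the marginals then gives $\nu_0(1)=\rho(1,0)+\rho(1,1)=\rho(1,1)$ and $\nu_1(1)=\rho(0,1)+\rho(1,1)$, so that $\nu_1(1)-\nu_0(1)=\rho(0,1)$. This is exactly the computation already recorded in Proposition~\ref{uwazka}.

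Finally, for the measure of maximal entropy I would read off the equilibrium state furnished by Theorem~\ref{HHH} at $\varphi\equiv 0$: there $p=2^{\varphi(0)}/(2^{\varphi(0)}+2^{\varphi(1)})=1/2$, so the unique equilibrium state is $N_\ast(\rho\otimes B_{1/2})$. As equilibrium states for the zero potential are precisely the maximal entropy measures, uniqueness here is intrinsic ergodicity and $N_\ast(\rho\otimes B_{1/2})$ is the unique measure of maximal entropy. I do not expect any genuine obstacle beyond Theorem~\ref{HHH} itself; the only step requiring a moment's care is the bookkeeping identity $\rho(0,1)=\nu_1(1)-\nu_0(1)$, which hinges on working with a base measure so that $\rho(1,0)=0$.
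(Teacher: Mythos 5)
Your proposal is correct and follows essentially the same route as the paper, which states Corollary~\ref{GGG} as an immediate consequence of Theorem~\ref{HHH} obtained by taking $\varphi\equiv 0$ (so that pressure becomes topological entropy, equilibrium states become measures of maximal entropy, $p=1/2$, and~\eqref{wz} reduces to $\rho(0,1)\log 2=\rho(0,1)$ in base-$2$ logarithms). Your careful bookkeeping step $\rho(0,1)=\nu_1(1)-\nu_0(1)$, which uses $\rho(1,0)=0$ for a base measure as in Proposition~\ref{uwazka}, is exactly the identification implicit in the paper's statement.
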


Before we explain how to prove Theorem~\ref{HHH}, we need to introduce some notation. Recall that $\mathbf{A}=\{(w,x)\in (\{0,1\}^\Z)^2: w\leq x\}$ is the set of points in $(\{0,1\}^\Z)^2$ located above the diagonal. Let $S\colon \mathbf{A}\to \{0,1\}^\Z$ be the coordinatewise subtraction: $S(w,x)=x-w$. Recall that 
\[
\hat{Z}=\{x\in\{0,1\}^\Z : \text{ supp }x\text{ is unbounded from below and from above} \}.
\]
Let $\hat{\mathbf{A}}:=\{(w,x)\in \mathbf{A} : x-w \in \hat{Z}\}=\mathbf{A}\cap S^{-1}\hat{Z}$. Consider
\[
\Psi\colon S^{-1}\hat{Z}\times \{0,1\}^\Z \to S^{-1}\hat{Z}\times \{0,1\}^\Z,\ \Psi(w,x,y)=(w,x,\hat{y}_x)
\]
and $\Phi\colon S^{-1}\hat{Z}\times \{0,1\}^\Z \to \{0,1\}^\Z$ given by the condition that 
\begin{equation*}
\text{$\Phi(w,x,y)$ is the unique 0-1-sequence such that $w\leq \Phi(w,x,y)\leq x$ and $\Phi(w,x,y)^{\widehat{}}_{x-w}=y$.}
\end{equation*}
(Thus, $\Phi_0(x,y)=\Phi(\mathbf{0},x,y)$ and $\Psi_0(x,y)=\Psi(\mathbf{0},x,y)$, cf.\ Section~\ref{NNN}.) Last but not least, consider the following skew product:
\[
\widetilde{\sigma^{\times 2}}\colon (\{0,1\}^\Z)^3 \to (\{0,1\}^\Z)^3,\ \widetilde{\sigma^{\times  2}}(w,x,y)=(\sigma w,\sigma x,\sigma^{x_0-w_0}y).
\]
Take $C:=\{(w,x)\in \hat{\mathbf{A}} : w_0=0<1=x_0\}$ and consider the induced map $\widetilde{\sigma^{\times 2}}_{C\times \{0,1\}^\Z}$ corresponding to $\widetilde{\sigma^{\times 2}}$ and $C\times \{0,1\}^\Z$. Notice that
\[
\widetilde{\sigma^{\times 2}}_{C\times \{0,1\}^\Z}=(\sigma^{\times 2})_C \times \sigma.
\]

We will also use the following proposition whose proof is completely analogous to that of Proposition~\ref{diagramiki} (we skip it, leaving the details to the reader).
\begin{proposition} \label{diagramiki2}
Suppose that $X$ is a sandwich measure-theoretic subordinate subshift with base measure $\rho$. The following two diagrams are commutative and surjective:
\begin{equation*}
\begin{tikzpicture}[baseline=(current  bounding  box.center)]
\node (T) at (0,0) {$(X\times X\times \{0,1\}^\Z,\sigma^{\times 3},\{\rho\vee\kappa : \kappa\in\mathcal{M}(\{0,1\}^\Z)\})$};
\node (M) at (0,-1.5) {$ (X\times X\times \{0,1\}^\Z,\widetilde{\sigma^{\times 2}},\{\widetilde{\mu} : (\pi_{1,2})_\ast(\widetilde{\mu})=\rho\})$};
\node (B) at (0,-3) {$(X,\sigma)$};
\node (TR) at (-5,0) {};
\node (BR) at (-5,-3) {};
\draw (T) -- (-5,0) -- node[left] {$N$} (-5,-3);
\draw [->] (-5,-3) -- (B);
\draw[->]  (T) edge node[auto] {$\Psi$} (M)
		   (M) edge node[auto] {$\Phi$} (B);
\end{tikzpicture}
\end{equation*}
and
\begin{equation*}
\begin{tikzpicture}[baseline=(current  bounding  box.center)]
\node (T) at (0,0) {$(X\times X\times \{0,1\}^\Z,\sigma^{\times 3},\{\rho\vee\kappa : \kappa\in\mathcal{M}(\{0,1\}^\Z)\})$};
\node (M) at (0,-1.5) {$(X\times X\times \{0,1\}^\Z,\sigma^{\times 3},(\pi_{1,2}\otimes N)_\ast\{\rho\vee\kappa : \kappa\in\mathcal{M}(\{0,1\}^\Z)\})$};
\node (B) at (0,-3) {$(X\times X\times \{0,1\}^\Z,\widetilde{\sigma^{\times 2}},\{\widetilde{\mu} : (\pi_{1,2})_\ast(\widetilde{\mu})=\rho\})$};
\node (U) at (0,-4.5) {$(X,\sigma)$};
\node (TR) at (-6,0) {};
\node (BR) at (-6,-4.5) {};
\draw (T) -- (-6,0) -- node[left] {$N$} (-6,-4.5);
\draw [->] (-6,-4.5) -- (U);
\draw[->]  (T) edge node[auto] {$\pi_{1,2} \otimes N$} (M)
		   (M) edge node[auto] {$\Psi$} (B)
		   (B) edge node[auto] {$\Phi$} (U);
\end{tikzpicture}
\end{equation*}
\end{proposition}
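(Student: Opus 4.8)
The plan is to mimic, line for line, the proof of Proposition~\ref{diagramiki}, replacing the constant lower bound $\mathbf{0}$ by the variable lower bound $w$, the multiplication map $M$ by the sandwich map $N$, and the support of $x$ by the support of $x-w$. As there, it suffices to verify commutativity of the two diagrams, since the surjectivity of every morphism in them is immediate from the defining property~\eqref{miary} of a sandwich measure-theoretic subordinate subshift.

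First I would dispose of the degenerate case $\nu_0=\nu_1$. By Proposition~\ref{uwazka} this forces $\rho$ to be the diagonal joining and $X$ to be uniquely ergodic, so we may assume $\nu_0\neq\nu_1$ (otherwise there is nothing to prove). Then $\rho(0,1)=\nu_1(1)-\nu_0(1)>0$, and since $\rho$ is $(\sigma\times\sigma)$-invariant, the set $\{(w,x):(x-w)_0=1\}$ has positive $\rho$-measure. Applying the Poincar\'e recurrence theorem in both time directions then shows that $\supp(x-w)$ is $\rho$-a.s.\ unbounded from below and from above, i.e.\ $\rho(\hat{\mathbf{A}})=1$; consequently $(\rho\vee\kappa)(\hat{\mathbf{A}}\times\{0,1\}^\Z)=1$ for every joining $\rho\vee\kappa$, so all maps appearing in both diagrams are defined on sets of full measure.

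Next I would establish, pointwise on $\hat{\mathbf{A}}\times\{0,1\}^\Z$, the three identities
\[
\Psi\circ\sigma^{\times 3}=\widetilde{\sigma^{\times 2}}\circ\Psi,\qquad
\Phi\circ\widetilde{\sigma^{\times 2}}=\sigma\circ\Phi,\qquad
\Phi\circ\Psi=N,
\]
which together yield commutativity of the first diagram; these are the exact counterparts of the identities used for Proposition~\ref{diagramiki} and are checked by the same index bookkeeping as in the proof of Theorem~2.1.1 in~\cite{MR3356811}, now carried out along $\supp(x-w)$. For the second diagram everything already follows once one knows that $\Psi$ is a morphism from the $\sigma^{\times 3}$-system carrying the measures $(\pi_{1,2}\otimes N)_\ast\{\rho\vee\kappa\}$ to the $\widetilde{\sigma^{\times 2}}$-system carrying $\{\widetilde{\mu}:(\pi_{1,2})_\ast\widetilde{\mu}=\rho\}$: beyond the equivariance above, this is just the marginal condition $(\pi_{1,2})_\ast(\Psi)_\ast(\pi_{1,2}\otimes N)_\ast(\rho\vee\kappa)=\rho$, which holds because $\pi_{1,2}\circ\Psi\circ(\pi_{1,2}\otimes N)(w,x,y)=(w,x)$ and $(\pi_{1,2})_\ast(\rho\vee\kappa)=\rho$.

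The one step that I expect to require genuine care is the equivariance $\Psi\circ\sigma^{\times 3}=\widetilde{\sigma^{\times 2}}\circ\Psi$. One must confirm that a single step of $\sigma\times\sigma$ on the pair $(w,x)$ advances the reading head on the compressed $y$-coordinate by exactly $x_0-w_0$ positions --- which is precisely the exponent in $\widetilde{\sigma^{\times 2}}(w,x,y)=(\sigma w,\sigma x,\sigma^{x_0-w_0}y)$ --- while correctly respecting the boundary convention that pins down the zeroth coordinate of the subsequence read along $\supp(x-w)$. This is routine but notation-heavy, and is exactly the place where the one-sided argument (with $\supp x$) must be transcribed to the two-sided setting (with $\supp(x-w)$).
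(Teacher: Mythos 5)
Your overall plan coincides with the paper's: the paper gives no separate argument for Proposition~\ref{diagramiki2}, stating only that the proof is ``completely analogous'' to that of Proposition~\ref{diagramiki}, and your transcription is exactly the intended adaptation --- dispose of the degenerate case (your use of Proposition~\ref{uwazka} mirrors the paper's reduction to $\nu\neq\delta_{\mathbf{0}}$), show the maps are defined on a set of full measure for every $\rho\vee\kappa$, verify the three identities $\Psi\circ\sigma^{\times 3}=\widetilde{\sigma^{\times 2}}\circ\Psi$, $\Phi\circ\widetilde{\sigma^{\times 2}}=\sigma\circ\Phi$, $\Phi\circ\Psi=N$ along $\supp(x-w)$, check the marginal condition $\pi_{1,2}\circ\Psi\circ(\pi_{1,2}\otimes N)(w,x,y)=(w,x)$ making $\Psi$ a morphism in the second diagram, and reduce surjectivity to the defining property~\eqref{miary}.

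There is, however, one step where your argument as written does not go through: the claim that Poincar\'e recurrence alone yields $\rho(\hat{\mathbf{A}})=1$. Recurrence controls only those points whose orbit visits $B:=\{(w,x):(x-w)_0=1\}$ at least once; it says nothing about the invariant set $G:=\{(w,x)\in\mathbf{A} : w=x\}$, on which $\supp(x-w)$ is empty. In the one-sided Proposition~\ref{diagramiki} this issue is invisible because the base measure $\nu$ is ergodic by Proposition~\ref{pr:1}, so $\nu(1)>0$ forces a.e.\ unbounded support; but in the sandwich case $\rho$ itself is \emph{not} known to be ergodic (the paper explicitly leaves open whether the base measure is unique, hence its ergodicity), so a priori $\rho$ could carry a diagonal component with $\rho(G)>0$ even though $\rho(B)>0$. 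The fix is short but uses an ingredient you never invoke: by Corollary~\ref{cor:1} the marginals $\nu_0,\nu_1$ of any base measure are ergodic. If $\rho(G)>0$, write $\rho$ as the convex combination of its normalized restrictions to $G$ and $G^c$; projecting via $\pi_1$ and $\pi_2$ expresses the ergodic measures $\nu_0$ and $\nu_1$ as nontrivial convex combinations of invariant measures, so all components equal $\nu_0$ (resp.\ $\nu_1$), and since $\pi_1=\pi_2$ on $G$ the $G$-components coincide, forcing $\nu_0=\nu_1$ --- contradicting your standing assumption. With $\rho(G)=0$ in hand, your Poincar\'e argument (together with the standard refinement that a.e.\ point visiting $B$ at all visits it infinitely often, applied in both time directions, which also disposes of points whose $\supp(x-w)$ is nonempty but bounded on one side) does give $\rho(\hat{\mathbf{A}})=1$, and the remainder of your proof is sound.
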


\begin{proof}[Proof of Theorem~\ref{HHH}]
Suppose first that $\nu_0(1)=\nu_1(1)$. Then it follows by Proposition~\ref{uwazka} that
\[
\mathcal{P}_{X,\varphi}=\int \varphi\, d\nu_1 + h(\nu_1)=\nu_1(0)\varphi(0)+\nu_1(1)\varphi(1),
\]
which agrees with~\eqref{wz} as in this case $\rho$ is the diagonal self-joining of $\nu_0=\nu_1$ (in particular, $\rho\left(\begin{matrix}0 \\ 1\end{matrix}\right)=0$).

For the case when $\nu_0(1)<\nu_1(1)$, we will use the second diagram from Proposition~\ref{diagramiki2}. Let us now add potentials to this picture. Given $\varphi\colon X\to \R$, we define $\widetilde{\varphi}\colon X\times \{0,1\}^\Z\to \R$ by $\widetilde{\varphi}:=\varphi\circ \Phi$ and $\overline{\varphi}\colon X\times \{0,1\}^\Z\to \R$ by $\overline{\varphi}:=\widetilde{\varphi}\circ \Psi$. We have
\begin{equation*}
h(\sigma,\mu)+\int \varphi\, d\mu \leq h(\widetilde{\sigma^{\times 2}},\widetilde{\mu})+\int \widetilde{\varphi}\, d\widetilde{\mu} 
\leq h(\sigma^{\times 2},(\pi_{1,2}\otimes N)_\ast(\rho\vee\kappa))+\int\overline{\varphi}\, d (\pi_{1,2}\otimes N)_\ast(\rho\vee\kappa),
\end{equation*}
whenever $\mu=\Phi_\ast(\widetilde{\mu})$ and $\widetilde{\mu}=(\pi_{1,2}\otimes N)_\ast (\rho\vee\kappa)$ (the corresponding integrals are just equal and the inequality between the entropies follows by the fact that we deal with factors). Therefore,
\begin{align*}
\mathcal{P}_{X,\varphi}&=\sup\left\{h(\sigma,\mu)+\int \varphi\, d\mu : \mu\in\mathcal{M}(X)\right\}\nonumber\\ 
&\leq \sup\left\{h(\widetilde{\sigma^{\times 2}},\widetilde{\mu})+\int \widetilde{\varphi}\, d\widetilde{\mu} : \widetilde{\mu}\in\mathcal{M}(X\times X\times \{0,1\}^\Z,\widetilde{\sigma^{\times 2}})\text{ and }(\pi_{1,2})_\ast(\widetilde{\mu})=\rho\right\}\label{nn1}\\
&\leq\sup\left\{ h(\sigma^{\times 3},(\pi_{1,2}\otimes N)_\ast(\rho\vee\kappa))+\int \overline{\varphi}\, d (\pi_{1,2}\otimes N)_\ast(\rho\vee\kappa) :\kappa\in\mathcal{M}(\{0,1\}^\Z) \right\}\nonumber\\
&=\sup\left\{ h(\sigma, N_\ast(\rho\vee\kappa))+\int\varphi\, d N_\ast(\rho\vee\kappa):\kappa\in\mathcal{M}(\{0,1\}^\Z) \right\}=\mathcal{P}_{X,\varphi}
\end{align*}
(the first equality in the last line follows from $(\pi_{1,2}\otimes N)_\ast(\rho\vee\kappa)=\rho\vee(N_\ast(\rho\vee\kappa))$, $h(\sigma^{\times 2},\rho)=0$ and by the definition of $\overline{\varphi}$, while the second one holds since $X$ is a sandwich measure-theoretic subordinate subshift). Thus, to complete the proof, it suffices to compute 
\[
\sup\left\{h(\widetilde{\sigma^{\times 2}},\widetilde{\mu})+\int \widetilde{\varphi}\, d\widetilde{\mu} : \widetilde{\mu}\in\mathcal{M}(X\times X\times \{0,1\}^\Z,\widetilde{\sigma^{\times 2}})\text{ and }(\pi_{1,2})_\ast(\widetilde{\mu})=\rho\right\}
\]
and show that there is a unique measure realizing the above supremum. By Abramov's formula, we have
\begin{equation}\label{l1a}
h(\widetilde{\sigma},\widetilde{\mu})=\rho(C)\cdot h(\widetilde{\sigma^{\times 2}}_{C\times \{0,1\}^\Z},\widetilde{\mu}_{C\times \{0,1\}^\Z})=(\nu_1(1)-\nu_0(1))\cdot h(\sigma,(\pi_3)_\ast(\widetilde{\mu}_{C\times \{0,1\}^\Z}))
\end{equation}
(recall that $\widetilde{\sigma^{\times 2}}_{C\times \{0,1\}^\Z}=(\sigma^{\times 2})_C\times \sigma$, $\widetilde{\mu}_{C\times \{0,1\}^\Z}=\rho_C \vee (\pi_3)_\ast(\widetilde{\mu}_{C\times \{0,1\}^\Z})$ and  $h((\sigma^{\times 2})_C,\rho_C)=0$). Moreover,
\[
\widetilde{\varphi}(w,x,y)=\varphi\circ \Phi (w,x,y),\ \text{with }(\Phi(w,x,y))_0=(N(w,x,y))_0
\]
and it follows that
\[
\widetilde{\varphi}(w,x,y)=\varphi(N(w,x,y)).
\]
Thus,
\begin{align*}
\int \widetilde{\varphi}\, d\widetilde{\mu}&=(\widetilde{\mu}(0,0,*)+\widetilde{\mu}(0,1,0))\varphi(0)+(\widetilde{\mu}(0,1,1)+\widetilde{\mu}(1,1,*))\varphi(1)\\
&=\rho(0,0)\varphi(0)+\rho(1,1)\varphi(1)+\widetilde{\mu}(0,1,0)\varphi(0)+\widetilde{\mu}(0,1,1)\varphi(1).
\end{align*}
Notice also that for $i=1,2$, we have
\begin{equation}\label{l2a}
(\pi_3)_\ast(\widetilde{\mu^{\times 2}}_{C\times \{0,1\}^\Z})(i)=\widetilde{\mu^{\times 2}}_{C\times \{0,1\}^\Z}(\ast, \ast ,i)=\frac{\widetilde{\mu^{\times 2}}(0,1,i)}{\rho(0,1)}.
\end{equation}
Combining~\eqref{l1a} and~\eqref{l2a}, we obtain
\begin{align*}
h(\widetilde{\sigma^{\times 2}},\widetilde{\mu})+\int \widetilde{\varphi}\, d\widetilde{\mu} =& \rho(0,0)\varphi(0)+\rho(1,1)\varphi(1)\\
&+\rho(0,1)(h(\sigma, (\pi_3)_\ast(\widetilde{\mu^{\times 2}}_{C\times \{0,1\}^\Z}))+\int \varphi\, d (\pi_3)_\ast(\widetilde{\mu^{\times 2}}_{C\times \{0,1\}^\Z}))\\
\leq& \rho(0,0)\varphi(0)+\rho(1,1)\varphi(1)+\rho(0,1)\log (2^{\varphi(0)}+2^{\varphi(1)})
\end{align*}
with the equality only when $(\pi_3)_\ast(\widetilde{\mu^{\times 2}}_{C\times \{0,1\}^\Z}))$ is equal to the Bernoulli measure $B_{p}$ with parameter $p=\frac{2^{\varphi(0)}}{2^{\varphi(0)}+2^{\varphi(1)}}=B_p(0)$. Now, notice that
\[
(\pi_3)_\ast(\widetilde{\mu^{\times 2}}_{C\times \{0,1\}^\Z})) = B_{p}\iff \widetilde{\mu}_{C\times \{0,1\}^\Z}=\rho_C\otimes B_{p}
\]
($h((\sigma^{\times 2})_C,\rho_C)=0$ and thus $\rho_C\otimes B_{p}$ is the only joining of $\rho_C$ and $B_{p}$). The latter equality is, in turn, equivalent to $\widetilde{\mu^{\times 2}}=\rho\otimes B_{p}$. It remains to notice that
\[
\Phi_\ast(\rho\otimes B_{p})=\Phi_\ast \circ \Psi_\ast (\rho\otimes B_{p})=N_\ast (\rho\otimes B_{p}).
\]
\end{proof}

\begin{remark}[$\mathscr{B}$-free systems]\label{commentsB1}
As in Remark~\ref{commentsB}, one can use Corollary~\ref{GGG} to obtain a short and elegant proof of the intrinsic ergodicity of ${X}_\eta$, provided that $\eta^*$ is a regular Toeplitz sequence. Indeed, fix $\mathscr{B}\subseteq \mathbb{N}$ such that $\eta^*$ is a regular Toeplitz sequence and recall that $\mathscr{B}_K=\{b\in\mathscr{B} : b<K\}$. Let $\eta_K:=\mathbf{1}_{\mathcal{F}_{\mathscr{B}_K}}$ and recall that one can find periodic sequences $\underline{\eta}_K$ such that~\eqref{last2},~\eqref{last1} and~\eqref{last} hold. In other words, 
\begin{equation}\label{isgood1}
\text{$(\underline{\eta}_K,\eta_K)$ is good (and its coordinatewise limit equals $(\eta^*,\eta)$).}
\end{equation}
Moreover, each $(\underline{\eta}_K,\eta_K)$ is generic for some measure $\rho_K$ as a periodic point, while 
\begin{equation}\label{limitmeasure}
\begin{split}
&(\eta^*,\eta) \text{ is quasi-generic along $(N_i)$ for a certain joining of $\nu_{\eta^*}$ and $\nu_\eta$},\\
&\text{denoted by }\nu_{\eta^*}\triangle\nu_\eta.
\end{split}
\end{equation}
It follows by Lemma~\ref{lemat_gene1} that 
\begin{equation}\label{eq:proksym}
\rho_K\to \nu_{\eta^*}\triangle\nu_\eta.
\end{equation} 
It suffices to use Proposition~\ref{III} to conclude that $\bigcap_{K\geq 1}{[\underline{\eta}_K,\eta_K]}$ and $\overline{[\eta^*,\eta]}$ are sandwich measure-theoretic subordinate subshifts with base measure $\nu_{\eta^*}\triangle\nu_\eta$. Moreover, we have
\[
\overline{[\eta^*,\eta']}=X_{\eta'}\subseteq X_\eta \subseteq \overline{[\eta^*,\eta]}
\]
(see Proposition 2.1 in~\cite{ADJ}; implicit in~\cite{MR4280951} -- see Corollary 1.2 therein) and $\nu_{\eta^*}\triangle\nu_{\eta'}=\nu_{\eta^*}\triangle\nu_\eta$ (see (22) in~\cite{ADJ}). Thus, 
\[
\mathcal{P}(\overline{[\eta^*,\eta']})=\mathcal{P}(X_{\eta'})=\mathcal{P}(X_\eta)=\mathcal{P}(\overline{[\eta^*,\eta]})
\]
and it follows immediately that also $X_\eta$ is a sandwich measure-theoretic subordinate subshift with base measure $\nu_{\eta^*}\triangle\nu_\eta$ (whenever $\eta^*$ is a regular Toeplitz sequence). The intrinsic ergodicity of $X_\eta$ follows immediately by Corollary~\ref{GGG}. (Clearly, Theorem~\ref{HHH} also holds in this setup with $\rho=\nu_{\eta^*}\triangle\nu_\eta$.)

Recall that the subshift $X_\eta$ for $\eta^*$ being a regular Toeplitz sequence was first proved to be a sandwich measure-theoretic subordinate subshift in~\cite{ADJ}. The same goes for the unique ergodicity of $X_\eta$.
\end{remark}

\section{Topological pressure: $\varphi(x)=\varphi(x_0,x_1)$}\label{NNNN}
In this section we will deal with a more complicated setup, namely when the potential $\varphi$ depends on two consecutive coordinates, i.e.\ $\varphi(x)=\varphi(x_0,x_1)$. Let
\[
M:=\left(\begin{matrix}2^{\varphi(0,0)} & 2^{\varphi(0,1)} \\ 2^{\varphi(1,0)} & 2^{\varphi(1,1)}\end{matrix} \right).
\]
We will distinguish two cases, depending on whether $M$ is invertible or not. 

Suppose that $\det M=0$, i.e.\ $\varphi(0,0)+\varphi(1,1)=\varphi(1,0)+\varphi(0,1)$. Recall that for each $\mu\in\mathcal{M}(\{0,1\}^\Z)$, we have $\mu(01)=\mu(0)-\mu(00)=\mu(10)+\mu(00)-\mu(00)=\mu(10)$. Thus, for any subshift $X\subseteq \{0,1\}^\Z$ and any $\mu\in\mathcal{M}(X)$, we have
\begin{align*}
\int_X \varphi\, d\mu&=\sum_{i,j\in\{0,1\}}\mu(ij)\varphi(i,j)\\
&=\mu(01)(\varphi(0,1)+\varphi(1,0))+\mu(00)\varphi(0,0)+\mu(11)\varphi(1,1)\\
&=\mu(01)(\varphi(0,0)+\varphi(1,1))+\mu(00)\varphi(0,0)+\mu(11)\varphi(1,1)\\
&=\mu(0)\varphi(0,0)+\mu(1)\varphi(1,1)=\int\psi\, d\mu,
\end{align*}
where $\psi(i)=\varphi(i,i)$ for $i=0,1$ and $\mathcal{P}_{X,\varphi}=\mathcal{P}_{X,\psi}$.

From now on we will make no assumptions on $\det M$. Let $\lambda^+$, $\lambda^-$ be the eigenvalues of $M$, with $\lambda^+>|\lambda^-|$ (i.e.\ $\lambda^+$ is the Perron-Frobenius eigenvalue of $M$).

\subsection{Full shift}
We start by presenting some well-known results and methods for the full 0-1-shift. To begin with, we have
\begin{equation}\label{wynik}
\mathcal{P}_{\{0,1\}^\Z,\varphi}=\log \lambda^+.
\end{equation}
There are various ways to prove this, e.g., using Walters' method~\cite{MR390180}, we encode the full $0$-$1$ shift as a finite type shift over the alphabet consisting of $0$-$1$-blocks of length two, i.e.\ $\{00,01,10,11\}$. The product of the corresponding incidence matrix and the diagonal matrix with the entries determined by the values of $\varphi$ equals
\begin{equation*}
\left(\begin{matrix}
1 & 1 & 0 & 0 \\ 0 & 0 & 1 & 1 \\ 1 & 1 & 0 & 0 \\ 0 & 0 & 1 & 1 
\end{matrix} \right)
\left(\begin{matrix} 2^{\varphi(0,0)} & 0 & 0 & 0 \\ 0 & 2^{\varphi(0,1)} & 0 & 0\\ 0 & 0 & 2^{\varphi(1,0)} & 0 \\ 0 & 0 & 0 & 2^{\varphi(1,1)} \end{matrix}
\right)
=
\left(\begin{matrix}
2^{\varphi(0,0)} & 2^{\varphi(0,1)} &  0 & 0 \\
0 & 0 & 2^{\varphi(1,0)} & 2^{\varphi(1,1)}\\
2^{\varphi(0,0)} & 2^{\varphi(0,1)} &  0 & 0\\
0 & 0 & 2^{\varphi(1,0)} & 2^{\varphi(1,1)}
\end{matrix} \right).
\end{equation*}
The characteristic polynomial $Q(\lambda)$ of this matrix equals $\lambda^2\cdot P(\lambda)$, where 
\[
P(\lambda)=\lambda^2-(2^{\varphi(0,0)}+2^{\varphi(1,1)})\lambda+\det M.
\] 
Notice here that $P(\lambda)$ is nothing but the characteristic polynomial of $M$. Therefore, the corresponding Perron-Frobenius eigenvalues are the same both for the $4$ by $4$ matrix presented above and for $M$, we will denote them by $\lambda^+$ and conclude that~\eqref{wynik} indeed holds. 

The above method, however, is difficult to adjust to the $\mathscr{B}$-free setting. Therefore, let us look at the problem of computing $\mathcal{P}_{\{0,1\}^\Z,\varphi}$ in a different way. We will begin by introducing some notation. For $a,b\in \{0,1\}$, let
\[
\mathcal{L}_{a,b}^n:=\{A=a_1\dots a_{n}\in \{0,1\}^n : a_1=a, a_{n}=b\}.
\]
Let
\[
Z_{a,b}^n:=\sum_{A\in\mathcal{L}_{a,b}^n}\prod_{\ell=1}^{n-1}M_{a_{\ell},a_{\ell+1}}=\left(M^{n-1} \right)_{a,b} \text{ for }n\geq 1.
\]
By the very definition, we have
\[
\mathcal{P}_{\{0,1\}^\Z,\varphi}=\lim_{n\to\infty}\frac{1}{n}\log \sum_{A\in \{0,1\}^n}2^{\sup_{{x}\in A} \varphi^{(n)}({x})}.
\]
Notice that for any ${x}\in A=a_1\dots a_{n-1}$, we have 
\(
|\varphi^{(n)}({x})-\varphi^{(n)}({0a_1\dots a_{n-1}0})|\leq 4  \|\varphi\|_{sup}.
\)
This gives us
\[
\mathcal{P}_{\{0,1\}^\Z,\varphi}=\lim_{n\to\infty}\frac{1}{n}\sum_{A\in \{0,1\}^{n-1}}2^{\varphi^{(n)}(0A0)}=\lim_{n\to\infty}\frac{1}{n}\log Z_{0,0}^{n+1}=\lim_{n\to\infty}\frac{1}{n}\log Z_{0,0}^n.
\]
\begin{lemma}\label{wzoreczek1} 
There exist two real numbers $\lambda_-<\lambda_+$, $\lambda_+>0$, such that 
for $a,b\in \{0,1\}$, there exist constants $C_{a,b}^+$, $C_{a,b}^-$ such that for every $n$
\[
Z_{a,b}^n=C_{a,b}^+\lambda_+^{n-1}+C_{a,b}^-\lambda_{-}^{n-1}.
\]
Moreover, for every $a,b\in\{0,1\}$, $C_{a,b}^+>0$.
\end{lemma}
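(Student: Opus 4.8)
The plan is to recognize that the whole statement is just the spectral decomposition of the $2\times 2$ matrix $M$, which has strictly positive entries. First I would record that, by the very definition, $Z_{a,b}^n=(M^{n-1})_{a,b}$, so the claim is purely about the entries of the powers $M^{n-1}$. Since $M$ has strictly positive entries, the Perron--Frobenius theorem applies: its spectral radius $\lambda_+>0$ is a simple eigenvalue, and the remaining eigenvalue $\lambda_-$ is real (the characteristic polynomial $P(\lambda)=\lambda^2-(2^{\varphi(0,0)}+2^{\varphi(1,1)})\lambda+\det M$ has real coefficients and already one real root $\lambda_+$, so the other root is real too) and satisfies $|\lambda_-|<\lambda_+$. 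In particular $\lambda_+\neq\lambda_-$, so $M$ is diagonalizable, and both numbers $\lambda_-<\lambda_+$ with $\lambda_+>0$ asserted in the statement are produced at once.

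Second, I would invoke the spectral decomposition $M=\lambda_+P_++\lambda_-P_-$, where $P_\pm$ are the complementary spectral projections ($P_++P_-=I$, $P_+P_-=P_-P_+=0$, $P_\pm^2=P_\pm$). Taking powers gives $M^{n-1}=\lambda_+^{\,n-1}P_++\lambda_-^{\,n-1}P_-$ for every $n\geq 1$; for $n=1$ this reads $P_++P_-=I=M^0$, which is consistent. Reading off the $(a,b)$ entry yields exactly the claimed identity with $C_{a,b}^+:=(P_+)_{a,b}$ and $C_{a,b}^-:=(P_-)_{a,b}$.

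The only genuinely nontrivial point — and the part I would treat with care — is the strict positivity $C_{a,b}^+>0$. Here I would again use Perron--Frobenius: the dominant projection can be written as $P_+=\dfrac{v\,u^{\top}}{u^{\top}v}$, where $v$ and $u$ are the right and left eigenvectors of $M$ associated with $\lambda_+$. Perron--Frobenius guarantees that $v$ and $u$ may be chosen with strictly positive entries, so that $(P_+)_{a,b}=\dfrac{v_a\,u_b}{u^{\top}v}>0$ for all $a,b\in\{0,1\}$. Equivalently, one can bypass projections altogether: solving the two linear equations obtained from $n=1,2$ (namely $C_{a,b}^++C_{a,b}^-=\delta_{a,b}$ and $C_{a,b}^+\lambda_++C_{a,b}^-\lambda_-=M_{a,b}$) gives $C_{a,b}^+=(M_{a,b}-\lambda_-\delta_{a,b})/(\lambda_+-\lambda_-)$, and positivity then follows from $\lambda_+>\lambda_-$ together with the elementary fact that the Perron eigenvalue of a positive $2\times2$ matrix strictly dominates each diagonal entry, i.e. $\lambda_+>M_{b,b}$ (for the diagonal case one checks $M_{a,a}-\lambda_-=\lambda_+-M_{b,b}>0$ with $b\neq a$). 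Beyond this positivity check, no real obstacle remains: the lemma is essentially a packaged form of diagonalizing a strictly positive $2\times2$ matrix.
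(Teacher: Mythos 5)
Your proof is correct, and its skeleton coincides with the paper's: both reduce the claim to $Z_{a,b}^n=(M^{n-1})_{a,b}$, use strict positivity of $M$ and the Perron--Frobenius theorem (or, equivalently, the positive discriminant $(M_{0,0}-M_{1,1})^2+4M_{0,1}M_{1,0}>0$) to get two distinct real eigenvalues $\lambda_-<\lambda_+$ with $\lambda_+>0$, and then diagonalize, so that the entries of $M^{n-1}$ are $n$-independent combinations of $\lambda_\pm^{n-1}$. Where you genuinely diverge is the strict positivity of $C_{a,b}^+$: the paper disposes of it in one sentence (``the coefficients for the larger eigenvalue are positive because $M$, and hence $M^{n-1}$ for every $n$, is positive''), which as literally stated only yields $C_{a,b}^+\geq 0$ after dividing by $\lambda_+^{n-1}$ and letting $n\to\infty$, and strictness requires an extra (standard, but unwritten) step. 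You supply two complete arguments: the rank-one formula $P_+=v u^{\top}/(u^{\top}v)$ with strictly positive left and right Perron eigenvectors, and the fully elementary closed form $C_{a,b}^+=(M_{a,b}-\lambda_-\delta_{a,b})/(\lambda_+-\lambda_-)$ obtained from the cases $n=1,2$, where positivity in the diagonal case follows from the trace identity $M_{a,a}-\lambda_-=\lambda_+-M_{b,b}$ ($b\neq a$) together with the fact that the Perron root of a positive $2\times 2$ matrix strictly exceeds each diagonal entry. Both of your checks are sound (the latter even makes the constants explicit, which could be useful downstream in Theorem~\ref{OKRESOWE}), so your write-up is, on this one point, more complete than the paper's.
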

\begin{proof}
Recall that
\(
Z_{a,b}^n=(M^{n-1})_{a,b}.
\)
Matrix $M$ is positive, hence by the Perron-Frobenius Theorem it has two distinct real eigenvalues $\lambda_-<\lambda_+$, the larger of which (by the absolute value) is positive. Thus, $M$ can be diagonalized:
\[
M=I D I^{-1}, \text{ where }D=\left(\begin{matrix}\lambda_+ & 0 \\ 0 & \lambda_- \end{matrix} \right).
\]
Therefore,
\[
M^{n-1}=I D^{n-1} I^{-1}
\]
and hence all of the elements of $M^{n-1}$ are linear combinations of $\lambda_+^{n-1}$ and $\lambda_-^{n-1}$, with coefficients provided by $I$ and $I^{-1}$ (and thus independent of $n$). The coefficients for the larger eigenvalue are positive because the matrix $M$, and hence $M^{n-1}$ for every $n$, is positive.
\end{proof}
It follows immediately by the above lemma that
\[
\mathcal{P}_{\{0,1\}^\Z,\varphi}=\lim_{n\to\infty}\frac{1}{n}\log Z_{0,0}^n=\log \lambda^+.
\]

\subsection{Sandwich subordinate subshifts with periodic bounds}\label{DDD}
The next step before we pass to the $\mathscr{B}$-free setting is a simpler setup of sandwich subordinate subshifts with periodic bounds, i.e., $w\leq x$, $w,x\in\{0,1\}^\Z$, are periodic and we consider $\overline{[w,x]}=[w,x]$. Let $s$ be the minimal common period of $w$ and $x$. The extreme case is when $w=\mathbf{0}$ and $x=\mathbf{1}$, where we deal with the full shift. Let us now assume that $(w,x)\neq (\mathbf{0},\mathbf{1})$. Then, in particular, there exists $i\in \Z$ such that $w_i=x_i$ and without loss of generality we can assume that $i=0$. We have
\begin{align*}
\mathcal{P}_{[w,x],\varphi}&=\lim_{n\to\infty}\frac{1}{n}\log \sum_{A\in\mathcal{L}_n([w,x])}2^{\sup_{y\in A}\varphi^{(n)}(y)}\\
&=\lim_{n\to\infty}\frac{1}{ns}\log \sum_{i=0}^{s-1}\sum_{\sigma^i w[0,ns-1]\leq A \leq \sigma^i x[0,ns-1]}2^{\sup_{y\in A}\varphi^{(ns)}(y)}\\
&=\lim_{n\to\infty}\frac{1}{ns}\log \sum_{w[0,ns-1]\leq A\leq x[ns-1]}\sum_{i=0}^{s-1}2^{\sup_{\sigma^i y\in A}\varphi^{(ns)}(y)}.
\end{align*}
It follows that
\[
\mathcal{P}_{[w,x],\varphi}=\lim_{n\to\infty}\frac{1}{ns}\log\sum_{w[0,ns-1]\leq A\leq x[ns-1]}s 2^{\varphi^{(ns)}(Ax_0)}=\lim_{n\to\infty}\frac{1}{ns}\log\sum_{w[0,ns-1]\leq A\leq x[ns-1]} 2^{\varphi^{(ns)}(Ax_0)}.
\]
Thus, as $(w[0,ns-1],x[0,ns-1])$ is the concatenation of $n$ copies of $(w[0,s-1],x[0,s-1])$, 
\[
\mathcal{P}_{[w,x],\varphi}=\lim_{n\to\infty}\frac{1}{ns}\log\left(\sum_{w[0,s-1]\leq A\leq x[0,s-1]} 2^{\varphi^{(s)}(Ax_0)}\right)^n=\frac{1}{s}\log \sum_{w[0,s-1]\leq A\leq x[0,s-1]}2^{\varphi^{(s)}(Ax_0)}.
\]

Now, each block $w[0,s-1]\leq A=a_0\dots a_{s-1}\leq x[0,s-1]$ has certain ``fixed'' entries, namely $w_i=x_i$ implies $a_i=w_i=x_i$. Let $0= i_1<\dots <i_k<i_{k+1}=s$ be all positions in $[0,s]$ with $w_{i_j}=x_{i_j}=:f_j\in \{0,1\}$. The remaining positions on $A$ are ``free'', the symbols there can be either $0$ or $1$. It follows that
\[
\mathcal{P}_{[w,x],\varphi}=\frac{1}{s}\log \prod_{1\leq j\leq k}\sum_{w[i_j,i_{j+1}-1]\leq A_j\leq x[i_j,i_{j+1}-1]}2^{\varphi^{(i_{j+1}-i_j)}(A_j f_{j+1})}.
\]
(observe that $f_{k+1}:=f_1$). It follows that
\[
\mathcal{P}_{[w,x],\varphi}=\frac{1}{s}\log \prod_{1\leq j\leq k}Z_{f_j,f_{j+1}}^{i_{j+1}-i_j+1}.
\]
For $a,b\in\{0,1\}$ and $\ell\geq 2$, let 
\begin{equation}\label{wzornam}
m_{a,b}^\ell:=\rho\left(\begin{matrix}a & 0 & \dots & 0 & b \\ a & 1 & \dots & 1 & b \end{matrix} \right),
\end{equation}
where the pair $\begin{matrix}0\\ 1\end{matrix}$ appears $\ell-2$ times and $\rho\in\mathcal{M}((\{0,1\}^\Z)^2)$ is the measure
for which the pair $(w,x)$ is generic. Notice that
\begin{equation}\label{eqn:mab}
m_{a,b}^\ell=\frac{1}{s}\#\{1\leq j\leq k : i_{j+1}-i_j+1=\ell\}.
\end{equation}
Thus,
\begin{equation}\label{sumka}
\mathcal{P}_{[w,x],\varphi}=\sum_{a,b\in\{0,1\},\ell\geq 2}m_{a,b}^\ell \log Z_{a,b}^\ell
\end{equation}
(note that this sum is finite as $(w,x)$ is periodic and hence for large values of $\ell$ we have $m_{a,b}^\ell=0$). Moreover,~\eqref{eqn:mab} implies
\[
\sum_{a,b\in \{0,1\}}\sum_{\ell\geq 2}(\ell-1)m_{a,b}^\ell=1.
\]
Thus, taking into account Lemma~\ref{wzoreczek1}, we have proved the following result (to include the full shift case in the same formula, if $(w,x)=(\mathbf{0},\mathbf{1})$ we formally set $m_{a,b}^\ell\equiv 0$ for all $a,b$ and $\ell$).
\begin{theorem}\label{OKRESOWE}
Let $w,x\in\{0,1\}^\Z$ with $w\leq x$ be periodic and let $\varphi(x)=\varphi(x_0,x_1)$. Then
\[
\mathcal{P}_{[w,x],\varphi}=\log \lambda_+ + \sum_{a,b\in\{0,1\}}\sum_{\ell\geq 2}m_{a,b}^\ell \log C_{a,b}^++\sum_{a,b\in\{0,1\}}\sum_{\ell\geq 2}m_{a,b}^\ell \log\left(1+\frac{C_{a,b}^-}{C_{a,b}^+}\left( \frac{\lambda_-}{\lambda_+}\right)^{\ell-1} \right),
\]
where $C_{a,b}^\pm$ for $a,b\in\{0,1\}$, $\ell\geq 2$ are constants from the assertion of Lemma~\ref{wzoreczek1} and $m_{a,b}^{\ell}$ come from~\eqref{wzornam} for $\rho$ corresponding to $(w,x)$.
\end{theorem}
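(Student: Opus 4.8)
The plan is to read the result off directly from the representation~\eqref{sumka} of the pressure together with the two-term form of $Z_{a,b}^\ell$ supplied by Lemma~\ref{wzoreczek1}. Essentially all the substantive work has already been carried out in the discussion preceding the statement, so what remains is a single bookkeeping computation combining these two ingredients.

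First I would recall that by~\eqref{sumka},
\[
\mathcal{P}_{[w,x],\varphi}=\sum_{a,b\in\{0,1\}}\sum_{\ell\geq 2}m_{a,b}^\ell\log Z_{a,b}^\ell,
\]
which is a \emph{finite} sum, since $(w,x)$ is periodic and hence $m_{a,b}^\ell=0$ whenever $\ell$ exceeds the period $s$. Next I would substitute $Z_{a,b}^\ell=C_{a,b}^+\lambda_+^{\ell-1}+C_{a,b}^-\lambda_-^{\ell-1}$ from Lemma~\ref{wzoreczek1}. Because that lemma guarantees $\lambda_+>0$ and $C_{a,b}^+>0$, the factor $C_{a,b}^+\lambda_+^{\ell-1}$ is strictly positive and may legitimately be pulled out of the logarithm:
\[
\log Z_{a,b}^\ell=\log C_{a,b}^+ +(\ell-1)\log\lambda_+ +\log\!\left(1+\frac{C_{a,b}^-}{C_{a,b}^+}\Big(\frac{\lambda_-}{\lambda_+}\Big)^{\ell-1}\right).
\]

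Plugging this into the sum and separating the three resulting contributions, the first and the third reproduce verbatim the two double sums appearing in the claimed formula. For the middle contribution I would factor out $\log\lambda_+$ and invoke the normalization identity $\sum_{a,b\in\{0,1\}}\sum_{\ell\geq 2}(\ell-1)m_{a,b}^\ell=1$ established just above the statement; this collapses that term to exactly $\log\lambda_+$, completing the derivation. Finally I would observe that the degenerate full-shift case $(w,x)=(\mathbf{0},\mathbf{1})$ is subsumed by the stated convention $m_{a,b}^\ell\equiv 0$: all three double sums then vanish and the formula reduces to $\mathcal{P}_{\{0,1\}^\Z,\varphi}=\log\lambda_+$, in agreement with~\eqref{wynik}.

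There is no genuine obstacle here; the argument is a one-line algebraic rearrangement once~\eqref{sumka} and Lemma~\ref{wzoreczek1} are in hand. The only points needing a moment's care are (i) the positivity used to factor $C_{a,b}^+\lambda_+^{\ell-1}$ out of the logarithm, which is precisely the last sentence of Lemma~\ref{wzoreczek1}, and (ii) recognizing that the coefficient of $\log\lambda_+$ is exactly the normalized quantity $\sum_{a,b}\sum_{\ell\geq 2}(\ell-1)m_{a,b}^\ell$, so that it contributes a clean $\log\lambda_+$ rather than an $\ell$-dependent weight. Since all sums are finite, no question of convergence arises.
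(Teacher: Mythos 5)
Your proposal is correct and takes essentially the same route as the paper: the paper's own ``proof'' of Theorem~\ref{OKRESOWE} is precisely the discussion preceding it, which establishes \eqref{sumka} and the normalization $\sum_{a,b\in\{0,1\}}\sum_{\ell\geq 2}(\ell-1)m_{a,b}^\ell=1$, and then concludes by ``taking into account Lemma~\ref{wzoreczek1}'' --- exactly the substitution $\log Z_{a,b}^\ell=\log C_{a,b}^+ +(\ell-1)\log\lambda_+ +\log\bigl(1+\frac{C_{a,b}^-}{C_{a,b}^+}(\frac{\lambda_-}{\lambda_+})^{\ell-1}\bigr)$ that you carry out. You merely spell out the algebra (and the positivity justifying the split of the logarithm, plus the degenerate full-shift convention) that the paper leaves implicit, so there is nothing to object to.
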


Notice that if $w=\mathbf{0}$ then $m_{a,b}^\ell=0$ unless $a=b=0$. Therefore,
\[
\sum_{a,b\in\{0,1\}}\sum_{\ell\geq 2}m_{a,b}^\ell=\sum_{\ell\geq 2}m_{0,0}^\ell=\rho\left(\begin{matrix}0 \\ 0\end{matrix} \right)
\]
as the union of cylinders contributing to $\sum_{\ell\geq 1}m_{0,0}^\ell$ equals $\begin{matrix}0 \\ 0\end{matrix}$. Thus, the following result is immediate.
\begin{corollary}\label{OKRESOWE1}
Let $x\in\{0,1\}^\Z$ be periodic and let $\varphi(x)=\varphi(x_0,x_1)$. Then
\[
\mathcal{P}_{[\mathbf{0},x],\varphi}=\log \lambda_+ +\nu(0) \log C_{0,0}^++\sum_{\ell\geq 2}m_{0,0}^\ell \log\left(1+\frac{C_{0,0}^-}{C_{0,0}^+}\left( \frac{\lambda_-}{\lambda_+}\right)^{\ell-1} \right),
\]
where $C_{0,0}^\pm$ for $\ell\geq 2$ are constants from the assertion of Lemma~\ref{wzoreczek1} and $m_{0,0}^\ell=\nu(01\dots 10)$ (with $\ell-2$ ones), for $\nu$ corresponding to $x$.
\end{corollary}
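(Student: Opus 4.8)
The plan is to obtain Corollary~\ref{OKRESOWE1} directly from Theorem~\ref{OKRESOWE} by specializing the lower bound to $w=\mathbf{0}$. First I would record how this choice constrains the base measure: if $(\mathbf{0},x)$ is generic for $\rho$, then every shift of the first coordinate is again $\mathbf{0}$, so the first marginal of $\rho$ is $\delta_{\mathbf{0}}$ and $\rho$ is concentrated on pairs whose first coordinate is identically $0$. Consequently, in the defining cylinder~\eqref{wzornam} for $m_{a,b}^\ell$ the top row prescribes the values $a$ and $b$ at its two endpoints, and unless $a=b=0$ this cylinder requires $w$ to be nonzero somewhere and is therefore $\rho$-null. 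Hence $m_{a,b}^\ell=0$ whenever $(a,b)\neq(0,0)$, and only the $(0,0)$-terms survive in the two sums of Theorem~\ref{OKRESOWE}.

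Next I would evaluate the surviving coefficient $\sum_{\ell\geq 2}m_{0,0}^\ell$ of $\log C_{0,0}^+$. Using the combinatorial interpretation~\eqref{eqn:mab}, each $m_{0,0}^\ell$ equals $\tfrac1s$ times the number of gaps of length $\ell-1$ between consecutive \emph{fixed} positions of $(\mathbf{0},x)$ inside one period; since $w=\mathbf{0}$, a fixed position is simply a position $i$ with $x_i=0$. Summing over $\ell$ then counts each such position exactly once, the point being that in the bi-infinite periodic sequence $x$ every zero is the left endpoint of exactly one block $0\,1^{\ell-2}\,0$. This yields $\sum_{\ell\geq 2}m_{0,0}^\ell=\tfrac1s\#\{i\in[0,s):x_i=0\}=\nu(0)$; equivalently, the cylinders indexed by $\ell\geq 2$ reassemble the event $\begin{matrix}0\\0\end{matrix}$, whose $\rho$-measure equals $\nu(0)$. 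For the degenerate case $x=\mathbf{1}$ (the full shift) one has $\nu(0)=0$ and all $m_{0,0}^\ell$ vanish, which matches the convention adopted before Theorem~\ref{OKRESOWE} and the value $\log\lambda_+$ from~\eqref{wynik}.

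Finally I would make $m_{0,0}^\ell$ explicit: because the top row in~\eqref{wzornam} is identically $0$, the cylinder is determined by its bottom row $0\,1^{\ell-2}\,0$, so $m_{0,0}^\ell=\nu(01\dots10)$ with $\ell-2$ ones. Substituting $\sum_{\ell\geq 2}m_{0,0}^\ell=\nu(0)$ into the first sum and retaining only the $(0,0)$-terms in the second sum transforms the formula of Theorem~\ref{OKRESOWE} into exactly the claimed expression. I do not expect a genuine obstacle, since the statement is a pure specialization; the only step demanding care is the bookkeeping of the previous paragraph, namely verifying that the gap-counting identity~\eqref{eqn:mab} sums precisely to the density $\nu(0)$ of zeros and not to the measure of some proper sub-event.
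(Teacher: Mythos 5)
Your proposal is correct and follows essentially the same route as the paper: the paper likewise obtains the corollary as an immediate specialization of Theorem~\ref{OKRESOWE}, noting that $w=\mathbf{0}$ forces $m_{a,b}^\ell=0$ unless $a=b=0$ and that $\sum_{\ell\geq 2}m_{0,0}^\ell=\rho\left(\begin{matrix}0\\ 0\end{matrix}\right)=\nu(0)$ because the contributing cylinders reassemble into $\begin{matrix}0\\ 0\end{matrix}$. Your gap-counting verification via~\eqref{eqn:mab} is just a more explicit rendering of that same observation, so there is nothing to fix.
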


\subsection{Approximation by sandwich subordinatesystems with periodic bounds}\label{APEB}
We will need the following basic lemma.
\begin{lemma}
Let $(X_K)_{K\geq 1}\subseteq \{0,1\}^\Z$ be a descending family of subshifts and suppose that $X$ is a subshift such that $X\subseteq\bigcap_{K\geq 1}X_K$ and $\mathcal{M}(X)=\mathcal{M}(\bigcap_{K\geq 1}X_K)$. Then
\[
\mathcal{P}_{X,\varphi}=\lim_{K\to\infty}\mathcal{P}_{X_K,\varphi}
\]
for any continuous potential $\varphi$.
\end{lemma}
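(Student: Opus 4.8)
The plan is to reduce the claim to a statement about the single subshift $Y:=\bigcap_{K\geq 1}X_K$ and to exploit that topological pressure depends on a subshift only through its set of invariant measures, together with upper semicontinuity of the entropy map. First I would record the trivial direction: since $X\subseteq X_{K+1}\subseteq X_K$ forces $\mathcal{L}_n(X)\subseteq\mathcal{L}_n(X_{K+1})\subseteq\mathcal{L}_n(X_K)$, the defining block sums in the formula for $\mathcal{P}_{X,\varphi}$ are monotone, and hence $\mathcal{P}_{X,\varphi}\leq\mathcal{P}_{X_{K+1},\varphi}\leq\mathcal{P}_{X_K,\varphi}$ for every $K$. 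Thus $(\mathcal{P}_{X_K,\varphi})_K$ is non-increasing, its limit equals its infimum, and it dominates $\mathcal{P}_{X,\varphi}$; so one inequality, $\mathcal{P}_{X,\varphi}\leq\lim_{K\to\infty}\mathcal{P}_{X_K,\varphi}$, is immediate.

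Next I would observe that $\mathcal{P}_{X,\varphi}=\mathcal{P}_{Y,\varphi}$. Indeed, the variational principle expresses each of these pressures as a supremum over the respective set of invariant measures of $h(\sigma,\mu)+\int\varphi\,d\mu$, and both the entropy and the integral depend only on $\mu$ as a shift-invariant measure on $\{0,1\}^\Z$, not on the ambient subshift. Since $\mathcal{M}(X)=\mathcal{M}(\bigcap_{K\geq 1}X_K)=\mathcal{M}(Y)$ by hypothesis, the two suprema coincide. Hence it remains to prove the reverse inequality $\lim_{K\to\infty}\mathcal{P}_{X_K,\varphi}\leq\mathcal{P}_{Y,\varphi}$.

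For this I would pass to equilibrium states. For each $K$ choose $\mu_K\in\mathcal{M}(X_K)$ with $\mathcal{P}_{X_K,\varphi}=h(\sigma,\mu_K)+\int\varphi\,d\mu_K$ (such a measure exists because $X_K$ is a subshift). All the $\mu_K$ lie in the weak-$*$ compact set $\mathcal{M}(X_1)$, so after passing to a subsequence $\mu_{K_j}\to\mu$. The key point is that $\mu\in\mathcal{M}(Y)$: for fixed $L$ one has $\mu_{K_j}\in\mathcal{M}(X_{K_j})\subseteq\mathcal{M}(X_L)$ as soon as $K_j\geq L$, and $\mathcal{M}(X_L)$ is weak-$*$ closed (it consists of the $\sigma$-invariant measures giving full mass to the closed set $X_L$), so $\mu\in\mathcal{M}(X_L)$ for every $L$; then $\mu(Y)=\lim_L\mu(X_L)=1$ by continuity from above, i.e.\ $\mu\in\bigcap_L\mathcal{M}(X_L)=\mathcal{M}(Y)$.

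Finally I would take the limit. Since $\varphi$ is continuous, hence bounded on the compact space $X_1$, weak-$*$ convergence gives $\int\varphi\,d\mu_{K_j}\to\int\varphi\,d\mu$; and since $(\{0,1\}^\Z,\sigma)$ is expansive, the entropy map $\mu\mapsto h(\sigma,\mu)$ is upper semicontinuous, whence $\limsup_j h(\sigma,\mu_{K_j})\leq h(\sigma,\mu)$. Combining these, $\lim_{K\to\infty}\mathcal{P}_{X_K,\varphi}=\lim_j\big(h(\sigma,\mu_{K_j})+\int\varphi\,d\mu_{K_j}\big)\leq h(\sigma,\mu)+\int\varphi\,d\mu\leq\mathcal{P}_{Y,\varphi}$, where the last step uses $\mu\in\mathcal{M}(Y)$ and the variational principle for $Y$. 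Together with the first paragraph this yields $\mathcal{P}_{X,\varphi}\leq\lim_{K}\mathcal{P}_{X_K,\varphi}\leq\mathcal{P}_{Y,\varphi}=\mathcal{P}_{X,\varphi}$, proving equality. The only non-elementary ingredient, and the step I expect to be the genuine crux, is the upper semicontinuity of the entropy map; everything else is monotonicity, compactness, and the variational principle. One could alternatively try a purely combinatorial argument showing $\mathcal{L}_n(Y)=\mathcal{L}_n(X_K)$ for all $K$ large (depending on $n$), but then controlling $\sup_{x\in A}\varphi^{(n)}(x)$ as the cylinder $A$ is intersected with the shrinking $X_K$ becomes the delicate point, so the measure-theoretic route seems cleaner.
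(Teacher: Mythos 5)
Your proof is correct and follows essentially the same route as the paper's: reduce to $Y=\bigcap_{K\geq 1}X_K$ via the variational principle, get one inequality from monotonicity of pressure under inclusion, and get the other by passing to a weak-$*$ limit of (near-)equilibrium states, which lies in $\mathcal{M}(Y)$, using upper semicontinuity of the entropy map. The only differences are cosmetic — you use exact equilibrium states where the paper takes $\tfrac1K$-approximate maximizers, and you make explicit the semicontinuity step and the verification that the limit measure is supported on $Y$, both of which the paper leaves implicit.
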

\begin{proof}
By the variational principle, $\mathcal{P}_{X,\varphi}=\mathcal{P}_{\bigcap_{K\geq 1}X_K,\varphi}$. Thus, we can assume without loss of generality that $X=\bigcap_{K\geq 1}X_K$. Since $X\subseteq X_K$ for each $K\geq 1$, we clearly have
\(
\mathcal{P}_{X,\varphi}\leq \liminf_{K\to\infty}\mathcal{P}_{X_K,\varphi}.
\)
On the other hand, take for each $K\geq 1$ a measure $\mu_K\in \mathcal{P}(X_K)$ such that
\[
h(X_K,\mu_K)+\int_{X_K}\varphi\, d\mu_K \geq \mathcal{P}_{X_K,\varphi}-\frac1K.
\]
We may assume without loss of generality that $\mu_K\to \mu$ weakly. Clearly, the limit measure $\mu$ is supported on $\bigcap_{K\geq 1}X_K$. This yields
\[
\limsup_{K\to\infty} \mathcal{P}_{X_K,\varphi}\leq \limsup_{K\to\infty}\left(h(X_K,\mu_K)+\int_{X_K}\varphi\, d\mu_K+\frac{1}{K} \right)\leq h(X,\mu)+\int_{X}\varphi\, d\mu\leq \mathcal{P}_{X,\varphi}.
\]
\end{proof}
The above lemma can be, in particular, applied to $X_K=[w_K,x_K]$, where $(w_K,x_K)$ is good and its elements are all periodic. Indeed, by Proposition~\ref{biokre}, $\mathcal{M}(\overline{[w,x]})$ is a sandwich measure-theoretic subordinate subshift and by Proposition~\ref{III}, we have $\mathcal{M}(\overline{[w,x]})=\mathcal{M}(\bigcap_{K\geq 1}[w_K,x_K])$. Thus, as an immediate consequence of Theorem~\ref{OKRESOWE} and Corollary~\ref{OKRESOWE1}, we obtain the following.
\begin{corollary}\label{R1}
Let $(w_K,x_K)\in \{0,1\}^\Z\times \{0,1\}^\Z$ be a good sequence of periodic points and let $(w,x)$ be its coordinatewise limit. Suppose that $\varphi(x)=\varphi(x_0,x_1)$. Then
\begin{align*}
\mathcal{P}_{\overline{[w,x]},\varphi}&=\mathcal{P}_{\bigcap_{K\geq 1}{[w_K,x_K]},\varphi}\\
&=\log \lambda_+ + \sum_{a,b\in\{0,1\}}\sum_{\ell\geq 2}m_{a,b}^\ell \log C_{a,b}^++\sum_{a,b\in\{0,1\}}\sum_{\ell\geq 2}m_{a,b}^\ell \log\left(1+\frac{C_{a,b}^-}{C_{a,b}^+}\left( \frac{\lambda_-}{\lambda_+}\right)^{\ell-1} \right),
\end{align*}
where $C_{a,b}^\pm$ for $a,b\in\{0,1\}$, $\ell\geq 2$ are constants from the assertion of Lemma~\ref{wzoreczek1} and $m_{a,b}^{\ell}$ come from~\eqref{wzornam} for $\rho$ corresponding to $(w,x)$.
\end{corollary}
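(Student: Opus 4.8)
The plan is to deduce the formula by passing to the limit $K\to\infty$ in the exact periodic expression of Theorem~\ref{OKRESOWE}, using the continuity lemma above together with the crucial observation that the constants $\lambda_\pm$ and $C_{a,b}^\pm$ do not depend on $K$. First I would set up the reduction. Each $[w_K,x_K]$ is a sandwich measure-theoretic subordinate subshift by Proposition~\ref{biokre} (the bounds $w_K\le x_K$ are periodic), and since $w_K\le w\le x\le x_K$ we have $\overline{[w,x]}\subseteq\bigcap_{K\ge1}[w_K,x_K]$. By Proposition~\ref{III}, both $\overline{[w,x]}$ and $\bigcap_{K\ge1}[w_K,x_K]=\bigcap_{K\ge1}\overline{[w_K,x_K]}$ are sandwich measure-theoretic subordinate subshifts with the same base measure $\rho=\lim_{K\to\infty}\rho_K$; hence $\mathcal{M}(\overline{[w,x]})=\mathcal{M}(\bigcap_{K\ge1}[w_K,x_K])$. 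The variational principle then gives the first equality $\mathcal{P}_{\overline{[w,x]},\varphi}=\mathcal{P}_{\bigcap_{K\ge1}[w_K,x_K],\varphi}$, and the continuity lemma above gives $\mathcal{P}_{\bigcap_{K\ge1}[w_K,x_K],\varphi}=\lim_{K\to\infty}\mathcal{P}_{[w_K,x_K],\varphi}$.

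Next I would apply Theorem~\ref{OKRESOWE} to each periodic $[w_K,x_K]$. Crucially, $\lambda_\pm$ and $C_{a,b}^\pm$ are determined by the matrix $M$, hence by $\varphi$ alone, and are the same for every $K$; only the weights $m_{a,b}^\ell$ change with $K$, say to $m_{a,b}^{\ell,K}$ computed from $\rho_K$. Since $\rho_K\to\rho$ weakly by Lemma~\ref{lemat_gene1} and each $m_{a,b}^{\ell,K}$ is the $\rho_K$-measure of a fixed cylinder (a clopen set), we obtain $m_{a,b}^{\ell,K}\to m_{a,b}^\ell$ for every fixed $a,b,\ell$. It then remains to pass to the limit term by term in the three-term formula of Theorem~\ref{OKRESOWE}.

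The $\log\lambda_+$ term is independent of $K$ and requires nothing. For the third term, the factor $\log\!\big(1+\tfrac{C_{a,b}^-}{C_{a,b}^+}(\lambda_-/\lambda_+)^{\ell-1}\big)$ decays geometrically in $\ell$ because $|\lambda_-/\lambda_+|<1$, so it is bounded in absolute value by a summable sequence independent of $K$; together with $0\le m_{a,b}^{\ell,K}\le1$ this permits dominated convergence for the series and term-by-term passage to the limit. The genuine obstacle is the second term $\sum_{a,b}\log C_{a,b}^+\sum_{\ell\ge2}m_{a,b}^{\ell,K}$, where $\log C_{a,b}^+$ (finite since $C_{a,b}^+>0$ by Lemma~\ref{wzoreczek1}) carries no decay in $\ell$; here I must show $\sum_{\ell\ge2}m_{a,b}^{\ell,K}\to\sum_{\ell\ge2}m_{a,b}^\ell$. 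For this I would invoke the normalization $\sum_{a,b}\sum_{\ell\ge2}(\ell-1)m_{a,b}^{\ell,K}=1$, valid for each periodic approximant (established just before Theorem~\ref{OKRESOWE}), which yields the uniform tail estimate $\sum_{\ell>L}m_{a,b}^{\ell,K}\le 1/L$ for all $K$. This uniform integrability in $\ell$, combined with the pointwise convergence $m_{a,b}^{\ell,K}\to m_{a,b}^\ell$ and the corresponding bound $\sum_{\ell>L}m_{a,b}^\ell\le 1/L$ for the limit, gives the convergence of the full sums and hence the claimed formula.
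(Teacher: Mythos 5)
Your proof is correct and follows essentially the same route as the paper: Proposition~\ref{biokre} and Proposition~\ref{III} to identify the invariant measures, the approximation lemma of Section~\ref{APEB} to get $\mathcal{P}_{\overline{[w,x]},\varphi}=\lim_{K\to\infty}\mathcal{P}_{[w_K,x_K],\varphi}$, and Theorem~\ref{OKRESOWE} for each periodic approximant. The only difference is that the paper declares the passage to the limit in the formula ``immediate,'' whereas you spell out the justification (weak convergence $\rho_K\to\rho$ on cylinders, geometric decay of the third term, and the uniform tail bound $\sum_{\ell>L}m_{a,b}^{\ell,K}\leq 1/L$ from the normalization $\sum_{a,b}\sum_{\ell\geq 2}(\ell-1)m_{a,b}^{\ell,K}=1$ for the second term) --- all of which is accurate and exactly the content the paper leaves implicit.
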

\begin{corollary}\label{R2}
Let $x_K\in \{0,1\}^Z$ be a good sequence of periodic points and let $x$ be its coordinatewise limit. Suppose that $\varphi(x)=\varphi(x_0,x_1)$ is such that $\det M\neq 0$. Then $\mathcal{P}_{\overline{[\mb{0},x]},\varphi}=\mathcal{P}_{\bigcap_{K\geq 1}\overline{[\mb{0},x_K]},\varphi}$ and
\begin{align*}
\mathcal{P}_{\overline{[\mb{0},x]},\varphi}=\log \lambda_+ +\nu(0) \log C_{0,0}^++\sum_{\ell\geq 2}m_{0,0}^\ell \log\left(1+\frac{C_{0,0}^-}{C_{0,0}^+}\left( \frac{\lambda_-}{\lambda_+}\right)^{\ell-1} \right),
\end{align*}
where $C_{0,0}^\ell$ for $\ell\geq 2$ are constants from the assertion of Lemma~\ref{wzoreczek1} and $m_{0,0}^\ell=\nu(01\dots 10)$ (with $\ell-2$ ones), for $\nu$ corresponding to $x$.
\end{corollary}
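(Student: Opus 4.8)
The plan is to follow the same route that produced Corollary~\ref{R1}, only now with the lower bound fixed at $\mathbf{0}$, so that—exactly as in the passage from Theorem~\ref{OKRESOWE} to Corollary~\ref{OKRESOWE1}—the only surviving weights are those with $a=b=0$. The guiding observation is that the eigenvalues $\lambda_\pm$ and the constants $C_{0,0}^\pm$ of Lemma~\ref{wzoreczek1} depend only on the matrix $M$, hence only on $\varphi$, and are therefore the same at every approximation level $K$; the dependence on $K$ enters solely through the combinatorial weights coming from the measures $\nu_K$. Thus the proof splits into an approximation step (reducing the pressure on $\overline{[\mathbf{0},x]}$ to the limit of the pressures on the periodic levels) and a convergence step (passing to the limit inside the formula of Corollary~\ref{OKRESOWE1}).

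For the approximation step, first I would note that each $x_K$ is periodic, so its orbit closure $X_{x_K}$ is finite and uniquely ergodic; by Example~(B) in Section~\ref{przyk} the subshift $\overline{[\mathbf{0},x_K]}=M(X_{x_K}\times\{0,1\}^\Z)$ is then a measure-theoretic subordinate subshift whose base measure $\nu_K$ is the unique invariant measure of $X_{x_K}$, and $x_K$ is generic for $\nu_K$. Since $(x_K)$ is good with coordinatewise limit $x$, Proposition~\ref{prop_gene} shows that both $\overline{[\mathbf{0},x]}$ and $\bigcap_{K\ge1}\overline{[\mathbf{0},x_K]}$ are measure-theoretic subordinate subshifts with the common base measure $\nu=\lim_K\nu_K$; in particular $\mathcal{M}(\overline{[\mathbf{0},x]})=\mathcal{M}(\bigcap_{K\ge1}\overline{[\mathbf{0},x_K]})$. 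As $(x_K)$ is non-increasing the family $(\overline{[\mathbf{0},x_K]})$ is descending, and $x\le x_K$ gives $\overline{[\mathbf{0},x]}\subseteq\overline{[\mathbf{0},x_K]}$, so the basic lemma opening this subsection applies with $X=\overline{[\mathbf{0},x]}$ and $X_K=\overline{[\mathbf{0},x_K]}$ and yields
\[
\mathcal{P}_{\overline{[\mathbf{0},x]},\varphi}=\mathcal{P}_{\bigcap_{K\ge1}\overline{[\mathbf{0},x_K]},\varphi}=\lim_{K\to\infty}\mathcal{P}_{\overline{[\mathbf{0},x_K]},\varphi}.
\]

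For the convergence step, I would insert Corollary~\ref{OKRESOWE1} at each level, writing $m_{0,0}^{\ell,K}=\nu_K(01\cdots10)$ (with $\ell-2$ ones):
\[
\mathcal{P}_{\overline{[\mathbf{0},x_K]},\varphi}=\log\lambda_++\nu_K(0)\log C_{0,0}^++\sum_{\ell\ge2}m_{0,0}^{\ell,K}\log\!\left(1+\frac{C_{0,0}^-}{C_{0,0}^+}\Bigl(\tfrac{\lambda_-}{\lambda_+}\Bigr)^{\ell-1}\right).
\]
By Lemma~\ref{lemat_gene} goodness gives $\nu_K\to\nu$ weakly, so $\nu_K(0)\to\nu(0)$ and $m_{0,0}^{\ell,K}\to m_{0,0}^\ell:=\nu(01\cdots10)$ for each fixed $\ell$. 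The substantive point, and the only real obstacle, is to interchange $\lim_K$ with the sum over $\ell$.

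I would settle this interchange by a Tannery (dominated convergence for series) argument, which is exactly where Perron--Frobenius and the hypothesis $\det M\ne0$ come in. Lemma~\ref{wzoreczek1} gives $\lambda_+>|\lambda_-|$, hence $|\lambda_-/\lambda_+|<1$, while $\det M\ne0$ secures $\lambda_-\ne0$ and places us genuinely in the two-coordinate regime (when $\det M=0$ the potential reduces to one in $x_0$ alone and is handled by Theorem~\ref{int0}). Since $(M^{\ell-1})_{0,0}>0$ and $C_{0,0}^+\lambda_+^{\ell-1}>0$, each factor $1+\frac{C_{0,0}^-}{C_{0,0}^+}(\lambda_-/\lambda_+)^{\ell-1}$ is strictly positive and tends to $1$ geometrically, whence $\bigl|\log(1+\tfrac{C_{0,0}^-}{C_{0,0}^+}(\lambda_-/\lambda_+)^{\ell-1})\bigr|\le A\,|\lambda_-/\lambda_+|^{\ell-1}$ for some constant $A$ and all $\ell$. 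Together with the uniform bound $0\le m_{0,0}^{\ell,K}\le\nu_K(0)\le1$, the summands are dominated by the $K$-independent summable sequence $A\,|\lambda_-/\lambda_+|^{\ell-1}$, so the series converge to their termwise limits. Passing to the limit in the displayed formula then delivers precisely the asserted expression for $\mathcal{P}_{\overline{[\mathbf{0},x]},\varphi}$.
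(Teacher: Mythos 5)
Your proposal is correct and follows essentially the same route as the paper: the paper derives Corollary~\ref{R2} as an immediate consequence of the approximation lemma opening Section~\ref{APEB} (with $\mathcal{M}(\overline{[\mathbf{0},x]})=\mathcal{M}(\bigcap_{K\geq 1}\overline{[\mathbf{0},x_K]})$ supplied by the approximation propositions) combined with the periodic formula of Corollary~\ref{OKRESOWE1}. Your Tannery-type domination argument for interchanging $\lim_K$ with the sum over $\ell$, using $K$-independence of $\lambda_\pm$, $C_{0,0}^\pm$ and the geometric decay of the logarithmic terms, correctly fills in the passage to the limit that the paper leaves implicit.
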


\subsection{$\mathscr{B}$-free systems}\label{apb}
\subsubsection{General results}
In this part we will apply the results from Section~\ref{APEB} to the $\mathscr{B}$-free setting. All the tools have already been prepared, so let us state and prove our results.
\begin{corollary}\label{W1}
Suppose that $\mathscr{B}\subseteq \mathbb{N}$ is such that $\eta^*$ is a regular Toeplitz sequence and $\varphi(x)=\varphi(x_0,x_1)$. Then
\begin{align*}
&\mathcal{P}_{{X}_\eta,\varphi}=\sum_{a,b\in\{0,1\},2\leq \ell\leq \min\mathscr{B}+1}m_{a,b}^\ell Z_{a,b}^\ell\\
=&\log \lambda_+ + \sum_{a,b\in\{0,1\}}\sum_{2\leq \ell\leq \min\mathscr{B}+1}m_{a,b}^\ell \log C_{a,b}^++\sum_{a,b\in\{0,1\}}\sum_{2\leq \ell\leq \min\mathscr{B}+1}m_{a,b}^\ell \log\left(1+\frac{C_{a,b}^-}{C_{a,b}^+}\left( \frac{\lambda_-}{\lambda_+}\right)^{\ell-1} \right),
\end{align*}
where $C_{a,b}^\ell$ for $a,b\in\{0,1\}$, $2\leq \ell\leq \min\mathscr{B}+1$ are constants from the assertion of Lemma~\ref{wzoreczek1} and $m_{a,b}^\ell$ are given by the same formula as in~\eqref{wzornam} with $\rho=\nu_{\eta^*}\triangle\nu_\eta$.
\end{corollary}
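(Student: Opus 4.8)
The plan is to recognize this statement as a direct instance of Corollary~\ref{R1} and then to exploit the $\mathscr{B}$-free structure to truncate the (a priori infinite) sums over $\ell$. First I would invoke the setup already prepared in Remark~\ref{commentsB1}: since $\eta^*$ is a regular Toeplitz sequence, the pair sequence $(\underline{\eta}_K,\eta_K)$ is good, consists of periodic points, and has coordinatewise limit $(\eta^*,\eta)$, cf.~\eqref{isgood1}. By Proposition~\ref{III} we have $\mathcal{M}(\overline{[\eta^*,\eta]})=\mathcal{M}(\bigcap_{K\geq 1}[\underline{\eta}_K,\eta_K])$ with base measure $\nu_{\eta^*}\triangle\nu_\eta$, and Remark~\ref{commentsB1} identifies $X_\eta$ as a sandwich measure-theoretic subordinate subshift with the same base measure and $\mathcal{M}(X_\eta)=\mathcal{M}(\overline{[\eta^*,\eta]})$. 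In particular $\mathcal{P}_{X_\eta,\varphi}=\mathcal{P}_{\overline{[\eta^*,\eta]},\varphi}$ by the variational principle, so Corollary~\ref{R1} applies verbatim with $(w,x)=(\eta^*,\eta)$ and $\rho=\nu_{\eta^*}\triangle\nu_\eta$, yielding the asserted formula but with both sums running over all $\ell\geq 2$.

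The heart of the argument, and the only genuinely new point, is the claim that $m_{a,b}^\ell=0$ once $\ell\geq \min\mathscr{B}+2$, which collapses each sum to the finite range $2\leq \ell\leq \min\mathscr{B}+1$. Recall from~\eqref{wzornam} that $m_{a,b}^\ell$ is the $\rho$-measure of the two-row cylinder whose second row reads $a\,1\cdots 1\,b$, i.e.\ it records $\ell-2$ consecutive positions on which the upper sequence (the $\eta$-coordinate) equals $1$. Since $(\eta^*,\eta)$ is quasi-generic for $\rho$, the quantity $m_{a,b}^\ell$ equals the frequency of this pattern along $(\eta^*,\eta)$, so it suffices to show that $\eta$ never carries $\min\mathscr{B}$ consecutive $1$'s. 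This is immediate: $\eta_n=1$ forces $n\in\mathcal{F}_\mathscr{B}$, hence $\min\mathscr{B}\nmid n$; but among any $\min\mathscr{B}$ consecutive integers exactly one is divisible by $\min\mathscr{B}\in\mathscr{B}$ and thus lies in $\mathcal{M}_\mathscr{B}$, where $\eta=0$. (Equivalently, $\nu_\eta$ is supported on the admissible shift $X_\mathscr{B}$, on which the residue class modulo $\min\mathscr{B}$ missing from the support forbids $\min\mathscr{B}$ consecutive ones.) Therefore $\ell-2\geq \min\mathscr{B}$ makes the pattern impossible and $m_{a,b}^\ell=0$ for $\ell\geq \min\mathscr{B}+2$.

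Finally I would assemble the two ingredients. Substituting the vanishing of $m_{a,b}^\ell$ for $\ell>\min\mathscr{B}+1$ into the formula supplied by Corollary~\ref{R1} truncates both sums to $2\leq\ell\leq\min\mathscr{B}+1$, giving the displayed expression in terms of $\lambda_\pm$ and $C_{a,b}^\pm$. The first (summatory) equality is then just the truncated form of~\eqref{sumka}, namely $\mathcal{P}_{X_\eta,\varphi}=\sum_{a,b\in\{0,1\}}\sum_{2\leq\ell\leq\min\mathscr{B}+1}m_{a,b}^\ell\log Z_{a,b}^\ell$, rewritten via $Z_{a,b}^\ell=C_{a,b}^+\lambda_+^{\ell-1}+C_{a,b}^-\lambda_-^{\ell-1}$ from Lemma~\ref{wzoreczek1} together with the normalization $\sum_{a,b}\sum_{\ell\geq 2}(\ell-1)m_{a,b}^\ell=1$ (which is exactly what produces the isolated $\log\lambda_+$ term). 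I do not expect any real obstacle beyond the truncation observation; the remaining steps are bookkeeping on top of results already in hand, and the only subtlety is to confirm that passing through the chain $X_{\eta'}\subseteq X_\eta\subseteq\overline{[\eta^*,\eta]}$ (with equal pressures, as noted in Remark~\ref{commentsB1}) legitimately lets the periodic-approximation machinery compute $\mathcal{P}_{X_\eta,\varphi}$ rather than merely $\mathcal{P}_{\overline{[\eta^*,\eta]},\varphi}$.
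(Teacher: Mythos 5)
Your argument is correct and takes essentially the same route as the paper, whose proof of Corollary~\ref{W1} is a one-line citation of~\eqref{isgood1},~\eqref{limitmeasure},~\eqref{eq:proksym} and Corollary~\ref{R1} --- precisely the chain you assemble via Remark~\ref{commentsB1} (including the pressure equalities along $X_{\eta'}\subseteq X_\eta\subseteq\overline{[\eta^*,\eta]}$, which the remark already supplies). The one step you spell out that the paper leaves implicit --- that $m_{a,b}^\ell=0$ for $\ell\geq\min\mathscr{B}+2$ because $\eta$ (hence every point of $X_\eta$, hence $\nu_\eta$-a.e.\ point) carries no run of $\min\mathscr{B}$ consecutive $1$'s --- is exactly the right justification for the truncated range of summation.
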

\begin{proof}
The assertion follows directly by~\eqref{isgood1},~\eqref{limitmeasure},~\eqref{eq:proksym} and by Corollary~\ref{R1}.
\end{proof}
\begin{corollary}\label{W11}
Let that $\mathscr{B}\subseteq \mathbb{N}$ and suppose that $\varphi(x)=\varphi(x_0,x_1)$. Then
\begin{align*}
\mathcal{P}_{\widetilde{X}_\eta,\varphi}&=\sum_{2\leq\ell\leq\min\mathscr{B}+1}m_{0,0}^\ell Z_{0,0}^\ell\\
&=\log \lambda_+ +
\nu_\eta(0) \log C_{0,0}^++\sum_{2\leq\ell\leq\min\mathscr{B}+1}\nu_\eta(0\underbrace{1\dots 1}_{\ell-2}0) \log\left(1+\frac{C_{0,0}^-}{C_{0,0}^+}\left( \frac{\lambda_-}{\lambda_+}\right)^{\ell-1} \right),
\end{align*}
where $C_{0,0}^\pm$ for $2\leq \ell\leq 1+\min\mathscr{B}$ are constants from the assertion of Lemma~\ref{wzoreczek1}.
\end{corollary}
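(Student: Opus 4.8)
The plan is to deduce Corollary~\ref{W11} from the periodic–approximation result Corollary~\ref{R2} applied to the canonical good approximation of $\eta$ from above, and then to observe that the (a priori infinite) sum truncates at $\ell=\min\mathscr{B}+1$ because of a purely combinatorial feature of $\mathscr{B}$-free integers. The only point needing separate care is the degenerate potential $\det M=0$, which lies outside the hypotheses of Corollary~\ref{R2}.

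First I would recall the good periodic approximation from Remark~\ref{commentsB}. Setting $\eta_K=\mathbf{1}_{\mathcal{F}_{\mathscr{B}_K}}$, each $\eta_K$ is periodic (of period $\operatorname{lcm}\mathscr{B}_K$), the sequence $(\eta_K)$ is good with coordinatewise limit $\eta$ by~\eqref{isgood}, $\eta$ is quasi-generic for $\nu_\eta$ by~\eqref{eq:qg}, and $\nu_{\eta_K}\to\nu_\eta$ by~\eqref{eq:gra}. Thus $(\eta_K)$ is exactly a good sequence of periodic points with limit $\eta$, and by Proposition~\ref{prop_gene} we have $\widetilde{X}_\eta=\overline{[\mathbf{0},\eta]}$ with base measure $\nu_\eta$ and $\mathcal{M}(\widetilde{X}_\eta)=\mathcal{M}(\bigcap_{K\geq1}[\mathbf{0},\eta_K])$. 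In the case $\det M\neq 0$, Corollary~\ref{R2} then applies verbatim and yields
\[
\mathcal{P}_{\widetilde{X}_\eta,\varphi}=\log\lambda_+ +\nu_\eta(0)\log C_{0,0}^+ +\sum_{\ell\geq 2}m_{0,0}^\ell\log\!\left(1+\frac{C_{0,0}^-}{C_{0,0}^+}\left(\frac{\lambda_-}{\lambda_+}\right)^{\ell-1}\right),
\]
with $m_{0,0}^\ell=\nu_\eta(0\underbrace{1\dots1}_{\ell-2}0)$ coming from~\eqref{wzornam} for $\rho$ the diagonal-type measure of $(\mathbf{0},\eta)$, i.e.\ $\nu=\nu_\eta$.

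The key step is the truncation, and it is where the $\mathscr{B}$-free structure enters. The cylinder $0\underbrace{1\dots1}_{\ell-2}0$ contains a run of $\ell-2$ consecutive $\mathscr{B}$-free integers. Writing $b_0:=\min\mathscr{B}$, every window of $b_0$ consecutive integers contains exactly one multiple of $b_0$, which is not $\mathscr{B}$-free; hence no run of $\mathscr{B}$-free integers has length $\geq b_0$, so $\eta$ never contains the block $\underbrace{1\dots1}_{b_0}$. Since every block occurring in $X_\eta$ occurs already in $\eta$ and $\operatorname{supp}\nu_\eta\subseteq X_\eta$, this forces $\nu_\eta(0\underbrace{1\dots1}_{\ell-2}0)\le\nu_\eta(\underbrace{1\dots1}_{\ell-2})=0$ whenever $\ell-2\ge b_0$, that is $m_{0,0}^\ell=0$ for $\ell>\min\mathscr{B}+1$. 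This collapses both sums to the range $2\le\ell\le\min\mathscr{B}+1$, and (reading the compact first line as the truncated version of~\eqref{sumka} with $w=\mathbf{0}$, i.e.\ Corollary~\ref{OKRESOWE1}) gives precisely the stated formula, with $m_{0,0}^\ell=\nu_\eta(0\underbrace{1\dots1}_{\ell-2}0)$.

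Finally I would dispose of the excluded case $\det M=0$. By the computation opening Section~\ref{NNNN}, for such $\varphi$ we have $\mathcal{P}_{\widetilde{X}_\eta,\varphi}=\mathcal{P}_{\widetilde{X}_\eta,\psi}$ with the one-coordinate potential $\psi(i)=\varphi(i,i)$, so Theorem~\ref{int0} gives $\mathcal{P}_{\widetilde{X}_\eta,\varphi}=\nu_\eta(0)\psi(0)+\nu_\eta(1)\log(2^{\psi(0)}+2^{\psi(1)})$. On the other side, $\det M=0$ forces the eigenvalues of $M$ to be $0$ and $\lambda_+=\operatorname{tr}M=2^{\varphi(0,0)}+2^{\varphi(1,1)}$; since $M$ is still diagonalizable (distinct eigenvalues), Lemma~\ref{wzoreczek1} applies with $\lambda_-=0$, so $(\lambda_-/\lambda_+)^{\ell-1}=0$ for $\ell\ge2$ and the last sum of the formula vanishes. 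Using $Z_{0,0}^2=M_{0,0}=2^{\varphi(0,0)}=C_{0,0}^+\lambda_+$, one has $C_{0,0}^+=2^{\psi(0)}/\lambda_+$, whence the surviving terms satisfy $\log\lambda_+ +\nu_\eta(0)\log C_{0,0}^+=\nu_\eta(1)\log\lambda_+ +\nu_\eta(0)\psi(0)$, matching the expression from Theorem~\ref{int0}. Thus the formula holds for every $\varphi(x)=\varphi(x_0,x_1)$. I expect the truncation of Step three to be the genuine content; the degenerate case is only a bookkeeping check that the stated closed form is continuous across $\lambda_-=0$.
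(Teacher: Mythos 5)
Your proof follows the paper's own route exactly: the published proof of Corollary~\ref{W11} is precisely the appeal to~\eqref{isgood},~\eqref{eq:qg},~\eqref{eq:gra} and Corollary~\ref{R2}, with the truncation at $\ell=\min\mathscr{B}+1$ left implicit, and your run-length argument (no $\min\mathscr{B}$ consecutive $\mathscr{B}$-free integers, hence $\nu_\eta(0\underbrace{1\dots1}_{\ell-2}0)=0$ for $\ell>\min\mathscr{B}+1$) is the intended justification. Your separate treatment of $\det M=0$ --- which the paper silently glosses over even though Corollary~\ref{R2} formally assumes $\det M\neq 0$ --- is correct (via the reduction to $\psi(i)=\varphi(i,i)$ and Theorem~\ref{int0}, whose applicability with $\nu=\nu_\eta$ is asserted in Remark~\ref{commentsB}) and in fact repairs a small gap in the paper's one-line proof.
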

\begin{proof}
The assertion follows directly by~\eqref{isgood},~\eqref{eq:qg},~\eqref{eq:gra} and by Corollary~\ref{R2}.
\end{proof}

\subsubsection{Special case: $2\in\mathscr{B}$}
Let us now compare our results with the formula obtained in~\cite{LC} in the Erd\"os case, under the additional assumption that $2\in\mathscr{B}$. Lin and Chen showed that in this case for $\varphi=a_{00}\mathbf{1}_{00}+a_{01}\mathbf{1}_{01}+a_1\mathbf{1}_1$ we get
\begin{equation}\label{chinczycy}
\mathcal{P}_{\widetilde{X}_\eta,\varphi}=a_{00}(1-2d)+d\log(2^{a_1+a_{01}}+2^{2a_{00}}),
\end{equation}
where $d=\nu_\eta(1)$. The first formula in Corollary~\ref{W11} in this special case yields the same:
\begin{align*}
\mathcal{P}_{\widetilde{X}_\eta,\varphi}&=\nu_\eta(00)\log Z_{0,0}^1+\nu_\eta(010)\log Z_{0,0}^2=\nu_\eta(00)\log Z_{0,0}^1+\nu_\eta(01)\log Z_{0,0}^2\\
&=a_{00}(1-2d)+d\log(2^{a_1+a_{01}}+2^{2a_{00}})
\end{align*}
as $\nu_\eta(00)=1-\nu_\eta(01)-\nu_\eta(10)-\nu_\eta(11)=1-2\nu_\eta(1)=1-2d$.

In fact, we can have more than that. Suppose that $2\in\mathscr{B}$, $\mathscr{B}$ is primitive and fix $\varphi\colon \widetilde{X}_\eta\to\mathbb{R}$. Then
\[
\mathcal{P}_{\widetilde{X}_\eta,\varphi}=\max\left\{\varphi(\mathbf{0}), \sup\left\{h(\nu,\sigma)+\int \varphi\, d\nu : \delta_{\mathbf{0}}\neq\nu\in\mathcal{M}^e(\widetilde{X}_\eta,\sigma)\right\}\right\}.
\]
Let $X:=\{x\in\widetilde{X}_\eta : x\neq \mathbf{0}\text{ and }x|_{2\mathbb{Z}}\equiv 0\}$. For any $\delta_{\mathbf{0}}\neq\nu\in\mathcal{M}^e(\widetilde{X}_\eta,\sigma)$, $\nu(X\cup \sigma X)=1$ and $X\cap\sigma X=\emptyset$. Moreover, $\sigma^2$ is the induced map corresponding to $\sigma$ and $X$. Let $\nu_X(A)=2\nu(A)$ for any measurable set $A\subseteq X$. Then $h(\sigma,\nu)=\frac{1}{2}h(\sigma^2,\nu_X)$ and $\int \varphi\, d\nu=\frac{1}{2}\int \varphi^{(2)}\, d\nu_X$. Moreover, we have the following measure-theoretic isomorphism:
\[
I\colon (X,\nu_X,\sigma^2)\to (\widetilde{X}_{\eta^{(2)}},I_\ast\nu_X,\sigma),
\]
where $\eta^{(2)}:=\mathbf{1}_{\mathcal{F}_{\mathscr{B}\setminus \{2\}}}$ and $I$ is given by $(I(x))_k=x_{2k+1}$ for $k\in\Z$ (clearly, $I$ is measurable and injective, its inverse is given by $(I^{-1}(x))_k=x_{(k-1)/2}$ for $k$ odd and $(I^{-1}(x))_k=0$ for $k$ even). Moreover,
\[
\{I_\ast \nu_X :\nu\in\mathcal{M}^e(X)\}=\mathcal{M}^e(\widetilde{X}_{\eta^{(2)}})\setminus \{\delta_{\mathbf{0}}\}.
\]\
It follows immediately that
\[
h(\nu_X,\sigma^2)=h(I_\ast \nu_X,\sigma)\text{ and }\int \varphi^{(2)}\, d\nu_X=\int \varphi^{(2)}\circ I^{-1}\, d I_\ast \nu_X.
\]
Thus,
\begin{equation}\label{nowe}
\mathcal{P}_{\widetilde{X}_\eta,\varphi}=\max\left\{\frac{1}{2}\varphi^{(2)}(\mathbf{0}),\frac{1}{2}\sup_{\nu\in\mathcal{M}^e(\widetilde{X}_{\eta^{(2)}})\setminus\{\delta_{\mathbf{0}}\}} \right\}=\frac{1}{2}\mathcal{P}_{\widetilde{X}_{\eta^{(2)}},\varphi^{(2)}\circ I^{-1}}.
\end{equation}

Now, if $\varphi(x)=\varphi(x_0,x_1)$ then $\varphi^{(2)}\circ I^{-1}(x)=\varphi^{(2)}(0,x_0)+\varphi(x_0,0)$. Since $\nu_{\eta^{(2)}}(1)=2\nu_\eta(1)$, we immediately recover again formula~\eqref{chinczycy}. Moreover, if $\varphi(x)=\varphi(x_0,x_1,x_2,x_3)$ then $\varphi^{(2)}\circ I^{-1}(x)=\varphi(0,x_0,0,x_1)+\varphi(x_0,0,x_1,0)$ and one can easily combine formula~\eqref{nowe} with Corrolary~\ref{W11} to compute~$\mathcal{P}_{\widetilde{X}_\eta,\varphi}$; the matrix corresponding to $\varphi^{(2)}\circ I^{-1}$ equals
\[
\left(\begin{matrix}2^{2\varphi(0,0,0,0)}& 2^{\varphi(1,0,0,0)+\varphi(0,1,0,0)}\\ 2^{\varphi(0,0,1,0)+\varphi(0,0,0,1)} & 2^{\varphi(1,0,1,0)+\varphi(0,1,0,1)}\end{matrix} \right).
\]

\subsubsection{Asymptotics}
A natural question arises, what happens with the second and third term in formulas from Corollary~\ref{W1} and Corollary~\ref{W11} when $X_\eta$ or $\widetilde{X}_\eta$ more and more ``resembles'' $\{0,1\}^\Z$. Let us concentrate on $\widetilde{X}_\eta$ as this subshift is easier to deal with (and, as will turn out, we will be able to cover the Erd\"os case only when the two subshifts coincide anyway). 

We would like to measure ``how close'' is $\widetilde{X}_\eta$ to $\{0,1\}^\Z$ in terms of the topological pressure. We claim that the correct way to ``measure'' this is to look at the value of $\nu_\eta(1)$. If $\nu_{\eta_K}(1)\to 1$ when $K\to \infty$ this implies that $\min \mathscr{B}_K \to \infty$, so, in particular 
\begin{equation}\label{jezyk}
\mathcal{L}_n(\widetilde{X}_{\eta_K})=\{0,1\}^n
\end{equation}
for any fixed $n$, whenever $K$ is large enough. Notice that condition~\eqref{jezyk} itself is not sufficient to obtain any meaningful results. Consider $\mathscr{B}^{(K)}=\{p\in\mathbb{P} : p\geq p_K\}$. Then~\eqref{jezyk} holds as $\mathscr{B}^{(K)} \to \infty$. However, the set of invariant measures living on $\widetilde{X}_{\eta_K}$ for $\eta^{(K)}:=\mathbf{1}_{\mathscr{B}^{(K)}}$ is the singleton $\{\delta_{\boldsymbol{0}}\}$ and the variational principle tells us that $\widetilde{X}_{\eta_K}$ differs substantially from $\{0,1\}^\Z$! 

Let us now see how the notion of the topological pressure can be used to quantify the resemblance of $\widetilde{X}_\eta=X_\eta$ to $\{0,1\}^\Z$ in the Erd\"os case.

\begin{theorem}\label{tempo}
For any $\varepsilon\in (0,2)$, within the family of Erd\"os sets $\mathscr{B}\subseteq \mathbb{N}$,
the third ingredient in formula for $\mathcal{P}_{{X}_\eta,\varphi}$ from Corollary~\ref{W1} is of order $\rm{o}((1-d)^\varepsilon)$  as $d\to 1$:
\[
\sum_{2\leq \ell\leq\min \mathscr{B}+1}\nu_\eta(0\underbrace{1\dots 1}_{\ell-2} 0)\log\left(1+\frac{C_{0,0}^-}{C_{0,0}^+}\left(\frac{\lambda_-}{\lambda_+}\right)^\ell \right) \ll (1-d)^\varepsilon.
\]
\end{theorem}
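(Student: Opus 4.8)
Denote by $\Sigma$ the left-hand side of the claimed inequality. The plan is to estimate the two factors in each summand separately and then to sum a convergent geometric series. Throughout write $s:=\sum_{b\in\mathscr{B}}1/b$ and $q:=|\lambda_-/\lambda_+|<1$. First I would record, for an Erd\"os set, the elementary comparison
\[
1-d\le s\le 2(1-d),
\]
the left inequality coming from $d=\prod_{b}(1-1/b)\ge 1-\sum_b 1/b$ and the right one from $1-d\ge 1-e^{-s}\ge s/2$ for $s\le 1$; thus $s\asymp 1-d$ and $s\to 0$ as $d\to 1$. I would also note that in the Erd\"os case $X_\eta$ is proximal with unique minimal subset $\{\mathbf{0}\}$, so $\eta^*=\mathbf{0}$, $\nu_{\eta^*}=\delta_{\mathbf{0}}$ and $\rho=\nu_{\eta^*}\triangle\nu_\eta=\delta_{\mathbf{0}}\otimes\nu_\eta$. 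Hence $m_{a,b}^\ell=0$ unless $a=b=0$, and $m_{0,0}^\ell=\nu_\eta(0\underbrace{1\cdots1}_{\ell-2}0)$, so the sum in the theorem is exactly the third ingredient of Corollary~\ref{W1}.

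For the logarithmic weight, observe that $\lambda_\pm$ and $C_{0,0}^\pm$ depend only on $\varphi$, that $C_{0,0}^+>0$, and that by Lemma~\ref{wzoreczek1}
\[
1+\frac{C_{0,0}^-}{C_{0,0}^+}\Bigl(\frac{\lambda_-}{\lambda_+}\Bigr)^{\ell}=\frac{Z_{0,0}^{\ell+1}}{C_{0,0}^+\lambda_+^{\ell}}>0,
\]
so the logarithm is well defined. Since the displayed perturbation has modulus $\le \mathrm{const}\cdot q^\ell$, using $|\log(1+t)|\le 2|t|$ once this is below $1/2$ and absorbing the finitely many small $\ell$ into the constant, I obtain a uniform bound $\bigl|\log(1+\tfrac{C_{0,0}^-}{C_{0,0}^+}(\lambda_-/\lambda_+)^\ell)\bigr|\le C'q^\ell$ for all $\ell\ge 2$, with $C'$ depending only on $\varphi$.

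The heart of the argument is the bound on $\nu_\eta(0\underbrace{1\cdots1}_{\ell-2}0)$. Dropping the interior constraints gives $\nu_\eta(0\underbrace{1\cdots1}_{\ell-2}0)\le\nu_\eta(x_0=0,\,x_{\ell-1}=0)$. Using the description of $\nu_\eta$ as the push-forward of Haar measure on $\prod_b\Z/b\Z$ (a full product of independent uniform coordinates in the Erd\"os case), one has $\{x_0=0\}=\bigcup_b\{h_b\equiv 0\}$ and $\{x_{\ell-1}=0\}=\bigcup_{b'}\{h_{b'}\equiv-(\ell-1)\}$, so a union bound together with the Chinese Remainder Theorem yields
\[
\nu_\eta(x_0=0,\,x_{\ell-1}=0)\le\sum_{b\ne b'}\frac1{bb'}+\sum_{b\mid \ell-1}\frac1b\le s^2+\sum_{b\mid \ell-1}\frac1b.
\]
The crucial point is that the diagonal sum $\sum_{b\mid \ell-1}1/b$ is \emph{empty} whenever $\ell-1<\min\mathscr{B}$, i.e.\ for all $2\le\ell\le\min\mathscr{B}$, while for the single top value $\ell=\min\mathscr{B}+1$ it equals exactly $1/\min\mathscr{B}$ (by primitivity only $b=\min\mathscr{B}$ can divide $\ell-1=\min\mathscr{B}$). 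Thus $\nu_\eta(0\underbrace{1\cdots1}_{\ell-2}0)\le s^2$ for $\ell\le\min\mathscr{B}$, and $\le s^2+1/\min\mathscr{B}$ for $\ell=\min\mathscr{B}+1$.

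Combining the two bounds,
\[
\Sigma\le C'\sum_{\ell=2}^{\min\mathscr{B}}s^2q^\ell+C'\Bigl(s^2+\tfrac1{\min\mathscr{B}}\Bigr)q^{\min\mathscr{B}+1}\le C'\frac{q^2}{1-q}\,s^2+C'\frac{q^{\min\mathscr{B}+1}}{\min\mathscr{B}}.
\]
The first term is $O(s^2)=O((1-d)^2)$. For the second I would use $\min\mathscr{B}\ge 1/s$ (as $1/\min\mathscr{B}$ is one of the summands of $s$), giving $q^{\min\mathscr{B}+1}\le q^{1/s}=e^{-(\log(1/q))/s}$, which tends to $0$ faster than any power of $s$; hence that term is $o(s^2)$. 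Therefore $\Sigma=O((1-d)^2)$, and since $2-\varepsilon>0$ this is $o((1-d)^\varepsilon)$ for every $\varepsilon\in(0,2)$, which proves the claim (indeed a uniform $\ll(1-d)^2$ bound). The main obstacle is precisely the measure estimate: one must confine the diagonal ($b=b'$) contribution $1/\min\mathscr{B}$ to the single top index $\ell=\min\mathscr{B}+1$, since attaching it to every $\ell$ would only yield the useless bound $O(1-d)$; it is the geometric weight $q^{\min\mathscr{B}+1}$ together with $\min\mathscr{B}\gtrsim 1/s$ that renders this boundary term negligible.
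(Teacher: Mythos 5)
Your proof is correct, and it takes a genuinely different --- and in fact sharper --- route than the paper's. The paper fixes $\varepsilon$ and splits the sum at a cutoff $\ell_0\simeq(1-d)^{-2+\varepsilon}$: on the head $2\le\ell\le\ell_0$ it uses the uniform bound $\nu_\eta(0\underbrace{1\dots1}_{\ell-2}0)\le S^2$ from~\eqref{eq:szacmiar} (with $S=\sum_{b\in\mathscr{B}}1/b$, your $s$) together with a constant bound on the logarithmic factor, giving $\ell_0\cdot S^2\cdot\mathrm{const}\ll(1-d)^{\varepsilon}$ via Lemma~\ref{nowylemat} (whose content $S\asymp 1-d$ you rederive more simply from $1-d\ge 1-e^{-S}\ge S/2$ for small $S$, versus the paper's AM--GM argument); on the tail $\ell>\ell_0$ it uses only total measure $\le 1$ and that $q^{\ell_0}$, $q:=|\lambda_-/\lambda_+|<1$, decays superpolynomially in $1-d$. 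You avoid any cutoff: you exploit the geometric decay $q^\ell$ term by term and sum a geometric series, which requires a finer measure estimate but yields the uniform bound $O((1-d)^2)$, strictly stronger than the stated $\ll(1-d)^{\varepsilon}$ for every $\varepsilon\in(0,2)$. Your diagonal analysis is moreover a genuine refinement: the union bound behind~\eqref{eq:szacmiar} actually produces, for $b=b'$, a contribution $1/b$ whenever $b\mid\ell-1$, and within the range $2\le\ell\le\min\mathscr{B}+1$ this occurs exactly at $\ell=\min\mathscr{B}+1$, where $\nu_\eta(0\underbrace{1\dots1}_{\ell-2}0)$ can genuinely be of order $1-d$ rather than $(1-d)^2$ (take $\min\mathscr{B}=b_0$ and the remaining elements of $\mathscr{B}$ enormous); thus~\eqref{eq:szacmiar} fails at that single index, and the paper's head estimate tacitly invokes it there whenever $\min\mathscr{B}+1\le\ell_0$, which can happen for $\varepsilon<1$ since $\min\mathscr{B}$ may be of order $(1-d)^{-1}$. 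You isolate precisely this boundary term and annihilate it with the weight $q^{\min\mathscr{B}+1}\le q^{1/S}$, using $\min\mathscr{B}\ge 1/S$ --- exactly the repair needed, so your argument not only proves the theorem but patches this small gap. Two cosmetic remarks: at $\ell=\min\mathscr{B}+1$ minimality alone (not primitivity) shows that only $b=\min\mathscr{B}$ can divide $\ell-1$; and your identification $\eta^*=\mathbf{0}$, $\rho=\delta_{\mathbf{0}}\otimes\nu_\eta$ in the Erd\"os case is correct (since $1\in D$), so your sum is indeed the third ingredient of Corollary~\ref{W1}.
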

Before we give the proof, let us prepare some tools. Recall that
\[
d=\nu_{\eta}(1)=\prod_{b\in\mathscr{B}}\left(1-\frac{1}{b} \right)
\]
and notice that
\begin{equation}\label{eq:szacmiar}
\nu_{\eta}(0\underbrace{1\dots 1}_{\ell-2} 0)\leq \nu_{\eta}(0\underbrace{*\dots *}_{\ell-2} 0)\leq\sum_{b,b'\in\mathscr{B}}\frac{1}{b \cdot b'}=S^2,\text{ where }S=\sum_{b\in\mathscr{B}}\frac{1}{b}.
\end{equation}
\begin{lemma}\label{nowylemat}
Let $d_0\in (0,1)$. Then there exists $C>0$ such that for any $\mathscr{B}\subseteq\mathbb{N}$ that is Erd\"os, with $d\in [d_0,1]$, we have
\[
1-d \leq S\leq C (1-d).
\]
\end{lemma}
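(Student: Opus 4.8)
The plan is to treat the two inequalities separately, extracting each directly from the product formula $d=\prod_{b\in\mathscr{B}}(1-1/b)$ recalled just above the statement (valid for Erd\"os $\mathscr{B}$ by pairwise coprimality), together with the elementary relation between a sum and the corresponding product.

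For the lower bound $1-d\le S$, I would invoke the Weierstrass product inequality: for any finite family $x_b\in[0,1]$ one has $\prod_b(1-x_b)\ge 1-\sum_b x_b$, which follows by a one-line induction. Applying this to the partial products over $\{b\in\mathscr{B}:b\le N\}$ with $x_b=1/b$ and letting $N\to\infty$ gives $d\ge 1-S$, i.e.\ $1-d\le S$. This direction uses neither the Erd\"os hypothesis beyond the product formula nor the constraint $d\ge d_0$.

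For the upper bound $S\le C(1-d)$ I would proceed in two steps. First, summing the bound $\log(1-1/b)\le -1/b$ over $b\in\mathscr{B}$ yields $\log d\le -S$, hence $d\le e^{-S}$; combined with the standing assumption $d\ge d_0$ this forces $S\le S_0:=-\log d_0$, a threshold depending only on $d_0$. Second, on the interval $(0,S_0]$ the function $g(S)=(1-e^{-S})/S$ is decreasing: its derivative has numerator $e^{-S}(S+1)-1$, which vanishes at $0$ and is strictly decreasing thereafter, so $g'<0$ on $(0,\infty)$. Hence $g(S)\ge g(S_0)$ for every admissible $S$, and therefore $1-d\ge 1-e^{-S}=S\,g(S)\ge S\,g(S_0)$. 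Setting $C:=1/g(S_0)=S_0/(1-e^{-S_0})$ gives $S\le C(1-d)$, as desired.

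The one point requiring care --- and the main obstacle --- is the \emph{uniformity} of $C$: it must not depend on the particular Erd\"os set $\mathscr{B}$. This is exactly what the restriction $d\in[d_0,1]$ buys us, since it converts into the uniform bound $S\le S_0$, after which the monotonicity of $g$ on the \emph{fixed} interval $(0,S_0]$ produces a single constant $C=C(d_0)$ valid for all $\mathscr{B}$ at once. (The degenerate cases $S=0$, resp.\ $d=1$, do not occur for an infinite Erd\"os set, but in any event both inequalities hold trivially there.)
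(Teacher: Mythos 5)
Your proof is correct, and while it ends at the same place as the paper's, it gets to the key intermediate inequality by a genuinely more elementary route. The lower bound $1-d\le S$ is identical in both arguments (the Weierstrass product inequality applied to finite truncations, then a limit). For the upper bound, both proofs pivot on $S\le-\log d$, but the paper derives it via the AM--GM inequality applied to the partial products $\prod_{b\in\mathscr{B}_K}\left(1-\frac1b\right)\ge d$, obtaining $S\le n_K-n_Kd^{1/n_K}$ with $n_K=\#\mathscr{B}_K$, and then computing the limit $n-n\sqrt[n]{d}\to-\log d$; you instead just sum $\log(1-1/b)\le -1/b$ over $\mathscr{B}$, which is a one-line replacement for that entire computation (and is safe termwise, since all summands are negative and $d\ge d_0>0$ guarantees convergence). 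Your closing step --- monotonicity of $g(S)=(1-e^{-S})/S$ on $(0,S_0]$ with $S_0=-\log d_0$ --- is an equivalent repackaging of the paper's final observation that $f(x)=-\frac{\ln x}{1-x}$ is continuous on $[d_0,1]$ with limit $1$ at $x=1$, hence bounded there; indeed, substituting $e^{-S_0}=d_0$ into your constant $C=S_0/(1-e^{-S_0})$ gives $C=-\log d_0/(1-d_0)$, exactly the optimal constant the paper's compactness appeal produces. What your version buys is an explicit constant and a verified derivative computation in place of an AM--GM-plus-limit argument and a continuity appeal; you also correctly isolate the uniformity of $C$ in $\mathscr{B}$ as the crux, which is handled in both proofs by converting $d\ge d_0$ into a uniform bound on $S$ (explicitly via $S\le S_0$ in yours, implicitly via $S\le-\log d\le-\log d_0$ in the paper's).
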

\begin{proof}
We have
\[
d=\prod_{b\in\mathscr{B}}\left(1-\frac{1}{b}\right)=\lim_{K\to\infty}\prod_{b\in\mathscr{B}_K}\left(1-\frac{1}{b} \right)\geq \lim_{K\to\infty}\left(1-\sum_{b\in\mathscr{B}_K}\frac{1}{b}\right)=1-\sum_{b\in\mathscr{B}}\frac{1}{b}=1-S.
\]

To explain the second inequality from the assertion, we will use the inequality of the arithmetic and geometric mean. For all $K\in\N$, we have
\[
\prod_{b\in \mathscr{B}_K}\left(1-\frac{1}{b} \right)\geq d, 
\]
whence, for $n_K=\# \mathscr{B}_K$,
\[
d^{1/n_K}\leq \sqrt[n_K]{\prod_{b\in\mathscr{B}_K}\left(1-\frac{1}{b} \right)}\leq \frac{1}{n_K}\sum_{b\in\mathscr{B}_K} \left(1-\frac{1}{b} \right)=1-\frac{1}{n_K}S.
\]
This yields
\[
S\leq n_K-n_Kd^{1/n_K}.
\]
Notice that 
\[
\lim_{n\to\infty}(n-n\sqrt[n]d)=\lim_{n\to\infty}\frac{1-d^{\nicefrac1n}}{\nicefrac1n}=\lim_{h\to 0}\frac{1-d^h}{h}=\lim_{h\to 0}(-\log d \cdot d^h)=-\log d.
\]
We are looking for $C>0$ such that for $d\in [d_0,1]$, $-\log d\leq C(1-d)$. However, for $f(x)=-\frac{\ln x}{1-x}$, limit from the left at $1$ equals $1$), $f$ is continuous, so such $C>0$ indeed exists.  
\end{proof}
\begin{proof}[Proof of Theorem~\ref{tempo}]
Fix $\varepsilon \in (0,2)$ and let $\ell_0\simeq (1-d)^{-2+\varepsilon}$ (e.g.\ $\ell_0=\lfloor (1-d)^{-2+\varepsilon} \rfloor$). We split the sum under consideration into two parts, in the first one, $\ell$ runs from $2$ to $\ell_0$, in the second one -- from $\ell_0+1$ to $\min \mathscr{B}+1$. We will estimate each of them separately. To shorten the notation, we put $c:=C_{0,0}^-/C_{0,0}^+$.

We have
\begin{align}
\begin{split}\label{eq:numer}
&\left|\sum_{\ell=2}^{\ell_0}\nu_\eta(0 \underbrace{1\dots 1}_{\ell-2} 0)\log\left(1+c\left(\frac{\lambda^-}{\lambda^+} \right)^\ell \right) \right|\\
&\ \ \ \ \ \leq \sum_{\ell=2}^{\ell_0}\nu_\eta(0 \underbrace{1\dots 1}_{\ell-2} 0)\left|\log\left(1+c\left(\frac{\lambda^-}{\lambda^+} \right)^\ell\right) \right|
\leq (1-d)^{-2+\varepsilon}\cdot S^2 \cdot \text{const}.
\end{split}
\end{align} 
To justify this, recall~\eqref{eq:szacmiar} and let us explain how to find the constant appearing in the last line in the above formula. Consider
\[
\left|\log\left(1+c\left(\frac{\lambda^-}{\lambda^+} \right)^\ell\right) \right|\cdot \left|\log\left(1+c\left(\frac{\lambda_-}{\lambda_+} \right)^{\ell+1}\right) \right|^{-1}.
\]
If $\ell$ is large (in terms of $c$ and $\nicefrac{\lambda_-}{\lambda_+}$) then the above quantity is of the same order as
\[
\left| c\left(\frac{\lambda_-}{\lambda_+} \right)^\ell\right|\cdot \left| c\left(\frac{\lambda_-}{\lambda_+} \right)^{\ell+1}\right|^{-1}=\left|\frac{\lambda_+}{\lambda_-}\right|>1.
\]
Thus, 
\[
\max_{2\leq \ell\leq \ell_0}\left|\log\left(1+c\left(\frac{\lambda_-}{\lambda_+} \right)^\ell\right) \right|=\left|\log\left(1+c\left(\frac{\lambda_-}{\lambda_+} \right)^k\right) \right|
\]
for some $k$ bounded from above by a constant that depends on $c$ and $\nicefrac{\lambda_-}{\lambda_+}$ only and~\eqref{eq:numer} indeed follows. It remains to use Lemma~\ref{nowylemat} to conclude that
\[
\left|\sum_{\ell=2}^{\ell_0}\nu_\eta(B_\ell 0)\log\left(1+c\left(\frac{\lambda_-}{\lambda_+} \right)^\ell \right) \right| \leq \text{const}\cdot (1-d)^\varepsilon.
\]

Now, notice that
\[
\left|\sum_{\ell=\ell_0+1}^{\min\mathscr{B}+1}\nu_\eta(B_\ell 0)\log\left(1+c\left(\frac{\lambda_-}{\lambda_+} \right)^\ell \right) \right|\leq \sum_{\ell=\ell_0+1}^{\min\mathscr{B}+1}\nu_\eta(B_\ell 0) \max_{\ell_0+1\leq \ell\leq \min\mathscr{B}+1}\left|\log\left(1+c\left(\frac{\lambda_-}{\lambda_+} \right)^\ell \right)\right|,
\]
where
\[
\max_{\ell_0+1\leq \ell\leq \min\mathscr{B}+1}\left|\log\left(1+c\left(\frac{\lambda_-}{\lambda_+} \right)^\ell \right)\right|\leq\left|\log\left(1+c\left(\frac{\lambda_-}{\lambda_+} \right)^{\ell_0} \right)\right|,
\]
provided that $\ell_0$ is large enough (i.e.,\ $1-d$ is small enough). Using again that $\log(1+x)\simeq x$ for small values of $x$, it follows that for $\ell_0$ large enough, 
\[
\left|\log\left(1+c\left(\frac{\lambda_-}{\lambda_+} \right)^{\ell_0} \right)\right|\simeq \left( \frac{\lambda_-}{\lambda_+}\right)^{\ell_0}\ll P(1-d),
\]
for any polynomial $P$. To see that the last relation in the formula above indeed holds, set $\delta:=\nicefrac{\lambda_-}{\lambda_+}$. It suffices to show that
\[
\delta^{\ell_0}\ll (1-d)^{(2-\varepsilon)k} \text{ for any }k\geq 1,
\]
which is equivalent to
\[
\delta^{\nicefrac{1}{(1-d)^{2-\varepsilon}}}\ll (1-d)^{(2-\varepsilon)k} \text{ as }d\to 1.
\]
Substituting $x:=\nicefrac{1}{(1-d)^{2-\varepsilon}}$, we need to show 
\[
\delta^x \ll \frac{1}{x^k} \text{ for }k\geq 1\text{ as }x\to\infty.
\]
This is, however, true by de l'Hospital's rule:
\[
\frac{\left(\nicefrac{1}{\delta}\right)^x}{x^k}\simeq \left(\log\nicefrac{1}{\delta} \right)^k\cdot\left(\nicefrac{1}{\delta} \right)^x\to \infty\text{ as }x\to\infty.
\]
\end{proof}

\appendix
\section{Probabilistic approach}

\subsection{Basic notions}
In this section we translate our setup to the probabilistic language. 

\paragraph{Random variables, processes}
By a \emph{random variable} $X$ we will mean any measurable function taking values in a measurable \emph{state space} $\mathcal{A}$; we will write then $X\in\mathcal{A}$. Usually, $\mathcal{A}\subseteq \mathbb{R}$, but we will also need random variables taking values in sequence spaces. For simplicity, we will assume that all random variables are defined on a common probability space $(\Omega, \mathcal{F}, \mathbb{P})$. A discrete stochastic \emph{process} $\mathbf{X} = (X_i)_{i \in \Z}$ is a family of a random variables $X_i$ taking values in a common state space $\mathcal{A}$.  Usually, $|\mathcal{A}|<\infty$, but we will also need countably-valued processes as a tool. Recall that a process $\mathbf{X}$ is called \emph{stationary} if the distribution of any $k$-tuple $(X_{i},X_{i+1},\dots,X_{i+k-1})$ does not depend on $i\in\mathbf{Z}$. Recall also that given a subshift $Z$, each choice of $\kappa\in \mathcal{M}(Z)$ determines a stationary process $\mathbf{X}$ (with distribution $\kappa$; we write $\mathbf{X}\sim \kappa$) and vice versa. We will often identify processes with their distributions and write, e.g., $\mathbf{X}\in\mathcal{M}(Z)$.

\paragraph{measure-theoretic subordinate subshifts}
Fix a stationary process $\mb{X}\in \mathcal{M}(\{0,1\}^\Z,\sigma)$ and let
        \begin{equation}\label{defiN}
           \mathcal{N}_{\mb{X}}
            = \left\{\mb{X}'\cdot \mb{Y}: (\mb{X}',\mb{Y})\in \mathcal{M}(\{0,1\}^\Z\times \{0,1\}^\Z,\sigma\times \sigma) \text{ and }\mb{X}\sim \mb{X}'\right\}.
        \end{equation}
        Since in this paper the processes are, in fact, treated as measures (we only care about the distribution of $\mb{X}$ and not its particular realization), it makes sense to write
        \begin{equation}\label{defiN}
 \mc{N}_{\mb{X}} = \left\{\mb{X}\cdot \mb{Y} : (\mb{X},\mb{Y}) \in \mc{M}({\{0,1\}^\Z\times \{0,1\}^\Z},\sigma\times \sigma) \right\}.
\end{equation}
If $Z\subseteq \{0,1\}^\Z$ is a subshift and $\mb{X}\in\mathcal{M}(Z)$ then one can view $\mathcal{N}_{\mb{X}}$ as a subset of $\mathcal{M}(\widetilde{Z})$. Moreover, if $\mathcal{M}(Z)=\mathcal{N}_{\mb{X}}$ for some $\mb{X}$, it means precisely that $Z$ is a measure-theoretic subordinate subshift. 

\paragraph{Shannon's entropy}
        Here, unless stated directly otherwise, all random variables for us will be discrete (countably valued).
        Recall that if $X \in \mc{X}$, $Y \in \mc{Y}$ and a set $A$ is such that $\pof[A] > 0$ then we define the \emph{Shannon entropy} and the \emph{Shannon conditional entropy} via
        \begin{align*}
            & \h[X][A] = -\sum_{x\in \mc{X}} \pof[X = x][A] \log \pof[X = x][A], \\
            & \ch[X][Y][A] = \sum_{y \in \mc{Y}}\pof[Y = y][A] \h[X][Y = y, A],
        \end{align*}
        respectively, where $\P_A(\cdot)=\pof[\cdot \ | \ A]$. If $\pof[A] = 1$ then we simply write $\h[X]$ and $\ch[X][Y]$.
                
        We will need the extension of the definition of $\ch[X][Y]$ to $\ch[X][\mc{G}]$, where $\mc{G}$ is a sub-$\sigma$-algebra of $\mc{F}$. There are several (equivalent) ways to do this, we choose to work with regular conditional distributions. Fix a random variable $X\in \mc{X}$ and a sub-$\sigma$-algebra $\G\subseteq \F$. We say that $p_{X\;|\;\G}(\cdot\;|\;\cdot)\colon \mathcal{B}(\mc{X})\times \Omega \to [0,1]$ is a \emph{regular conditional distribution} of $X$ given $\G$, if {for every $\omega \in \Omega$, $p_{X\;|\;\G}(\omega, \cdot)$ is a probability measure, for every $A \in  \F$, $p_{X\;|\;\G}(A, \cdot)$ is measurable and a.s.}
\[
p_{X\;|\; \G}(A\;|\;\omega)=\pof[X \in A\;|\; \G](w).
\]        

Since in this paper we consider processes corresponding to invariant measures for subshifts, the underlying space $(\Omega,\F,\mathbb{P})$ is standard and thus a regular conditional distribution of $X$ given $\G$ always exists (see, e.g., Lemma 5.8.1 in \cite{GRAY1}). We can also speak of a \emph{regular conditional distribution} of a random variable $X$ given another random variable $Y$, by considering $p_{X \;|\;Y}:=p_{X \;|\; \sigma(Y)}$, where $\sigma(Y)$ stands for the smallest sub-$\sigma$-algebra of $\mc{F}$ that makes $Y$ measurable. Note that here $Y$ can take values in an arbitrary measurable space, so, in particular, we can apply it to processes.

Let now $X\in\mc{X}$ be discrete and fix a sub-$\sigma$-algebra $\mathcal{G}$ of $\mc{F}$. We set
        \begin{equation*}
           \ch[X][\mc{G}] :=  \E \h[p_{X\;|\;\mathcal{G}}(\cdot\;|\; \omega)] = \int \h[p_{X\;|\;\mathcal{G}}(\cdot\;|\; \omega)] d\Pro(\omega).
        \end{equation*}
Moreover, for any random variable $Y$, we put
\begin{equation}\label{WZZZ}
\ch[X][Y]:=\ch[X][\sigma(Y)]=\E\h[p_{X\;|\;Y}(\cdot\;|\;y)]_{|_{y = Y}}.
\end{equation}
We will also use the following, slightly informal, notation: 
\[
\h[X][Y = y]:=\h[p_{X\;|\;Y}(\cdot\;|\; y)].
\]
Then formula~\eqref{WZZZ} becomes
\begin{equation}\label{niewarunkowa}
\ch[X][Y]=\E\h[X][Y = y]_{|{y = Y}}.
\end{equation}
(Note that this notation is consistent with the case when $Y$ takes finitely many values.)
Finally, notice that directly from the definition, if $\sigma(Y)=\sigma(Z)$ then $\ch[X][Y]=\ch[X][Z]$. Here $X$ is again discrete and $Y,Z$ take values in an arbitrary measurable space and the equality holds, whenever $Z$ is the image of $Y$ via a bimeasurable map between the respective measurable spaces.

We will need also a conditional version of~\eqref{niewarunkowa}. Before we formulate it, let us prove the following lemma. 
\begin{lemma}
Suppose that $(X,Y,Z)$ is a triple of random variables, taking values in arbitrary measurable spaces. Suppose that $p_{(X,Y) \;|\; Z}$ exists and let $(X^{(y)},Z^{(y)})\sim p_{(X,Z)\; | \; Y}(\cdot\;|\; y)$. Then
\begin{equation}\label{pedeefik}
p_{X^{(y)}\; | \; Z^{(y)}}(\cdot\;|\;z)=p_{X|(Y,Z)}(\cdot\;|\; (y,z)).\footnote{By this, we are saying that the left-hand side exists and we have the equality.}
\end{equation}
\end{lemma}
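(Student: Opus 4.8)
The plan is to show that, for $\mathbb{P}_Y$-almost every $y$, the function $z\mapsto p_{X|(Y,Z)}(\cdot\mid(y,z))$ is a version of the regular conditional distribution of $X^{(y)}$ given $Z^{(y)}$; the asserted equality then follows from the essential uniqueness of regular conditional distributions. All the regular conditional distributions that enter the argument ($p_{(X,Z)|Y}$, $p_{Z|Y}$, the joint law of $(Y,Z)$, and $p_{X|(Y,Z)}$) exist because the underlying space is standard, as recalled above.

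First I would fix measurable sets $A$ and $C$ in the state spaces of $X$ and $Z$ respectively, and record the disintegration identity that the genuine object $p_{X^{(y)}|Z^{(y)}}$ must satisfy. Since $(X^{(y)},Z^{(y)})\sim p_{(X,Z)|Y}(\cdot\mid y)$, the law of $Z^{(y)}$ is $p_{Z|Y}(\cdot\mid y)$, so by the very definition of a conditional distribution we must have
\begin{equation}
p_{(X,Z)|Y}(A\times C\mid y)=\int_C p_{X^{(y)}|Z^{(y)}}(A\mid z)\, p_{Z|Y}(dz\mid y).\tag{$\ast$}
\end{equation}
Thus it suffices to prove that the candidate $q(A\mid y,z):=p_{X|(Y,Z)}(A\mid(y,z))$ satisfies $(\ast)$ in place of $p_{X^{(y)}|Z^{(y)}}$, for $\mathbb{P}_Y$-almost every $y$.

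To establish this I would integrate the proposed identity over an arbitrary measurable set $D$ in the state space of $Y$ against $\mathbb{P}_Y$ and compare the two sides. On the left, the tower rule for $p_{(X,Z)|Y}$ gives $\mathbb{P}(X\in A,\,Z\in C,\,Y\in D)$. On the right, the disintegration $\mathbb{P}_{(Y,Z)}(dy\,dz)=p_{Z|Y}(dz\mid y)\,\mathbb{P}_Y(dy)$ converts the iterated integral into $\int_{D\times C}q(A\mid y,z)\,\mathbb{P}_{(Y,Z)}(dy\,dz)$, and the defining property of $p_{X|(Y,Z)}$, applied with the conditioning event $D\times C$, evaluates this to $\mathbb{P}(X\in A,\,Y\in D,\,Z\in C)$. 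Since the two sides agree for every choice of $D$, the identity $(\ast)$ holds with $q$ in place of $p_{X^{(y)}|Z^{(y)}}$ for $\mathbb{P}_Y$-almost every $y$ (for the fixed pair $(A,C)$).

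The remaining, and genuinely technical, step is to upgrade ``for each fixed $(A,C)$, identity $(\ast)$ holds for a.e.\ $y$'' to ``for a.e.\ $y$, identity $(\ast)$ holds simultaneously for all $(A,C)$'', which is exactly what is needed to conclude that $q(\cdot\mid y,\cdot)$ is an honest regular conditional distribution. Here I would use that the state spaces are standard Borel, hence their $\sigma$-algebras are countably generated: one fixes a countable algebra generating the relevant product $\sigma$-algebra, discards the countable union of exceptional null sets attached to its members, and then extends from this algebra to all measurable rectangles and finally to all measurable sets by a monotone-class (Dynkin) argument, exploiting the countable additivity of both sides of $(\ast)$ in $A$ and in $C$. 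This uniform bookkeeping of the exceptional null sets is the main obstacle; once it is resolved, the essential uniqueness of conditional distributions yields $p_{X^{(y)}|Z^{(y)}}(\cdot\mid z)=p_{X|(Y,Z)}(\cdot\mid(y,z))$, as claimed.
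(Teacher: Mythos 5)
Your proposal is correct, but it runs in the opposite direction to the paper's proof. The paper takes the kernel $p_{X^{(y)}\,|\,Z^{(y)}}$ as given and verifies, through a chain of tower-property manipulations tested against rectangles $\{Y\in B,\,Z\in C\}$, that the composed map $(y,z)\mapsto p_{X^{(y)}\,|\,Z^{(y)}}(A\,|\,z)$ satisfies the defining property of $p_{X\,|\,(Y,Z)}$; you instead take the manifestly existing, jointly measurable kernel $p_{X\,|\,(Y,Z)}$ and verify that, for $\P_Y$-a.e.\ $y$, the map $z\mapsto p_{X\,|\,(Y,Z)}(\cdot\,|\,(y,z))$ satisfies the defining disintegration identity of a regular conditional distribution of $X^{(y)}$ given $Z^{(y)}$ under $p_{(X,Z)\,|\,Y}(\cdot\,|\,y)$. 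The computational core is the same in both cases -- tower property plus the disintegration $\P_{(Y,Z)}(dy\,dz)=p_{Z|Y}(dz\,|\,y)\,\P_Y(dy)$, tested on rectangles and upgraded by a $\pi$-system argument -- so the two proofs are mirror images of one another. What your direction buys is the existence claim in the lemma's footnote for free: by exhibiting $p_{X\,|\,(Y,Z)}(\cdot\,|\,(y,\cdot))$ as a version, you construct the left-hand side rather than presuppose it, whereas the paper's opening expectation already requires a jointly measurable choice of versions $(y,z)\mapsto p_{X^{(y)}\,|\,Z^{(y)}}(A\,|\,z)$; you also make explicit the null-set bookkeeping (countable generating $\pi$-systems plus a Dynkin argument, using that both sides of your $(\ast)$ extend to measures on the product $\sigma$-algebra) that the paper's proof leaves implicit. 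Two small points to tighten: the law of $Z^{(y)}$ is by definition the second marginal of $p_{(X,Z)\,|\,Y}(\cdot\,|\,y)$, which coincides with $p_{Z|Y}(\cdot\,|\,y)$ only for $\P_Y$-a.e.\ $y$ (or you may simply take this marginal as your version of $p_{Z|Y}$), and the final equality should accordingly be read, as in the paper, modulo $\P_Y$-null sets in $y$ and $p_{Z|Y}(\cdot\,|\,y)$-null sets in $z$.
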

\begin{proof}
For any measurable sets $A,B,C$, we have
\begin{align*}
&\E_{Y,Z}p_{X^{(Y)}\;|\; Z^{(Y)}}(A\;|\;Z)\iof[Y\in B,Z\in C]=\E\iof[Y\in B]\E\left(\iof[Z\in C]p_{X^{(Y)}\;|\; Z^{(Y)}}(A \;|\;Z)\;|\;Y \right)\\
&=\E\iof[Y\in B]\left[\E\left[\iof[Z^{(y)}\in C]p_{X^{(y)}\;|\; Z^{(y)}} (A\;|\; Z^{(y)})\right]_{|_{y=Y}}\right]\\
&=\E\iof[Y\in B]\left[\E \iof[Z^{(y)}\in C]\E\left(X^{(y)}\in A\;|\; Z^{(y)} \right) \right]_{|_{y=Y}}\\
&=\E\iof[Y\in B]\left[\mathbb{P}\left(X^{(y)}\in A,Z^{(y)}\in C \right) \right]_{|_{y=Y}}=\E\iof[Y\in B]\E (\iof[X\in A,Z\in C]\;|\; Y)\\
&=\mathbb{P}(X\in A,Z\in C\;|\; Y\in B)=\E \iof[Y\in B]\iof[Z\in C]p_{X\;|\; (Y,Z)}(A\;|\; Y,Z).
\end{align*}
\end{proof}
It follows from the above lemma that if we set
\begin{equation}
	\mb{H}_{Y=y}(X\;|\;Z):=\ch[X^{(y)}][Z^{(y)}]
\end{equation}
then
\begin{align}
	\begin{split}\label{eins}
		\ch[X][Y,Z]&=\E \h[p_{X\;|\; (Y,Z)}(\cdot \;|\; (y,z))]_{|_{y=Y,z=Z}}\\
		&=\E\mb{H}(p_{X^{(y)}\;|\; Z^{(y)}}(\cdot\;|\; z))_{|_{y=Y,z=Z}} = \E\cmv[\mb{H}(p_{X^{(Y)}\;|\; Z^{(Y)}}(\cdot\;|\; z))_{|_{z=Z}}][Y] \\
		&= \E\cmv[\mb{H}(p_{X^{(Y)}\;|\; Z^{(Y)}}(\cdot\;|\; z))_{|_{z=Z^{(Y)},}}][Y] = \E\cmv[	\mb{H}_{Y=y}(X\;|\;Z)][Y] 
	\end{split}
	\end{align}

\paragraph{Entropy rate}
    If $\mb{X}$ is a process and $A\subseteq \Z$ then by $X_A$ we mean a {sequence} of random variables $(X_a)_{a\in A}$. We will often use this notation for integer intervals, denoted in the same way as the usual intervals in $\R$ (e.g.\ $[k,\ell)=\{i\in \Z : k\leq i<\ell\}$). Fix stationary processes $\mb{X} = \proc[X][\Z]$  and $\mb{Y} = \proc[Y][\Z]$ with at most countable alphabets such that $\h[X_0]$, $\h[Y_0] < \infty$ and recall that in such a case \emph{entropy rate} of $\mb{X}$ is given by
        \begin{align*}
           \h[\mathbf{X}] & = \lim\limits_{n\to\infty}\frac{1}{n}\h[X_{[0, n-1]}] = \inf_{n\in \N}\frac{1}{n}\h[X_{[0, n-1]}] \\
                          & = \lim\limits_{n\to\infty}\ch[X_n][X_{[0, n-1]}] = \ch[X_0][X_{(-\infty, -1]}]
        \end{align*}
        and, if additionally $(\mb{X}, \mb{Y}) = \left((X_i, Y_i)\right)_{i \in \Z}$ is stationary, \emph{the relative entropy rate} of $\mb{X}$ with respect to $\mb{Y}$ is given by
        \begin{align*}
           \ch[\mb{X}][\mb{Y}] & = \lim\limits_{n \to \infty} \frac{1}{n} \ch[X_{[0, n - 1]}][Y_{[0, n - 1]}] = \inf_{n \in \N} \frac{1}{n} \ch[X_{[0, n - 1]}][Y_{[0, n - 1]}] \\
                               & = \ch[X_0][X_{\oci{-\infty, -1}}, \mb{Y}].
        \end{align*}
        This notation is consistent with the one for the measure-theoretic entropy: if $\mb{X}$ is a stationary process over some finite alphabet, with distribution $\mu$ then $\ph[X]=h(\mu)$.

\paragraph{Topological pressure}\label{vps}
Let $Z\subseteq \{0,1\}^\Z$ be a subshift. The classical variational principle for the topological pressure~\eqref{VP1} in the probabilistic language can be stated as follows:
\[
           \mathcal{P}_{Z, \varphi} = \sup_{\mb{X}\in \mc{M}(Z)}\left[\h[\mb{X}] + \E \varphi(\mb{X})\right] .
\]
        Now, for any subset of invariant measures $\mc{N}\subseteq \mc{M}(\{0,1\}^\Z)$, we define
        \begin{equation*}\label{var_pri}
           V_{\mc{N}, \varphi} = \sup_{\mb{X} \in \mc{N}}\left[\h[\mb{X}] + \E \varphi(\mb{X})\right].
        \end{equation*}
        In particular, we will be interested in 
        \[
        V_{\mc{N}_\mb{X}, \varphi} =  \sup_{\mb{X}\cdot \mb{Y} \in \mc{N}_\mb{X}}\left[\h[\mb{X}\cdot \mb{Y}] + \E \varphi(\mb{X}\cdot \mb{Y})\right]
        \]
        since for a measure-theoretic subordinate subshift $Z$ (by the very definition) $\mc{N}_\mb{X}=\mc{M}(Z)$ and thus $V_{\mc{N}_\mb{X}, \varphi}=\mathcal{P}_{Z, \varphi}$ (in particular, for $\varphi\equiv 0$ we get $V_{\mc{N}_\mb{X},0}=h(Z)$).

\subsection{Entropy rate of a product}\label{entconv}
In this section, we will assume that
\begin{equation}\label{standing1}
\mb{X} \text{ is a stationary process with }X_i\in \{0,1\} \text{ and }\PP(X_0=1)>0
\end{equation}
(the assumption $\Pro(X_0=1)>0$ is made to avoid a degenerate situation, cf.\ \eqref{degen}). Whenever a pair of processes $(\mb{X},\mb{Y})$ is considered, we will assume that it is (jointly) stationary and $\mb{Y}$ is a real finitely-valued process.

Given $\mb{X}$, let $\mb{R}=\mb{R}(\mb{X})=\proc[R]$ be the process of consecutive \emph{arrival times} of~$\mb{X}$ to the state 1:
        \begin{equation}\label{procesR}
           R_i=\begin{cases}
              \inf\{j \ge 0 : X_j = 1\},       & i=0,     \\
              \inf\{j > R_{i - 1} : X_j = 1\}, & i\ge 1,  \\
              \sup\{j < R_{i + 1} : X_j = 1\}, & i\le -1.
           \end{cases}
        \end{equation}
        Process $\mb{R}$ is well-defined by the Poincar\'e recurrence theorem applied to $\mb{X}$ (for a.e.\ $\mb{x}$ such that $x_0=1$, both the negative and positive part of the support of $x$ is infinite).      
        
		The key ingredient in our method of computing the topological pressure is the following explicit formula for the entropy rate of a product.
        \begin{proposition}\label{theo q1}
Suppose that $(\mb{X},\mb{Y})$ is a finitely-valued stationary process, with $Y_i\in\R$, $X_i\in \{0,1\}$ and $\Pro(X_0=1)>0$. Then
           \begin{equation}\label{mc}
              \ch[\mb{X}\cdot\mb{Y}][\mb{X}]= \PP(X_0 = 1)\ch[Y_0][{Y}_{\mb{R}_{(-\infty, -1]}}, \mb{X}][X_0 = 1].
           \end{equation}
           In particular, if $\mb{X} \amalg \mb{Y}$ (that is, $\mb{X}$ and $\mb{Y}$ are independent) then~\eqref{mc} reduces to
           \begin{equation}\label{mc independent}
              \ch[\mb{X}\cdot\mb{Y}][\mb{X}]= \pof[X_0 = 1]\E_{X_0=1}\left[\mb{H}(Y_0\;|\; Y_{\mb{r}_{(-\infty,-1]}}) \right]_{|_{\mb{x}=\mb{X}}}.
           \end{equation}
        \end{proposition}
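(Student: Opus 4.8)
The plan is to reduce the relative entropy rate to a single–coordinate conditional entropy, peel off the coordinate $X_0$ using the chain rule \eqref{eins}, and then re‑express the conditioning on the past of the product process as a conditioning on the $Y$–values at the negative arrival times. First I would invoke the tail formula for the relative entropy rate recorded in the ``Entropy rate'' paragraph, applied to the finitely–valued, jointly stationary pair $(\mb{X}\cdot\mb{Y},\mb{X})$, which gives
\[
\ch[\mb{X}\cdot\mb{Y}][\mb{X}]=\ch[X_0Y_0][(\mb{X}\cdot\mb{Y})_{(-\infty,-1]},\mb{X}],
\]
since $(\mb{X}\cdot\mb{Y})_0=X_0Y_0$. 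Next I would apply \eqref{eins} with $X_0$ in the role of the conditioning variable that is peeled off, taking $Z=\bigl((\mb{X}\cdot\mb{Y})_{(-\infty,-1]},X_{\Z\setminus\{0\}}\bigr)$, to obtain
\[
\ch[\mb{X}\cdot\mb{Y}][\mb{X}]=\sum_{x_0\in\{0,1\}}\PP(X_0=x_0)\,\mathbf{H}_{X_0=x_0}\bigl(X_0Y_0\mid Z\bigr).
\]
On $\{X_0=0\}$ the product $X_0Y_0$ equals $0$ deterministically, so that term vanishes; on $\{X_0=1\}$ we have $X_0Y_0=Y_0$, and reinserting the now–known value $X_0=1$ into the conditioning restores the full process $\mb{X}$, yielding
\[
\ch[\mb{X}\cdot\mb{Y}][\mb{X}]=\PP(X_0=1)\,\ch[Y_0][(\mb{X}\cdot\mb{Y})_{(-\infty,-1]},\mb{X}][X_0=1].
\]

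The decisive step is the identification of the conditioning $\sigma$–algebras. By Poincar\'e recurrence applied to the stationary process $\mb{X}$ (using $\PP(X_0=1)>0$), the arrival process $\mb{R}$ of \eqref{procesR} is a.s.\ well defined and is a measurable function of $\mb{X}$; conditionally on $\{X_0=1\}$ one has $R_0=0$, while $R_{-1},R_{-2},\dots$ are exactly the positions $\le -1$ at which $X$ takes the value $1$. Consequently, \emph{given} $\mb{X}$, the past product $(\mb{X}\cdot\mb{Y})_{(-\infty,-1]}$ and the sequence $Y_{\mb{R}_{(-\infty,-1]}}$ determine one another through a bimeasurable bijection (off the arrivals the product is the deterministic symbol $0$). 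Invoking the invariance of conditional entropy under a bimeasurable recoding of the conditioning variable (the remark following \eqref{WZZZ}), I may replace $(\mb{X}\cdot\mb{Y})_{(-\infty,-1]}$ by $Y_{\mb{R}_{(-\infty,-1]}}$ in the conditioning, which is exactly \eqref{mc}.

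For the independent case \eqref{mc independent} I would apply \eqref{eins} once more, this time peeling off the entire process $\mb{X}$: given $\mb{X}=\mb{x}$ the arrival positions $\mb{r}=\mb{R}(\mb{x})$ are frozen, and the hypothesis $\mb{X}\amalg\mb{Y}$ makes the conditional law of $\mb{Y}$ given $\mb{X}=\mb{x}$ coincide with its unconditional law. Hence the inner conditional entropy reduces to $\mathbf{H}\bigl(Y_0\mid Y_{\mb{r}_{(-\infty,-1]}}\bigr)$, a function of $\mb{x}$, and averaging under $\PP(\,\cdot\mid X_0=1)$ gives \eqref{mc independent}. The step I expect to be the main obstacle is making the $\sigma$–algebra identification fully rigorous: one must verify the measurability of the random‑length family of negative arrival times as a function of $\mb{X}$ and check that, after conditioning on $\{X_0=1\}$, the recoding between $(\mb{X}\cdot\mb{Y})_{(-\infty,-1]}$ and $Y_{\mb{R}_{(-\infty,-1]}}$ is genuinely bimeasurable, so that the conditional‑entropy invariance applies; the remaining ingredients (the tail formula, the vanishing on $\{X_0=0\}$, and the two uses of \eqref{eins}) are routine given the machinery already in place.
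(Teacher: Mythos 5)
Your proposal is correct and follows essentially the same route as the paper's proof: the tail formula for the relative entropy rate, peeling off the coordinate $X_0$ (with the $\{X_0=0\}$ term vanishing because the product is deterministically $0$ there), the identification $\sigma\bigl((\mb{X}\cdot\mb{Y})_{(-\infty,-1]},\mb{X}\bigr)=\sigma\bigl(\mb{Y}_{\mb{R}_{(-\infty,-1]}},\mb{X}\bigr)$ under $X_0=1$, and a final application of~\eqref{eins} plus independence to deduce~\eqref{mc independent}. The only difference is cosmetic: you invoke~\eqref{eins} to justify splitting over $X_0$, whereas the paper simply rewrites the conditioning as $(\mb{M}_{(-\infty,-1]},X_0,\mb{X}_{\neq 0})$ and conditions on $X_0=1$ directly.
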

\begin{remark}
        Formula~\eqref{mc independent} was proved by us already in~\cite{MR4530196,doktorat}. In order to compute the integral from the right hand side of \eqref{mc independent} one must take the following steps: for almost $\P_{X_0 = 1}$ every realization of our return process $\mb{R}$ we compute $\E_{X_{0} = 1} \ch[Y_0][Y_{\{r_{-1}, r_{-2}, \cdots\}}]$, thus obtaining some function $f(\mb{r}_{\oci{-\infty, -1}})$; then, we find $\E_{X_0 = 1} f(\mb{R}_{\oci{-\infty, -1}})$. 
\end{remark}

\begin{proof}[Proof of Proposition~\ref{theo q1}]
For $\mb{M} = \mb{X} \cdot \mb{Y}$ we have
        \begin{equation*}
        \begin{split}
           \cph[M][X] &=  \ch[X_0 Y_0][\mb{M}_{\oci{-\infty, -1}}, \mb{X}]
           =\ch[X_0 Y_0][\mb{M}_{\oci{-\infty, -1}}, X_0, \mb{X}_{\neq 0}]\\
            &= \pof[X_0 = 1]\ch[Y_0][\mb{M}_{\oci{-\infty, -1}}, \mb{X}][X_0 = 1].
        \end{split}
        \end{equation*}
Now, under $X_0=1$, $(\mb{M}_{(-\infty,-1]},\mb{X})$ carries exactly the same information as $(\mb{Y}_{\mb{R}_{(-\infty,-1]}},\mb{X})$. Therefore, $\sigma(\mb{M}_{(-\infty,-1]},\mb{X})=\sigma(\mb{Y}_{\mb{R}_{(-\infty,-1]}},\mb{X})$ and thus
        \begin{equation*}
           \cph[M][X]    = \pof[X_0 = 1]\ch[Y_0][\mb{Y}_{\mb{R}_{\oci{-\infty, -1}}}, \mb{X}][X_0 = 1].
        \end{equation*}  
To complete the proof, it suffices to notice that
\begin{multline*}
\ch[Y_0][\mb{Y}_{\mb{R}_{\oci{-\infty, -1}}}, \mb{X}][X_0 = 1]=\E_{X_0=1}\left[\mb{H}_{\mb{X}=\mb{x}}(Y_0\;|\; \mb{Y}_{\mb{R}_{(-\infty,-1]}}) \right]_{|_{\mb{x}=\mb{X}}}\\
=\E_{X_0=1}\left[\mb{H}_{\mb{X}=\mb{x}}(Y_0\;|\; \mb{Y}_{\mb{r}_{(-\infty,-1]}}) \right]_{|_{\mb{x}=\mb{X}}}=\E_{X_0=1}\left[\mb{H}(Y_0\;|\; \mb{Y}_{\mb{r}_{(-\infty,-1]}}) \right]_{|_{\mb{x}=\mb{X}}},
\end{multline*}
where the last equality follows by the independence of $\mb{X}$ and $\mb{Y}$ (for $X$ and $Y$ independent and any measurable $A$, we have $p_{X\;|\; Y}(A\;|\; \cdot\;)=\mathbb{P}(X\in A\;|\; Y)=\mathbb{P}(X\in A)$).
\end{proof}

        Recall that if $\h[\mb{X}]=0$ then $\ch[\mb{X}\cdot \mb{Y}][\mb{X}]=\h[\mb{X}\cdot\mb{Y}]$. Therefore, the above theorem gives formulas for $\ph[X\cdot Y]$, as soon as $\ph[X] =0$.

\subsection{Probabilistic proof of Theorem~\ref{int0}}\label{potone}
Let $X\subseteq \{0,1\}^\Z$ be a measure-theoretic subordinate subshift and suppose that $\varphi\colon X\to\mathbb{R}$ depends only on one coordinate, i.e.\ $\varphi(\mb{x})=\varphi(x_0)$. Recall that $\mb{G}$ is called the \emph{Gibbs measure} associated with $\varphi$ if $\mb{G}$ is an i.i.d.\ process such that $\pof[G_i = x]$ is proportional to $2^{\varphi(x)}$:
        \[
           \pof[G_i = x] = 2^{\varphi(x)}/ \sum_{y\in\{0,1\}}2^{\varphi(y)}\text{ for }x\in \{0,1\}^\Z.
        \]
(In other words, $\mathbf{G}$ corresponds to the Bernoulli measure $B(p,1-p)$ on $\{0,1\}^\Z$ for $p=2^{\varphi(0)}/ \sum_{y\in\{0,1\}}2^{\varphi(y)}$.)
It is well-known (see, e.g., \cite{MR390180}) that for the full shift $\{0,1\}^\Z$ and $\varphi$ as above, 
\begin{equation}\label{rema gibbs suprema}
\mathscr{P}_{\{0,1\}^\Z,\varphi}=\log \sum_{x\in\{0,1\}}2^{\varphi(x)}
\end{equation}
and $\mb{G}$ is the unique equilibrium measure. In fact, we have
       \begin{align}
       \begin{split}\label{toobliczenie}
              \mathscr{P}_{\{0,1\}^\Z, \varphi} &= \sup_{\mb{X} \in \mc{P}(\{0,1\}^\Z)}\left[\h[\mb{X}] + \E \varphi(X_0)\right]\\
              & \le \sup_{X_0 \in \{0,1\}} \left[\h[X_0] + \E \varphi(X_0)\right] = \log \sum_{x\in\{0,1\}}2^{\varphi(x)}
           \end{split}
           \end{align}
and $\mb{G}$ is the unique measure for which the inequality above becomes an equality (this follows easily by Jensen's inequality, see for example the calculation below equation $(3)$ in~\cite{MR3220762}). It turns out that this result can be extended in the following way:        
      \begin{theorem}\label{theo pot}
        Let $\mb{X}$ be a stationary process, with $X_i\in\{0,1\}$ and $\Pro(X_0=1)>0$. Assume additionally that $\mb{H}(\mb{X})=0$ and that $\varphi(\mb{y}) = \varphi(y_0)$. Then
           \begin{equation}\label{topo pres for 1loc potential}
              V_{\mc{N}_\mb{X}, \varphi} = \sup_{\mb{X}\cdot \mb{Y} \in \mc{N}_\mb{X}} \left[\h[\mb{X}\cdot\mb{Y}] + \E \varphi(\mb{X}\cdot \mb{Y})\right]= (1 - d)\varphi(0) + d\lo[\sum_{x \in \{0,1\}}2^{\varphi(x)}][],
           \end{equation}
           where $d = \pof[X_0 = 1]$. Furthermore, there is a unique equilibrium measure for $\varphi$, given by $\mb{X}\cdot \mb{G}$, where $\mb{X} \amalg \mb{G}$ and $\mb{G}$ is the {Gibbs measure} associated with $\varphi$.
        \end{theorem}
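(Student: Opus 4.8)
The plan is to reduce everything to the formula for the entropy rate of a product from Proposition~\ref{theo q1} together with the elementary single-symbol computation~\eqref{toobliczenie}. First I would fix an arbitrary joining $(\mb{X},\mb{Y})$ with the prescribed first marginal and write $\mb{M}=\mb{X}\cdot\mb{Y}$. Since $\varphi(\mb{y})=\varphi(y_0)$ and $M_0=X_0Y_0$ equals $0$ whenever $X_0=0$ and equals $Y_0$ whenever $X_0=1$, the integral term splits as
\[
\E\varphi(\mb{M})=\PP(X_0=0)\varphi(0)+\PP(X_0=1)\E_{X_0=1}[\varphi(Y_0)]=(1-d)\varphi(0)+d\,\E_{X_0=1}[\varphi(Y_0)].
\]

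For the entropy term I would use that, since $\h[\mb{X}]=0$, we have $\ch[\mb{M}][\mb{X}]=\h[\mb{M}]$ (noted in Section~\ref{entconv}), and then apply Proposition~\ref{theo q1} to obtain $\h[\mb{M}]=d\,\ch[Y_0][Y_{\mb{R}_{\oci{-\infty,-1}}},\mb{X}][X_0=1]$. Adding the two contributions gives, for every joining,
\[
\h[\mb{M}]+\E\varphi(\mb{M})=(1-d)\varphi(0)+d\Big[\ch[Y_0][Y_{\mb{R}_{\oci{-\infty,-1}}},\mb{X}][X_0=1]+\E_{X_0=1}[\varphi(Y_0)]\Big].
\]
The bracket is now a single-symbol quantity: conditioning does not increase entropy, so $\ch[Y_0][Y_{\mb{R}_{\oci{-\infty,-1}}},\mb{X}][X_0=1]\le\h[Y_0][X_0=1]$, and since $Y_0\in\{0,1\}$ the computation~\eqref{toobliczenie} applied to the conditional law $\P_{X_0=1}$ yields $\h[Y_0][X_0=1]+\E_{X_0=1}[\varphi(Y_0)]\le\log\sum_{x\in\{0,1\}}2^{\varphi(x)}$. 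This produces the claimed upper bound on $V_{\mc{N}_\mb{X},\varphi}$.

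To see that the bound is attained I would take $\mb{Y}=\mb{G}$ to be the i.i.d.\ Gibbs process independent of $\mb{X}$. Then $G_0$ is independent of $(G_{\mb{R}_{\oci{-\infty,-1}}},\mb{X})$ and of $X_0$, so the conditional entropy equals $\h[G_0]$ and $\E_{X_0=1}[\varphi(G_0)]=\E[\varphi(G_0)]$; the equality case of~\eqref{toobliczenie} shows that both inequalities above become equalities, and $\mb{X}\cdot\mb{G}\in\mc{N}_\mb{X}$ realizes the supremum, giving~\eqref{topo pres for 1loc potential}.

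The delicate point — and the step I expect to be the main obstacle — is uniqueness of the equilibrium measure, i.e.\ showing that \emph{every} maximizing product $\mb{M}$ has the law of $\mb{X}\cdot\mb{G}$ with $\mb{X}\amalg\mb{G}$, not merely that one optimizing joining exists. Equality in the two inequalities forces, for any optimizing joining, both (i) $Y_0\perp(Y_{\mb{R}_{\oci{-\infty,-1}}},\mb{X})$ under $\P_{X_0=1}$ and (ii) the conditional law of $Y_0$ given $X_0=1$ being the Gibbs law. By stationarity of $(\mb{X},\mb{Y})$ these two facts propagate to every arrival time, so that conditionally on $\mb{X}$ the values $(Y_{R_i})_{i\in\Z}$ read along the support of $\mb{X}$ are i.i.d.\ Gibbs and independent of $\mb{X}$. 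The final observation is that the product $\mb{X}\cdot\mb{Y}$ depends on $\mb{Y}$ only through its restriction to $\supp\mb{X}$ (values off the support are killed by the multiplication), so the law of $\mb{M}$ is determined by the law of $\mb{X}$ together with the conditional law of $(Y_{R_i})_{i\in\Z}$ given $\mb{X}$; the latter having just been identified, $\mb{M}$ must be distributed as $\mb{X}\cdot\mb{G}$ with $\mb{X}\amalg\mb{G}$. Turning the informal phrase ``propagate to every arrival time'' into a clean induction on finite-dimensional conditional distributions is the part requiring the most care.
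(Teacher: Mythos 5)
Your proposal is correct and follows essentially the same route as the paper's proof: the upper bound via Proposition~\ref{theo q1} combined with the single-symbol computation~\eqref{toobliczenie} applied under $\P_{X_0=1}$, attainment by an independent Gibbs process, and uniqueness forced by the equality conditions (conditional independence of $Y_0$ from $\left(\mb{Y}_{\mb{R}_{\oci{-\infty,-1}}},\mb{X}\right)$ together with the Gibbs conditional law of $Y_0$ given $X_0=1$). The ``propagation to every arrival time'' you flag as delicate is precisely what the paper carries out, via a coordinate-by-coordinate replacement (using~\eqref{wzoreczek}) of the $Y$-values at arrival times by independent Gibbs variables to show $\E f(\mb{X}_-\cdot\mb{Y}_-)=\E f(\mb{X}_-\cdot\mb{G}_-)$ for every local $f$.
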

        The assertion of Theorem~\ref{int0} follows directly from Theorem~\ref{theo pot}.
        \begin{remark}\label{tauw}
           Notice that by taking $\mb{Y}  = \mb{1}$ (that is, $d = 1$), we obtain~\eqref{rema gibbs suprema}. However, our proof relies on this special case -- more specifically, it uses~\eqref{toobliczenie}. 
        \end{remark}
\begin{remark}
For the proof of Theorem~\ref{theo pot}, We will need a standard argument on conditional mean values. Let $\mc{G}\subseteq \mc{F}$ be a sub-$\sigma$-field. If $Z\amalg \mc{G}$ and $W\in \mc{G}$ then
\begin{equation}\label{wzoreczek}
\E(g(Z,W) | \mc{G})=G(W),
\end{equation}
where $G(w)=\E g(Z,w)$.

\end{remark}

        \begin{proof}[Proof of Theorem~\ref{theo pot}]
Take $\mb{M}:=\mb{X}\cdot\mb{Y}\in\mc{N}_\mb{X}$. By Theorem~\ref{theo q1}, we have
                          \begin{multline*}
                             \ch[\mb{M}][\mb{X}]=\ch[\mb{X}\cdot\mb{Y}][\mb{X}]= \pof[X_0 = 1] \ch[Y_0][\mb{Y}_{\mb{R}_{\oci{-\infty, -1}}}, \mb{X}][X_{0} = 1] \\
                             \le \pof[X_0 = 1] \h[Y_0][X_{0} = 1].
                          \end{multline*}
                          Thus, using~\eqref{toobliczenie},
                                     \begin{align*}
              \cph[\mb{M}][\mb{X}] &+ \E \varphi(\mb{M})\\ & \le  \pof[X_0 = 1] \h[Y_0][X_{0} = 1] + \pof[X_0 = 1]\E_{X_0 = 1} \varphi(Y_0) + \pof[X_0 = 0] \varphi(0) \\
                                                               & =\pof[X_0 = 0] \varphi(0) + \pof[X_0 = 1] \left[\h[Y_0][X_{0} = 1]  + \E_{X_0 = 1} \varphi(Y_0)\right]    \\
                                                               & \le \pof[X_0 = 0] \varphi(0) + \pof[X_0 = 1] \log \left(\sum_{y \in \{0,1\}} 2^{\varphi(y)}\right).
           \end{align*}
                          Note that these inequalities become equalities iff (conditionally on $X_0 = 1$)
                          \begin{gather}\label{one}
                             Y_0 \amalg (\mb{Y}_{\mb{R}_{\oci{-\infty, -1}}}, \mb{X})
                          \end{gather}
                          and
                          \begin{equation}\label{two}
                             \pof[Y_i = y][X_0 = 1] = 2^{\varphi(y)}/\sum_{x\in\{0,1\} }2^{\varphi(x)} 
                          \end{equation}
                          for all $y\in\{0,1\}$.
                          Clearly, $\mb{Y}=\mb{G}$ satisfies these conditions.
        
                          Now we will show that if a process $\mb{Y}$ has properties \eqref{one} and \eqref{two} then $\mb{X}\cdot \mb{Y} \sim \mb{X}\cdot\mb{G}$ under $\P_{X_0 = 1}$. Clearly, it is enough to show that for any local potential $f$, we have
                          \begin{equation}\label{aim intrinsic}
                             \E f(\mb{X}_-\cdot \mb{Y}_-) = \E f(\mb{X}_-\cdot \mb{G}_-),
                          \end{equation}
                          where for any sequence $\mb{y}$, $\mb{y}_-$ stands for $\mb{y}_{\oci{-\infty, 0}}$. To see this take a Gibbs process $\mb{H}$ such that $\mb{H} \amalg(\mb{X}, \mb{Y})$ and $\mb{H} \sim \mb{G}$. Then for any finite non-positive interval of indices $0 \in A \subseteq \{i \in \Z : i \le 0\}$ and a corresponding binary vector $x \in \{0, 1\}^{|A|}$, by~\eqref{wzoreczek},~\eqref{one} and~\eqref{two}, we obtain
                          \begin{equation*}
                          	\E_{X_A = x} f(\mb{X}_- \mb{Y}_-) = \E_{X_A = x} \E_{X_A = x}(f(\mb{X}_{<0} \mb{Y}_{<0}, Y_0)|\mb{X}, \mb{Y}_{R_{<0}}) =\E_{X_A = x} f(\mb{X}_{\le r} \mb{Y}_{\le r}, 0^{r- 1}, H_0))
                          	\end{equation*}
                          where $r$ is the first negative index from $A$ such that $x_r = 1$.  Repeating this argument we obtain
                          \begin{equation}
	                          		\E_{X_A = x} f(\mb{X}_- \mb{Y}_-) = \E f(\mb{x_-}\mb{H_-})
                          \end{equation}
                          (for convenience's sake we extended $x$ to $\mb{x_-}$ putting zeros at the remaining negative coordinates) and hence $\E f(\mb{X}_-\cdot \mb{Y}_-) = \E f(\mb{X_-}\mb{H_-})$, independent of the choice of $\mb{Y}$.                    
        \end{proof}
        
\subsection*{Acknowledgements}
Research of the first author was supported by National Science Centre grant UMO-2019/33/B/ST1/00364 (Poland). Research of the last author was supported by National Science Centre grant 2019/33/B/ST1/00275 (Poland).
       
\bibliographystyle{acm}
\bibliography{basic}

\bigskip
\footnotesize
\noindent
Joanna Ku\l aga-Przymus\\
\textsc{Faculty of Mathematics and Computer Science}\\
\textsc{Nicolaus Copernicus University in Toru\'{n}}\\ 
\textsc{Chopina 12/18, 87-100 Toru\'{n}, Poland}\\
\texttt{joasiak@mat.umk.pl}\\
\\
\noindent
Micha\l{} D. Lema\'{n}czyk\\
\textsc{Faculty of Physics, Astronomy and Informatics}\\
\textsc{Nicolaus Copernicus University in Toru\'{n}}\\
\textsc{Grudzi\c{a}dzka 5, 87-100 Toru\'{n}, Poland}\\
\texttt{mlemanczyk@fizyka.umk.pl}\\
\\
\noindent
Micha\l{} Rams\\
\textsc{Institute of Mathematics}\\
\textsc{Polish Academy of Sciences} \\
\textsc{Śniadeckich 8, 00-656 Warszawa, Poland}\\
\texttt{rams@impan.pl}

\end{document}